\documentclass[a4paper,12pt,reqno]{amsart}  %%%%% When compiling with pdfLaTeX

\usepackage{amsmath}
\usepackage{tikz-cd}

\newif\iffastcompile
\fastcompilefalse

\iffastcompile
  \RenewDocumentEnvironment{tikzcd}{O{} +b}
    {\hbox{\fbox{\strut tikz-cd omitted in fast mode}}}{}
\fi

\usepackage{amssymb}
\usepackage{amsthm}
\usepackage{amsfonts}
\usepackage{mathtools}
\usepackage{comment}
\usepackage{mathrsfs}
\usepackage{pifont}
\usepackage{cite}
\usepackage{graphicx}
\usepackage[colorlinks]{hyperref}  %%%%%%%%%% pdfLaTeX ver
\usepackage{xcolor}
\hypersetup{
	bookmarksnumbered=true,
    colorlinks=true,
    citecolor=blue,
    linkcolor=purple,
    urlcolor=orange,
}
\usepackage{mleftright} % \mleft \mrightの使用
\usepackage{extpfeil} % use for \xtwoheadrightarrow

\usepackage{pgfplots}
\pgfplotsset{compat=1.18}

\usepackage[capitalize, nameinlink]{cleveref}
\everymath{\displaystyle}
\numberwithin{equation}{section}

\newcommand{\redtext}[1]{\textcolor{red}{#1}}
\newcommand{\memo}[1]{\redtext{[{#1}]}} %% \memo{#1}

\theoremstyle{plain}

\newtheorem{thm}{Theorem}[section]
\crefname{thm}{Theorem}{Theorems}

\newtheorem{lem}[thm]{Lemma}
\crefname{lem}{Lemma}{Lemmas}
\newtheorem{prop}[thm]{Proposition}
\crefname{prop}{Proposition}{Propositions}
\newtheorem{cor}[thm]{Corollary}
\crefname{cor}{Corollary}{Corollaries}

\newtheorem*{claim*}{Claim}
\newtheorem*{thm*}{Theorem}

\newtheorem{introthm}{Theorem}[section]

\newtheorem{introcor}[introthm]{Corollary}
\crefname{introthm}{Theorem}{Theorems}
\crefname{introprop}{Proposition}{Propositions}
\crefname{introcor}{Corollary}{Corollaries}

\theoremstyle{definition}

\newtheorem{dfn}[thm]{Definition}
\crefname{dfn}{Definition}{Definitions}

\newtheorem{eg}[thm]{Example}
\crefname{eg}{Example}{Examples}
\newtheorem*{Ack}{Acknowledgement}
\newtheorem*{NoCon}{Notation and Conventions}
\newtheorem*{Out}{Outline of this paper}

\newtheorem*{HisNote}{Historical Note}
\newtheorem*{AI}{AI Usage}

\crefname{fact}{Fact}{Facts}

\theoremstyle{remark}
\newtheorem{rem}[thm]{Remark}
\crefname{rem}{Remark}{Remarks}

\usepackage{enumitem} %% [label={(M\arabic*)}] の使用

\crefname{appendix}{Appendix}{Appendices}
\Crefname{appendix}{Appendix}{Appendices}

\DeclareMathOperator{\Aut}{Aut}

\DeclareMathOperator{\Ker}{Ker}

\DeclareMathOperator{\id}{id}

\DeclareMathOperator{\GL}{GL}

\DeclareMathOperator{\Image}{Im}

\DeclareMathOperator{\End}{End}

\DeclareMathOperator{\Eq}{Eq}

\DeclareMathOperator{\Fit}{Fit}

\newcommand\dual{\raise0.9ex\hbox{$\scriptscriptstyle\vee$}}

\newcommand{\catname}[1]{\mathsf{#1}}

\newcommand{\Grp}{\catname{Grp}}
\newcommand{\Ab}{\catname{Ab}}

\DeclareMathOperator{\Map}{Map} % set of maps
\DeclareMathOperator{\Inn}{Inn} % Inn group of quandles
\DeclareMathOperator{\Dis}{Dis} % Displacement group of quandles
\DeclareMathOperator{\Trans}{Trans} % transvection group of quandles
\DeclareMathOperator{\relInn}{Inn_{rel}} % relative inner automorphism group
\DeclareMathOperator{\relTrans}{Trans_{rel}} % relative transvection group
\DeclareMathOperator{\relAs}{Adj_{rel}} % relative automorphism group

\DeclareMathOperator{\ncl}{ncl} % nilpotent class
\DeclareMathOperator{\covlen}{covlen} % covering length
\DeclareMathOperator{\tncl}{tncl} % transvection nilpotent class

\newcommand{\Quandle}{\catname{Qnd}}% the cat of quandles
\DeclareMathOperator{\Conj}{Conj} % conjugacy quandle of a group
\DeclareMathOperator{\Adj}{Adj} % adjoint group of a quandle
\DeclareMathOperator{\As}{Adj} % adjoint (associated) group of a quandle
\DeclareMathOperator{\Red}{Red} % 
\DeclareMathOperator{\OpRed}{OpRed} % 

\DeclareMathOperator{\Alex}{Alex} % Alexander quandle

\newcommand{\qCong}{\sim}% A generic quandle congruence.
\newcommand{\nilpTrunc}[2]{#2^{\mathrm{nil}}_{#1}}% Nilpotent truncation homomorphism.
\newcommand{\covTrunc}[2]{#2^{\mathrm{cov}}_{#1}}% Covering-length truncation homomorphism.

\newcommand{\redCong}[1]{\sim_{#1, \mathrm{red}}}% Congruence defining the reduced quotient.
\newcommand{\redQuot}[1]{\Red(#1)}% Domain of the reduced factor.
\newcommand{\redMap}[1]{\pi_{#1}^{\mathrm{red}}}% Quotient map to the reduced quotient.
\newcommand{\redFac}[1]{#1^{\mathrm{red}}}% Reduced factor over the original target.
\newcommand{\opredDef}[1]{E_{#1}}% Normal subgroup defining the operator-reduced quotient.
\newcommand{\opredQuot}[1]{\OpRed(#1)}% Domain of the operator-reduced factor.
\newcommand{\opredMap}[1]{\pi_{#1}^{\mathrm{opred}}}% Quotient map to the operator-reduced quotient.
\newcommand{\opredFac}[1]{#1^{\mathrm{opred}}}% Operator-reduced factor over the original target.

\newcommand{\redOpGrp}{O}% Abelian operator group attached to a point.
\newcommand{\relLCS}{\Gamma}% Relative lower central series.
\newcommand{\grpLCS}{\Gamma}% Ordinary lower central series.
\newcommand{\relInnLCS}[2]{\Gamma_{#1}(#2)}% Relative lower central series of \relInn(f).
\newcommand{\relTransLCS}[2]{\gamma_{#1}(#2)}% Relative lower central series of \relTrans(f).
\providecommand{\Hyp}{\operatorname{Hyp}}

\providecommand{\relHypCenter}[2]{\Hyp_{#1}(#2)}
\providecommand{\hypCenter}[1]{\Hyp(#1)}
\providecommand{\hypMap}[1]{\pi_{#1}^{\mathrm{hyp}}}
\providecommand{\hypFac}[1]{#1^{\mathrm{hyp}}}

\newcommand{\ZZ}{\mathbb{Z}}

\author{Yuki Imamura}
\address[Y.Imamura]{Osaka Central Advanced Mathematical Institute, Osaka Metropolitan University, Osaka 558-8585, Japan}
\email{\href{mailto:u287972b@alumni.osaka-u.ac.jp}{u287972b@alumni.osaka-u.ac.jp}}

\author{Tomoki Yoshida}
\address[T.Yoshida]{Department~of~Mathematics, School~of~Science~and~Engineering, Waseda~University, Ohkubo~3-4-1, Shinjuku, Tokyo~169-8555, Japan}
\email{\href{mailto:tomoki_y@asagi.waseda.jp}{tomoki\_y@asagi.waseda.jp}}

\title{On Nilpotent and Hypocentral Quandle Homomorphisms}
\date{September 12, 2026}

\keywords{Quandles, Nilpotent homomorphisms, Covering homomorphisms, Relative inner automorphisms, Orthogonal factorization systems}
\subjclass[2020]{20N02 (primary), 57K12, 20F14, 20F18, 18A32 (secondary).}

\begin{document}

\begin{abstract}
We introduce a notion of nilpotency for surjective quandle homomorphisms, which relativizes the nilpotency of quandles recently defined by Darn\'e. 
A surjective homomorphism is called nilpotent if its relative inner automorphism group is nilpotent relative to the inner automorphism group of the source. 
We show that this condition can equally be described through the relative transvection group and the relative adjoint group, and that it is equivalent to being a finite composite of covering homomorphisms. 

Extending the relative lower central series to transfinite ordinals, we then introduce hypocentral homomorphisms and characterize them as the surjections that are right orthogonal to the strongly connected homomorphisms. 
As a consequence, the strongly connected and the hypocentral homomorphisms form an orthogonal factorization system for the surjections in the category of quandles. 

Finally, we also introduce reduced and operator-reduced homomorphisms, which relativize reduced quandles, and give sufficient conditions for such a homomorphism to be nilpotent.
\end{abstract}
%Specializing to the terminal quandle, we obtain new characterizations of nilpotent quandles.

\maketitle

\setcounter{tocdepth}{2} %table of contents subsection
\tableofcontents
\setcounter{section}{-1}

\section{Introduction}
\label{section: introduction}

Quandles, introduced independently by Joyce~\cite{joyce_1982_a_classifying_invariant_of_knots_the_knot_quandle} and Matveev~\cite{matveev_1982_distributive_groupoids_in_knot_theory}, are algebraic structures whose axioms are an abstraction of the Reidemeister moves for knot diagrams.
Besides providing invariants of knots, quandles are closely related to groups.
With a quandle $Q$, one associates several groups, such as the automorphism group $\Aut(Q)$, the inner automorphism group $\Inn(Q)$, the transvection group $\Trans(Q)$, and the adjoint group $\As(Q)$.
Conversely, every group $G$ gives rise to a quandle $\Conj(G)$, called the \emph{conjugacy quandle} of $G$, with underlying set $G$ and operation given by conjugation in $G$.
The functor $\Conj$ is faithful and admits a left adjoint, given by $Q\mapsto \Adj(Q)$.
Thus, quandles provide a broader setting in which groups can be studied through their conjugation structures.

Several of the groups associated with a quandle $Q$ reflect the structure of $Q$ through their natural action on $Q$. Accordingly, one way to study a quandle $Q$ is to study the properties of these groups, or those of their actions on $Q$. For example, a quandle $Q$ is called \emph{homogeneous} if the action $\Aut(Q)\curvearrowright Q$ is transitive, and \emph{connected} if the action $\Inn(Q)\curvearrowright Q$ is transitive. 
Likewise, the \emph{mediality} of a quandle $Q$ is equivalent to the commutativity of $\Trans(Q)$.

In his paper~\cite{darne_2026_nilpotent_quandles}, Darn\'e introduced a notion of nilpotency for quandles, calling a quandle $Q$ \emph{nilpotent} if its inner automorphism group $\Inn(Q)$ is nilpotent.
The resulting class of quandles had appeared earlier in the literature under the terminology of reductive quandles; the use of the word nilpotency is justified by the fact that these quandles behave in close parallel to nilpotent groups.
One manifestation of this parallel is their characterization as the quandles admitting a finite tower of quandle coverings.

Recall that a group $G$ is \emph{nilpotent} if its lower central series becomes trivial after finitely many steps, or equivalently, if $G$ can be obtained from the trivial group by a finite sequence of central group extensions.
The quandle-theoretic counterpart of a central extension is a covering homomorphism, introduced by Eisermann~\cite{eisermann_2014_quandle_coverings_and_their_galois_correspondence} in the context of the covering theory of quandles; coverings of conjugation quandles arising from group extensions correspond to central extensions of groups (see for instance \cite[Proposition 5.2]{preprint_yuki_tomoki_2026_relativization_of_symmetries_on_quandles}).
As an analogue of the above characterization of group nilpotency, Darn\'e showed in \cite[Theorem 2.13]{darne_2026_nilpotent_quandles} that a non-empty quandle $Q$ is nilpotent if and only if the canonical map from $Q$ to the one-element quandle is expressible as a finite composite of covering homomorphisms.

In the present paper, we extend this notion of nilpotency and develop its relative version for surjective homomorphisms of quandles.

In the previous work \cite{preprint_yuki_tomoki_2026_relativization_of_symmetries_on_quandles}, the authors defined the \emph{relative inner automorphism group}
\[ \relInn(f) = \Ker(f_*\colon \Inn(P) \twoheadrightarrow \Inn(Q)) \]
of a surjective quandle homomorphism $f\colon P \twoheadrightarrow Q$, where $f_*$ denotes the induced surjective group homomorphism.
There, a surjective homomorphism $f$ is called \emph{connected} if $\relInn(f)$ acts transitively on every fiber of $f$. 
This is a relativization of the connectedness of quandles, in the sense that a quandle $Q$ is connected if and only if the unique homomorphism $Q\to\{\ast\}$ into the terminal quandle is connected.

% Relativization is not merely a formal generalization.
% なんかいい感じのrelativizeする理由を書く？

Along this line, we define nilpotency for quandle homomorphisms.
It should be noted that the nilpotency of the relative inner automorphism group alone is not sufficient: we must also take into account the conjugation action of the whole inner automorphism group. We therefore make use of the notion of relative nilpotency for a normal subgroup; see \cref{appendix: relative nilpotency for groups} for its general account.

\subsection{Results}
\label{subsection: results in introduction}

We first fix notation. For a normal subgroup $N \trianglelefteq G$, the \emph{$G$-relative lower central series} of $N$ is defined inductively by $\relLCS_0(G,N)=N$ and $\relLCS_{i+1}(G,N)=[G,\relLCS_i(G,N)]$. If $\relLCS_c(G,N)=1$ for some $c\geq 0$, then $N$ is called \emph{$G$-nilpotent}.

Let $f \colon P\twoheadrightarrow Q$ be a surjective quandle homomorphism.
By the surjectivity, there is a unique surjective group homomorphism $f_*\colon \Inn(P)\twoheadrightarrow \Inn(Q)$ satisfying $f_*(s_x)=s_{f(x)}$ for all $x\in P$.
The \emph{relative inner automorphism group} $\relInn(f)$ is defined as $\relInn(f)=\Ker(f_*)$, which is a normal subgroup of $\Inn(P)$.
Then, we call $f$ \emph{nilpotent} if $\relInn(f)$ is nilpotent relative to its ambient group $\Inn(P)$, namely if $\relLCS_{c}(\Inn(P),\relInn(f))=1$ for some $c\geq0$. Thus, relative nilpotency depends not only on the abstract group $\relInn(f)$, but also on the conjugation action of $\Inn(P)$ on it.
For a non-empty quandle $P$ and the unique surjective homomorphism $P\twoheadrightarrow\{\ast\}$ into the terminal quandle, this recovers Darn\'e's nilpotency of $P$.

Our first result generalizes to this relative setting the characterization of nilpotent quandles as those obtained as iterated coverings of the terminal quandle.

\begin{introthm}[{\cref{theorem: nilpotent homomorphism and covering rigid tower}}]
\label[introthm]{introtheorem: nilpotent homomorphism and covering rigid tower}
    Let $f\colon P\twoheadrightarrow Q$ be a surjective quandle homomorphism and let $c\geq0$. Then the following conditions are equivalent.
    \begin{enumerate}
        \item\label{introitem:c-nilpotent} $f$ is $c$-nilpotent.
        \item\label{introitem:factorization_into_c_covering_and_1_rigid} $f$ admits a factorization
        \[
        \begin{tikzcd}
            P=P_{c}
            \arrow[two heads]{r}{p_c}
            &
            P_{c-1}
            \arrow[two heads]{r}{p_{c-1}}
            &
            \cdots
            \arrow[two heads]{r}
            &
            P_{1}
            \arrow[two heads]{r}{p_1}
            &
            P_{0}
            \arrow[two heads]{r}{h}
            &
            Q
        \end{tikzcd}
        \]
        in which the first $c$ homomorphisms $p_c,\dots,p_1$ are coverings and the last homomorphism $h$ is rigid (i.e.~$\relInn(h)=1$).
    \end{enumerate}
\end{introthm}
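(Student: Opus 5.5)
The proof goes through two constructions: orbit quotients of $P$ by the terms of the relative lower central series for (1)$\Rightarrow$(2), and induction on $c$ for (2)$\Rightarrow$(1). Throughout, write $G=\Inn(P)$, $N=\relInn(f)$ and $\relLCS_i=\relLCS_i(G,N)$.

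\emph{Preliminary: orbit quotients.} Let $M\trianglelefteq G$ with $M\le N$. The $M$-orbit relation on $P$ should be a quandle congruence. If $y=m(x)$, then $s_y=ms_xm^{-1}$. Hence $s_y(z)$ and $s_x(z)$ lie in the same $M$-orbit, using normality of $M$. Also $y\triangleright z' = m(x\triangleright m^{-1}z')$ lies in the orbit of $x\triangleright z'$. So the orbit set $P/M$ is a quandle and $\pi_M\colon P\to P/M$ is surjective. Since every element of $N$ maps to $1\in\Inn(Q)$, it acts trivially on $Q$, so $f$ is constant on $M$-orbits and factors through $P/M$. The induced map $G\twoheadrightarrow\Inn(P/M)$ kills $M$, because $M$ acts trivially on the orbit set. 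I expect this bookkeeping (congruence, factorization, and the description of $\Inn(P/M)$ as a quotient of $G$) to be the only slightly technical part; everything else is formal.

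\emph{(1)$\Rightarrow$(2).} Assume $\relLCS_c=1$ and set $P_j=P/\relLCS_j$ for $0\le j\le c$. Then $P_c=P$. Since $\relLCS_{j+1}\le\relLCS_j$, there are surjections $p_{j+1}\colon P_{j+1}\to P_j$, and $h\colon P_0=P/N\to Q$ is induced by $f$.
\begin{itemize}
\item $h$ is rigid. $\relInn(h)$ is the image of $\relInn(f)=N$ under the surjection $G\to\Inn(P_0)$, because $h_*\circ(\pi_N)_*=f_*$ and $(\pi_N)_*$ is onto. By the preliminary step, $N$ acts trivially on $P_0$, so this image is trivial.
\item $p_{j+1}$ is a covering. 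If $x,y\in P_{j+1}$ have the same image in $P_j$, lift them to $P$ so that $y=n(x)$ with $n\in\relLCS_j$. Then $s_ys_x^{-1}=[n,s_x]\in\relLCS_{j+1}$. This element acts trivially on $P_{j+1}$, so $s_x=s_y$ in $\Inn(P_{j+1})$.
\end{itemize}

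\emph{(2)$\Rightarrow$(1).} We induct on $c$. For $c=0$, rigid means $\relLCS_0=N=1$, i.e.\ $0$-nilpotent.

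The key lemma is that for a covering $p\colon P\to P'$, the group $\relInn(p)$ is central in $\Inn(P)$. Let $g\in\relInn(p)$ and $x\in P$. Then $p(gx)=p_*(g)p(x)=p(x)$, so the covering property gives $s_{gx}=s_x$. Hence $gs_xg^{-1}=s_{gx}=s_x$, so $g$ commutes with the generators $s_x$ of $\Inn(P)$.

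Now write $f=g\circ p_c$, where $g$ is the composite $h\circ p_1\circ\cdots\circ p_{c-1}$. By induction, $g$ is $(c-1)$-nilpotent. Since $f_*=g_*\circ(p_c)_*$ and $(p_c)_*$ is surjective, $(p_c)_*$ maps $\relInn(f)$ onto $\relInn(g)$. It also maps $\relLCS_k(\Inn(P),\relInn(f))$ onto $\relLCS_k(\Inn(P_{c-1}),\relInn(g))$. Therefore $\relLCS_{c-1}(\Inn(P),\relInn(f))\le\Ker((p_c)_*)=\relInn(p_c)$. By the key lemma,
\[
\relLCS_c\le[\Inn(P),\relInn(p_c)]=1,
\]
so $f$ is $c$-nilpotent.
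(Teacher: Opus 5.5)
Your proof is correct and follows essentially the same route as the paper: for (1)$\Rightarrow$(2) the paper also takes the tower of orbit quotients $P/\relLCS_i(\Inn(P),\relInn(f))$ and shows each step is a covering by the same commutator identity $s_{n(x)}s_x^{-1}=[n,s_x]$, getting rigidity of the last map from the connected--rigid factorization rather than your direct check. For (2)$\Rightarrow$(1) the paper cites Eisermann's result that coverings induce central extensions of inner automorphism groups, which is your key lemma, together with the group-theoretic fact that a composite of $c$ central extensions is nilpotent of class at most $c$, which your induction proves directly.
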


Since rigid homomorphisms are coverings, \cref{introtheorem: nilpotent homomorphism and covering rigid tower} says that the nilpotent homomorphisms are precisely the finite composites of coverings.

In \cite{preprint_yuki_tomoki_2026_relativization_of_symmetries_on_quandles}, the authors also introduced the \emph{relative transvection group} of $f$,
\[ \relTrans(f) = \left\langle s_xs_y^{-1} \mid x,y \in P \text{ such that }f(x)=f(y) \right\rangle, \]
which is a normal subgroup of $\Inn(P)$ as well. One may use this subgroup to define nilpotency, but the two notions turn out to coincide, because the inclusion $[\Inn(P),\relInn(f)] \subseteq \relTrans(f)$ holds (see \cref{lem:commutator_inn_rel_trans_rel}).
Furthermore, a third group-theoretic description of the nilpotency condition is deduced from the \emph{relative adjoint group} of $f$, which is defined as
\[ \relAs(f)=\Ker\bigl(\As(f)\colon\As(P)\twoheadrightarrow\As(Q)\bigr). \]
Using the fact that the surjective group homomorphism $\rho_P \colon \Adj(P) \twoheadrightarrow\Inn(P)$ is a central extension and sends $\relAs(f)$ onto $\relTrans(f)$, we prove that the relative nilpotency of $\relAs(f)$ is equivalent to that of $\relTrans(f)$.
 
Summarizing, we establish the following theorem.

\begin{introthm}[{\cref{theorem: main structure theorem for nilpotent homomorphisms}}]
\label[introthm]{introtheorem: main structure theorem for nilpotent homomorphisms}
    Let $f\colon P\twoheadrightarrow Q$ be a surjective quandle
    homomorphism. Then the following conditions are equivalent.
    \begin{enumerate}[label={(\roman*)}]
        \item\label{introitem: f is nilp} $f$ is nilpotent, that is, $\relInn(f)$ is $\Inn(P)$-nilpotent.
        \item\label{introitem: reltrans is nilp} $\relTrans(f)$ is $\Inn(P)$-nilpotent.
        \item\label{introitem: relas is nilp} $\relAs(f)$ is $\As(P)$-nilpotent.
        \item\label{introitem: composition of coverings} $f$ is a finite composition of covering homomorphisms.
    \end{enumerate}
\end{introthm}

Considering the case with $Q=\{\ast\}$, we also obtain equivalent descriptions of nilpotency for a non-empty quandle (\cref{corollary: characterization_of_nilpitent_quandles}). In particular, a non-empty quandle $P$ is nilpotent (i.e., $\Inn(P)$ is nilpotent) if and only if $\Trans(P)$ is $\Inn(P)$-nilpotent.
This is a novel equivalent condition for nilpotency, which is not included in \cite[Corollary~2.5 and Theorem~2.13]{darne_2026_nilpotent_quandles}.

It is worth mentioning that Bonatto and Stanovsk\'{y}~\cite{bonatto_stanovsky_2021_commutator_theory_for_racks_and_quandles} also study nilpotency for quandles from the universal-algebraic viewpoint. According to \cite[Theorem 1.2]{bonatto_stanovsky_2021_commutator_theory_for_racks_and_quandles}, the nilpotency of a quandle $P$ in their sense is equivalent to that of $\Trans(P)$. Hence, our notion of nilpotency is stronger than theirs, as noted by Darn\'{e} in the introduction of \cite{darne_2026_nilpotent_quandles}.

%% section 3

Next, we extend the relative lower central series to transfinite ordinals.
Even when it does not reach $1$ after finitely many steps, the series may still do so at an infinite stage; for groups, this weaker condition is classical and known as hypocentrality.
So, we call $f$ a \emph{hypocentral} homomorphism if the transfinite $\Inn(P)$-relative lower central series of $\relInn(f)$ reaches $1$ at some, possibly infinite, ordinal stage. Every nilpotent homomorphism is hypocentral, but the converse fails in general.
We obtain the following group-theoretic characterization of hypocentrality.

\begin{introthm}[\cref{theorem: characterizations of hypocentral homomorphisms}]
\label[introthm]{introtheorem: characterizations of hypocentral homomorphisms}
Let $f\colon P\twoheadrightarrow Q$ be a surjective quandle homomorphism. Then, the following conditions are equivalent.
\begin{enumerate}[label={(\roman*)}]
    \item\label{introitem: f is hypocentral}
    The homomorphism $f$ is hypocentral, that is, $\relInn(f)$ is $\Inn(P)$-hypocentral.
    \item\label{introitem: relative transvection is hypocentral}
    The group $\relTrans(f)$ is $\Inn(P)$-hypocentral.
    \item\label{introitem: relative adjoint group is hypocentral}
    The group $\relAs(f)$ is $\As(P)$-hypocentral.
    \item\label{introitem: no perfect subgroup in relative inner group}
    The group $\relInn(f)$ contains no non-trivial $\Inn(P)$-perfect
    normal subgroup of $\Inn(P)$.
\end{enumerate}
\end{introthm}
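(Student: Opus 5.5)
We prove the equivalences by comparing the transfinite relative lower central series of the three groups. Throughout, for a group $G$ and $N\trianglelefteq G$, set $\relLCS_0(G,N)=N$, $\relLCS_{\alpha+1}(G,N)=[G,\relLCS_\alpha(G,N)]$, and $\relLCS_\lambda(G,N)=\bigcap_{\alpha<\lambda}\relLCS_\alpha(G,N)$ for limit ordinals $\lambda$. Write $G=\Inn(P)$, $N=\relInn(f)$ and $T=\relTrans(f)$.

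\textbf{Step 1: \ref{introitem: f is hypocentral}$\Leftrightarrow$\ref{introitem: relative transvection is hypocentral}.} From the inclusions $[G,N]\subseteq T\subseteq N$ (\cref{lem:commutator_inn_rel_trans_rel}), an induction on $\alpha$ gives
\[
\relLCS_{1+\alpha}(G,N)\subseteq\relLCS_\alpha(G,T)\subseteq\relLCS_\alpha(G,N).
\]
The successor step follows by applying $[G,-]$; the limit step follows because both sides are intersections. For infinite $\alpha$ we have $1+\alpha=\alpha$, so the two series agree from stage $\omega$ on. Hence one series reaches $1$ exactly when the other does.

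\textbf{Step 2: \ref{introitem: relative transvection is hypocentral}$\Leftrightarrow$\ref{introitem: relative adjoint group is hypocentral}.} Let $A=\As(P)$, $R=\relAs(f)$ and $\rho=\rho_P$. This map is a central extension with $\rho(R)=T$. Since $\rho$ is surjective and a commutator map, it sends commutators onto commutators, so $\rho(\relLCS_{\alpha+1}(A,R))=[G,\rho(\relLCS_\alpha(A,R))]$.

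At limit stages only the inclusion $\rho(\bigcap)\subseteq\bigcap\rho$ holds in general. So instead of comparing the series stage by stage, I would pass through the perfect-subgroup criterion of Step 3, applied once to $(G,T)$ and once to $(A,R)$.

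\textbf{Step 3: hypocentrality versus perfect subgroups.} I would prove a general group lemma (presumably in \cref{appendix: relative nilpotency for groups}): $N\trianglelefteq G$ is $G$-hypocentral if and only if $N$ contains no non-trivial normal subgroup $M\trianglelefteq G$ with $[G,M]=M$.

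\emph{Forward direction.} Suppose $M\subseteq N$ is normal with $[G,M]=M$. By induction $M\subseteq\relLCS_\alpha(G,N)$ for every $\alpha$, so hypocentrality forces $M=1$.

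\emph{Converse.} The series $\relLCS_\alpha(G,N)$ is decreasing, so by cardinality it stabilizes at some $\relLCS_\beta$. Its stable value $M=\relLCS_\beta$ satisfies $[G,M]=M$. The hypothesis then gives $M=1$.

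This settles \ref{introitem: f is hypocentral}$\Leftrightarrow$\ref{introitem: no perfect subgroup in relative inner group}. It also reduces Step 2 to comparing the stable terms, i.e. the hypocentral residuals.

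\textbf{Finishing Step 2.} Let $M_R$ be the residual of $R$. Then $\rho(M_R)$ satisfies $[G,\rho(M_R)]=\rho([A,M_R])=\rho(M_R)$, so it is a normal $G$-perfect subgroup of $T$.

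\emph{From \ref{introitem: relative transvection is hypocentral} to \ref{introitem: relative adjoint group is hypocentral}.} Hypocentrality of $T$ forces $\rho(M_R)=1$, so $M_R\subseteq\Ker\rho$, which is central in $A$. Then $M_R=[A,M_R]=1$.

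\emph{From \ref{introitem: relative adjoint group is hypocentral} to \ref{introitem: relative transvection is hypocentral}.} Let $M\subseteq T$ be normal in $G$ with $[G,M]=M$, and put $\tilde M=\rho^{-1}(M)\cap R$. Because $\rho(R)=T\supseteq M$, we have $\rho(\tilde M)=M$. Consider $M'=[A,\tilde M]\subseteq R$. It satisfies $\rho(M')=[G,M]=M$, so $\tilde M=M'\cdot(\tilde M\cap\Ker\rho)$.

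Since $\Ker\rho$ is central, commutators with it are trivial, and therefore
\[
[A,M']=[A,[A,\tilde M]]\supseteq[A,\tilde M]=M'.
\]
The displayed inclusion uses the identity $[A,\tilde M]=[A,M'(\tilde M\cap\Ker\rho)]$. This identity needs care, because $[A,xz]=[A,x]$ for central $z$ only elementwise, but elementwise suffices. Hence $M'$ is $A$-perfect, normal, and contained in $R$. Condition \ref{introitem: relative adjoint group is hypocentral} gives $M'=1$, so $M=\rho(M')=1$.

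\textbf{Expected obstacle.} I expect the main difficulty to be the limit stages in Step 2, where images do not commute with intersections. Routing the argument through the perfect-residual characterization of Step 3 is designed to avoid this.
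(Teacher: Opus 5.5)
Your proposal is correct and follows essentially the same route as the paper. Step 1 is the paper's transfinite interlacing lemma. Step 3 is its relative-hypocenter lemma together with the corollary that characterizes hypocentrality by the absence of non-trivial $G$-perfect normal subgroups. Step 2 reproduces the paper's lemma on hypocentrality under central extensions, applied to $\rho_P$; this includes the same decomposition $\tilde M=M'(\tilde M\cap\Ker\rho)$ that yields $[A,\tilde M]=[A,M']$.

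The one input you assert without proof is that $\rho_P(\relAs(f))=\relTrans(f)$ with central kernel. This is exactly the paper's earlier proposition on the relative adjoint group. Its nontrivial inclusion $\rho_P(\relAs(f))\subseteq\relTrans(f)$, which your (ii)$\Rightarrow$(iii) direction needs, rests on the generator description of $\relAs(f)$.
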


Passing from nilpotency to hypocentrality clarifies the relationship with the notion of strongly connected homomorphisms.
In \cite[Definition 3.6]{preprint_yuki_tomoki_2026_relativization_of_symmetries_on_quandles}, a surjective homomorphism $f\colon P\twoheadrightarrow Q$ is called \emph{strongly connected} if the relative transvection group $\relTrans(f)$ acts transitively on every fiber of $f$.
The authors observed in \cite[Proposition 3.21]{preprint_yuki_tomoki_2026_relativization_of_symmetries_on_quandles} that the class of strongly connected homomorphisms is left orthogonal to the class of pre-covering homomorphisms. Here, a \emph{pre-covering} homomorphism is a not necessarily surjective homomorphism $f$ such that $f(x)=f(y)$ implies $s_x=s_y$.
We extend this observation by showing that hypocentral homomorphisms admit a category-theoretic characterization as the class of morphisms right orthogonal to strongly connected homomorphisms.

\begin{introthm}[\cref{theorem: transfinite covering characterization of hypocentral homomorphisms}]
\label[introthm]{introtheorem: transfinite covering characterization of hypocentral homomorphisms}
    Let $f\colon P\twoheadrightarrow Q$ be a surjective quandle homomorphism. Then, the following conditions are equivalent.
\begin{enumerate}[label={(\roman*)}]
    \item\label{introitem: homomorphism is hypocentral}
    $f$ is hypocentral.
    \item\label{introitem: transfinite cocomposite of covering condition}
    $f$ is a transfinite cocomposite of pre-covering homomorphisms.
    \item\label{introitem: hypocentral right orthogonal}
    $f$ is right orthogonal to every strongly connected homomorphism.
\end{enumerate}
\end{introthm}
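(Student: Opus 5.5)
The plan is to prove (i)$\Rightarrow$(ii)$\Rightarrow$(iii)$\Rightarrow$(i). The transfinite relative lower central series does the work: at each step it cuts $P$ down by a pre-covering homomorphism, and at limits it passes to an inverse limit.

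\textbf{Building the tower.} For each ordinal $\alpha$ put $N_\alpha=\relInnLCS{\alpha}{\Inn(P),\relInn(f)}$. This is a normal subgroup of $\Inn(P)$, and the series is decreasing. Define a congruence $\sim_\alpha$ on $P$ by
\[
x\sim_\alpha y \iff f(x)=f(y)\ \text{and}\ s_xs_y^{-1}\in N_\alpha .
\]
I expect this to be a congruence because $N_\alpha$ is normal and $s_{x\triangleright z}=s_xs_zs_x^{-1}$. Let $P_\alpha=P/{\sim_\alpha}$. The transition maps $P_{\alpha}\twoheadrightarrow P_\beta$ for $\alpha\ge\beta$ should then satisfy the following.
\begin{itemize}
\item At a successor stage, $P_{\alpha+1}\to P_\alpha$ is pre-covering. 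The reason is that $[\Inn(P),N_\alpha]=N_{\alpha+1}$, so $s_xs_y^{-1}\in N_\alpha$ forces $s_{x}$ and $s_{y}$ to act equally on $P_{\alpha+1}$ modulo $N_{\alpha+1}$; this mirrors the finite argument of \cref{theorem: nilpotent homomorphism and covering rigid tower}.
\item At a limit stage $\lambda$, since $N_\lambda=\bigcap_{\beta<\lambda}N_\beta$, the map $P_\lambda\to\varprojlim_{\beta<\lambda}P_\beta$ is injective.
\end{itemize}
Hypocentrality says $N_\gamma=1$ for some $\gamma$. Then $\sim_\gamma$ is the kernel of $x\mapsto (f(x),s_x)$. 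I would refine this tower, or compose its last stage with a rigid map as in the finite theorem, to exhibit $f$ as a transfinite cocomposite. I expect some care with the precise definition of "transfinite cocomposite" (images in the limit versus the limit itself), and I will adopt whichever the paper fixes. This gives (i)$\Rightarrow$(ii).

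\textbf{(ii)$\Rightarrow$(iii).} Pre-covering maps are right orthogonal to strongly connected maps by \cite[Proposition 3.21]{preprint_yuki_tomoki_2026_relativization_of_symmetries_on_quandles}. A right orthogonal class is closed under composition and under limits, hence under transfinite cocomposites. One needs the lift at limit stages to be compatible, and this follows from the uniqueness of lifts.

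\textbf{(iii)$\Rightarrow$(i).} This is the main obstacle. Suppose $f$ is not hypocentral. By \cref{theorem: characterizations of hypocentral homomorphisms}(iv), $\relInn(f)$ contains a nontrivial $\Inn(P)$-perfect normal subgroup $K$, i.e.\ $[\Inn(P),K]=K$; take $K$ to be the terminal term of the series. Let $P\twoheadrightarrow P'$ be the quotient by the congruence "$f(x)=f(y)$ and $s_xs_y^{-1}\in K$", or by the orbits of $K$ on fibers. The map $P\to P'$ should be strongly connected. The perfectness gives $K=[\Inn(P),K]\subseteq\relTrans(P\to P')$, using \cref{lem:commutator_inn_rel_trans_rel} applied to this quotient, and this should make $\relTrans$ transitive on its fibers. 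Also, $f$ factors as $P\to P'\to Q$.

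Now apply orthogonality of $f$ to the square with top $\id_P$ and bottom $P'\to Q$. This yields a section $P'\to P$, which forces $P\to P'$ to be an isomorphism. Then $K$ acts trivially in the sense that $s_x=s_y$ whenever $x\sim y$, so $K$ is generated by trivial elements and $K=1$, a contradiction. The delicate point is choosing the congruence so that it is simultaneously strongly connected and nontrivial whenever $K\neq1$. My first attempt would be to use the $K$-orbits within fibers and check that $K\ne1$ moves some point, which holds since $K\le\Inn(P)$ acts faithfully.
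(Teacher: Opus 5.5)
Your proposal works. Your (ii)$\Rightarrow$(iii) and (iii)$\Rightarrow$(i) are essentially the paper's arguments; the genuine difference is the tower in (i)$\Rightarrow$(ii).

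The paper uses the orbit quotients $C_\alpha=P/\relInnLCS{\alpha}{f}$. With these, the top is exactly $P$ once $\relInnLCS{\lambda}{f}=1$, and the bottom map $C_0=P/\relInn(f)\twoheadrightarrow Q$ is the rigid half of the Stein factorization. Successor maps are coverings by \cref{lemma: quotient covering via commutator}. At a limit $\beta$, however, the paper must check by hand that $C_\beta\to\varprojlim_{\alpha<\beta}C_\alpha$ is pre-covering, via $s_ys_x^{-1}=[\varphi_\alpha,s_x]\in\relInnLCS{\alpha+1}{f}$ for all $\alpha<\beta$.

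Your $\sim_\alpha$ is the kernel congruence of $x\mapsto(f(x),s_xN_\alpha)$. It is indeed a congruence and satisfies $\sim_\lambda=\bigcap_{\beta<\lambda}\sim_\beta$. So the limit comparison maps are injective, hence trivially pre-covering. The successor step is the identity $s_{s_x(z)}s_{s_y(z)}^{-1}=[n,s_{s_y(z)}]\in N_{\alpha+1}$ with $n=s_xs_y^{-1}\in N_\alpha$.

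The price is paid at the two ends of the tower:
\begin{itemize}
\item At the bottom, $\sim_0=\Eq(f)$, because $f(x)=f(y)$ already forces $s_xs_y^{-1}\in\relInn(f)$. So $P_0\cong Q$ and no rigid map is needed.
\item At the top, $P_\gamma$ is the quotient by $\{f(x)=f(y),\ s_x=s_y\}$, which is in general proper, so you must append $P\twoheadrightarrow P_\gamma$. This map is a covering by construction but not rigid in general. For $\Lambda_{4,3}\twoheadrightarrow\Lambda_{2,1}$ it is $f$ itself, with $\relInn(f)=\Inn(\Lambda_{4,3})\neq1$. So ``compose its last stage with a rigid map'' should read ``precompose with this covering''.
\end{itemize}

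In (iii)$\Rightarrow$(i), only the second of your two candidate quotients works. For the congruence ``$f(x)=f(y)$ and $s_xs_y^{-1}\in K$'', the relative transvection group is exactly $K$. But its classes can be strictly larger than $K$-orbits: two points of a fiber with equal symmetry maps may lie in different $K$-orbits. So that quotient need not be strongly connected. The $K$-orbit quotient is strongly connected by \cref{theorem: characterization of strongly connected via perfect subgroup}, and with $K=\hypCenter{f}$ it is exactly $\hypMap{f}$.

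The paper then concludes $f\cong\hypFac{f}$ and uses the hypocentrality of $\hypFac{f}$ proved in \cref{theorem: universal strongly connected-hypocentral factorization}. Your ending reaches $\hypCenter{f}=1$ directly: $\hypMap{f}$ bijective forces $K$ to act trivially, and $K\subseteq\relTrans(\hypMap{f})=1$ (or faithfulness) gives $K=1$. This route does not need that half of the factorization theorem.
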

The use of pre-covering homomorphisms is essential: a hypocentral homomorphism need not be a transfinite cocomposite of coverings (see \cref{example: hypocentral but not nilpotent Alexander quandle}).

Even and Gran~\cite[Proposition 3.2]{even_gran_2014_on_factorization_systems_for_surjective_quandle_homomorphisms} previously showed that the classes of connected and rigid homomorphisms form an orthogonal factorization system relative to surjective homomorphisms (see also \cite[Proposition 2.24]{preprint_yuki_tomoki_2026_relativization_of_symmetries_on_quandles}). 
We establish the analogous statement for strongly connected homomorphisms.

\begin{introthm}[Strongly connected--hypocentral factorization system; \cref{theorem: universal strongly connected-hypocentral factorization,theorem: strongly connected-hypocentral factorization system}]
\label[introthm]{introtheorem: strongly connected-hypocentral factorization system}
    Every surjective quandle homomorphism $f$ admits a factorization $f=\hypFac{f}\circ \hypMap{f}$ into a strongly connected homomorphism $\hypMap{f}$ followed by a hypocentral homomorphism $\hypFac{f}$.
    Thus, the pair $(\{\mathrm{strongly\>connected}\}, \{\mathrm{hypocentral}\})$ of classes of surjections is an orthogonal factorization system for the surjections in $\Quandle$.
\end{introthm}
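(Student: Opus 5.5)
We construct the factorization explicitly from the transfinite relative lower central series and then obtain the factorization-system statement from the orthogonality in \cref{introtheorem: transfinite covering characterization of hypocentral homomorphisms}.

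\textbf{The factorization.} Given $f\colon P\twoheadrightarrow Q$, let $N=\relInn(f)$ and let $\relLCS_\alpha=\relLCS_\alpha(\Inn(P),N)$ be the transfinite relative lower central series. Since these are normal subgroups of $\Inn(P)$ that decrease with $\alpha$, the series stabilizes at some ordinal $\lambda$, with $H:=\relLCS_\lambda$ satisfying $[\Inn(P),H]=H$. Thus $H$ is the largest $\Inn(P)$-perfect normal subgroup of $\Inn(P)$ contained in $N$: any such subgroup $K$ satisfies $K\subseteq\relLCS_\alpha$ for all $\alpha$, by transfinite induction. Define a congruence on $P$ by $x\sim y$ iff $y\in H\cdot x$. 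Since $H$ is normal in $\Inn(P)$, we have $s_{hx}=h s_x h^{-1}$, so $s_{hx}$ and $s_x$ agree modulo $H$, and $\sim$ is a quandle congruence. Put $\hypMap{f}\colon P\twoheadrightarrow P/H$. Because $H\subseteq N$, $f$ is constant on $H$-orbits, so $f$ factors as $f=\hypFac{f}\circ\hypMap{f}$.

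\textbf{Strong connectedness of $\hypMap{f}$.} The fibers of $\hypMap{f}$ are $H$-orbits, so it suffices to show $H\subseteq\relTrans(\hypMap{f})$. Since $H=[\Inn(P),H]$, this reduces to $[\Inn(P),\relInn(\hypMap{f})]\subseteq\relTrans(\hypMap{f})$, which is \cref{lem:commutator_inn_rel_trans_rel}, combined with $H\subseteq\relInn(\hypMap{f})$. The latter holds because every $h\in H$ fixes each orbit $H\cdot x$ and hence acts trivially on $P/H$. Applying the lemma to $\hypMap{f}$, we get
\[
H=[\Inn(P),H]\subseteq[\Inn(P),\relInn(\hypMap{f})]\subseteq\relTrans(\hypMap{f}).
\]

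\textbf{Hypocentrality of $\hypFac{f}$.} This is the main obstacle. Write $P'=P/H$. The map $\Inn(P)\to\Inn(P')$ is surjective, and its image of $N$ is $\relInn(\hypFac{f})$. I would use criterion \ref{introitem: no perfect subgroup in relative inner group} of \cref{introtheorem: characterizations of hypocentral homomorphisms}. Suppose $K'\subseteq\relInn(\hypFac{f})$ is a nontrivial $\Inn(P')$-perfect normal subgroup, and let $\tilde K$ be its preimage in $\Inn(P)$ intersected with $N$. The difficulty is that $\Ker(\Inn(P)\to\Inn(P'))$ need not equal $H$. However, that kernel lies in $\relInn(\hypMap{f})$, and $\relInn(\hypMap{f})$ is central modulo its relative transvections in a controlled way. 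The cleaner route is to show that the relative lower central series of $\tilde K$ eventually lands inside $H$ times a subgroup acting trivially on $P'$. One then derives that $\relLCS_\infty(\tilde K)$ is an $\Inn(P)$-perfect subgroup of $N$, hence contained in $H$, so its image in $\Inn(P')$ is trivial. But this image is $K'$, because the image of the perfect part surjects onto the perfect $K'$. This contradicts $K'\neq 1$.

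If this route stalls, I would instead verify right orthogonality of $\hypFac{f}$ to strongly connected maps directly and invoke \cref{introtheorem: transfinite covering characterization of hypocentral homomorphisms}. Given a square with a strongly connected $g\colon A\twoheadrightarrow B$ on the left, compose with $\hypMap{f}$ and use that $\hypFac{f}$ is a transfinite cocomposite of the pre-coverings $P/\relLCS_\alpha\to P/\relLCS_{\alpha+1}$-type quotients. Each step then lifts by \cite[Proposition 3.21]{preprint_yuki_tomoki_2026_relativization_of_symmetries_on_quandles}, passing to limit ordinals by uniqueness.

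\textbf{The factorization system.} Both classes contain the isomorphisms and are closed under composition with isomorphisms. Orthogonality holds by \cref{introtheorem: transfinite covering characterization of hypocentral homomorphisms}. Together with the existence of the factorization constructed above, this gives an orthogonal factorization system on surjections by the standard argument.
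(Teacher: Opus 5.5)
\textbf{Where the proof fails: hypocentrality of $\hypFac{f}$.} Your construction of $H$ and of the factorization is correct. So is your direct proof that $\hypMap{f}$ is strongly connected via $H=[\Inn(P),H]\subseteq[\Inn(P),\relInn(\hypMap{f})]\subseteq\relTrans(\hypMap{f})$, and so is your use of (i)$\Rightarrow$(iii) of the transfinite characterization for orthogonality. The gap is the hypocentrality of $\hypFac{f}$, which is the real content of the statement. Your decisive claim is that the image in $\Inn(P/H)$ of the perfect part of $\tilde K$ is all of $K'$. You give no proof, and the claim is false for general surjections, because images need not commute with the intersections taken at limit stages. For example, take a surjection $F_2\twoheadrightarrow A_5$: the stable term of the lower central series of $F_2$ is trivial (free groups are residually nilpotent), yet $A_5$ is perfect.

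What makes the claim true here is a fact you gesture at but never establish. Write $G=\Inn(P)$, $N=\relInn(f)$ and $S=\relInn(\hypMap{f})=\Ker\bigl(G\twoheadrightarrow\Inn(P/H)\bigr)$. Then $[G,S]\subseteq H$, i.e.\ $G/H\twoheadrightarrow\Inn(P/H)$ is a central extension. To see this, take $\varphi\in S$ and choose $\xi\in H$ with $\xi(x)=\varphi(x)$; then $[\varphi,s_x]=s_{\varphi(x)}s_x^{-1}=[\xi,s_x]\in H$. Equivalently, use \cref{lem:commutator_inn_rel_trans_rel} once you also check $\relTrans(\hypMap{f})\subseteq H$, which holds since $s_xs_{h(x)}^{-1}=[s_x,h]$.

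With this in hand, no transfinite series is needed. Put $L=[G,\tilde K]$; it maps onto $[\Inn(P/H),K']=K'$. From $\tilde K=L(\tilde K\cap S)$ you get $L\subseteq[G,L]H$, so $LH$ is a $G$-perfect normal subgroup contained in $N$. Maximality of $H$ forces $L\subseteq H\subseteq S$, hence $K'=1$. This is exactly the paper's argument. There it is phrased as: $N/H$ is $G/H$-hypocentral by maximality of $H$, and hypocentrality passes along the central extension $G/H\twoheadrightarrow G/S$ (\cref{lemma: hypocentrality under central extensions}).

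\textbf{The fallback route.} It could become a genuinely different proof, but not as sketched. First, at a limit ordinal $\beta$ the quotient $P/\relInnLCS{\beta}{f}$ is not the inverse limit of the earlier quotients. Compatible lifts below $\beta$ therefore only give a map into $\lim_{\alpha<\beta}P/\relInnLCS{\alpha}{f}$, and uniqueness does not produce a lift into $P/\relInnLCS{\beta}{f}$. You must insert the comparison maps into these limits and prove they are pre-covering, using $s_ys_x^{-1}=[\varphi_\alpha,s_x]\in\relInnLCS{\alpha+1}{f}$ for all $\alpha<\beta$. The paper does this in the proof of \cref{theorem: transfinite covering characterization of hypocentral homomorphisms}.

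Second, you cannot then cite (iii)$\Rightarrow$(i) of that theorem to recover hypocentrality. The paper proves that implication using the very factorization you are constructing. Argue directly instead. Right orthogonality of $\hypFac{f}$ against the strongly connected quotient $P/H\twoheadrightarrow(P/H)/\Hyp(\hypFac{f})$ gives a retraction, so that quotient map is injective. Hence $\Hyp(\hypFac{f})\subseteq\Inn(P/H)$ acts trivially and is trivial. With both repairs, the fallback replaces the central-extension lemma by the tower-and-lifting machinery.
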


From \cite[Proposition 3.32]{preprint_yuki_tomoki_2026_relativization_of_symmetries_on_quandles}, strongly connected homomorphisms are closed under pullbacks along surjective homomorphisms. Therefore, this factorization system for the surjections is in fact \textit{stable} in the sense of \cite[\href{https://ncatlab.org/nlab/show/stable+factorization+system}{stable factorization system}]{preprint_nlab_2008_nlab}.

%We further show that the hypocentral factor $\hypFac{f}$ is nilpotent precisely when the relative transvection series of $f$ stabilizes at a finite stage. In particular, hypocentrality and nilpotency agree for every surjection between quandles with finite inner automorphism groups, so that the classes of strongly connected and nilpotent homomorphisms already form a stable orthogonal factorization system on this subcategory, for instance on that of finite quandles. This is no longer true in general: for the Alexander quandle associated with $-\id$ on $\mathbb{Z}$, the terminal homomorphism is hypocentral but not nilpotent.

%%% section 4

In the third part of the paper, we introduce relative analogues of reduced quandles. Reduced quandles, also called quasi-trivial quandles in part of the literature, arise in the study of link-homotopy \cite{hughes_2011_link_homotopy_invariant_quandles,inoue_2013_quasitriviality_of_quandles_for_linkhomotopy,elhamdadi_liu_nelson_2018_quasitrivial_quandles_and_biquandles_cocycle_enhancements_and_linkhomotopy_of_pretzel_links} and are studied systematically in \cite[Section~7]{darne_2026_nilpotent_quandles}.
We study their relativization, called reduced homomorphisms, together with the slightly more general operator-reduced homomorphisms; these two notions lie between the first and the second nilpotency.

A surjective homomorphism $f$ is called \emph{reduced} if $s_{\varphi(x)}(x)=x$, and \emph{operator-reduced} if $s_xs_{\varphi(x)}=s_{\varphi(x)}s_x$, for every $x\in P$ and every $\varphi\in\relInn(f)$.
As we will see in \cref{proposition: commutator interpretation of reducedness}, $1$-nilpotent implies reduced, reduced implies operator-reduced, and $2$-nilpotent implies operator-reduced.
We then investigate when an operator-reduced homomorphism is nilpotent, and establish the following ascent theorem.

\begin{introthm}[Nilpotency ascent theorem;~\cref{theorem: finite operator reduced ascent}]
\label[introthm]{introtheorem: finite operator reduced ascent}
    Let $f\colon P\twoheadrightarrow Q$ be an operator-reduced quandle homomorphism such that $\relInn(f)$ is a finite group. If $Q$ is nilpotent, then both $f$ and $P$ are nilpotent.
\end{introthm}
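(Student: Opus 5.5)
The plan is to reduce everything to group theory inside $G=\Inn(P)$ with $N=\relInn(f)\trianglelefteq G$ finite and $G/N\cong\Inn(Q)$ nilpotent. The assertion about $P$ will follow from the one about $f$. Indeed, if $\Gamma_c(G,N)=1$ and $\Gamma_d(G/N,G/N)=1$, then the ordinary lower central series of $G$ satisfies $\Gamma_d(G,G)\subseteq N$. Inductively this gives $\Gamma_{d+i}(G,G)\subseteq\Gamma_i(G,N)$, so $G$ is nilpotent of class at most $c+d$. (Equivalently, in the language of \cref{introtheorem: main structure theorem for nilpotent homomorphisms}, we compose the tower of coverings for $f$ with the one for $Q\to\{\ast\}$.) So the heart of the matter is to show that $N$ is $G$-nilpotent.

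First, I will translate operator-reducedness into a commutator identity. Since $\varphi\in N$ is an automorphism, $s_{\varphi(x)}=\varphi s_x\varphi^{-1}$. Hence the condition $s_xs_{\varphi(x)}=s_{\varphi(x)}s_x$ says that $s_x$ commutes with all of its $N$-conjugates; equivalently, $[\varphi,s_x]=\varphi s_x\varphi^{-1}s_x^{-1}$ commutes with $s_x$. This is a $2$-Engel-type condition $[[\varphi,s_x],s_x]=1$ for every generator $s_x$ of $G$ and every $\varphi\in N$.

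Second, I will use finiteness of $N$ to work with a $G$-chief series $1=N_0<N_1<\dots<N_r=N$ of normal subgroups of $G$. For finite $N$, $N$ is $G$-nilpotent if and only if $G$ acts trivially by conjugation on every factor $V=N_{j}/N_{j-1}$. Indeed, then $[G,N_j]\subseteq N_{j-1}$, which gives $\Gamma_r(G,N)=1$. The commutator identity passes to each factor, so both of the following hold in $\Aut(V)$. Each image $\bar s_x$ satisfies $[[v,\bar s_x],\bar s_x]=1$ for $v\in V$, and $\bar s_x$ commutes with its conjugates under the image $\bar N$ of $N$. Moreover, the image $H$ of $G$ in $\Aut(V)$ is generated by the single conjugacy class $D=\{\bar s_x\}$, and $H/\bar N$ is nilpotent since it is a quotient of $G/N$.

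Third, and this is the main obstacle, I must show that $H$ is trivial. In the abelian case $V\cong\mathbb{F}_p^k$ is an irreducible $H$-module and each $d\in D$ satisfies $(d-1)^2=0$, so $D$ consists of transvection-like $p$-elements. This alone is not enough, since $SL_2(p)$ is generated by such elements. I will therefore also use that $d$ commutes with $d^{\bar N}$. Then $A=\langle d^{\bar N}\rangle$ is an abelian group of $p$-elements, hence a $p$-group, normalized by $\bar N d$. I would try a Baer--Suzuki-style argument: $\bar N\langle D\rangle=H$ and $H/\bar N$ is nilpotent. From this I would aim to show that $O_p(H)$ contains $D$, so $H$ is a $p$-group. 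A $p$-group acting irreducibly on an $\mathbb{F}_p$-module acts trivially, so $H=1$. In the nonabelian case $V\cong T^k$, I would argue that the commutation of $d$ with all $\bar N$-conjugates, combined with the Engel condition on $V$ itself, forces $d$ to centralize $V$. To do this I would first show that $[V,d]$ is an abelian normal subset, and then use that $V$ has no nontrivial abelian normal subgroups.

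I expect the abelian case, where one must rule out $SL_2$-type actions using only nilpotency of $H/\bar N$ and the conjugate-commuting property, to need the most care.
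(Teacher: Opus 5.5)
\textbf{There is a genuine gap at the step you flag as the main obstacle.} Your reduction is sound. The translation of operator-reducedness into $[[\varphi,s_x],s_x]=1$ is correct, and so are the passage to $\Inn(P)$-chief factors of $\relInn(f)$ and the deduction of nilpotency of $P$ from that of $f$ and $Q$. What is missing is a proof that the image $H$ of $\Inn(P)$ in $\Aut(V)$ is trivial for an abelian chief factor $V$.

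``A Baer--Suzuki-style argument'' aimed at $D\subseteq O_p(H)$ is only an intention. It is not clear how Baer--Suzuki would apply, since you only control $\langle d,\bar n d\bar n^{-1}\rangle$ for $\bar n\in\bar N$, not for arbitrary elements of $H$. In \cref{example: finite connected reduced homomorphism not nilpotent}, the conjugate-commuting property holds vacuously ($\bar N=1$ there) while $H\cong S_3\cong\mathrm{SL}_2(2)$ acts by transvections. So everything depends on how the nilpotency of $H/\bar N$ is combined with that property, and your proposal leaves this open.

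\textbf{The missing observation is short.} Put $A_d=\langle \bar n d\bar n^{-1}\mid \bar n\in\bar N\rangle$, an elementary abelian $p$-group normalized by $\bar N$.
\begin{itemize}
\item The commutator $[\bar n,d]=(\bar n d\bar n^{-1})d^{-1}$ lies in $A_d\cap\bar N$. This is a normal $p$-subgroup of $\bar N$, hence lies in $O_p(\bar N)$, which is normal in $H$.
\item Since $D$ generates $H$, the quotient $\bar N/O_p(\bar N)$ is central in $H/O_p(\bar N)$.
\item Because $H/\bar N$ is nilpotent, $H/O_p(\bar N)$ is a finite nilpotent group generated by $p$-elements, hence a $p$-group. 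So $H$ is a $p$-group.
\item Then $V^H\neq 0$, and minimality of the chief factor gives $H=1$.
\end{itemize}

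\textbf{The paper does the same thing one level up, in $\Inn(P)$.} The abelian groups $\redOpGrp_f(x)\cap\relInn(f)$ (see \cref{proposition: orbit description of relative reduction}) are normal in $\relInn(f)$ and contain $[\varphi,s_x]$. Hence $[\Inn(P),\relInn(f)]\subseteq\Fit(\relInn(f))$ (\cref{proposition: fitting control for operator reduced homomorphisms}). This gives two things at once:
\begin{itemize}
\item $\relInn(f)$ is solvable, so your nonabelian case never occurs and need not be treated.
\item $\Inn(P)/\Fit(\relInn(f))$ is nilpotent.
\end{itemize}
Choosing the chief series through the upper central series of $\Fit(\relInn(f))$ then makes the image of $\Inn(P)$ on each factor a nilpotent group generated by $p$-elements. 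The same fixed-point argument concludes.
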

In Darn\'e's absolute setting \cite[Proposition 7.13]{darne_2026_nilpotent_quandles}, the target is the one-point quandle and hence automatically nilpotent. 
The relative viewpoint reveals the importance of this condition: even a connected reduced homomorphism with trivial fibers and a finite relative inner automorphism group need not be nilpotent (see \cref{example: finite connected reduced homomorphism not nilpotent}).

Under some technical assumptions on the commutativity of the conjugation action on abelian sections of $\relInn(f)$, we also prove another criterion for the nilpotency of an operator-reduced homomorphism (\cref{theorem: commutative action criterion}).
In contrast with \cref{introtheorem: finite operator reduced ascent}, it requires no finiteness assumption on $\relInn(f)$.
Applying this criterion, we obtain the following corollary, which covers a number of natural situations.

\begin{introcor}[\cref{lemma: abelian images of inner automorphism groups,corollary: easy consequences of comm action criterion}]
\label[introcor]{introcorollary: easy consequences of comm action criterion}
    Let $f\colon P\twoheadrightarrow Q$ be an operator-reduced quandle homomorphism. 
    Suppose that $P$ satisfies either the condition that $\Inn(P)$ is generated by finitely many symmetry maps or the condition that $P$ has finitely many connected components.
    If $\relInn(f)$ is nilpotent and $\Inn(Q)$ is abelian, then $f$ is nilpotent.
\end{introcor}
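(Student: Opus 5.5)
The plan is to deduce the statement from the commutative action criterion (\cref{theorem: commutative action criterion}). Its hypotheses are checked in two steps. First, the conjugation action of $\Inn(P)$ on the relevant abelian sections of $\relInn(f)$ factors through an abelian group. Second, that abelian group is generated by the images of \emph{finitely many} symmetries $s_x$, which is the content of \cref{lemma: abelian images of inner automorphism groups}.

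\textbf{Step 1: the action on sections.} Write $N=\relInn(f)$ and consider its ordinary lower central series $\grpLCS_i(N)$, which reaches $1$ after finitely many steps because $N$ is nilpotent. Each $\grpLCS_i(N)$ is characteristic in $N$, hence normal in $\Inn(P)$. So $\Inn(P)$ acts by conjugation on every abelian section $A_i=\grpLCS_i(N)/\grpLCS_{i+1}(N)$. Since $[N,\grpLCS_i(N)]=\grpLCS_{i+1}(N)$, the subgroup $N$ acts trivially on $A_i$. The action therefore factors through $\Inn(P)/N\cong\Inn(Q)$, which is abelian by assumption. Thus the operators on each $A_i$ induced by the symmetries $s_x$ commute with one another.

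\textbf{Step 2: finitely many generators.} I would apply \cref{lemma: abelian images of inner automorphism groups} to the abelian image $\Inn(Q)$ of $\Inn(P)$. If $\Inn(P)$ is generated by finitely many $s_x$, this is immediate. If $P$ has finitely many connected components, use that $s_{y\ast x}=s_ys_xs_y^{-1}$. In an abelian quotient of $\Inn(P)$, the image of $s_x$ therefore depends only on the connected component of $x$. Hence the image is generated by one symmetry per component, so by finitely many elements $t_1,\dots,t_k$.

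\textbf{Step 3: the role of operator-reducedness.} Operator-reducedness says $s_x$ commutes with $s_{\varphi(x)}=\varphi s_x\varphi^{-1}$ for every $\varphi\in N$. Hence $s_x$ commutes with $[\varphi,s_x]$. Writing $A_i$ additively with $t=s_x$ acting, the image of $t-1$ consists of classes of such commutators. Since $t$ fixes them, $(t-1)^2=0$ on $A_i$. Commuting operators $t_1,\dots,t_k$ with $(t_j-1)^2=0$ then satisfy
\[
\prod_j (t_j-1)^{e_j}=0 \quad\text{whenever}\quad \sum_j e_j\geq 2k+1
\]
by the pigeonhole principle. This is presumably exactly what \cref{theorem: commutative action criterion} encodes. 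It shows that $\Inn(P)$ acts nilpotently on each $A_i$, with $[\Inn(P),-]$ iterated at most $2k$ times landing in $\grpLCS_{i+1}(N)$. Stacking over the finitely many $i$ gives $\relLCS_c(\Inn(P),N)=1$ for some $c$, i.e.\ $f$ is nilpotent.

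\textbf{Main obstacle.} The hardest step is Step 3. One must pass the commutation relation from the level of $N$ to each section $A_i$ and check that every element of $A_i$ can be handled. The commutators $[\varphi,s_x]$ with $\varphi$ ranging over $\grpLCS_i(N)$ give the image of $t-1$ modulo $\grpLCS_{i+1}(N)$. I would verify this carefully using the identity $[\varphi\psi,s]=\varphi[\psi,s]\varphi^{-1}[\varphi,s]$. If the cited criterion already packages this computation, the proof reduces to Steps 1 and 2.
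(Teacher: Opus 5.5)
Your proposal is correct and follows the paper's proof. Take a characteristic series of $N=\relInn(f)$ with abelian factors on which $N$ acts trivially, so that the conjugation action factors through the abelian group $\Inn(Q)$; then apply \cref{lemma: abelian images of inner automorphism groups} and the commutative action criterion, whose internal computation is exactly your Step 3. The differences are only cosmetic: the paper uses the upper central series $Z_j(N)$ instead of the lower central series $\grpLCS_i(N)$ (both have length $\ncl(N)$), and your pigeonhole bound can be sharpened from $2k+1$ to $k+1$, because any product of $k+1$ of the commuting square-zero operators $t_j-1$ already repeats a factor, which is what gives the paper's bound $d(r+1)$.
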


%The nilpotency of the target is essential in \cref{introtheorem: finite operator reduced ascent}. We exhibit a finite connected reduced homomorphism with trivial fibers and abelian relative inner automorphism group which is nevertheless not nilpotent. Hence, neither the triviality of the fibers nor the abstract nilpotency of $\relInn(f)$ controls relative nilpotency, as long as no condition is imposed on the action coming from the base.

\begin{Out}
\cref{section: preliminaries on quandles} collects the preliminaries on quandles used throughout the paper. We also recall the relative viewpoint on surjective quandle homomorphisms introduced in \cite{preprint_yuki_tomoki_2026_relativization_of_symmetries_on_quandles}.
 
In \cref{section: nilpotent homomorphisms}, we introduce the notion of nilpotency for surjective quandle homomorphisms and develop its basic theory. After recalling Darn\'e's nilpotent quandles, we define nilpotent homomorphisms through the relative lower central series of $\relInn(f)$ and prove \cref{introtheorem: nilpotent homomorphism and covering rigid tower}.
%, together with the behaviour of nilpotency under composition, products, pullbacks and passage to fibers, as well as the construction of the universal $c$-nilpotent truncation.
We then give the descriptions of nilpotency in terms of the relative transvection group and of the relative adjoint group, which yield \cref{introtheorem: main structure theorem for nilpotent homomorphisms}.
%; along the way, the relative transvection group is shown to compute the covering length.
%The section closes with a comparison of the three resulting numerical invariants and with explicit computations for homomorphisms of Alexander quandles, which show that the bounds obtained are sharp.
 
\cref{section: hypocentral homomorphisms} extends the relative lower central series to transfinite ordinals and introduces hypocentral homomorphisms. We first establish their basic properties and prove \cref{introtheorem: characterizations of hypocentral homomorphisms}. We then construct the strongly connected--hypocentral factorization and prove \cref{introtheorem: transfinite covering characterization of hypocentral homomorphisms,introtheorem: strongly connected-hypocentral factorization system}.
%In the last part, we determine when the hypocentral factor is nilpotent; in particular, the two classes coincide under a descending chain condition, which applies to all finite quandles, whereas an Alexander quandle over $\mathbb{Z}$ provides a hypocentral homomorphism that is not nilpotent.
 
\cref{section: relative reduced homomorphisms} is devoted to reduced and operator-reduced homomorphisms. We first define these two classes and locate them between $1$-nilpotency and $2$-nilpotency.
%, and establish their closure properties under composition, products and pullbacks.
We next construct the universal reduced and operator-reduced factors of a surjective homomorphism. 
Finally, we prove \cref{introtheorem: finite operator reduced ascent} and the commutative action criterion (\cref{theorem: commutative action criterion}), from which \cref{introcorollary: easy consequences of comm action criterion} is deduced, and we conclude with the case of a cyclic relative inner automorphism group.
 
\cref{appendix: relative nilpotency for groups} collects the purely group-theoretic material on the relative nilpotency of a normal subgroup, or equivalently on nilpotent surjections of groups.
\end{Out}

\begin{NoCon}
Groups act on the left. For elements $x,y$ of a group, we use the commutator convention $[x,y]=xyx^{-1}y^{-1}$. For subgroups $A$ and $B$, the notation $[A,B]$ denotes the subgroup generated by the commutators $[a,b]$ with $a\in A$ and $b\in B$.
\end{NoCon}

\begin{AI}
The authors used GPT-5.6 Sol and GPT-6 Astra (OpenAI), Gemini 3.1 Pro (Google), and Claude Opus 5 (Anthropic) throughout the preparation of this work, particularly for the proof of \cref{theorem: finite operator reduced ascent}, the construction of \cref{example: hypocentral but not nilpotent Alexander quandle}, and for English proofreading. 
All arguments and proofs were independently verified by the authors, who take full responsibility for the contents of this paper.
\end{AI}

\begin{Ack}
This work was supported by JSPS KAKENHI Grant Number JP25K23333 and JP26KJ0276.
The first author acknowledges access to Claude through the Claude Team plan for scientists and the second author acknowledges support from OpenAI through the ChatGPT for Academic Researchers program.
\end{Ack}

\section{Preliminaries on quandles}
\label{section: preliminaries on quandles}

In this section, we recall the basic definitions and fix the examples and notation used later. The reader may consult \cite{joyce_1982_a_classifying_invariant_of_knots_the_knot_quandle,
eisermann_2014_quandle_coverings_and_their_galois_correspondence,
preprint_valeriy_mohamed_mahender_2025_yangbaxter_equation_and_related_algebraic_structures} for more background.

\subsection{Quandles and fundamental notions}
\label{subsection: quandles and standard examples}

\begin{dfn}
\label[dfn]{definition: quandle}
    A \emph{quandle} $(Q,s)$ is a pair consisting of a set $Q$ and a map $s\colon Q\to\Map(Q,Q)$, $x\mapsto s_x$, satisfying the following conditions for all $x,y\in Q$:
\begin{enumerate}
    \item\label{condition:Q1} $s_x(x)=x$;
    \item\label{condition:Q2} $s_x$ is bijective;
    \item\label{condition:Q3} $s_x\circ s_y=s_{s_x(y)}\circ s_x$.
\end{enumerate}
\end{dfn}

\begin{dfn}
\label[dfn]{definition: quandle homomorphism}
    Let $(Q,s)$ and $(Q',s')$ be quandles. A map $f\colon Q\to Q'$ is a \emph{quandle homomorphism} if $f\circ s_x=s'_{f(x)}\circ f$ for every $x\in Q$. The category of quandles and quandle homomorphisms is denoted by $\Quandle$.
\end{dfn}

The condition~\eqref{condition:Q3} says each symmetry map $s_x$ is a quandle homomorphism $Q\to Q$.

\begin{eg}[Trivial quandles]
\label[eg]{example: trivial quandle}
    Every set $X$ has a quandle structure given by $s_x=\id_X$ for all $x\in X$. We call it the \emph{trivial quandle} structure on $X$.
\end{eg}

\begin{dfn}
\label[dfn]{definition: inner and transvection groups}
    Let $Q$ be a quandle.
\begin{enumerate}
    \item The group of quandle automorphisms of $Q$ is denoted by $\Aut(Q)$.
    \item The \emph{inner automorphism group} of $Q$ is $\Inn(Q)=\langle s_x\mid x\in Q\rangle\subseteq\Aut(Q)$.
    \item The \emph{transvection group} of $Q$ is $\Trans(Q) = \langle s_xs_y^{-1}\mid x,y\in Q\rangle$.
    \footnote{It is also called the \emph{displacement group} in some literature.}
    \item The quandle $Q$ is called \emph{connected} if the natural action of $\Inn(Q)$ on $Q$ is transitive. The $\Inn(Q)$-orbits are called the connected components of $Q$, and their set is denoted by $\pi_0(Q)$.
\end{enumerate}
\end{dfn}

For later quantitative statements, we recall finite generation. A subset $R\subseteq Q$ of a quandle $Q$ is called a \emph{subquandle} when $R$ is closed under both $s_z$ and $s_z^{-1}$ for every $z\in R$. A subset $S\subseteq Q$ \emph{generates} $Q$ if $Q$ is the smallest subquandle containing $S$. The quandle $Q$ is \emph{finitely generated} if it admits a finite generating set.

\begin{lem}
\label[lem]{lemma: quandle generators generate inner group}
    Let $Q$ be a quandle. If $Q$ is generated by $x_1,\ldots,x_r$ as a quandle, then $\Inn(Q)$ is generated by $s_{x_1},\ldots,s_{x_r}$.
\end{lem}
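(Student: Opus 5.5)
Let $H=\langle s_{x_1},\ldots,s_{x_r}\rangle\subseteq\Inn(Q)$ and set
\[
R=\{\,y\in Q \mid s_y\in H\,\}.
\]
The plan is to show that $R$ is a subquandle of $Q$ containing $x_1,\dots,x_r$. Since $S=\{x_1,\dots,x_r\}$ generates $Q$, this forces $R=Q$. Then every generator $s_y$ of $\Inn(Q)$ lies in $H$, so $\Inn(Q)=H$; the reverse inclusion $H\subseteq\Inn(Q)$ is trivial.

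First, $x_i\in R$ by the definition of $H$. The key identity is the one recorded after \cref{definition: quandle homomorphism}: axiom~\eqref{condition:Q3} says $s_z s_y s_z^{-1}=s_{s_z(y)}$. Applying it with $y$ replaced by $s_z^{-1}(y)$ gives
\[
s_z^{-1}s_y s_z=s_{s_z^{-1}(y)}.
\]

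Now let $y,z\in R$. Then $s_y,s_z\in H$, so $s_{s_z(y)}=s_zs_ys_z^{-1}\in H$ and $s_{s_z^{-1}(y)}=s_z^{-1}s_ys_z\in H$. Hence both $s_z(y)$ and $s_z^{-1}(y)$ lie in $R$. Thus $R$ is closed under $s_z$ and $s_z^{-1}$ for every $z\in R$, which is exactly the definition of a subquandle.

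Minimality of the generated subquandle then gives $R\supseteq Q$, so $R=Q$. There is no real obstacle here. The only point needing care is to use the paper's definition of a subquandle, which requires closure only under $s_z^{\pm1}$ for $z$ in the subset itself; that is exactly what the closure argument above verifies.
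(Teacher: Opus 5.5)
Your proof is correct and is essentially the paper's argument: both rest on the identity $s_{\varphi(y)}=\varphi s_y\varphi^{-1}$ together with the minimality of the generated subquandle. The only difference is the choice of subquandle. The paper shows that the orbit $H\cdot\{x_1,\ldots,x_r\}$ is a subquandle, hence equal to $Q$, and then reads off $s_x=\varphi s_{x_i}\varphi^{-1}\in H$; your set $\{y\in Q\mid s_y\in H\}$ builds that last step into its definition, while the paper's route additionally yields $Q=H\cdot\{x_1,\ldots,x_r\}$.
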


\begin{proof}
Let $H=\langle s_{x_1},\ldots,s_{x_r}\rangle \subseteq \Inn(Q)$. Since $s_{\varphi(x_i)}=\varphi s_{x_i}\varphi^{-1}\in H$ for $\varphi\in H$, the subset $H\cdot\{x_1,\ldots,x_r\}$ is closed under the quandle operation. So it is a subquandle of $Q$, and hence we have $Q=H\cdot\{x_1,\ldots,x_r\}$ by the assumption. Then, for every $x\in Q$, if we take an index $i$ and $\varphi\in H$ with $\varphi(x_i)=x$, it follows that $s_x=s_{\varphi(x_i)} \varphi s_{x_i} \varphi^{-1}\in H$, which shows that $\Inn(Q)=H$.
\end{proof}

We use the notation $\As(Q)$ for the adjoint group of a quandle. The following definition and properties are standard; see \cite[Sections~2 and 5]
{eisermann_2014_quandle_coverings_and_their_galois_correspondence}.

\begin{dfn}
\label[dfn]{definition: adjoint group}
    Let $Q$ be a quandle. The \emph{adjoint group} $\As(Q)$ is the group generated by $\{g_{x}\}_{x\in Q}$ subject to the relations $g_{x}g_{y}=g_{s_x(y)}g_{x}$, for $x,y\in Q$.
\end{dfn}

\begin{prop}
\label[prop]{proposition: elementary properties of adjoint groups}
Let $Q$ be a quandle.
\begin{enumerate}
    \item The assignment $g_{x}\mapsto s_x$ induces a surjective group homomorphism $\rho_{Q}\colon\As(Q)\twoheadrightarrow\Inn(Q)$, and $\As(Q)$ acts on $Q$ via $\rho_Q$.
    \item Every quandle homomorphism $f\colon P\to Q$ induces a group homomorphism $\As(f)\colon\As(P)\to\As(Q)$ satisfying $\As(f)(g_{x})=g_{f(x)}$. If $f$ is surjective, then so is $\As(f)$.
    \item The homomorphism $\rho_{Q}\colon\As(Q)\twoheadrightarrow\Inn(Q)$ is a central extension of groups.
\end{enumerate}
\end{prop}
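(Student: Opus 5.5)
The plan is to verify the three statements in turn, starting from the presentation of $\As(Q)$ and using the universal property of a group given by generators and relations.

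For (1), it suffices to check that the elements $s_x\in\Inn(Q)$ satisfy the defining relations of $\As(Q)$. That is, we need $s_xs_y=s_{s_x(y)}s_x$, which is exactly axiom \eqref{condition:Q3}. So the assignment $g_x\mapsto s_x$ extends to a group homomorphism $\rho_Q\colon\As(Q)\to\Inn(Q)$. Its image contains every generator $s_x$, and $\Inn(Q)=\langle s_x\mid x\in Q\rangle$, so $\rho_Q$ is surjective. The action of $\As(Q)$ on $Q$ is then simply the pullback of the natural action of $\Inn(Q)$ along $\rho_Q$.

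For (2), we check relations again. For a quandle homomorphism $f$, the definition gives $f(s_x(y))=s_{f(x)}(f(y))$. Hence
\[ g_{f(x)}g_{f(y)}=g_{s_{f(x)}(f(y))}g_{f(x)}=g_{f(s_x(y))}g_{f(x)}, \]
which means the elements $g_{f(x)}$ satisfy the defining relations of $\As(P)$. Therefore $g_x\mapsto g_{f(x)}$ defines $\As(f)$. If $f$ is surjective, every generator $g_z$ of $\As(Q)$ is of the form $g_{f(x)}$, so $\As(f)$ is surjective.

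Part (3) carries the only real content. We must show that $\Ker\rho_Q$ is central in $\As(Q)$. The first step is the conjugation formula
\[ a\,g_y\,a^{-1}=g_{\rho_Q(a)(y)}\qquad\text{for all } a\in\As(Q),\ y\in Q. \]
We prove it by induction on the word length of $a$. For $a=g_x$, it is the defining relation rewritten as $g_xg_yg_x^{-1}=g_{s_x(y)}$. For $a=g_x^{-1}$, we write $y=s_x(z)$ with $z=s_x^{-1}(y)$, using the bijectivity axiom \eqref{condition:Q2}. The relation then gives $g_x^{-1}g_{s_x(z)}g_x=g_z=g_{s_x^{-1}(y)}$. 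The inductive step follows because $\rho_Q$ is a homomorphism, so the conjugation formula is compatible with products.

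With the formula in hand, centrality is immediate. If $a\in\Ker\rho_Q$, then $ag_ya^{-1}=g_y$ for every $y\in Q$. Since the $g_y$ generate $\As(Q)$, the element $a$ commutes with everything, so $\Ker\rho_Q$ lies in the center of $\As(Q)$.

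The main obstacle is the bookkeeping with inverses in the conjugation formula. Every other step is a direct verification of relations.
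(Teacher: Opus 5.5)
Your proof is correct and complete. It follows the standard argument: check the defining relations to obtain $\rho_Q$ and the induced map on adjoint groups, prove $a\,g_y\,a^{-1}=g_{\rho_Q(a)(y)}$, and conclude that $\Ker\rho_Q$ centralizes the generators $g_y$. The paper gives no proof of its own here and cites Eisermann, whose argument is essentially yours.
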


There is a natural degree homomorphism $\varepsilon_{Q}\colon\As(Q)\to\mathbb Z$ defined by $\varepsilon_{Q}(g_{x})=1$; see \cite[Definition~2.33]{eisermann_2014_quandle_coverings_and_their_galois_correspondence}. We put $\As_0(Q)\coloneqq \Ker(\varepsilon_{Q})$. If $Q$ is connected, the group $\As_0(Q)$ agrees with the commutator subgroup of $\As(Q)$ \cite[Remark~2.34]{eisermann_2014_quandle_coverings_and_their_galois_correspondence}; this equality need not hold for a disconnected quandle.

\subsection{Surjective homomorphisms and relative symmetries}
\label{subsection: quandle homomorphisms}

In this subsection, we recall the results on surjective quandle homomorphisms used throughout the paper. Our main sources are Eisermann's covering theory \cite{eisermann_2014_quandle_coverings_and_their_galois_correspondence}, the quotient and factorization results of \cite{bunch_lofgren_rapp_yetter_2010_on_quotients_of_quandles,even_gran_2014_on_factorization_systems_for_surjective_quandle_homomorphisms}, and the relative symmetry groups introduced in \cite{preprint_yuki_tomoki_2026_relativization_of_symmetries_on_quandles}.

We shall use quotients by quandle congruences and by group actions.

\begin{dfn}
\label[dfn]{definition: quandle congruence}
An equivalence relation $\qCong$ on a quandle $Q$ is a \emph{quandle
congruence} if
\[
    x\mathrel\qCong x',\quad y\mathrel\qCong y'
    \quad\Longrightarrow\quad
    s_x^{\varepsilon}(y)\mathrel\qCong
    s_{x'}^{\varepsilon}(y')
\]
for $\varepsilon\in\{1,-1\}$.
\end{dfn}

\begin{prop}
%[{\cite[Propositions~1.22 and 1.23]{preprint_yuki_tomoki_2026_relativization_of_symmetries_on_quandles}}]
\label[prop]{proposition: quotients by quandle congruences}
    Let $\qCong$ be a quandle congruence on $Q$. Then, the quotient set $Q/{\qCong}$ has a unique quandle structure satisfying
    \[ s_{[x]}([y])=[s_x(y)], \]
    and the quotient map $Q\twoheadrightarrow Q/{\qCong}$ is a surjective quandle homomorphism. If $f\colon Q\twoheadrightarrow R$ is surjective, then its kernel relation
    \[ \Eq(f) = \{(x,y)\in Q\times Q\mid f(x)=f(y)\} \]
    is a quandle congruence. These correspondences are inverse to each other.
\end{prop}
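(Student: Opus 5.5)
The proposition has three parts, which I will handle in turn: the quotient carries a well-defined quandle structure; a kernel relation is a congruence; and these two constructions are mutually inverse up to the evident isomorphism.

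\textbf{Quotient structure.} For $[x]\in Q/{\qCong}$, define $s_{[x]}([y])=[s_x(y)]$ and $s_{[x]}^{-1}([y])=[s_x^{-1}(y)]$. The congruence condition with $\varepsilon=1$ says the first formula is independent of the chosen representatives $x,y$. The condition with $\varepsilon=-1$ says the same for the second, so it defines a map $t_{[x]}$. The two maps are mutually inverse because they are computed on representatives, so $s_{[x]}$ is bijective; this is axiom \eqref{condition:Q2}. Axioms \eqref{condition:Q1} and \eqref{condition:Q3} are identities between composites of the maps $s$. They hold in $Q$ and descend verbatim on representatives. Uniqueness is immediate, since the displayed formula determines $s_{[x]}$ on every class. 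The same formula is exactly the statement that the projection $q$ satisfies $q\circ s_x=s_{q(x)}\circ q$, so $q$ is a surjective homomorphism.

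\textbf{Kernel relation.} $\Eq(f)$ is clearly an equivalence relation. Suppose $f(x)=f(x')$ and $f(y)=f(y')$. Then
\[
f(s_x(y))=s_{f(x)}(f(y))=s_{f(x')}(f(y'))=f(s_{x'}(y')).
\]
For $\varepsilon=-1$, the identity $f\circ s_x=s_{f(x)}\circ f$ gives $f\circ s_x^{-1}=s_{f(x)}^{-1}\circ f$, and the same computation applies. This is the only point where one must note that homomorphisms intertwine the inverses as well.

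\textbf{Inverse correspondences.} Starting from a congruence, $\Eq(Q\twoheadrightarrow Q/{\qCong})$ is ${\qCong}$ by construction. Starting from a surjection $f\colon Q\twoheadrightarrow R$, the induced map $Q/\Eq(f)\to R$, $[x]\mapsto f(x)$, is well defined and injective by the definition of $\Eq(f)$. It is surjective because $f$ is. It is a homomorphism because $f$ is, and a bijective homomorphism is an isomorphism, since its inverse automatically intertwines the maps $s$. So the correspondence is a bijection between congruences on $Q$ and surjections out of $Q$ up to isomorphism under $Q$.

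None of these steps presents a real obstacle. The only subtlety is that the congruence must be closed under $s^{-1}$ as well as $s$, and that is exactly why the definition includes $\varepsilon=-1$.
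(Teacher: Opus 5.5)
Your proof is correct and complete. The four checks are exactly the routine ones required: well-definedness of $s_{[x]}$ and of its inverse from the cases $\varepsilon=\pm1$, descent of the axioms along representatives, compatibility of $\Eq(f)$ with $s^{\pm1}$, and the identification $Q/\Eq(f)\cong R$ under $Q$. The paper gives no proof of this proposition and treats it as standard background, so your argument is the expected verification.
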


\begin{prop}[{see \cite[Proposition~1.25 and Remark~1.26]{preprint_yuki_tomoki_2026_relativization_of_symmetries_on_quandles}}]
%\cite[Lemma~1.20]{darne_2026_nilpotent_quandles}; 
\label[prop]{proposition: orbit quotients of quandles}
    Let $A\subseteq\Aut(Q)$ be a subgroup normalized by $\Inn(Q)$. Then the relation given by the $A$-orbits is a quandle congruence. We denote the quotient quandle by $Q/A$. In particular, this construction applies to every normal subgroup $A\trianglelefteq\Inn(Q)$.
\end{prop}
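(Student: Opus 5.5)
The statement to prove is that for a subgroup $A\subseteq\Aut(Q)$ normalized by $\Inn(Q)$, the relation ``$x\qCong y$ iff $y\in A\cdot x$'' is a quandle congruence. The plan is to check this directly from \cref{definition: quandle congruence}, using that every automorphism intertwines symmetry maps.

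First I would note that the relation is an equivalence relation, because it is the orbit relation of a group action. For the congruence condition, suppose $x'=\alpha(x)$ and $y'=\beta(y)$ with $\alpha,\beta\in A$, and fix $\varepsilon\in\{1,-1\}$. We must show that $s_{x'}^{\varepsilon}(y')$ lies in the $A$-orbit of $s_x^{\varepsilon}(y)$.

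The key identity is that for any automorphism $\alpha$ of $Q$ we have $s_{\alpha(x)}=\alpha s_x\alpha^{-1}$, since $\alpha$ is a quandle homomorphism. Hence
\[
s_{x'}^{\varepsilon}(y')=\alpha s_x^{\varepsilon}\alpha^{-1}\beta(y)=\alpha\, s_x^{\varepsilon}(\alpha^{-1}\beta)s_x^{-\varepsilon}\, s_x^{\varepsilon}(y).
\]
Since $s_x^{\pm1}\in\Inn(Q)$ normalizes $A$, the element $s_x^{\varepsilon}(\alpha^{-1}\beta)s_x^{-\varepsilon}$ lies in $A$. It follows that $\alpha\, s_x^{\varepsilon}(\alpha^{-1}\beta)s_x^{-\varepsilon}\in A$, which proves the congruence condition. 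Forming the quotient $Q/A$ is then an application of \cref{proposition: quotients by quandle congruences}.

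For the final sentence, a normal subgroup $A\trianglelefteq\Inn(Q)$ is a subgroup of $\Aut(Q)$ and is normalized by $\Inn(Q)$ by definition, so the first part applies. There is no real obstacle here. The only point needing care is to use normalization by $s_x^{-1}$ as well as by $s_x$ in the case $\varepsilon=-1$, and this holds because $\Inn(Q)$ is a group.
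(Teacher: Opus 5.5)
Your proof is correct: the identity $s_{\alpha(x)}=\alpha s_x\alpha^{-1}$, together with the fact that $s_x^{\pm1}$ normalizes $A$, gives $s_{x'}^{\varepsilon}(y')\in A\cdot s_x^{\varepsilon}(y)$, which is exactly the congruence condition. The paper does not prove this statement itself but cites earlier work, and your direct verification is the standard argument one would expect to find there.
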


The assignment $Q\mapsto\Inn(Q)$ is not functorial for arbitrary quandle homomorphisms, but it is functorial for surjections.

\begin{prop}[{\cite[Proposition~2.26]{eisermann_2014_quandle_coverings_and_their_galois_correspondence}}]
\label[prop]{proposition: functoriality of Inn for surjections}
    Let $f\colon P\twoheadrightarrow Q$ be a surjective quandle homomorphism. There is a unique surjective group homomorphism
    \[ f_*\colon\Inn(P)\twoheadrightarrow\Inn(Q) \]
    satisfying $f_*(s_x)=s_{f(x)}$ for every $x\in P$.
    Moreover, this gives rise to a functor $\Inn(-) \colon \Quandle^\mathrm{surj} \to \Grp^\mathrm{surj}$ from the category of surjective quandle homomorphisms to the category of surjective group homomorphisms.
\end{prop}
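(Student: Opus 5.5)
We must show: for a surjective quandle homomorphism $f\colon P\twoheadrightarrow Q$ there is a unique group homomorphism $f_*\colon\Inn(P)\to\Inn(Q)$ with $f_*(s_x)=s_{f(x)}$. It is surjective and functorial.

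The plan is to construct $f_*$ directly on automorphisms, not through a presentation. We use the identity $f\circ s_x = s_{f(x)}\circ f$ and surjectivity of $f$.

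\textbf{Step 1: $f_*$ on generators.} For $\varphi\in\Inn(P)$, write $\varphi=s_{x_1}^{\varepsilon_1}\cdots s_{x_n}^{\varepsilon_n}$ and set $\psi=s_{f(x_1)}^{\varepsilon_1}\cdots s_{f(x_n)}^{\varepsilon_n}\in\Inn(Q)$. Applying the homomorphism identity and its inverse form $f\circ s_x^{-1}=s_{f(x)}^{-1}\circ f$ repeatedly gives
\[ f\circ\varphi=\psi\circ f. \]

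\textbf{Step 2: well-definedness.} Let $\psi,\psi'$ arise from two words representing the same $\varphi$. Then $\psi\circ f=f\circ\varphi=\psi'\circ f$. Since $f$ is surjective, it is right-cancellable, so $\psi=\psi'$. Hence $\psi$ is determined by $\varphi$ alone, and we set $f_*(\varphi)=\psi$. Equivalently, $f_*(\varphi)$ is the unique map $\psi$ with $\psi\circ f=f\circ\varphi$, given by $\psi(f(y))=f(\varphi(y))$. This is the main point of the proof: surjectivity is exactly what makes this uniqueness work. For a non-surjective $f$ the construction fails, which is why functoriality is restricted to surjections.

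\textbf{Step 3: homomorphism and surjectivity.} The characterization in Step 2 shows $f_*(\varphi\varphi')\circ f=f\circ\varphi\varphi'=f_*(\varphi)f_*(\varphi')\circ f$. Cancelling $f$ gives multiplicativity. Uniqueness of $f_*$ holds because the $s_x$ generate $\Inn(P)$. Every generator $s_q$ of $\Inn(Q)$ equals $s_{f(x)}$ for some $x$ with $f(x)=q$, so its image contains all generators and $f_*$ is surjective.

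\textbf{Step 4: functoriality.} For surjections $f\colon P\twoheadrightarrow Q$ and $g\colon Q\twoheadrightarrow R$, both $(g\circ f)_*$ and $g_*\circ f_*$ send $s_x$ to $s_{g(f(x))}$. By uniqueness they agree, and similarly $(\id_P)_*=\id$.
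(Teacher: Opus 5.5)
Your proof is correct, and it is the standard argument: you characterize $f_*(\varphi)$ as the unique $\psi\in\Inn(Q)$ with $\psi\circ f=f\circ\varphi$, and surjectivity of $f$ gives exactly the right-cancellation needed for well-definedness and multiplicativity. The paper gives no proof of its own and only cites Eisermann's Proposition~2.26, so there is no different route to compare against; an equivalent packaging of your argument is that $\As(f)$ carries $\Ker(\rho_P)$ into $\Ker(\rho_Q)$, by the same cancellation of $f$.
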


We next recall coverings in the sense of Eisermann.

\begin{dfn}[{\cite[Definition~2.42]{eisermann_2014_quandle_coverings_and_their_galois_correspondence}}]
\label[dfn]{definition: quandle covering}
    A quandle homomorphism $f\colon P\to Q$ is a \emph{covering homomorphism} if it is surjective and $f(x)=f(y)$ implies $s_x=s_y$ for all $x,y\in P$.
\end{dfn}

\begin{prop}[{\cite[Proposition~4.19]{eisermann_2014_quandle_coverings_and_their_galois_correspondence}}]
\label[prop]{proposition: pull-back of covering is again covering}
    Covering homomorphisms are preserved by pullback along arbitrary quandle homomorphisms.
\end{prop}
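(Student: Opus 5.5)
The statement is that coverings are stable under pullback along an arbitrary quandle homomorphism. Let $f\colon P\twoheadrightarrow Q$ be a covering and $g\colon R\to Q$ any quandle homomorphism. I will work directly with the explicit pullback
\[ P\times_Q R=\{(x,r)\in P\times R\mid f(x)=g(r)\}, \]
with componentwise operation $s_{(x,r)}=s_x\times s_r$ restricted to this set. The plan is to check three things in turn: this set is a quandle and the pullback in $\Quandle$; the projection $f'\colon P\times_Q R\to R$ is surjective; and $f'$ satisfies the covering condition.

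For the first step, the set is closed under $s_{(x,r)}$ and its inverse. Indeed, $f(s_x^{\pm1}(y))=s_{f(x)}^{\pm1}(f(y))=s_{g(r)}^{\pm1}(g(t))=g(s_r^{\pm1}(t))$ whenever $(x,r),(y,t)$ lie in the set. So it is a subquandle of the product quandle $P\times R$. The universal property is the usual set-level one, because a map into it is a homomorphism exactly when both of its components are homomorphisms.

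For surjectivity, take $r\in R$. Since $f$ is surjective, there is some $x\in P$ with $f(x)=g(r)$, and then $(x,r)$ maps to $r$.

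For the covering condition, suppose $f'(x,r)=f'(y,t)$, that is, $r=t$. Then $f(x)=g(r)=f(y)$, so $s_x=s_y$ because $f$ is a covering. Hence $s_{(x,r)}=s_x\times s_r=s_y\times s_r=s_{(y,t)}$ on the pullback.

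There is no genuine obstacle here. The only point needing care is that surjectivity of $f'$ comes from surjectivity of $f$, not of $g$; this is why the covering definition including surjectivity is essential, and why no assumption on $g$ is needed.
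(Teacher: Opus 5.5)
Your proof is correct and complete: the explicit fibered product is a subquandle of $P\times R$ representing the pullback, surjectivity of $f'$ comes from surjectivity of $f$ alone, and $f(x)=g(r)=f(y)$ forces $s_x=s_y$, hence $s_{(x,r)}=s_{(y,r)}$ on the pullback. The paper gives no proof of its own here and simply cites Eisermann's Proposition~4.19; your direct verification is the standard elementwise argument for this fact.
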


\begin{prop}[{\cite[Proposition~2.49]{eisermann_2014_quandle_coverings_and_their_galois_correspondence}}]
\label[prop]{proposition: covering hom is central extension}
    If $f\colon P\twoheadrightarrow Q$ is a covering homomorphism, then $f_*\colon\Inn(P)\twoheadrightarrow\Inn(Q)$ is a central extension of groups.
\end{prop}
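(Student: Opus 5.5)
The plan is to show directly that $\relInn(f)=\Ker(f_*)$ lies in the centre of $\Inn(P)$. Surjectivity of $f_*$ is already given by \cref{proposition: functoriality of Inn for surjections}, so centrality of the kernel is the only thing to prove.

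The first step is an equivariance identity: for every $\psi\in\Inn(P)$,
\[ f\circ\psi = f_*(\psi)\circ f. \]
\begin{itemize}
\item For a generator $\psi=s_x$, this is exactly the homomorphism condition $f\circ s_x=s_{f(x)}\circ f$ of \cref{definition: quandle homomorphism}.
\item For inverses, compose the generator case with $s_x^{-1}$ on the right and with $s_{f(x)}^{-1}$ on the left. This gives $f\circ s_x^{-1}=s_{f(x)}^{-1}\circ f$.
\item For products, if the identity holds for $\psi_1$ and $\psi_2$, then $f\psi_1\psi_2=f_*(\psi_1)f\psi_2=f_*(\psi_1)f_*(\psi_2)f=f_*(\psi_1\psi_2)f$.
\end{itemize}
Induction on word length in the generators $s_x^{\pm1}$ then gives the identity for all of $\Inn(P)$.

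Now take $\varphi\in\relInn(f)$ and any $x\in P$. The identity gives $f(\varphi(x))=f_*(\varphi)(f(x))=f(x)$. Since $f$ is a covering, this forces $s_{\varphi(x)}=s_x$. Because $\varphi$ is a quandle automorphism, we also have $s_{\varphi(x)}=\varphi s_x\varphi^{-1}$. Combining the two, $\varphi s_x\varphi^{-1}=s_x$ for all $x$. So $\varphi$ commutes with every generator of $\Inn(P)$ and is therefore central.

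No step here is a real obstacle. The only point needing care is that $\relInn(f)$ consists of automorphisms of $P$, not merely bijections, so that the relation $\varphi s_x\varphi^{-1}=s_{\varphi(x)}$ is available. This holds because $\Inn(P)\subseteq\Aut(P)$.
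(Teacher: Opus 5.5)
Your proof is correct and complete. The paper gives no proof of its own and simply cites Eisermann; your argument, that the equivariance $f\circ\psi=f_*(\psi)\circ f$ plus the covering condition gives $\varphi s_x\varphi^{-1}=s_{\varphi(x)}=s_x$ for every $\varphi\in\relInn(f)$, is the standard direct proof, and it is the same commutator identity $s_{\varphi(x)}s_x^{-1}=[\varphi,s_x]$ that the paper uses in \cref{lemma: quotient covering via commutator} and that underlies the one-line alternative $[\Inn(P),\relInn(f)]\subseteq\relTrans(f)=1$ from \cref{lem:commutator_inn_rel_trans_rel,lemma: elementary properties of relative symmetry groups}.
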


We now recall the relative symmetry groups.

\begin{dfn}[{\cite[Definitions~2.1 and 3.1]{preprint_yuki_tomoki_2026_relativization_of_symmetries_on_quandles}}]
\label[dfn]{definition: relative inner and transvection groups}
    Let $f\colon P\twoheadrightarrow Q$ be a surjective quandle homomorphism.
\begin{enumerate}
    \item The \emph{relative inner automorphism group} of $f$ is
    \[ \relInn(f) = \Ker\bigl( f_*\colon \Inn(P)\to\Inn(Q) \bigr). \]
    \item The \emph{relative transvection group} of $f$ is
    \[ \relTrans(f) = \left\langle s_xs_y^{-1} \mid x,y\in P \text{ such that } f(x)=f(y) \right\rangle. \]
\end{enumerate}
\end{dfn}

\begin{prop}[{\cite[Propositions~2.4 and~3.4, and Lemma~3.15]{preprint_yuki_tomoki_2026_relativization_of_symmetries_on_quandles}}]
\label[prop]{lemma: elementary properties of relative symmetry groups}
    For every surjective homomorphism $f\colon P\twoheadrightarrow Q$, the following hold.
    \begin{enumerate}
        \item The group $\relInn(f)$ is normal in $\Inn(P)$ and $\relInn(f)=\{\varphi\in \Inn(P) \mid f\circ \varphi=f \}$. Every inner automorphism $\varphi\in \relInn(f)$ preserves the fibers of $f$, and hence $\relInn(f)$ acts on each fiber.
        \item The group $\relTrans(f)$ is normal in $\Inn(P)$ and $\relTrans(f)\subseteq\relInn(f)$. Moreover, $f$ is covering if and only if $\relTrans(f)=1$.
    \end{enumerate}
\end{prop}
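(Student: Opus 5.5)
The statement has two parts. Part (1) says $\relInn(f)$ is normal, equals $\{\varphi\in\Inn(P)\mid f\circ\varphi=f\}$, and preserves fibers. Part (2) says $\relTrans(f)$ is normal in $\Inn(P)$, is contained in $\relInn(f)$, and vanishes exactly when $f$ is a covering. I would prove both directly from \cref{proposition: functoriality of Inn for surjections} and the quandle axioms. The key identity is that $f\circ\psi=f_*(\psi)\circ f$ for every $\psi\in\Inn(P)$. On generators this is the homomorphism condition $f\circ s_x=s_{f(x)}\circ f$. Both sides are multiplicative in $\psi$, and the identity also passes to inverses, since $f\circ s_x^{-1}=s_{f(x)}^{-1}\circ f$. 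So it extends to all words in the $s_x^{\pm1}$.

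For (1), normality is immediate, since $\relInn(f)$ is a kernel. For the description of $\relInn(f)$, the identity gives $f\circ\varphi=f_*(\varphi)\circ f$. If $f_*(\varphi)=\id$, then $f\circ\varphi=f$. Conversely, if $f\circ\varphi=f$, then $f_*(\varphi)\circ f=f$. Since $f$ is surjective, this forces $f_*(\varphi)=\id_Q$. Fiber preservation is a restatement of $f\circ\varphi=f$: it gives $f(\varphi(x))=f(x)$, so $\varphi$ maps $f^{-1}(q)$ into itself. The same holds for $\varphi^{-1}\in\relInn(f)$, so each $\varphi$ restricts to a bijection of every fiber, and we get an action.

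For (2), the inclusion comes first. If $f(x)=f(y)$, then $f_*(s_xs_y^{-1})=s_{f(x)}s_{f(y)}^{-1}=1$, so every generator of $\relTrans(f)$ lies in $\relInn(f)$. For normality, it suffices to show that conjugating a generator by any $\psi\in\Inn(P)$ gives a generator. Axiom (Q3) yields $\psi s_x\psi^{-1}=s_{\psi(x)}$ for all $\psi\in\Inn(P)$, by induction on word length, with inverses handled as in the rearranged identity $s_x^{-1}s_y s_x=s_{s_x^{-1}(y)}$. Hence
\[ \psi(s_xs_y^{-1})\psi^{-1}=s_{\psi(x)}s_{\psi(y)}^{-1}. \]
Moreover $f(\psi(x))=f_*(\psi)(f(x))=f_*(\psi)(f(y))=f(\psi(y))$, so the right-hand side is again a generator of $\relTrans(f)$.

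For the covering criterion, recall that $f$ is a covering exactly when $f(x)=f(y)$ implies $s_x=s_y$, which is exactly when every generator $s_xs_y^{-1}$ is trivial. This in turn is equivalent to $\relTrans(f)=1$.

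There is no real obstacle here. The only step needing care is the extension of the two generator-level identities, $f\circ\psi=f_*(\psi)\circ f$ and $\psi s_x\psi^{-1}=s_{\psi(x)}$, to arbitrary words, including inverse letters. This is a routine induction.
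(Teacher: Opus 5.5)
Your proof is correct and complete. The paper gives no proof of this proposition; it quotes it from the authors' earlier preprint. So there is no argument in the text to compare against.

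Your direct verification is the standard one: you use $f\circ\psi=f_*(\psi)\circ f$ on all of $\Inn(P)$, surjectivity of $f$ for the description of $\relInn(f)$, and the fact that conjugation permutes the generators $s_xs_y^{-1}$. The paper uses this same generator-permutation argument for normality of the analogous subgroup of $\As(P)$ in \cref{lemma: generators of relative adjoint group}.

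One small simplification: $\psi s_x\psi^{-1}=s_{\psi(x)}$ holds for every $\psi\in\Aut(P)$ directly from the homomorphism condition $\psi\circ s_x=s_{\psi(x)}\circ\psi$. So the induction on word length there is unnecessary.
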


\begin{rem}
\label[rem]{remark: relative transvection as relative displacement group}
    The group $\relTrans(f)$ is described in \cite{bonatto_stanovsky_2021_commutator_theory_for_racks_and_quandles} as the relative displacement group $\Dis_\alpha$ where $\alpha$ is the congruence corresponding to the surjection $f$.
\end{rem}

\begin{prop}[{\cite[Proposition~2.15]{preprint_yuki_tomoki_2026_relativization_of_symmetries_on_quandles}}]
\label{prop:basic_property_of_quotient_by_normal_subgroup}
Let $Q$ be a quandle and let $N\subseteq \Inn(Q)$ be a normal subgroup.
\begin{enumerate}
    \item\label{item:N_is_contained_in_relInn_of_projection}
    For the quotient map $\pi_N\colon Q\twoheadrightarrow Q/N$, we have $N \subseteq \relInn(\pi_N)=\Ker(\Inn(\pi_N))$.
    \item\label{item:factoring_condition_through_quotient_by_normal_subgroup}
    A quandle homomorphism $f\colon Q\to R$ factors through $\pi_N\colon Q\twoheadrightarrow Q/N$ if and only if $f\circ \varphi = f$ holds for all $\varphi\in N$, or equivalently if and only if $N\subseteq \relInn(f)$.
\end{enumerate}
\end{prop}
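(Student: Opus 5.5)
For \eqref{item:N_is_contained_in_relInn_of_projection}, I will use the description $\relInn(f)=\{\varphi\in\Inn(P)\mid f\circ\varphi=f\}$ from \cref{lemma: elementary properties of relative symmetry groups}. By \cref{proposition: orbit quotients of quandles}, $Q/N$ is the quotient of $Q$ by the $N$-orbit relation. So for $\varphi\in N$ and $x\in Q$, the points $\varphi(x)$ and $x$ lie in the same $N$-orbit, and hence $\pi_N(\varphi(x))=\pi_N(x)$. Thus $\pi_N\circ\varphi=\pi_N$. Since $\varphi\in N\subseteq\Inn(Q)$, this puts $\varphi$ in $\relInn(\pi_N)=\Ker(\Inn(\pi_N))$.

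For \eqref{item:factoring_condition_through_quotient_by_normal_subgroup}, first suppose $f=g\circ\pi_N$. Then for $\varphi\in N$ we get $f\circ\varphi=g\circ\pi_N\circ\varphi=g\circ\pi_N=f$ by part \eqref{item:N_is_contained_in_relInn_of_projection}.

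Conversely, suppose $f\circ\varphi=f$ for all $\varphi\in N$. Then $f$ is constant on $N$-orbits, so it descends to a well-defined map $g\colon Q/N\to R$ with $g([x])=f(x)$. This $g$ is a quandle homomorphism, because $\pi_N$ is surjective and
\[ g\bigl(s_{[x]}([y])\bigr)=g([s_x(y)])=f(s_x(y))=s_{f(x)}(f(y))=s_{g([x])}(g([y])). \]
Uniqueness of $g$ also follows from the surjectivity of $\pi_N$.

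The only delicate step is the last equivalence, ``$f\circ\varphi=f$ for all $\varphi\in N$ iff $N\subseteq\relInn(f)$'', because $f$ need not be surjective. There I will interpret $\relInn(f)$ via the corestriction $f'\colon Q\twoheadrightarrow f(Q)$; the image $f(Q)$ is a subquandle since $f(s_x^{\pm1}(y))=s_{f(x)}^{\pm1}(f(y))$. Applying \cref{lemma: elementary properties of relative symmetry groups} to $f'$ gives $\relInn(f')=\{\varphi\in\Inn(Q)\mid f'\circ\varphi=f'\}$. Moreover $f'\circ\varphi=f'$ holds exactly when $f\circ\varphi=f$. Since $N\subseteq\Inn(Q)$, the inclusion $N\subseteq\relInn(f)$ is therefore equivalent to $f\circ\varphi=f$ for all $\varphi\in N$.
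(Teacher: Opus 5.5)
Your proof is correct. The paper gives no proof of this statement: it imports it from the authors' earlier preprint. Your argument is the standard direct one, using the orbit-quotient construction and the description $\relInn(f)=\{\varphi\in\Inn(P)\mid f\circ\varphi=f\}$. Passing to the corestriction $Q\twoheadrightarrow f(Q)$ is a sensible way to give meaning to $\relInn(f)$ for non-surjective $f$, since the paper defines it only for surjections. In every place the paper applies this proposition, $f$ is surjective anyway.
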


The following lemma will be used repeatedly.

\begin{lem}[{\cite[Lemma 3.8]{preprint_yuki_tomoki_2026_relativization_of_symmetries_on_quandles}}]
\label[lem]{lemma: relative symmetry groups under a surjective factorization}
    Suppose that $f=h\circ u$, where $u\colon P\twoheadrightarrow R$ and $h\colon R\twoheadrightarrow Q$ are surjective. Then
    \[ u_*(\relInn(f))=\relInn(h), \qquad u_*(\relTrans(f))=\relTrans(h).\]
\end{lem}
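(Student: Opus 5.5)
The plan is to use functoriality of $\Inn$ on surjections (\cref{proposition: functoriality of Inn for surjections}), namely $f_*=h_*\circ u_*$, together with the fact that $u_*$ is surjective and sends generators to generators.

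\emph{First equality, inclusion $u_*(\relInn(f))\subseteq\relInn(h)$.} Let $\varphi\in\relInn(f)$. Then $h_*(u_*(\varphi))=f_*(\varphi)=1$, so $u_*(\varphi)\in\relInn(h)$.

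\emph{First equality, reverse inclusion.} Let $\psi\in\relInn(h)$. Since $u_*$ is surjective, choose $\varphi\in\Inn(P)$ with $u_*(\varphi)=\psi$. Then $f_*(\varphi)=h_*(\psi)=1$, so $\varphi\in\relInn(f)$ and $\psi=u_*(\varphi)$.

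\emph{Second equality, inclusion $u_*(\relTrans(f))\subseteq\relTrans(h)$.} It suffices to check generators. If $x,y\in P$ satisfy $f(x)=f(y)$, then $u_*(s_xs_y^{-1})=s_{u(x)}s_{u(y)}^{-1}$. Moreover $h(u(x))=h(u(y))$, so this element is a generator of $\relTrans(h)$.

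\emph{Second equality, reverse inclusion.} Again it suffices to check generators. Take $a,b\in R$ with $h(a)=h(b)$. By surjectivity of $u$, choose $x,y\in P$ with $u(x)=a$ and $u(y)=b$. Then $f(x)=h(a)=h(b)=f(y)$, so $s_xs_y^{-1}\in\relTrans(f)$, and its image under $u_*$ is $s_as_b^{-1}$. Hence every generator of $\relTrans(h)$ lies in the subgroup $u_*(\relTrans(f))$, which gives $\relTrans(h)\subseteq u_*(\relTrans(f))$.

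No step is a real obstacle; the only point needing care is that both reverse inclusions rely on surjectivity, of $u_*$ in the first case and of $u$ on points in the second.
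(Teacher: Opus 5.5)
Your proof is correct and complete: the first equality is the general fact that $u_*$ maps $\Ker(h_*\circ u_*)$ onto $\Ker(h_*)$ because $u_*$ is surjective. The second follows because $u_*$ sends the generating set of $\relTrans(f)$ exactly onto the generating set of $\relTrans(h)$, which uses surjectivity of $u$ on points. The paper gives no proof of this lemma (it is quoted from the authors' earlier preprint), so there is no in-paper argument to compare with; your direct argument needs no additional input.
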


\begin{dfn}
\label[dfn]{definition: connected and rigid quandle homomorphisms}
    Let $f\colon P\twoheadrightarrow Q$ be a surjective quandle homomorphism.
\begin{enumerate}
    \item $f$ is called \emph{connected} if
    $\relInn(f)$ acts transitively on every fiber of $f$ (\cite[Definition~2.5]{preprint_yuki_tomoki_2026_relativization_of_symmetries_on_quandles}).
    \item $f$ is called \emph{strongly connected} if $\relTrans(f)$ acts transitively on every fiber of $f$ (\cite[Definition~3.6]{preprint_yuki_tomoki_2026_relativization_of_symmetries_on_quandles}).
    \item The homomorphism $f$ is called \emph{rigid} if $f_*\colon\Inn(P)\to\Inn(Q)$ is an isomorphism, or equivalently, if $\relInn(f)=1$ (\cite[Definition~4.1]{bunch_lofgren_rapp_yetter_2010_on_quotients_of_quandles}).
\end{enumerate}
\end{dfn}

We can see that a rigid homomorphism is covering, and that a connected rigid homomorphism is an isomorphism \cite[Lemmas~2.19 and~2.20]{preprint_yuki_tomoki_2026_relativization_of_symmetries_on_quandles}.
%Connectedness descends along surjective factorizations \cite[Proposition~2.12(3) and Remark~2.13]{preprint_yuki_tomoki_2026_relativization_of_symmetries_on_quandles}.

The following factorization is essentially \cite[Theorem~8.1]{bunch_lofgren_rapp_yetter_2010_on_quotients_of_quandles}, and its factorization-system formulation is due to \cite[Propositions~3.1 and~3.2]{even_gran_2014_on_factorization_systems_for_surjective_quandle_homomorphisms}.

\begin{thm}[Quandle Stein factorization; {\cite[Theorem~2.22 and Proposition~2.24]{preprint_yuki_tomoki_2026_relativization_of_symmetries_on_quandles}}]
\label[thm]{theorem: connected-rigid factorization}
    Let $f\colon P\twoheadrightarrow Q$ be a surjective quandle homomorphism. Then, the canonical factorization $P\twoheadrightarrow P/\relInn(f)\twoheadrightarrow Q$ has connected first homomorphism and rigid second homomorphism. 
    Moreover, the classes of connected and rigid homomorphisms form an orthogonal factorization system for the surjections.
    % Remark~2.25: satisfying the Frobenius condition.
\end{thm}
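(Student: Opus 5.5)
The plan is to take $N=\relInn(f)$, which is a normal subgroup of $\Inn(P)$ by \cref{lemma: elementary properties of relative symmetry groups}, and form the orbit quotient $u\colon P\twoheadrightarrow P/N$ of \cref{proposition: orbit quotients of quandles}. By \cref{prop:basic_property_of_quotient_by_normal_subgroup}\eqref{item:factoring_condition_through_quotient_by_normal_subgroup}, $f$ factors as $f=h\circ u$ for a homomorphism $h\colon P/N\to Q$, and $h$ is surjective because $f$ is. The proof then has two parts: checking the properties of $u$ and $h$, and proving the orthogonal factorization system statement.

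\emph{Connectedness of $u$.} By \cref{prop:basic_property_of_quotient_by_normal_subgroup}\eqref{item:N_is_contained_in_relInn_of_projection} we have $N\subseteq\relInn(u)$. The fibers of $u$ are exactly the $N$-orbits, so $N$, and hence the larger group $\relInn(u)$, acts transitively on each fiber.

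\emph{Rigidity of $h$.} By \cref{lemma: relative symmetry groups under a surjective factorization},
\[
\relInn(h)=u_*(\relInn(f))=u_*(N).
\]
Since $N\subseteq\relInn(u)=\Ker(u_*)$, this image is trivial, so $h$ is rigid.

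\emph{Orthogonality.} Consider a commutative square of surjections $g\circ c=r\circ k$ with $c\colon A\twoheadrightarrow B$ connected, $r\colon C\twoheadrightarrow D$ rigid, $k\colon A\twoheadrightarrow C$ and $g\colon B\twoheadrightarrow D$. We need a unique $d\colon B\to C$ with $d\circ c=k$ and $r\circ d=g$.

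Uniqueness is immediate since $c$ is surjective. For existence it suffices to show that $k$ is constant on the fibers of $c$. Take $a,a'$ in one fiber; connectedness gives $\varphi\in\relInn(c)$ with $a'=\varphi(a)$. The key identity is
\[
k\circ\psi=k_*(\psi)\circ k \quad\text{for all } \psi\in\Inn(A).
\]
It holds on generators because $k\circ s_x=s_{k(x)}\circ k$, and it extends to all of $\Inn(A)$ since $k_*$ is a homomorphism. By functoriality in \cref{proposition: functoriality of Inn for surjections},
\[
r_*(k_*(\varphi))=(r\circ k)_*(\varphi)=(g\circ c)_*(\varphi)=g_*(c_*(\varphi))=1.
\]
Rigidity of $r$ means $r_*$ is injective, so $k_*(\varphi)=1$. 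Hence $k(a')=k_*(\varphi)(k(a))=k(a)$.

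So $k$ descends to a map $d\colon B\to C$. It is a quandle homomorphism because $c$ is a surjective homomorphism and $k$ is one: $d\circ s_{c(x)}\circ c=k\circ s_x=s_{k(x)}\circ k=s_{d(c(x))}\circ d\circ c$, and $c$ is onto. The identity $r\circ d=g$ follows after precomposing with the epimorphism $c$.

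Both classes contain the isomorphisms and are closed under composition with them, since these properties are defined through $\relInn$, which transforms along the isomorphisms $\Inn(\cdot)$. Together with existence of factorizations and orthogonality, this gives the orthogonal factorization system. Uniqueness of the factorization up to unique isomorphism is then the standard formal consequence.

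The one delicate point is that $k$ must be surjective for $k_*$ to be defined. This holds because we work inside the category of surjections; for a general $k$, one would instead restrict $k$ to its image subquandle, using that $\relInn(c)\subseteq\Inn(A)$ still acts compatibly.
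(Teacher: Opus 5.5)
Your main argument is correct. The paper states this theorem without proof, citing the authors' earlier work (the factorization goes back to Bunch--Lofgren--Rapp--Yetter, and the factorization-system form to Even--Gran). Your route is the natural proof, and each step checks out:
\begin{itemize}
\item factor $f$ through the orbit quotient $u\colon P\twoheadrightarrow P/\relInn(f)$;
\item obtain connectedness of $u$ from $\relInn(f)\subseteq\relInn(u)$;
\item obtain rigidity of $h$ from $u_*(\relInn(f))=\relInn(h)$;
\item obtain the diagonal filler from $k\circ\psi=k_*(\psi)\circ k$ together with injectivity of $r_*$.
\end{itemize}

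Your closing paragraph is wrong, though. Restricting to squares whose top side $k$ is surjective is not a technicality that passing to the image of $k$ can remove. The lifting property genuinely fails for arbitrary $k$, which is why the statement is a factorization system \emph{for the surjections}. It is also why the paper records stability of rigid maps only under pullback along surjections. A class of surjections right orthogonal to something in all of $\Quandle$ would be stable under every pullback.

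Here is a concrete failure.
\begin{itemize}
\item Let $r=\Conj(\pi)\colon\Conj(G)\twoheadrightarrow\Conj(H)$ be the rigid map of \cref{example: sharpness of pullback class estimate}.
\item Let $c\colon\Conj(Q_8)\twoheadrightarrow\Conj(Q_8)/\Inn(\Conj(Q_8))$ be the connected quotient onto the trivial quandle of conjugacy classes of $Q_8$.
\item Let $k\colon\Conj(Q_8)\hookrightarrow\Conj(G)$ be the inclusion.
\item Let $g$ send the class of $x$ to $\pi(x)\in V_4\subseteq H$. This is a well-defined homomorphism because $V_4$ is abelian.
\end{itemize}
Then $r\circ k=g\circ c$. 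A diagonal $d$ with $d\circ c=k$ would force $k(i)=k(-i)$, which is false in $G$, so no diagonal exists.

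Restricting $r$ to $k(\Conj(Q_8))$ does not help. That restriction is not surjective onto $\Conj(H)$. Its corestriction to its image is the pullback $f'\colon\Conj(Q_8)\twoheadrightarrow\Conj(V_4)$ from that example, and $f'$ is not rigid, since $\relInn(f')\cong V_4$. So the step ``$r_*$ is injective'' is exactly what is lost.

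You should delete that remark. The proof for squares of surjections, where the filler $d$ is automatically surjective because $d\circ c=k$, is complete as it stands.
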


In particular, rigid homomorphisms are preserved by pullback along surjective homomorphisms \cite[Proposition~2.21(2)]{preprint_yuki_tomoki_2026_relativization_of_symmetries_on_quandles}.

The relative transvection group gives the maximal covering factorization.

\begin{thm}[{
\cite[Propositions~3.17 and~3.18]{preprint_yuki_tomoki_2026_relativization_of_symmetries_on_quandles}}]
\label[thm]{theorem: maximal covering factorization}
    Let $f\colon P\twoheadrightarrow Q$ be a surjective quandle homomorphism. Then, the canonical factorization $P\twoheadrightarrow P/\relTrans(f)\twoheadrightarrow Q$ has connected first homomorphism and covering second homomorphism. It is universal among factorizations of $f$ whose second homomorphism is covering.
\end{thm}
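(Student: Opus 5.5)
The plan is to work directly with $N\coloneqq\relTrans(f)$. By \cref{lemma: elementary properties of relative symmetry groups}, $N$ is a normal subgroup of $\Inn(P)$ and $N\subseteq\relInn(f)$. So \cref{proposition: orbit quotients of quandles} gives the quotient quandle $P/N$ together with the quotient map $\pi_N\colon P\twoheadrightarrow P/N$. Since $N\subseteq\relInn(f)$, \cref{prop:basic_property_of_quotient_by_normal_subgroup}\eqref{item:factoring_condition_through_quotient_by_normal_subgroup} shows that $f$ factors uniquely as $f=h\circ\pi_N$. The map $h\colon P/N\twoheadrightarrow Q$ is surjective because $f$ is. This is the canonical factorization in the statement, and it remains to check three properties.

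First, connectedness of $\pi_N$. The fibers of $\pi_N$ are exactly the $N$-orbits in $P$. By \cref{prop:basic_property_of_quotient_by_normal_subgroup}\eqref{item:N_is_contained_in_relInn_of_projection} we have $N\subseteq\relInn(\pi_N)$, so $\relInn(\pi_N)$ acts transitively on each fiber.

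Second, $h$ is a covering. Take $x,y\in P$ with $h([x])=h([y])$. Then $f(x)=f(y)$, so $s_xs_y^{-1}\in N\subseteq\relInn(\pi_N)=\Ker((\pi_N)_*)$. Applying $(\pi_N)_*$ and using $(\pi_N)_*(s_z)=s_{[z]}$ from \cref{proposition: functoriality of Inn for surjections}, we get $s_{[x]}=s_{[y]}$. This is exactly the covering condition of \cref{definition: quandle covering}.

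Third, universality. Suppose $f=k\circ u$ with $u\colon P\twoheadrightarrow R$ surjective and $k\colon R\twoheadrightarrow Q$ a covering. By \cref{lemma: relative symmetry groups under a surjective factorization} and \cref{lemma: elementary properties of relative symmetry groups}(2), $u_*(N)=\relTrans(k)=1$, so $N\subseteq\relInn(u)$. Then \cref{prop:basic_property_of_quotient_by_normal_subgroup}\eqref{item:factoring_condition_through_quotient_by_normal_subgroup} gives a factorization $u=v\circ\pi_N$. From $k\circ v\circ\pi_N=f=h\circ\pi_N$ and the surjectivity of $\pi_N$, we get $k\circ v=h$. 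The map $v$ is unique because $\pi_N$ is an epimorphism.

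None of these steps looks like a real obstacle. The only point needing care is the covering step, where one must identify the symmetry $s_{[x]}$ of the quotient with $(\pi_N)_*(s_x)$, so that membership in $\Ker((\pi_N)_*)$ translates into equality of symmetries in $P/N$.
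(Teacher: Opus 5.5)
Your proof is correct. The paper gives no argument of its own here and cites Propositions~3.17 and~3.18 of the authors' earlier preprint, but your steps match how the paper treats the parallel statements: your covering step is the computation in \cref{lemma: quotient covering via commutator} (with $s_xs_y^{-1}\in\relTrans(f)$ in place of $[s_x,\varphi]$), and your universality step, via $u_*(\relTrans(f))=\relTrans(k)=1$ together with \cref{prop:basic_property_of_quotient_by_normal_subgroup}, is the argument of \cref{proposition: universal property of nilpotent truncation,theorem: universal property of covering length truncation}.
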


Recall that a normal subgroup $K\trianglelefteq G$ is called \emph{$G$-perfect} if $[G,K]=K$. 

\begin{thm}[{\cite[Theorem 3.13 and Corollary 3.14]{preprint_yuki_tomoki_2026_relativization_of_symmetries_on_quandles}}]
\label{theorem: characterization of strongly connected via perfect subgroup}
    A surjective quandle homomorphism $f\colon P\twoheadrightarrow Q$ is strongly connected if and only if it is isomorphic to the quotient $\pi_N\colon P\twoheadrightarrow P/N$ by an $\Inn(P)$-perfect subgroup $N\subseteq \Inn(P)$.

    Moreover, there is a one-to-one correspondence between the isomorphism classes of strongly connected quotients of $P$ and $\Inn(P)$-perfect normal subgroups of $\Inn(P)$.
\end{thm}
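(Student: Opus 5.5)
We need to prove: a surjective $f\colon P\twoheadrightarrow Q$ is strongly connected if and only if it is isomorphic to $\pi_N\colon P\twoheadrightarrow P/N$ for some $\Inn(P)$-perfect normal subgroup $N$. Moreover, the assignment $N\mapsto P/N$ gives a bijection between such subgroups and isomorphism classes of strongly connected quotients.

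\textbf{Step 1: from strongly connected to a perfect quotient.} Let $f$ be strongly connected and set $N=\relTrans(f)$, which is normal in $\Inn(P)$.

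The fibers of $f$ are exactly the $N$-orbits.
\begin{itemize}
\item $N\subseteq\relInn(f)$ preserves fibers, so each $N$-orbit lies in a fiber.
\item Transitivity of $N$ on each fiber gives the reverse inclusion.
\end{itemize}
Hence $f\cong\pi_N$ by \cref{proposition: quotients by quandle congruences}.

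To show $N$ is $\Inn(P)$-perfect, take a generator $s_xs_y^{-1}$ with $f(x)=f(y)$. Choose $\psi\in N$ with $\psi(y)=x$. Then
\[
s_xs_y^{-1}=s_{\psi(y)}s_y^{-1}=\psi s_y\psi^{-1}s_y^{-1}=[\psi,s_y]\in[N,\Inn(P)]=[\Inn(P),N].
\]
So $N\subseteq[\Inn(P),N]$. The reverse inclusion holds by normality, giving $N=[\Inn(P),N]$.

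\textbf{Step 2: from a perfect subgroup to strong connectedness.} Let $N$ be $\Inn(P)$-perfect and $\pi=\pi_N$. I claim $\relTrans(\pi)\supseteq[\Inn(P),N]$. For $\varphi\in N$ and $y\in P$, the points $\varphi(y)$ and $y$ lie in the same fiber of $\pi$. Therefore
\[
[\varphi,s_y]=s_{\varphi(y)}s_y^{-1}\in\relTrans(\pi).
\]
Commutators $[s_y^{\pm1},\varphi]$ reduce to these by conjugation and inversion, using normality of $\relTrans(\pi)$. A general element of $\Inn(P)$ is a product of $s_y^{\pm1}$, and commutator identities handle such products. This yields $[\Inn(P),N]\subseteq\relTrans(\pi)$.

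Hence $N=[\Inn(P),N]\subseteq\relTrans(\pi)$. Since $N$ already acts transitively on the fibers of $\pi$ (they are $N$-orbits), so does $\relTrans(\pi)$. Thus $\pi$ is strongly connected.

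\textbf{Step 3: the correspondence.} Send a strongly connected $f$ to $\relTrans(f)$, and send $N$ to $\pi_N$.
\begin{itemize}
\item By Step 1, $\pi_{\relTrans(f)}\cong f$.
\item Conversely, we need $\relTrans(\pi_N)=N$. Step 2 gives $N\subseteq\relTrans(\pi_N)$. For the reverse, any generator $s_xs_y^{-1}$ with $x=\psi(y)$, $\psi\in N$, equals $[\psi,s_y]\in N$ by normality of $N$.
\end{itemize}
Finally, $\relTrans(f)$ is invariant under isomorphism over $P$, so the two assignments are mutually inverse on isomorphism classes.

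The main obstacle is the careful generator bookkeeping in Step 2, namely passing from commutators $[\varphi,s_y]$ to all of $[\Inn(P),N]$. The remaining steps are direct.
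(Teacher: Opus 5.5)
Your proof is correct. In Step 1, the fibers of $f$ are the $\relTrans(f)$-orbits, and each generator satisfies $s_xs_y^{-1}=[\psi,s_y]$ with $\psi\in\relTrans(f)$; this checks out. Step 2 and the identification $\relTrans(\pi_N)=N$ in Step 3 are also sound.

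The paper does not reprove this statement; it is quoted from the authors' earlier preprint, so there is no in-paper argument to compare against. The bookkeeping you flag as the main obstacle in Step 2 is exactly \cref{lem:commutator_inn_rel_trans_rel} applied to $\pi_N$, combined with $N\subseteq\relInn(\pi_N)$ from \cref{prop:basic_property_of_quotient_by_normal_subgroup}. It can therefore be cited rather than redone. Your two computations in fact give $\relTrans(\pi_N)=[\Inn(P),N]$ for every normal subgroup $N$; perfectness is used only to identify this with $N$.
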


\begin{cor}
\label{proposition:relTrans_of_strongly_conncted_is_Inn-perfect}
    For a strongly connected homomorphism $f\colon P\twoheadrightarrow Q$, the group $\relTrans(f)$ is $\Inn(P)$-perfect.
\end{cor}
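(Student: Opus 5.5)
By \cref{theorem: characterization of strongly connected via perfect subgroup}, we may replace $f$ by the quotient map $\pi_N\colon P\twoheadrightarrow P/N$ for some $\Inn(P)$-perfect normal subgroup $N\trianglelefteq\Inn(P)$, since $\relTrans(-)$ is invariant under composing with an isomorphism of the target. The goal is then to show $[\Inn(P),\relTrans(\pi_N)]=\relTrans(\pi_N)$, and the plan is to prove the equality
\[
\relTrans(\pi_N)=N
\]
and then use the perfectness of $N$.

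The inclusion $\relTrans(\pi_N)\subseteq N$ comes first. Take a generator $s_xs_y^{-1}$ with $\pi_N(x)=\pi_N(y)$. Then $y=\nu(x)$ for some $\nu\in N$, so
\[
s_xs_y^{-1}=s_x\nu s_x^{-1}\nu^{-1}=[s_x,\nu]\in[\Inn(P),N]\subseteq N .
\]

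For the reverse inclusion, I would use the same computation backwards. For $\varphi\in\Inn(P)$, $\nu\in N$ and $x\in P$, the points $x$ and $\nu(x)$ lie in the same $N$-orbit, so $s_x\nu s_x^{-1}\nu^{-1}=s_xs_{\nu(x)}^{-1}\in\relTrans(\pi_N)$. Hence $[s_x,\nu]\in\relTrans(\pi_N)$ for every generator $s_x$ of $\Inn(P)$.

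Next I extend this to all of $\Inn(P)$. The identity $[ab,\nu]=a[b,\nu]a^{-1}\,[a,\nu]$ handles products, and $[a^{-1},\nu]=a^{-1}[a,\nu]^{-1}a$ handles inverses. Both reduce everything to conjugates of the $[s_x,\nu]$, and these stay in $\relTrans(\pi_N)$ because it is normal in $\Inn(P)$ by \cref{lemma: elementary properties of relative symmetry groups}. Therefore $N=[\Inn(P),N]\subseteq\relTrans(\pi_N)$. Combining the two inclusions gives $\relTrans(\pi_N)=N$, which is $\Inn(P)$-perfect.

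The main obstacle is the identification $\relTrans(\pi_N)=N$, and in particular the reverse inclusion. There the point is that commutators with the generators $s_x$ already generate $[\Inn(P),N]$ as a normal subgroup, which is routine given the normality of $\relTrans(\pi_N)$.
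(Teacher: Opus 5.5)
Your proof is correct and takes the route the paper intends: the paper records this as an immediate corollary of the characterization theorem, and you supply the identification $\relTrans(\pi_N)=N$ that it leaves implicit (the same identification behind its later remark that a strongly connected $f$ is isomorphic to $\pi_{\relTrans(f)}$). Your first computation, applied directly to $f$, writes $y=\tau(x)$ with $\tau\in\relTrans(f)$ by strong connectedness; this gives $s_xs_y^{-1}=[s_x,\tau]\in[\Inn(P),\relTrans(f)]$ and proves the claim without invoking the characterization theorem at all.
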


We next recall the standard examples used in this paper.

\begin{eg}[Conjugacy quandles]
\label[eg]{example: conjugacy quandle}
    Let $G$ be a group. Its \emph{conjugacy quandle} $\Conj(G)$ is the set $G$ equipped with $s_x(y)=xyx^{-1}$ for $x,y\in G$. A surjective group homomorphism $\phi$ induces a surjective quandle homomorphism $f\coloneqq \Conj(\phi)\colon\Conj(G)\to\Conj(H)$. Write $K\coloneqq \Ker(\phi)$ and $A\coloneqq \phi^{-1}(Z(H))$. Then, we have
    \begin{align*}
        \relInn(f) &= \{ i_g \mid g \in G \text{ such that } \phi(g) \in Z(H)\} \cong A/Z(G), \text{ and} \\
        \relTrans(f) &= \{i_k \mid k\in \Ker(\phi)\} \cong K /(K\cap Z(G)),
    \end{align*}
    where $i_g$ is given by $i_g(x)=gxg^{-1}$ (see {\cite[Proposition~5.1]{preprint_yuki_tomoki_2026_relativization_of_symmetries_on_quandles}}).
\end{eg}

\begin{eg}[Alexander quandles]
\label[eg]{example: Alexander quandle}
    Let $A$ be an abelian group and let $\sigma\in\Aut(A)$ be a group automorphism of $A$. The \emph{Alexander quandle} $\Alex(A,\sigma)$ is the set $A$ equipped with
    \[ s_x(y)=(1-\sigma)x+\sigma(y) \qquad (x,y\in A). \]
    For $a\in A$, let $L_a\colon A\to A$ be the translation $L_a(y)=a+y$. It is immediate to see that $s_0=\sigma$ and for any $x\in \Alex(A,\sigma)$, $s_x = L_{(1-\sigma)x} \circ \sigma$ as maps.
    
    Let $(B,\tau)$ be another abelian group with an automorphism, and let $\Phi\colon A\twoheadrightarrow B$ be a surjective group homomorphism satisfying $\Phi\sigma=\tau\Phi$. It induces a surjective quandle homomorphism $f\coloneqq \Alex(\Phi)\colon \Alex(A,\sigma)\twoheadrightarrow\Alex(B,\tau)$.
    Write $K\coloneqq\Ker(\Phi)$ and $D\coloneqq(1-\sigma)A\cap K$. Then, we have
    \begin{align*}
        \relInn(f) &= \{L_a\sigma^n \mid a\in (1-\sigma)A\cap K,\ n\in\ZZ \text{ such that }\tau^n=\id_B\}, \text{ and} \\
        \relTrans(f) &= \{L_a \mid a\in(1-\sigma)K\} \cong (1-\sigma)K
    \end{align*}
    (see {\cite[Propositions~5.12 and 5.14]{preprint_yuki_tomoki_2026_relativization_of_symmetries_on_quandles}}).
    
    In particular, if the induced homomorphism $\langle\sigma\rangle \twoheadrightarrow \langle\tau\rangle$ is injective, then $\relInn(f)=\{L_a\mid a\in D\}$.
\end{eg}

\begin{eg}[Linear Alexander quandles]
\label[eg]{example: linear Alexander quandle}
    Let $n\geq1$, and let $a$ be an integer coprime to $n$. The Alexander quandle associated with multiplication by $a$ on $\ZZ/n\ZZ$ is denoted by $\Lambda_{n,a}$. Explicitly, the symmetry maps on the Alexander quandle $\Lambda_{n,a}$ are given by
    \[ s_x(y)=(1-a)x+ay \qquad (x,y\in\mathbb Z/n\mathbb Z). \]
    In particular, $\Lambda_{n,1}$ is the trivial quandle of order $n$ and $R_n\coloneqq \Lambda_{n, -1}$ is the dihedral quandle of order $n$.
\end{eg}

\section{Nilpotent quandle homomorphisms}
\label{section: nilpotent homomorphisms}

In this section, we introduce nilpotent surjective homomorphisms of
quandles and study their structure in terms of coverings.

\subsection{Nilpotent quandles}
\label{subsection: nilpotent quandles}

For a group $G$, the \emph{lower central series} of $G$ is defined inductively by $\grpLCS_0(G)=G$ and $\grpLCS_{i+1}(G)=[G,\grpLCS_i(G)]$. 
The group $G$ is called \emph{nilpotent} if $\grpLCS_c(G)$ is trivial for some $c\geq 0$. 
Remark here that we adopt the degree-zero convention on indexing: the lower central series starts at $0$, while Darn\'e's notation uses indices starting at $1$.

Using the inner automorphism group, Darn\'e introduced nilpotent quandles:

\begin{dfn}[{\cite[Definition~2.1]{darne_2026_nilpotent_quandles}}]
\label[dfn]{definition: nilpotent quandle}
    A quandle $Q$ is called \emph{nilpotent} if $\Inn(Q)$ is a nilpotent group. More precisely, we say that $Q$ is \emph{nilpotent of class at most $c$}, or \emph{$c$-nilpotent}, if $\grpLCS_{c}(\Inn(Q))=1$. The smallest such integer $c$ is called the \emph{nilpotency class} of $Q$ and is denoted by $\ncl(Q)$.
\end{dfn}

\begin{rem}
    As already remarked, our indexing is slightly different from Darn\'e's \cite{darne_2026_nilpotent_quandles}.
    In terms of our notation, his definition of $(c+1)$-nilpotency is $\relLCS_{c}(\Inn(Q))=1$.
    Thus, our numerical class is one less than Darn\'e's quandle-class convention.
\end{rem}

\begin{eg}
    A quandle $Q$ is $0$-nilpotent if and only if $\Inn(Q)$ is a trivial group, if and only if $Q$ is a trivial quandle.
    A quandle $Q$ is $1$-nilpotent if and only if $\Inn(Q)$ is an abelian group.
\end{eg}

\subsection{Nilpotent homomorphisms and their basic properties}
\label{subsection: nilpotent homomorphisms and their basic properties}

In the spirit of relativization, we now introduce a notion of nilpotency for quandle homomorphisms. The nilpotency of the relative inner automorphism group alone, however, is not sufficient; we need to consider the conjugation action of the whole inner automorphism group. In other words, we utilize the notion of relative nilpotency of a normal subgroup.

Recall that for a normal subgroup $N$ of a group $G$, the \emph{$G$-relative lower central series} of $N$ is defined inductively by $\relLCS_0(G,N)=N$ and $\relLCS_{i+1}(G,N)=[G,\relLCS_i(G,N)]$. 
Then, $N$ is called \emph{$G$-nilpotent} if $\relLCS_c(G,N)$ is trivial for some $c\geq 0$. See \cref{subsection in appendix: relative nilpotency for groups} for the details on relative nilpotency of groups.

Applying this to $\relInn(f)\trianglelefteq\Inn(P)$, we introduce the following definition.

\begin{dfn}
\label[dfn]{definition: nilpotent quandle homomorphism}
    A surjective quandle homomorphism $f\colon P\twoheadrightarrow Q$ is called \emph{nilpotent} if $\relInn(f)$ is $\Inn(P)$-nilpotent, or equivalently if the group homomorphism $f_*\colon \Inn(P)\twoheadrightarrow \Inn(Q)$ is nilpotent. More precisely, for $c\geq0$, we say that $f$ is \emph{nilpotent of class at most $c$}, or \emph{$c$-nilpotent}, if $\relLCS_{c}(\Inn(P),\relInn(f))=1$. We write $\ncl(f)\coloneqq \ncl(f_*)=\ncl_{\Inn(P)}(\relInn(f))$ for its nilpotency class.
\end{dfn}

In what follows, we will often use the abbreviation $\relLCS_i(f)\coloneqq \relLCS_i(\Inn(P),\relInn(f))$.

%Note that, in our convention in \cref{appendix: relative nilpotency for groups}, when $f$ is not nilpotent, $\ncl(f)=\infty$.

\begin{eg}
    For a non-empty quandle $P$, consider the unique homomorphism $f\colon P\twoheadrightarrow\{\ast\}$ into the terminal quandle. Then, since $\relInn(f) = \Inn(P)$, $f$ is $c$-nilpotent if and only if $P$ is a $c$-nilpotent quandle.

    On the other hand, we do not consider the empty quandle nilpotent, since the unique homomorphism $\emptyset\to\{\ast\}$ is not surjective. This differs from Darn\'e's convention; see \cref{remark: minor caveat concerning the empty quandle}.
\end{eg}

The following characterizations of the first two nilpotency classes are immediate.

\begin{prop}
\label[prop]{proposition: low nilpotency classes}
    Let $f\colon P\twoheadrightarrow Q$ be a surjective quandle homomorphism.
    \begin{enumerate}
        \item $f$ is $0$-nilpotent if and only if it is rigid.
        \item $f$ is $1$-nilpotent if and only if $f_*\colon \Inn(P)\to \Inn(Q)$ is a central extension.
    \end{enumerate}
\end{prop}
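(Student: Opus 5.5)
Both items follow by unwinding \cref{definition: nilpotent quandle homomorphism} at $c=0$ and $c=1$, using the convention $\relLCS_0(f)=\relInn(f)$ and $\relLCS_1(f)=[\Inn(P),\relInn(f)]$.

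For (1), $f$ is $0$-nilpotent exactly when $\relLCS_0(f)=\relInn(f)=1$. By \cref{definition: connected and rigid quandle homomorphisms}(3), this is the definition of rigidity. Equivalently, the surjection $f_*$ of \cref{proposition: functoriality of Inn for surjections} has trivial kernel and is therefore an isomorphism.

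For (2), $f$ is $1$-nilpotent exactly when $[\Inn(P),\relInn(f)]=1$. This says every element of $\relInn(f)=\Ker(f_*)$ commutes with every element of $\Inn(P)$, that is, $\Ker(f_*)\subseteq Z(\Inn(P))$. Since $f_*$ is surjective, this is precisely the statement that $f_*$ is a central extension.

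Nothing here is an obstacle; the only point to check is the indexing convention, namely that $\relLCS_1$ is the first commutator with the ambient group $\Inn(P)$ rather than the derived subgroup of $\relInn(f)$. Accordingly, the condition in (2) is centrality in $\Inn(P)$, not mere commutativity of $\relInn(f)$.
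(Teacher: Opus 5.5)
Your proposal is correct and is the same direct unwinding of definitions the paper intends. The paper calls these characterizations immediate and gives no separate proof: $\relLCS_0(f)=\relInn(f)=1$ is rigidity, and $[\Inn(P),\relInn(f)]=1$ means exactly $\Ker(f_*)\subseteq Z(\Inn(P))$.
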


In particular, we have the implications: $0$-nilpotent $\Leftrightarrow$ rigid $\Rightarrow$ covering $\Rightarrow$ $1$-nilpotent.

\begin{prop}
\label[prop]{proposition: nilpotency under composition}
Let $f\colon P\twoheadrightarrow Q$ and $g\colon Q\twoheadrightarrow R$ be surjective quandle homomorphisms. 
\begin{enumerate}
    \item\label{item:composition_of_nilpotent_quandle_hom} If $f$ is $c$-nilpotent and $g$ is $d$-nilpotent, then $g \circ f$ is $(c+d)$-nilpotent.
    \item\label{item:cancelation_of_nilpotent_quandle_hom} If $g\circ f$ is $c$-nilpotent, then both $f$ and $g$ are so.
\end{enumerate}
Consequently, whenever all three homomorphisms are nilpotent,
\[
\max\{\ncl(f),\ncl(g)\}\leq\ncl(g\circ f) \leq\ncl(f)+\ncl(g).
\]
\end{prop}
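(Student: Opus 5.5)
We reduce everything to the group-theoretic behaviour of relative lower central series, using the functoriality of $\Inn$ on surjections (\cref{proposition: functoriality of Inn for surjections}), so that $(g\circ f)_* = g_*\circ f_*$. Write $G=\Inn(P)$, $H=\Inn(Q)$, $N=\relInn(g\circ f)=\Ker(g_*f_*)$ and $M=\relInn(f)=\Ker(f_*)$. Then $M\subseteq N$, and by \cref{lemma: relative symmetry groups under a surjective factorization} we have $f_*(N)=\relInn(g)$.

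For \eqref{item:composition_of_nilpotent_quandle_hom}, the first step is to check that a surjective group homomorphism $\phi\colon G\twoheadrightarrow H$ satisfies $\phi(\relLCS_i(G,K))=\relLCS_i(H,\phi(K))$ for every normal $K\trianglelefteq G$. This follows by induction, since $\phi([G,X])=[\phi(G),\phi(X)]=[H,\phi(X)]$. Applying it with $K=N$ gives $f_*(\relLCS_d(G,N))=\relLCS_d(H,\relInn(g))=1$, so $\relLCS_d(G,N)\subseteq M$. Next we use the monotonicity $X\subseteq Y \Rightarrow \relLCS_i(G,X)\subseteq \relLCS_i(G,Y)$ together with the index shift $\relLCS_{c}(G,\relLCS_d(G,N))=\relLCS_{c+d}(G,N)$, which is immediate from the inductive definition. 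Combining these, $\relLCS_{c+d}(G,N)\subseteq\relLCS_c(G,M)=1$, so $g\circ f$ is $(c+d)$-nilpotent.

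For \eqref{item:cancelation_of_nilpotent_quandle_hom}, the statement for $f$ follows from $M\subseteq N$ and monotonicity: $\relLCS_c(G,M)\subseteq\relLCS_c(G,N)=1$. For $g$, the identity from the first step gives $\relLCS_c(H,\relInn(g))=f_*(\relLCS_c(G,N))=1$. The final inequalities are then immediate: the lower bound from \eqref{item:cancelation_of_nilpotent_quandle_hom} with $c=\ncl(g\circ f)$, and the upper bound from \eqref{item:composition_of_nilpotent_quandle_hom} with $c=\ncl(f)$ and $d=\ncl(g)$.

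The only step requiring care is the identity $f_*(N)=\relInn(g)$, i.e.\ that $f_*$ maps the kernel of the composite onto the kernel of $g_*$. This holds because $f_*$ is surjective: any $\psi\in\Ker g_*$ lifts to some $\varphi\in G$, and then $g_*f_*(\varphi)=1$. In any case it is supplied by the cited lemma. Everything else is routine manipulation of commutator subgroups, where we may use that images of generated subgroups are generated by the images.
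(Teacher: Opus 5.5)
Your proof is correct and follows the paper's route: the paper also passes to $f_*$ and $g_*$ via the functoriality of $\Inn$ and invokes the group-theoretic \cref{proposition: composition of nilpotent group homomorphisms}, whose part (2) is exactly your monotonicity-plus-image argument. The only variation is in part (1): the paper concatenates the two towers of central extensions (\cref{theorem: characterization of relative nilpotency}), whereas you compute directly that $\relLCS_d(G,N)\subseteq\relInn(f)$ and hence $\relLCS_{c+d}(G,N)=\relLCS_c(G,\relLCS_d(G,N))\subseteq\relLCS_c(G,\relInn(f))=1$; the two arguments are equivalent.
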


\begin{proof}
By the functoriality of $\Inn(-)$ for surjections (\cref{proposition: functoriality of Inn for surjections}), we have surjective group homomorphisms $f_*\colon \Inn(P)\to \Inn(Q)$ and $g_*\colon \Inn(Q)\to\Inn(R)$.
Applying \cref{proposition: composition of nilpotent group homomorphisms} to these homomorphisms yields the assertions.
\end{proof}

\begin{prop}[cf. {\cite[Proposition 2.9]{ishihara_tamaru_2016_flat_connected_finite_quandles}}]
\label[prop]{proposition: inner automorphism groups of products}
Let $P$ and $P'$ be quandles. Then, there is a canonical injective group homomorphism
\[ \iota_{P,P'}\colon \Inn(P\times P') \hookrightarrow \Inn(P)\times\Inn(P') \]
satisfying $\iota_{P,P'}(s_{(x,x')})=(s_x,s_{x'})$. Moreover, these homomorphisms are natural with respect to surjective quandle homomorphisms.
\end{prop}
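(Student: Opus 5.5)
The plan is to construct $\iota_{P,P'}$ as the restriction of an obvious injective map on automorphism groups, and then to check naturality on generators.

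\emph{Construction.} Consider the group homomorphism
\[
\Aut(P)\times\Aut(P')\to\Aut(P\times P'),\qquad (\alpha,\alpha')\mapsto\alpha\times\alpha'.
\]
Here $P\times P'$ carries the componentwise structure $s_{(x,x')}=s_x\times s_{x'}$. The map is well defined: a product of automorphisms is an automorphism of the product quandle. It is also visibly injective when $P$ and $P'$ are non-empty. Its image contains every generator $s_{(x,x')}=s_x\times s_{x'}$ of $\Inn(P\times P')$. Hence $\Inn(P\times P')$ lies in the image of the subgroup $\Inn(P)\times\Inn(P')$. We then define $\iota_{P,P'}$ as the inverse of this injection, restricted to $\Inn(P\times P')$. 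It is an injective group homomorphism, and by construction $\iota_{P,P'}(s_{(x,x')})=(s_x,s_{x'})$.

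\emph{Empty factors.} If $P$ or $P'$ is empty, then $P\times P'=\emptyset$ and $\Inn(P\times P')$ is trivial. The trivial map is then the required injection.

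\emph{Equivalent description via restriction.} When both factors are non-empty, $\iota_{P,P'}(\varphi)$ can also be read off directly. Fix $x'_0\in P'$; then $\varphi(x,x'_0)$ has first coordinate $\alpha(x)$, where $\alpha$ is the first component of $\iota_{P,P'}(\varphi)$. This shows independently that the first component does not depend on the choice of $x'_0$.

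\emph{Naturality.} Let $f\colon P\twoheadrightarrow R$ and $f'\colon P'\twoheadrightarrow R'$ be surjective. Then $f\times f'$ is surjective, so by \cref{proposition: functoriality of Inn for surjections} all three induced maps $(f\times f')_*$, $f_*$ and $f'_*$ exist. We must show
\[
(f_*\times f'_*)\circ\iota_{P,P'}=\iota_{R,R'}\circ(f\times f')_* .
\]
Both sides are group homomorphisms out of $\Inn(P\times P')$, so it suffices to compare them on the generators $s_{(x,x')}$. There both sides give $(s_{f(x)},s_{f'(x')})$, so they agree.

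I expect no real obstacle here. The only point needing care is that an arbitrary element of $\Inn(P\times P')$ really does split as a product. This is handled by the fact that the splitting map $\Aut(P)\times\Aut(P')\to\Aut(P\times P')$ is a homomorphism containing the generators in its image.
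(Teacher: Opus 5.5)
Your proof is correct, but it builds $\iota_{P,P'}$ from the opposite side to the paper.

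\textbf{The paper's route.} It sets $\iota_{P,P'}\coloneqq((\pi_1)_*,(\pi_2)_*)$, using the projections and the functoriality of $\Inn(-)$ for surjections (\cref{proposition: functoriality of Inn for surjections}). It then gets injectivity from the universal property of $P\times P'$: if $\pi_1\circ\varphi=\pi_1$ and $\pi_2\circ\varphi=\pi_2$, then $\varphi=\id$.

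\textbf{Your route.} You note that $\Inn(P\times P')\subseteq\Phi(\Inn(P)\times\Inn(P'))$ for the componentwise embedding $\Phi\colon\Aut(P)\times\Aut(P')\hookrightarrow\Aut(P\times P')$, and you invert $\Phi$ there. Injectivity then comes for free. The construction also needs no appeal to the well-definedness of $f_*$; that result enters your argument only to make sense of the naturality square.

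\textbf{What each buys.} Your route builds the componentwise action $\varphi=\varphi_1\times\varphi_2$ of each $\varphi\in\Inn(P\times P')$ directly into the definition. This is the form in which the map is used later, e.g.\ for $s_{\varphi(z)}=(s_{\varphi_1(x)},s_{\varphi_2(x')})$ in the proof of \cref{proposition: first properties of reduced homomorphisms}. The paper's version needs the one-line extra observation $\pi_i\circ\varphi=(\pi_i)_*(\varphi)\circ\pi_i$ to get this. In return, the paper's route makes the map visibly canonical, with components literally $(\pi_1)_*(\varphi)$ and $(\pi_2)_*(\varphi)$, so naturality follows formally from functoriality.

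\textbf{Minor remarks.} The two maps agree on generators and hence coincide. Your ``equivalent description'' paragraph is essentially the check $\pi_1\circ\varphi=\alpha\circ\pi_1$, i.e.\ $\alpha=(\pi_1)_*(\varphi)$. Its claim about independence from $x'_0$ is tautological, given how you defined $\alpha$. Your separate treatment of empty factors is genuinely needed in your approach, since $\Phi$ need not be injective when exactly one factor is empty.
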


\begin{proof}
If either factor is empty, then $\Inn(P\times P')=1$, and the assertions are immediate.
Suppose that $P$ and $P'$ are non-empty, and let $\pi_1\colon P\times P'\twoheadrightarrow P$ and $\pi_2\colon P\times P'\twoheadrightarrow P'$ be the projections. Then, we obtain a natural group homomorphism $\iota_{P,P'}\coloneqq((\pi_1)_*,(\pi_2)_*)\colon \Inn(P\times P')\to \Inn(P)\times\Inn(P')$ from the universal property of the product of groups.

Let $\varphi\in\Ker(\iota_{P,P'})$. Then, $(\pi_1)_*(\varphi)=\id=(\pi_2)_*(\varphi)$, so $\pi_1\circ\varphi=\pi_1$ and $\pi_2\circ\varphi=\pi_2$. Thus, the universal property of $P\times P'$ gives $\varphi=\id_{P\times P'}$. This shows $\iota_{P,P'}$ is injective.

For surjective homomorphisms $f\colon P\twoheadrightarrow Q$ and $f'\colon P'\twoheadrightarrow Q'$, both $(f_*\times f'_*)\circ\iota_{P,P'}$ and $\iota_{Q,Q'}\circ(f\times f')_*$ send $s_{(x,x')}$ to $(s_{f(x)},s_{f'(x')})$. Since the symmetry maps generate $\Inn(P\times P')$, these homomorphisms coincide, proving naturality.
\end{proof}

\begin{prop}
\label[prop]{proposition: nilpotency under finite products}
    Let $f\colon P\twoheadrightarrow Q$ and $f'\colon P'\twoheadrightarrow Q'$ be $c$- and $c'$-nilpotent, respectively. Then $f\times f'\colon P\times P'\twoheadrightarrow Q\times Q'$ is nilpotent of class at most $\max\{c,c'\}$.
\end{prop}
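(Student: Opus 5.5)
The plan is to use the injection $\iota=\iota_{P,P'}$ of \cref{proposition: inner automorphism groups of products} and its naturality to embed $\relInn(f\times f')$ into $\relInn(f)\times\relInn(f')$, compatibly with the ambient groups. Put $F=f\times f'$, which is surjective. Naturality gives
\[ (f_*\times f'_*)\circ\iota_{P,P'}=\iota_{Q,Q'}\circ F_*. \]
If $\varphi\in\relInn(F)$, then $F_*(\varphi)=1$, so $(f_*\times f'_*)(\iota(\varphi))=1$. Hence $\iota(\relInn(F))\subseteq\relInn(f)\times\relInn(f')$.

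Next I will show by induction on $i$ that
\[ \iota\bigl(\relLCS_i(\Inn(P\times P'),\relInn(F))\bigr)\subseteq \relLCS_i(\Inn(P),\relInn(f))\times\relLCS_i(\Inn(P'),\relInn(f')). \]
The case $i=0$ is the inclusion just established. For the inductive step, $\iota$ is a group homomorphism with image inside $\Inn(P)\times\Inn(P')$. It therefore sends a commutator $[g,n]$, with $g\in\Inn(P\times P')$ and $n\in\relLCS_i(F)$, to $[\iota(g),\iota(n)]$. Computed componentwise, this commutator lies in $[\Inn(P),\relLCS_i(f)]\times[\Inn(P'),\relLCS_i(f')]=\relLCS_{i+1}(f)\times\relLCS_{i+1}(f')$. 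Since the right-hand side is a subgroup, it also contains the image of the subgroup generated by these commutators.

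Finally, take $i=\max\{c,c'\}$. The relative lower central series is decreasing, because each $\relLCS_i$ is normal in the ambient group and so $[G,\relLCS_i]\subseteq\relLCS_i$. Hence $\relLCS_i(f)=1$ and $\relLCS_i(f')=1$. By the inclusion above and the injectivity of $\iota$, it follows that $\relLCS_i(F)=1$, so $F$ is $\max\{c,c'\}$-nilpotent.

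The only point needing care is the empty case. If $P$ or $P'$ is empty, then $\Inn(P\times P')=1$, so $\relInn(F)=1$ and the claim is trivial. This case is in any case covered by the proof of \cref{proposition: inner automorphism groups of products}. I do not expect a real obstacle; the substantive step is the naturality square, which is already available.
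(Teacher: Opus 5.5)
Your proof is correct and follows the paper's own route. The paper uses the same naturality square for the embeddings $\iota_{P,P'}$ and $\iota_{Q,Q'}$, and then cites two appendix facts: $\relLCS_i$ of a product of surjections is the product of the $\relLCS_i$, and relative lower central series can only shrink when passing to a compatible subgroup pair. Your inline induction proves exactly the inclusion those two facts provide.
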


\begin{proof}
By \cref{proposition: inner automorphism groups of products}, we have an inclusion between surjections of groups:
\[\begin{tikzcd}
    \Inn(P)\times \Inn(P') \arrow[two heads]{r}{f_*\times f'_*} & \Inn(Q) \times \Inn(Q') \\
    \Inn(P\times P') \arrow[two heads]{r}{(f\times f')_*} \arrow[hook]{u} & \Inn(Q\times Q')\rlap{.} \arrow[hook]{u}
\end{tikzcd}\]
Since $f_*$ and $f'_*$ are nilpotent of class at most $c$ and $c'$ by the assumption, respectively, the product $f_*\times f'_*$ is nilpotent of class at most $d\coloneqq\max\{c,c'\}$ by \cref{proposition: relative nilpotency under finite products}. Therefore, by \cref{lemma: relative lower central series under subgroups}, so is $(f\times f')_*$, and thus the product $f\times f'$ is $d$-nilpotent.
\end{proof}

The relative-inner filtration records the number of covering steps before the final rigid factor.
The characterization below is a direct relativization of \cite[Theorem 2.13]{darne_2026_nilpotent_quandles}.

\begin{lem}
\label[lem]{lemma: quotient covering via commutator}
    Let $P$ be a quandle, and $N\subseteq M\subseteq \Inn(P)$ be (normal) subgroups of $\Inn(P)$. If $[\Inn(P),M] \subseteq N$, then the induced quandle homomorphism $h \colon P/N \to P/M$ is a covering homomorphism.
\end{lem}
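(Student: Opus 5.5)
We need to show that $h\colon P/N\to P/M$ is surjective and that $h([x]_N)=h([y]_N)$ implies $s_{[x]_N}=s_{[y]_N}$. The plan is to lift everything to $P$ and use the commutator hypothesis.

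Surjectivity and well-definedness come first. Since $N\subseteq M$, every $N$-orbit lies in an $M$-orbit, so $h([x]_N)=[x]_M$ is well defined and surjective. It is a quandle homomorphism because both quotient maps are, and the structure is given by $s_{[x]}([y])=[s_x(y)]$ (\cref{proposition: orbit quotients of quandles}). Alternatively, use \cref{prop:basic_property_of_quotient_by_normal_subgroup}\eqref{item:factoring_condition_through_quotient_by_normal_subgroup}: the map $\pi_M$ satisfies $N\subseteq M\subseteq\relInn(\pi_M)$, so it factors through $\pi_N$.

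For the covering condition, suppose $[x]_M=[y]_M$, so that $y=\mu(x)$ for some $\mu\in M$. The key computation is
\[ s_y=s_{\mu(x)}=\mu s_x\mu^{-1}, \]
which gives
\[ s_ys_x^{-1}=\mu s_x\mu^{-1}s_x^{-1}=[\mu,s_x]\in[M,\Inn(P)]=[\Inn(P),M]\subseteq N. \]
Write $\nu\coloneqq s_ys_x^{-1}\in N$. For any $z\in P$ we then get
\[ s_{[y]_N}([z]_N)=[s_y(z)]_N=[\nu s_x(z)]_N=[s_x(z)]_N=s_{[x]_N}([z]_N), \]
because $\nu\in N$ does not change $N$-orbits. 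Hence $s_{[x]_N}=s_{[y]_N}$. Since any two preimages $[x]_N,[y]_N$ of the same point have representatives related in this way, $h$ is a covering.

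There is no real obstacle here. The only point needing care is the commutator convention: $[\mu,s_x]\in[M,\Inn(P)]$, and $[M,\Inn(P)]=[\Inn(P),M]$ because $[a,b]^{-1}=[b,a]$.
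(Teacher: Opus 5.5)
Your proof is correct and follows essentially the same route as the paper: write $y=\mu(x)$ with $\mu\in M$, observe that $s_ys_x^{-1}=[\mu,s_x]\in[\Inn(P),M]\subseteq N$, and conclude that $s_{[x]_N}=s_{[y]_N}$. The paper phrases the last step as $(\pi_N)_*(s_xs_y^{-1})=\id$ (using $N\subseteq\relInn(\pi_N)$), whereas you check the equality pointwise on $P/N$. This is the same argument, and you additionally spell out the well-definedness and surjectivity of $h$, which the paper leaves implicit.
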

\begin{proof}
%Since $[\Inn(P),N]\subseteq[\Inn(P),M]\subseteq N$ and $[\Inn(P),M]\subseteq N\subseteq M$, both $N$ and $M$ are normal in $\Inn(P)$. Thus, the quotients $P/N$ and $P/M$ are defined.
Let $\pi_N$ and $\pi_M$ be the quotient map by $N$ and $M$, respectively. Take $\pi_N(x),\pi_N(y)\in P/N$, where $x,y \in P$, such that they have the same image in $P/M$. Then, $\pi_M(x)=h(\pi_N(x))=h(\pi_N(y))=\pi_M(y)$, so we have $y=\varphi(x)$ for some $\varphi\in M$. The assumption implies that
\[ s_xs_y^{-1} =s_xs_{\varphi(x)}^{-1} = s_x\varphi s_x^{-1} \varphi^{-1} =[s_x,\varphi] \in [\Inn(P),M] \subseteq N. \]
Thus, it follows that $(\pi_N)_*(s_xs_y^{-1})=\id$, that is, $s_{\pi_N(x)}=s_{\pi_N(y)}$ on $P/N$. Therefore, $P/N\twoheadrightarrow P/M$ is a covering homomorphism.
\end{proof}

\begin{thm}
\label[thm]{theorem: nilpotent homomorphism and covering rigid tower}
Let $f\colon P\twoheadrightarrow Q$ be a surjective quandle homomorphism and let $c\geq0$. Then the following conditions are equivalent.
\begin{enumerate}
    \item\label{item:c-nilpotent} $f$ is $c$-nilpotent.
    \item\label{item:factorization_into_c_covering_and_1_rigid} $f$ admits a factorization
    \[
    \begin{tikzcd}
        P=P_{c}
        \arrow[two heads]{r}{p_c}
        &
        P_{c-1}
        \arrow[two heads]{r}{p_{c-1}}
        &
        \cdots
        \arrow[two heads]{r}
        &
        P_{1}
        \arrow[two heads]{r}{p_1}
        &
        P_{0}
        \arrow[two heads]{r}{h}
        &
        Q
    \end{tikzcd}
    \]
    in which the first $c$ homomorphisms $p_c,\dots,p_1$ are coverings and the last homomorphism $h$ is rigid.
\end{enumerate}
\end{thm}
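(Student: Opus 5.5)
For the implication (1)$\Rightarrow$(2), I would build the tower directly from the relative lower central series. Put $N_i\coloneqq\relLCS_{c-i}(f)$ for $0\le i\le c$, so that $N_0=\relInn(f)$ and $N_c=\relLCS_c(f)=1$. Each $N_i$ is normal in $\Inn(P)$, since it is an iterated commutator with $\Inn(P)$ of a normal subgroup. Set $P_i\coloneqq P/N_i$ via \cref{proposition: orbit quotients of quandles}; then $P_c=P/1=P$. Because $N_{i}\subseteq N_{i-1}$, the quotient map $P\twoheadrightarrow P/N_i$ factors through $P/N_{i-1}$ by \cref{prop:basic_property_of_quotient_by_normal_subgroup}. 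This gives maps $p_i\colon P_i\twoheadrightarrow P_{i-1}$. Since $[\Inn(P),N_{i-1}]=N_i$, \cref{lemma: quotient covering via commutator} shows that each $p_i$ is a covering. Finally, take $h\colon P_0=P/\relInn(f)\twoheadrightarrow Q$. By \cref{theorem: connected-rigid factorization} this map is rigid, and $f$ factors through it because $\relInn(f)\subseteq\relInn(f)$. All the maps are compatible quotient maps, so their composite is $f$.

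For (2)$\Rightarrow$(1), I would argue by induction on $c$ using \cref{proposition: nilpotency under composition}. A rigid $h$ is $0$-nilpotent by \cref{proposition: low nilpotency classes}. Each covering $p_i$ is $1$-nilpotent: \cref{proposition: covering hom is central extension} shows that $(p_i)_*$ is a central extension, and \cref{proposition: low nilpotency classes} then gives $1$-nilpotency. Composing with \cref{proposition: nilpotency under composition}(1) yields that $f$ is $(c\cdot 1+0)$-nilpotent, that is, $c$-nilpotent.

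The main obstacle is bookkeeping rather than substance. One point is the degenerate case $c=0$, where the tower reduces to $P=P_0\xrightarrow{h}Q$ and both directions amount to "rigid $\Leftrightarrow$ $0$-nilpotent". The other is checking in (1)$\Rightarrow$(2) that $P_c$ is indeed $P$ and not merely isomorphic to it, which is fine because orbits of the trivial group are singletons. I do not expect any further difficulty.
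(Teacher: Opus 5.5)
Your proposal is correct and follows the paper's proof. For (1)$\Rightarrow$(2) it uses the same tower of quotients by $\relLCS_i(f)$, with \cref{lemma: quotient covering via commutator} and the connected--rigid factorization; for (2)$\Rightarrow$(1), your appeal to \cref{proposition: nilpotency under composition} is just a repackaging of the paper's direct use of $\Inn(-)$ and \cref{theorem: characterization of relative nilpotency}, and the only slips are in indexing: to get $N_0=\relInn(f)$, $N_c=1$ and $[\Inn(P),N_{i-1}]=N_i$ you need $N_i\coloneqq\relLCS_i(f)$ rather than $\relLCS_{c-i}(f)$, and it is $P\twoheadrightarrow P/N_{i-1}$ that factors through $P/N_i$, not the reverse.
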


\begin{proof}
\eqref{item:c-nilpotent} $\Rightarrow$ \eqref{item:factorization_into_c_covering_and_1_rigid}:
We abbreviate $\relLCS_i(\Inn(P),\relInn(f))$ to $\relLCS_i(f)$. By \cref{theorem: connected-rigid factorization}, $f$ factors as a connected homomorphism $p\colon P\to P/\relInn(f)$ followed by a rigid homomorphism $h\colon P/\relInn(f) \to Q$. If $f$ is $c$-nilpotent, then $\relLCS_c(f)=1$, and hence the $\Inn(P)$-relative lower central series
\[ 1= \relLCS_c(f) \subseteq \relLCS_{c-1}(f) \subseteq \cdots \subseteq \relLCS_0(f) = \relInn(f) \]
stops. Taking successive quotients by these normal subgroups, we moreover obtain the following factorization of $p$:
\[
\begin{tikzcd}
    P=P/\relLCS_c(f)
    \arrow[two heads]{r}{p_c}
    &
    P/\relLCS_{c-1}(f)
    \arrow[two heads]{r}{p_{c-1}}
    &
    \cdots
    \arrow[two heads]{r}{p_1}
    &
    P/\relLCS_0(f) = P/\relInn(f).
\end{tikzcd}
\]
By \cref{lemma: quotient covering via commutator}, all $p_i$ are in fact a covering homomorphism. Thus $f$ has the required factorization.

\eqref{item:factorization_into_c_covering_and_1_rigid} $\Rightarrow$ \eqref{item:c-nilpotent}: By \cref{proposition: covering hom is central extension}, applying $\Inn(-)$ to the factorization of $f$ induces a factorization of $f_*$ into $c$ central extensions. Therefore, by \cref{theorem: characterization of relative nilpotency}, the group homomorphism $f_*$ is nilpotent of class at most $c$, and hence $f$ is $c$-nilpotent.
\end{proof}

By \cref{theorem: connected-rigid factorization}, the rigid part of the connected--rigid factorization of a connected homomorphism is an isomorphism. Thus, we have the following corollary.

\begin{cor}
\label[cor]{corollary: connected nilpotent homomorphism}
If $f\colon P\twoheadrightarrow Q$ is connected, then $f$ is $c$-nilpotent if and only if it is a composite of $c$ coverings.
\end{cor}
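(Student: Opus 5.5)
The plan is to read the corollary off \cref{theorem: nilpotent homomorphism and covering rigid tower}, using connectedness to get rid of the rigid factor $h$. I handle the two implications separately.

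For the forward direction, suppose $f$ is connected and $c$-nilpotent, and run the proof of \eqref{item:c-nilpotent} $\Rightarrow$ \eqref{item:factorization_into_c_covering_and_1_rigid} of \cref{theorem: nilpotent homomorphism and covering rigid tower}. It factors $f$ as
\[
P=P/\relLCS_c(f)\twoheadrightarrow\cdots\twoheadrightarrow P/\relInn(f)\xrightarrow{h} Q,
\]
where the first $c$ maps are coverings and $h$ is the rigid part of the connected--rigid factorization of \cref{theorem: connected-rigid factorization}. Since $f$ is connected, the orthogonal factorization system shows that $h$ is an isomorphism. Concretely:
\begin{itemize}
\item The pair $(f,\id_Q)$ is a connected--rigid factorization of $f$, because identities are rigid.
\item Uniqueness of factorizations up to isomorphism then forces $h$ to be an isomorphism.
\item Alternatively, $h$ is connected (connectedness descends along the surjective factorization, by \cref{lemma: relative symmetry groups under a surjective factorization}, since the orbits of $\relInn(f)$ map onto orbits of $\relInn(h)$) and rigid, so it is an isomorphism by the fact recalled after \cref{definition: connected and rigid quandle homomorphisms}.
\end{itemize}
Composing $h$ with $p_1$ gives a covering, since a covering followed by an isomorphism is a covering. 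This yields $f$ as a composite of exactly $c$ coverings.

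The case $c=0$ needs a separate remark. Here $f=h\circ\id$ is itself an isomorphism, and I read it as the empty composite of coverings, i.e.\ an identity up to isomorphism.

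For the converse, suppose $f=p_1\circ\cdots\circ p_c$ with each $p_i$ a covering. Take $P_0=Q$ and $h=\id_Q$, which is rigid. Then \eqref{item:factorization_into_c_covering_and_1_rigid} $\Rightarrow$ \eqref{item:c-nilpotent} of \cref{theorem: nilpotent homomorphism and covering rigid tower} gives that $f$ is $c$-nilpotent. This direction does not use connectedness.

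The only delicate point is justifying that $h$ is an isomorphism. I would use the factorization-system uniqueness argument, since it is already stated in \cref{theorem: connected-rigid factorization}.
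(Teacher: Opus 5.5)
Your proposal is correct and matches the paper's argument. The paper likewise applies \cref{theorem: nilpotent homomorphism and covering rigid tower} and uses \cref{theorem: connected-rigid factorization} to see that the rigid factor $h$ of a connected homomorphism is an isomorphism, so it can be absorbed into $p_1$; the converse takes $h=\id_Q$, and your separate treatment of $c=0$ is a harmless clarification the paper leaves implicit.
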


\begin{prop}
    Let $f\colon P\to Q$ be a surjective quandle homomorphism and $k\colon Q'\to Q$ be an arbitrary homomorphism. Consider the pullback $f'\colon P'\coloneqq P\times_Q Q'\to Q'$ of $f$ along $k$. If $f$ is $c$-nilpotent, then the pullback $f'$ is $(c+1)$-nilpotent.
\end{prop}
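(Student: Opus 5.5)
The plan is to reduce the statement to the covering tower of \cref{theorem: nilpotent homomorphism and covering rigid tower} and to the stability of coverings under pullback (\cref{proposition: pull-back of covering is again covering}). Since $f$ is $c$-nilpotent, \cref{theorem: nilpotent homomorphism and covering rigid tower} gives a factorization
\[ f = h\circ p_1\circ\cdots\circ p_c \colon P=P_c\twoheadrightarrow P_{c-1}\twoheadrightarrow\cdots\twoheadrightarrow P_0\twoheadrightarrow Q \]
with every $p_i$ a covering and $h$ rigid. Since a rigid homomorphism is a covering, this writes $f$ as a composite of $c+1$ coverings, and I will pull each of them back along $k$.

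First I pull the tower back stage by stage. Set $Q'_{\mathrm{new}}\coloneqq Q'$, let $h'\colon P_0'\coloneqq P_0\times_Q Q'\to Q'$ be the pullback of $h$ along $k$, and inductively let $p_i'\colon P_i'\coloneqq P_i\times_{P_{i-1}}P_{i-1}'\to P_{i-1}'$ be the pullback of $p_i$ along the projection $P_{i-1}'\to P_{i-1}$. By the pasting lemma for pullback squares, $P_c'$ together with $h'\circ p_1'\circ\cdots\circ p_c'$ is a pullback of $f$ along $k$. Hence it is isomorphic to $f'\colon P\times_Q Q'\to Q'$ over $Q'$. By \cref{proposition: pull-back of covering is again covering}, each $p_i'$ and $h'$ is a covering. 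Here one checks directly that each is surjective: pullbacks in $\Quandle$ are computed on underlying sets, and a set-theoretic pullback of a surjection is surjective.

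Thus $f'$ is a composite of the $c+1$ coverings $h',p_1',\dots,p_c'$. Appending the identity of $Q'$, which is rigid, gives a factorization of the form in \cref{theorem: nilpotent homomorphism and covering rigid tower}\eqref{item:factorization_into_c_covering_and_1_rigid} with $c+1$ coverings. Hence $f'$ is $(c+1)$-nilpotent. Alternatively, each covering is $1$-nilpotent, so \cref{proposition: nilpotency under composition} gives the same bound.

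The only delicate point, which also explains the loss of one in the class, is the rigid factor $h$. Rigidity is preserved only by pullback along surjections, and $k$ is arbitrary. So I cannot keep $h'$ rigid and must demote it to a covering using the implication rigid $\Rightarrow$ covering. I also need the pullback of a surjection to remain surjective, so that all relative groups and the tower theorem apply. Beyond this, the argument is just bookkeeping with the pasting lemma.
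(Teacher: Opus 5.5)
Your proposal is correct and follows essentially the same route as the paper. You write $f$ as a composite of $c+1$ coverings by demoting the rigid factor, then pull the chain back using the stability of coverings under pullback. The only difference is cosmetic and occurs in the last step: the paper applies $\Inn(-)$ to obtain $c+1$ central extensions and invokes the group-theoretic characterization of relative nilpotency, whereas you append the rigid $\id_{Q'}$ and reuse the tower theorem (or the composition bound), which amounts to the same argument.
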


\begin{proof}
    If $f$ is $c$-nilpotent, then $f$ is a composition of $c+1$ covering homomorphisms by \cref{theorem: nilpotent homomorphism and covering rigid tower}, since rigid homomorphisms are covering. Pulling back this chain yields a factorization of $f'$ into $c+1$ covering homomorphisms, by the stability of covering homomorphisms under pullback (\cref{proposition: pull-back of covering is again covering}). Applying $\Inn(-)$ to this factorization, we obtain a decomposition of $f'_*$ into $c+1$ central extensions. Hence, $f'$ is $(c+1)$-nilpotent by \cref{theorem: characterization of relative nilpotency}.
\end{proof}

\begin{cor}
\label[cor]{corollary: fibers of nilpotent homomorphisms}
If $f\colon P\twoheadrightarrow Q$ is $c$-nilpotent, then every fiber
$f^{-1}(q)$, regarded as a subquandle of $P$, is $c$-nilpotent.
\end{cor}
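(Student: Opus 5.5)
The plan is to argue directly with lower central series inside $\Inn(P)$, not through the pullback proposition above, which would only give class $c+1$. Fix $q\in Q$ and put $F=f^{-1}(q)$. For $x\in F$ the map $s_x$ preserves $F$, because $f(s_x(y))=s_q(q)=q$ for $y\in F$; the same holds for $s_x^{-1}$. Let $H=\langle s_x\mid x\in F\rangle\subseteq\Inn(P)$. Then $H$ acts on $F$, and restriction to $F$ gives a surjective group homomorphism $H\twoheadrightarrow\Inn(F)$ sending $s_x$ to the symmetry of $x$ in $F$. Since lower central series terms map onto lower central series terms under surjections, it suffices to show $\grpLCS_c(H)=1$.

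The key step is the inclusion $\grpLCS_1(H)\subseteq\relLCS_1(f)$. Take $x,y\in F$ and set $n=s_x^{-1}s_y$. Since $f(x)=f(y)$, we have $f_*(n)=s_q^{-1}s_q=\id$, so $n\in\relInn(f)$. Then
\[ [s_x,s_y]=[s_x,s_xn]=s_x\,[s_x,n]\,s_x^{-1}\in\relLCS_1(f), \]
because $\relLCS_1(f)=[\Inn(P),\relInn(f)]$ is normal in $\Inn(P)$. The group $[H,H]$ is the normal closure in $H$ of these commutators of generators, so $[H,H]\subseteq\relLCS_1(f)$. Inductively,
\[ \grpLCS_{i+1}(H)=[H,\grpLCS_i(H)]\subseteq[\Inn(P),\relLCS_i(f)]=\relLCS_{i+1}(f) \]
for all $i\geq1$. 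Hence, for $c\geq1$, $c$-nilpotency of $f$ gives $\grpLCS_c(H)\subseteq\relLCS_c(f)=1$, and therefore $\grpLCS_c(\Inn(F))=1$.

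The case $c=0$ needs a separate argument, since $H$ itself need not lie in $\relInn(f)$. Here $f$ is rigid, so $\relInn(f)=1$ and the element $n$ above is trivial, that is, $s_x=s_y$ for all $x,y\in F$. Then for $y\in F$ we get $s_x(y)=s_y(y)=y$, so each $s_x$ restricts to the identity on $F$. Thus $F$ is a trivial quandle, i.e.\ $0$-nilpotent.

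The main obstacle is obtaining the sharp class $c$ rather than $c+1$. This is exactly what the rewriting $s_y=s_xn$ achieves: it places the first commutator subgroup of $H$ already in $\relLCS_1(f)$, not merely in $\relInn(f)$. The empty-fiber case does not arise, since $f$ is surjective.
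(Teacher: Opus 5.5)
Your proof is correct, and it takes a different route from the paper's.

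The paper argues through towers. It takes the factorization of \cref{theorem: nilpotent homomorphism and covering rigid tower} into $c$ coverings followed by a rigid map, and pulls it back along $\{q\}\hookrightarrow Q$. The first $c$ maps stay coverings by pullback stability of coverings. The paper avoids the loss of one class (the loss the general pullback proposition would incur) by observing that the bottom pulled-back map has as domain a fiber of a covering. That domain is a trivial quandle, so the map is rigid, and the theorem is then applied in reverse.

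You instead work directly inside $\Inn(P)$. The identity $[s_x,s_y]=s_x[s_x,s_x^{-1}s_y]s_x^{-1}$, with $s_x^{-1}s_y\in\relInn(f)$, places $[H,H]$ in $\Gamma_1(f)$. Induction then gives $\Gamma_i(H)\subseteq\Gamma_i(f)$ for $i\geq1$. Your separate treatment of $c=0$ (fibers of a rigid map are trivial subquandles) plays exactly the role of the paper's observation about the bottom map.

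The paper's route is shorter given the established machinery, and it shows that the whole covering tower restricts to the fiber. Yours is elementary: it needs neither the tower theorem nor pullback stability of coverings. It also makes explicit the comparison between the lower central series of $\langle s_x\mid x\in F\rangle$ and the relative series of $f$. This comparison is essentially the commutator identity behind \cref{lemma: quotient covering via commutator}, written out without passing through quotients.
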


\begin{proof}
   Consider the pullback of the factorization in \cref{theorem: nilpotent homomorphism and covering rigid tower} along $\{q\}\hookrightarrow Q$. The first $c$ homomorphisms remain coverings, and the last one is rigid since its domain is a fiber of a covering homomorphism and hence a trivial quandle. Thus the assertion follows from \cref{theorem: nilpotent homomorphism and covering rigid tower} again.
\end{proof}

Darn\'e's universal nilpotent quotient construction in \cite[Proposition~2.15]{darne_2026_nilpotent_quandles} admits the following relative version.

\begin{prop}
\label[prop]{proposition: universal property of nilpotent truncation}
    Let $f\colon P\to Q$ be a surjective quandle homomorphism and let $c\geq0$. Then, the induced homomorphism $\nilpTrunc{c}{f}\colon P/\relInnLCS{c}{f} \twoheadrightarrow Q$ is $c$-nilpotent. 
    
    Moreover, the factorization $f= \nilpTrunc{c}{f}\circ \pi$, where $\pi\colon P\to P/\relInnLCS{c}{f}$ is the quotient map, is universal among factorizations of $f$ whose second homomorphism is $c$-nilpotent.
    That is, if $f=h\circ u$ with $u\colon P\twoheadrightarrow R$ surjective and $h\colon R\twoheadrightarrow Q$ $c$-nilpotent, then $u$ factors uniquely through $P/\relInnLCS{c}{f}$.
\end{prop}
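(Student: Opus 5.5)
The proof has two parts: first that $\nilpTrunc{c}{f}$ is $c$-nilpotent, then the universal property. Write $N\coloneqq\relInnLCS{c}{f}$. This is a normal subgroup of $\Inn(P)$ contained in $\relInn(f)$, so by \cref{prop:basic_property_of_quotient_by_normal_subgroup} the map $f$ factors as $f=\nilpTrunc{c}{f}\circ\pi$ with $\pi\colon P\twoheadrightarrow P/N$. Both maps are surjective.

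For nilpotency, apply \cref{lemma: relative symmetry groups under a surjective factorization} to get $\pi_*(\relInn(f))=\relInn(\nilpTrunc{c}{f})$. The surjection $\pi_*\colon\Inn(P)\twoheadrightarrow\Inn(P/N)$ sends $[\Inn(P),X]$ onto $[\Inn(P/N),\pi_*(X)]$. Inducting on $i$ then gives
\[ \pi_*\bigl(\relInnLCS{i}{f}\bigr)=\relLCS_i\bigl(\Inn(P/N),\relInn(\nilpTrunc{c}{f})\bigr) \]
for all $i$. At $i=c$ this yields $\relLCS_c(\nilpTrunc{c}{f})=\pi_*(N)$. By \cref{prop:basic_property_of_quotient_by_normal_subgroup}\eqref{item:N_is_contained_in_relInn_of_projection}, $N\subseteq\relInn(\pi)=\Ker(\pi_*)$, so $\pi_*(N)=1$ and $\nilpTrunc{c}{f}$ is $c$-nilpotent.

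For the universal property, let $f=h\circ u$ with $u$ surjective and $h$ $c$-nilpotent. By \cref{prop:basic_property_of_quotient_by_normal_subgroup}\eqref{item:factoring_condition_through_quotient_by_normal_subgroup}, it suffices to show $N\subseteq\relInn(u)=\Ker(u_*)$. Uniqueness is automatic because $\pi$ is surjective. Using the same image computation for $u_*$, with \cref{lemma: relative symmetry groups under a surjective factorization} giving $u_*(\relInn(f))=\relInn(h)$, we get
\[ u_*\bigl(\relInnLCS{c}{f}\bigr)=\relLCS_c\bigl(\Inn(R),\relInn(h)\bigr)=1, \]
the last equality because $h$ is $c$-nilpotent. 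So $N\subseteq\Ker(u_*)$, and $u$ factors through $\pi$ via some $\bar u\colon P/N\to R$. Finally, $h\circ\bar u\circ\pi=f=\nilpTrunc{c}{f}\circ\pi$, and $\pi$ is surjective, so $h\circ\bar u=\nilpTrunc{c}{f}$.

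The only step needing real care is the compatibility of the relative lower central series with surjections, i.e.\ that a surjective homomorphism $\phi\colon G\twoheadrightarrow H$ with $\phi(M)=M'$ satisfies $\phi(\relLCS_i(G,M))=\relLCS_i(H,M')$. This follows by induction from the fact that $\phi$ maps the generating commutators $[g,m]$ onto generating commutators $[\phi(g),\phi(m)]$, using surjectivity of $\phi$. Presumably this is already recorded in the appendix and can simply be cited.
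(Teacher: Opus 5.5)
Your proposal is correct and matches the paper's proof: you factor $f$ through $\pi$, then combine \cref{lemma: relative symmetry groups under a surjective factorization} with the functoriality of the relative lower central series under surjections (recorded in the appendix as \cref{proposition: functoriality of relative lower central series}) to get $\relInnLCS{c}{\nilpTrunc{c}{f}}=\pi_*(\relInnLCS{c}{f})=1$, and the same computation for $u_*$ gives the universal property. Your extra checks, that $\pi_*(\relInnLCS{c}{f})=1$ because $\relInnLCS{c}{f}\subseteq\Ker(\pi_*)$ and that the induced map $\bar u$ satisfies $h\circ\bar u=\nilpTrunc{c}{f}$, are details the paper leaves implicit.
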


\begin{proof}
    Note that since $\relInnLCS{c}{f}\subseteq \relInn(f)$, $f$ factors through the quotient map $\pi\colon P\to P/\relInnLCS{c}{f}$ as $f=\nilpTrunc{c}{f}\circ \pi$ (\cref{prop:basic_property_of_quotient_by_normal_subgroup}). Applying $\Inn(-)$, we have $f_*= (\nilpTrunc{c}{f})_*\circ \pi_*$. Since we see from \cref{lemma: relative symmetry groups under a surjective factorization}
    \[\pi_*(\relInn(f))=\pi_*(\Ker(f_*))= \Ker((\nilpTrunc{c}{f})_*)= \relInn(\nilpTrunc{c}{f}), \]
    \cref{proposition: functoriality of relative lower central series} implies that
    \[ \relInnLCS{c}{\nilpTrunc{c}{f}} = \relLCS_c(\Inn(P/\relInnLCS{c}{f}),\relInn(\nilpTrunc{c}{f})) = \pi_*( \relLCS_c(\Inn(P),\relInn(f)))=\pi_*(\relInnLCS{c}{f}) =1, \]
    which shows that $\nilpTrunc{c}{f}$ is $c$-nilpotent.
    
    For the universal property, if $f=h \circ u$ and $h$ is $c$-nilpotent, then $u_*(\relInnLCS{c}{f})=\relInnLCS{c}{h}=1$, so $u$ factors uniquely through $P/\relInnLCS{c}{f}$ by \cref{prop:basic_property_of_quotient_by_normal_subgroup}.
\end{proof}

\begin{dfn}
\label[dfn]{definition: nilpotent truncations}
    Let $c\geq0$. For a surjective quandle homomorphism $f\colon P \twoheadrightarrow Q$, the induced $c$-nilpotent homomorphism $\nilpTrunc{c}{f}\colon P/\relInnLCS{c}{f}\twoheadrightarrow Q$ is called the \emph{$c$-nilpotent truncation} of $f$.
\end{dfn}

\subsection{Nilpotency via relative transvection groups}
\label{subsection: Nilpotency via relative transvections and adjoint groups}

We have defined the nilpotency of $f$ in terms of the relative inner group $\relInn(f)$. We next show that the same nilpotency condition can also be characterized in terms of the relative transvection group $\relTrans(f)$. The following inclusion provides the key link between these two groups.

\begin{lem}[{\cite[Lemma 3.11]{preprint_yuki_tomoki_2026_relativization_of_symmetries_on_quandles}}]
\label{lem:commutator_inn_rel_trans_rel}
    For a surjective quandle homomorphism $f \colon P \twoheadrightarrow Q$, we have $[\Inn(P), \relInn(f)] \subseteq \relTrans(f)$.
    Moreover, if $f$ is connected, the equality $[\Inn(P), \relInn(f)] = \relTrans(f)$ holds.
\end{lem}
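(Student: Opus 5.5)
To prove $[\Inn(P),\relInn(f)]\subseteq\relTrans(f)$, I will check the inclusion on commutators $[\psi,\varphi]$ with $\psi\in\Inn(P)$ and $\varphi\in\relInn(f)$. These generate the left-hand side, and $\relTrans(f)$ is a subgroup, so this suffices.

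\emph{Reduction to generators.} First I reduce to $\psi=s_x^{\pm1}$ for $x\in P$. The identity
\[
[\psi_1\psi_2,\varphi]=\psi_1[\psi_2,\varphi]\psi_1^{-1}\,[\psi_1,\varphi]
\]
does this. We have $\relTrans(f)\trianglelefteq\Inn(P)$ by \cref{lemma: elementary properties of relative symmetry groups}, and $\relInn(f)$ is normal, so the set of $\psi$ with $[\psi,\varphi]\in\relTrans(f)$ for all $\varphi\in\relInn(f)$ is closed under products. For inverses, $[\psi^{-1},\varphi]=\psi^{-1}[\psi,\varphi]^{-1}\psi$, so it is closed under inverses as well.

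\emph{Key computation.} Now take $\psi=s_x$. By axiom \eqref{condition:Q3}, every inner automorphism $\varphi$ satisfies $\varphi s_x\varphi^{-1}=s_{\varphi(x)}$. Hence
\[
[s_x,\varphi]=s_x\varphi s_x^{-1}\varphi^{-1}=s_x\,s_{\varphi^{}(s_x^{-1}\cdot)}\ldots
\]
is better organised as
\[
[s_x,\varphi]=s_x\bigl(\varphi s_x\varphi^{-1}\bigr)^{-1}=s_xs_{\varphi(x)}^{-1}.
\]
Since $\varphi\in\relInn(f)$, we have $f\circ\varphi=f$ by \cref{lemma: elementary properties of relative symmetry groups}. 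So $f(\varphi(x))=f(x)$, and $s_xs_{\varphi(x)}^{-1}$ is one of the generators of $\relTrans(f)$. This is the computation already used in \cref{lemma: quotient covering via commutator}.

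\emph{Equality when $f$ is connected.} The reverse inclusion needs each generator $s_xs_y^{-1}$ with $f(x)=f(y)$ to be a commutator. Connectedness means $\relInn(f)$ acts transitively on the fiber $f^{-1}(f(x))$, so there is $\varphi\in\relInn(f)$ with $y=\varphi(x)$. Then $s_xs_y^{-1}=[s_x,\varphi]\in[\Inn(P),\relInn(f)]$ by the computation above. The subgroup $[\Inn(P),\relInn(f)]$ is generated by these elements' closure under the group operations, so it contains $\relTrans(f)$.

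There is no serious obstacle here. The only points needing care are the commutator convention $[x,y]=xyx^{-1}y^{-1}$ and the conjugation formula $\varphi s_x\varphi^{-1}=s_{\varphi(x)}$ for automorphisms. With these fixed, the generator computation carries the whole argument.
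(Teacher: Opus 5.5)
Your proof is correct and is the standard direct argument. The paper does not reprove this lemma; it imports it from its companion preprint. Your key identity $[s_x,\varphi]=s_xs_{\varphi(x)}^{-1}$ is the same computation the paper runs in the proof of \cref{lemma: quotient covering via commutator}, and your reduction to generators and transitivity argument are both sound. Delete the aborted first display in the key computation. Also rephrase the last step as: $[\Inn(P),\relInn(f)]$ is a subgroup containing every generator $s_xs_y^{-1}$ of $\relTrans(f)$, hence contains $\relTrans(f)$.
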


We have defined $\relInnLCS{i}{f}\coloneqq \relLCS_i(\Inn(P),\relInn(f))$ for a surjective homomorphism $f\colon P \twoheadrightarrow Q$. We also define $\relTransLCS{i}{f}\coloneqq\relLCS_{i}(\Inn(P),\relTrans(f))$ for the $i$-th term of the relative lower central series of $\relTrans(f)$.

\begin{prop}
\label[prop]{proposition: nilpotency detected by relative transvection group}
Let $f\colon P\twoheadrightarrow Q$ be a surjective homomorphism, and let $c\geq 0$.
\begin{enumerate}
    \item\label{item:nilpotent_of_relInn_implies_that_of_relTrans} If $\relInn(f)$ is $\Inn(P)$-nilpotent of class at most $c$, then $\relTrans(f)$ is $\Inn(P)$-nilpotent of class at most $c$.
    \item\label{item:nilpotent_of_relTrans_implies_that_of_relInn} If $\relTrans(f)$ is $\Inn(P)$-nilpotent of class at most $c$, then $\relInn(f)$ is $\Inn(P)$-nilpotent of class at most $c+1$.
\end{enumerate}
\end{prop}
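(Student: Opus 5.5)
Both parts reduce to comparing the two relative lower central series $\relInnLCS{i}{f}$ and $\relTransLCS{i}{f}$ inside $\Inn(P)$. The only inputs are the inclusions
\[
[\Inn(P),\relInn(f)]\subseteq\relTrans(f)\subseteq\relInn(f),
\]
the first from \cref{lem:commutator_inn_rel_trans_rel} and the second from \cref{lemma: elementary properties of relative symmetry groups}, together with the monotonicity of the operation $N\mapsto[\Inn(P),N]$ with respect to inclusion of normal subgroups.

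For \eqref{item:nilpotent_of_relInn_implies_that_of_relTrans}, we prove $\relTransLCS{i}{f}\subseteq\relInnLCS{i}{f}$ for all $i\geq0$ by induction on $i$.
\begin{itemize}
  \item The case $i=0$ is the inclusion $\relTrans(f)\subseteq\relInn(f)$.
  \item For the inductive step, if $\relTransLCS{i}{f}\subseteq\relInnLCS{i}{f}$, then
  \[
  \relTransLCS{i+1}{f}=[\Inn(P),\relTransLCS{i}{f}]\subseteq[\Inn(P),\relInnLCS{i}{f}]=\relInnLCS{i+1}{f}.
  \]
\end{itemize}
Hence $\relInnLCS{c}{f}=1$ forces $\relTransLCS{c}{f}=1$. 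Alternatively, this is the general fact that a $G$-normal subgroup of a $G$-nilpotent normal subgroup of class at most $c$ is again $G$-nilpotent of class at most $c$, which presumably appears in the appendix as \cref{lemma: relative lower central series under subgroups}.

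For \eqref{item:nilpotent_of_relTrans_implies_that_of_relInn}, we prove the index-shifted inclusion $\relInnLCS{i+1}{f}\subseteq\relTransLCS{i}{f}$ for all $i\geq0$, again by induction.
\begin{itemize}
  \item The base case $i=0$ reads $[\Inn(P),\relInn(f)]\subseteq\relTrans(f)$, which is exactly \cref{lem:commutator_inn_rel_trans_rel}.
  \item For the inductive step,
  \[
  \relInnLCS{i+2}{f}=[\Inn(P),\relInnLCS{i+1}{f}]\subseteq[\Inn(P),\relTransLCS{i}{f}]=\relTransLCS{i+1}{f}.
  \]
\end{itemize}
Taking $i=c$ gives $\relInnLCS{c+1}{f}\subseteq\relTransLCS{c}{f}=1$, so $\relInn(f)$ is $\Inn(P)$-nilpotent of class at most $c+1$.

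Neither part has a genuine obstacle; all the content sits in \cref{lem:commutator_inn_rel_trans_rel}, which we are allowed to assume. The only points to be careful about are the index shift by one in part \eqref{item:nilpotent_of_relTrans_implies_that_of_relInn} and the fact that all the subgroups involved are normal in $\Inn(P)$. Normality holds because each term of a relative lower central series of a normal subgroup is again normal, so every commutator subgroup above is well defined and monotone.
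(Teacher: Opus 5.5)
Your proof is correct and is essentially the paper's argument. Part (1) is the same induction from $\relTrans(f)\subseteq\relInn(f)$, and part (2) is the same index-shifted induction from $[\Inn(P),\relInn(f)]\subseteq\relTrans(f)$, which the paper phrases as $\relInnLCS{i+1}{f}=\relLCS_i(\Inn(P),\relInnLCS{1}{f})\subseteq\relTransLCS{i}{f}$.

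One small correction to your aside on part (1): the appendix fact you want is item (1) of \cref{proposition: products and intersections of relatively nilpotent subgroups}, not \cref{lemma: relative lower central series under subgroups}. The latter needs $\Inn(P)/\relTrans(f)$ to embed compatibly into $\Inn(P)/\relInn(f)$, which fails unless the two subgroups coincide.
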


\begin{proof}
\eqref{item:nilpotent_of_relInn_implies_that_of_relTrans}: For all $i\geq 0$, the inclusions $\relTransLCS{i}{f}\subseteq \relInnLCS{i}{f}$ follow inductively from $\relTrans(f)\subseteq \relInn(f)$. Therefore, if $\relInnLCS{c}{f}=1$, then we have $\relTransLCS{c}{f}=1$.

\eqref{item:nilpotent_of_relTrans_implies_that_of_relInn}: \cref{lem:commutator_inn_rel_trans_rel} says $\relInnLCS{1}{f}=[\Inn(P),\relInn(f)]\subseteq \relTrans(f)=\relTransLCS{0}{f}$. Thus, by induction again, we see $\relInnLCS{i+1}{f}=\relLCS_i(\Inn(P),\relInnLCS{1}{f}) \subseteq \relLCS_i(\Inn(P),\relTransLCS{0}{f})=\relTransLCS{i}{f}$.
From this, if $\relTransLCS{c}{f}=1$, then we have $\relInnLCS{c+1}{f}=1$.
\end{proof}

From \cite[Corollary 3.7]{darne_2026_nilpotent_quandles}, any connected and nilpotent quandle is isomorphic to the terminal quandle. This can be relativized as follows.

\begin{prop}
\label[prop]{proposition: strongly connected and nilpotent is iso}
    Any strongly connected and nilpotent homomorphism is an isomorphism.
\end{prop}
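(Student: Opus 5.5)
The plan is to show first that $\relTrans(f)=1$, which makes $f$ a covering, and then to use strong connectedness on the fibers to conclude that $f$ is injective.

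Let $f\colon P\twoheadrightarrow Q$ be strongly connected and nilpotent. By \cref{proposition:relTrans_of_strongly_conncted_is_Inn-perfect}, the group $N\coloneqq\relTrans(f)$ is $\Inn(P)$-perfect, that is, $[\Inn(P),N]=N$. By induction on $i$ this gives $\relTransLCS{i}{f}=N$ for every $i\geq0$: the case $i=0$ is the definition, and if $\relTransLCS{i}{f}=N$ then
\[
\relTransLCS{i+1}{f}=[\Inn(P),\relTransLCS{i}{f}]=[\Inn(P),N]=N.
\]

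On the other hand, $f$ is nilpotent, say $c$-nilpotent. Then \cref{proposition: nilpotency detected by relative transvection group}\eqref{item:nilpotent_of_relInn_implies_that_of_relTrans} shows that $\relTrans(f)$ is $\Inn(P)$-nilpotent of class at most $c$, so $\relTransLCS{c}{f}=1$. Combining this with the previous step gives $N=\relTransLCS{c}{f}=1$.

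Now $\relTrans(f)=1$ acts on every fiber of $f$. Strong connectedness says this action is transitive on each fiber, and the trivial group acts transitively on a set only if that set has at most one element. Hence every fiber of $f$ is a singleton, so the surjection $f$ is a bijection. A bijective quandle homomorphism is an isomorphism, because its inverse map automatically satisfies the homomorphism identity. This completes the argument.

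None of the steps is a real obstacle once \cref{proposition:relTrans_of_strongly_conncted_is_Inn-perfect} is available. The one point to check is that perfectness is taken relative to the same ambient group $\Inn(P)$ that appears in the lower central series. It is, so the series of $\relTrans(f)$ is constant and stabilizes at $N$.
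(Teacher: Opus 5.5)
Your proposal is correct and follows the paper's proof: $\Inn(P)$-perfectness of $\relTrans(f)$ makes its relative lower central series constant, and nilpotency (through $\relTransLCS{c}{f}\subseteq\relInnLCS{c}{f}$) then forces $\relTrans(f)=1$. The only difference is the last step: the paper concludes by identifying the strongly connected $f$ with the quotient $P\twoheadrightarrow P/\relTrans(f)$, whereas you argue directly that the trivial group can act transitively only on singleton fibers, so $f$ is bijective, which is equally valid and slightly more elementary.
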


\begin{proof}
    If $f\colon P \twoheadrightarrow Q$ is strongly connected, then $\relTrans(f)$ is $\Inn(P)$-perfect by \cref{proposition:relTrans_of_strongly_conncted_is_Inn-perfect}, and $\relTransLCS{i}{f}=\relTrans(f)$ for all $i\geq 0$. Thus, if $f$ is moreover $c$-nilpotent for some $c\geq 0$, then $\relInnLCS{c}{f}$, hence $\relTransLCS{c}{f}=\relTrans(f)$, becomes trivial.
    Since $f$ is isomorphic to $\pi_{\relTrans(f)}$ as quotients of $P$, $f$ is an isomorphism.
\end{proof}

The nilpotency class of $\relTrans(f)$, plus one, turns out to record the length of a decomposition into covering homomorphisms.

\begin{thm}
\label[thm]{theorem: transvection class and iterated coverings}
Let $f\colon P\twoheadrightarrow Q$ be a surjective quandle homomorphism and let $c\geq0$. Then, $\relTransLCS{c}{f}=1$ if and only if $f$ is a composition of $c+1$ covering homomorphisms.
\end{thm}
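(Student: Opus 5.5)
For the forward direction, assume $\relTransLCS{c}{f}=1$ and use the filtration
\[ 1=\relTransLCS{c}{f}\subseteq \relTransLCS{c-1}{f}\subseteq\cdots\subseteq \relTransLCS{0}{f}=\relTrans(f) \]
of normal subgroups of $\Inn(P)$. Taking successive quotients gives
\[ P=P/\relTransLCS{c}{f}\twoheadrightarrow P/\relTransLCS{c-1}{f}\twoheadrightarrow\cdots\twoheadrightarrow P/\relTrans(f)\twoheadrightarrow Q. \]
Since $[\Inn(P),\relTransLCS{i}{f}]=\relTransLCS{i+1}{f}$, \cref{lemma: quotient covering via commutator} shows that each of the first $c$ maps is a covering. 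The last map $P/\relTrans(f)\twoheadrightarrow Q$ is a covering by \cref{theorem: maximal covering factorization}. This writes $f$ as a composite of $c+1$ coverings.

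For the converse, I argue by induction on $c$, proving the following claim for every surjection: if $f=h\circ u$ with $h$ a composite of $c$ coverings and $u\colon P\twoheadrightarrow R$ a covering, then $\relTransLCS{c}{f}=1$.

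The base case $c=0$ is immediate: $f$ is itself a covering, so $\relTrans(f)=1$ by \cref{lemma: elementary properties of relative symmetry groups}.

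For the inductive step, write $f=h\circ u$ where $u\colon P\twoheadrightarrow R$ is the first covering and $h$ is a composite of $c$ coverings. By induction, $\relTransLCS{c-1}{h}=1$. By \cref{lemma: relative symmetry groups under a surjective factorization}, $u_*(\relTrans(f))=\relTrans(h)$. Because $u_*$ is surjective, it carries the $\Inn(P)$-relative lower central series onto the $\Inn(R)$-relative one (\cref{proposition: functoriality of relative lower central series}). Hence $u_*(\relTransLCS{c-1}{f})=\relTransLCS{c-1}{h}=1$, that is, $\relTransLCS{c-1}{f}\subseteq\relInn(u)$.

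Since $u$ is a covering, $u_*$ is a central extension (\cref{proposition: covering hom is central extension}), so $\relInn(u)$ is central in $\Inn(P)$. Therefore
\[ \relTransLCS{c}{f}=[\Inn(P),\relTransLCS{c-1}{f}]\subseteq[\Inn(P),\relInn(u)]=1, \]
which completes the induction.

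The main point to get right is the bookkeeping of indices. The naive route, invoking \cref{theorem: nilpotent homomorphism and covering rigid tower} and \cref{proposition: nilpotency detected by relative transvection group}, only yields $\relTransLCS{c+1}{f}=1$, which is off by one. The gain of one step comes from the base case: for a single covering, $\relTrans$ already vanishes, and not merely $[\Inn(P),\relInn]$. The functoriality step $u_*(\relTransLCS{i}{f})=\relTransLCS{i}{h}$ should be checked carefully, but it follows from surjectivity of $u_*$ together with the identity $u_*(\relTrans(f))=\relTrans(h)$.
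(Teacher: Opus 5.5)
Your proposal is correct and follows essentially the same route as the paper. The forward direction splits $f$ as the maximal covering factorization $P\twoheadrightarrow P/\relTrans(f)\twoheadrightarrow Q$ and refines the first map along the relative lower central series of $\relTrans(f)$. The converse is the same induction: surjectivity of $u_*$ on relative transvection groups plus functoriality of the relative lower central series gives containment in $\relInn(u)$, and then centrality of $\relInn(u)$ for a covering $u$ finishes the step.
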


\begin{proof}
($\Rightarrow$): By \cref{theorem: maximal covering factorization}, $f$ factors as a homomorphism $p\colon P\to P/\relTrans(f)$ followed by a covering homomorphism $h\colon P/\relTrans(f) \to Q$.
If $\relTransLCS{c}{f}=1$, then the $\Inn(P)$-relative lower central series
\[ 1= \relTransLCS{c}{f} \subseteq \relTransLCS{c-1}{f} \subseteq \cdots \subseteq \relTransLCS{0}{f} = \relTrans(f) \]
stops. Taking successive quotients by these normal subgroups, we moreover obtain the following factorization of $p$:
\[
\begin{tikzcd}
    P=P/\relTransLCS{c}{f}
    \arrow[two heads]{r}{p_c}
    &
    P/\relTransLCS{c-1}{f}
    \arrow[two heads]{r}{p_{c-1}}
    &
    \cdots
    \arrow[two heads]{r}{p_1}
    &
    P/\relTransLCS{0}{f}= P/\relTrans(f).
\end{tikzcd}
\]
By \cref{lemma: quotient covering via commutator}, all $p_i$ are in fact covering homomorphisms. Thus $f$ has the desired factorization of $(c+1)$ covering homomorphisms.

($\Leftarrow$): We show by induction on $c$. The case $c=0$ is the characterization of covering homomorphisms (see \cref{lemma: elementary properties of relative symmetry groups}). Next suppose that $f=h\circ p$, where $p\colon P\twoheadrightarrow R$ is a covering and $h\colon R\twoheadrightarrow Q$ is a composition of $c$ coverings. By \cref{lemma: relative symmetry groups under a surjective factorization}, the map $p_*$ sends $\relTrans(f)$ onto $\relTrans(h)$. By the induction hypothesis, $\relTransLCS{c-1}{h}=\relLCS_{c-1}(\Inn(R),\relTrans(h))=1$, so \cref{proposition: functoriality of relative lower central series} gives $p_*(\relTransLCS{c-1}{f})=\relTransLCS{c-1}{h}=1$, and hence $\relTransLCS{c-1}{f}\subseteq\Ker(p_*)$. Since $\Ker(p_*)\subseteq \Inn(P)$ is central by \cref{proposition: covering hom is central extension}, we thus obtain $\relTransLCS{c}{f}=[\Inn(P),\relTransLCS{c-1}{f}]=1$.
\end{proof}

\begin{dfn}
\label[dfn]{definition: covering length and transvection nilpotency class}
    Let $f\colon P\to Q$ be a surjective quandle homomorphism.
    We define the \emph{covering length} $\covlen(f)$ as the smallest integer $n\geq 1$ for which $f$ can be expressed as a composition of $n$ non-invertible covering homomorphisms.
    By convention, isomorphisms are understood to have covering length $0$.
    If $f$ does not admit a covering decomposition, set $\covlen(f)=\infty$.
\end{dfn}

In addition, we put $\tncl(f)\coloneqq\ncl_{\Inn(P)}(\relTrans(f))$.
The above \cref{theorem: transvection class and iterated coverings} says $\tncl(f)+1 = \covlen(f)$ for a non-isomorphism $f$.

\begin{prop}
\label[prop]{theorem: universal property of covering length truncation}
    Let $f\colon P\to Q$ be a surjective quandle homomorphism and let $c\geq 0$. Then, the induced homomorphism $\covTrunc{c}{f}\colon P/\relTransLCS{c}{f} \twoheadrightarrow Q$ admits a decomposition into covering homomorphisms of length at most $c+1$. 
    
    Moreover, the factorization $f= \covTrunc{c}{f}\circ \pi$, where $\pi\colon P\to P/\relTransLCS{c}{f}$ is the quotient map, is universal among factorizations of $f$ whose second homomorphism has covering length at most $c+1$.
    That is, if $f=h\circ u$ with $u\colon P\twoheadrightarrow R$ surjective and $h\colon R\twoheadrightarrow Q$ of covering length at most $c+1$, then $u$ factors uniquely through $P/\relTransLCS{c}{f}$.
\end{prop}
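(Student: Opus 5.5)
The plan mirrors the proof of \cref{proposition: universal property of nilpotent truncation}, with $\relTrans$ in place of $\relInn$ and \cref{theorem: transvection class and iterated coverings} in place of \cref{theorem: nilpotent homomorphism and covering rigid tower}.

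\textbf{Existence of the factorization.} Since $\relTransLCS{c}{f}\subseteq\relTrans(f)\subseteq\relInn(f)$, \cref{prop:basic_property_of_quotient_by_normal_subgroup} shows that $f$ factors through the quotient map $\pi\colon P\twoheadrightarrow P/\relTransLCS{c}{f}$. Hence $f=\covTrunc{c}{f}\circ\pi$ with $\covTrunc{c}{f}$ surjective.

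\textbf{Covering length of $\covTrunc{c}{f}$.} By \cref{lemma: relative symmetry groups under a surjective factorization}, $\pi_*(\relTrans(f))=\relTrans(\covTrunc{c}{f})$. Since $\pi_*$ is a surjective group homomorphism, \cref{proposition: functoriality of relative lower central series} gives
\[
\relTransLCS{c}{\covTrunc{c}{f}}=\relLCS_c\bigl(\Inn(P/\relTransLCS{c}{f}),\relTrans(\covTrunc{c}{f})\bigr)=\pi_*(\relTransLCS{c}{f}).
\]
This image is trivial because $\relTransLCS{c}{f}\subseteq\relInn(\pi)=\Ker(\pi_*)$ by \cref{prop:basic_property_of_quotient_by_normal_subgroup}. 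By \cref{theorem: transvection class and iterated coverings}, $\covTrunc{c}{f}$ is therefore a composite of $c+1$ coverings. Some of these may be isomorphisms; discarding them or absorbing them into a neighbouring covering still gives length at most $c+1$.

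\textbf{Universality.} Suppose $f=h\circ u$ with $u$ surjective and $h$ of covering length at most $c+1$.
\begin{enumerate}
\item First write $h$ as a composite of exactly $c+1$ coverings. If $\covlen(h)=k\le c+1$, pad the chain with $c+1-k$ identities, which are coverings. If $h$ is an isomorphism, it is already a covering, and we pad the same way.
\item By \cref{theorem: transvection class and iterated coverings}, $\relTransLCS{c}{h}=1$.
\item By \cref{lemma: relative symmetry groups under a surjective factorization} and \cref{proposition: functoriality of relative lower central series}, $u_*(\relTransLCS{c}{f})=\relTransLCS{c}{h}=1$, so $\relTransLCS{c}{f}\subseteq\Ker(u_*)=\relInn(u)$.
\item By \cref{prop:basic_property_of_quotient_by_normal_subgroup}, $u$ factors through $\pi$. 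The factorization is unique because $\pi$ is surjective.
\end{enumerate}

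The argument is essentially formal. The only point needing care is the bookkeeping between ``composite of exactly $c+1$ coverings'' in \cref{theorem: transvection class and iterated coverings} and ``covering length at most $c+1$'', which depends on the convention that isomorphisms have length $0$. Identity padding in one direction and removal of isomorphism factors in the other handle it.
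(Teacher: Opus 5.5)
Your proposal is correct and follows the paper's proof essentially line by line: you factor $f$ through $\pi$, use $\pi_*(\relTrans(f))=\relTrans(\covTrunc{c}{f})$ together with functoriality of the relative lower central series to get $\relTransLCS{c}{\covTrunc{c}{f}}=\pi_*(\relTransLCS{c}{f})=1$, and for universality you push $\relTransLCS{c}{f}$ forward along $u_*$ and invoke \cref{theorem: transvection class and iterated coverings}. Your extra bookkeeping makes explicit the passage between ``composite of $c+1$ coverings'' and ``covering length at most $c+1$'', which the paper leaves implicit; this means padding with identity coverings, and absorbing isomorphism factors into an adjacent covering rather than literally discarding them.
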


\begin{proof}
    Note that since $\relTransLCS{c}{f}\subseteq \relInn(f)$, $f$ factors through the quotient map $\pi\colon P\to P/\relTransLCS{c}{f}$ as $f=\covTrunc{c}{f}\circ \pi$ (\cref{prop:basic_property_of_quotient_by_normal_subgroup}). 
    From \cref{lemma: relative symmetry groups under a surjective factorization} and \cref{proposition: functoriality of relative lower central series}, it follows that $\pi_*(\relTransLCS{c}{f})=\relTransLCS{c}{\covTrunc{c}{f}}$. Since $\pi_*(\relTransLCS{c}{f})=1$, we have $\relTransLCS{c}{\covTrunc{c}{f}}=1$, and hence $\covTrunc{c}{f}$ is a composition of $c+1$ covering homomorphisms by \cref{theorem: transvection class and iterated coverings}.
    
    For the universal property, if $f=h \circ u$ and $h$ is a composition of at most $c+1$ coverings, then $\relTransLCS{c}{h}=1$ by \cref{theorem: transvection class and iterated coverings}. Hence, $u_*(\relTransLCS{c}{f})=\relTransLCS{c}{h}=1$, and $u$ factors uniquely through $P/\relTransLCS{c}{f}$ by \cref{prop:basic_property_of_quotient_by_normal_subgroup}.
\end{proof}

\begin{dfn}
\label[dfn]{definition: covering length truncations}
    Let $c\geq 1$. For a surjective quandle homomorphism $f\colon P \twoheadrightarrow Q$, the induced homomorphism $\covTrunc{c-1}{f}\colon P/\relTransLCS{c-1}{f}\twoheadrightarrow Q$ of covering length at most $c$ is called the \emph{$c$-th covering-length truncation} of $f$.
\end{dfn}

\subsection{Nilpotency via relative adjoint groups}
\label{subsection: Nilpotency via relative adjoint groups}

The relative adjoint group, defined below, gives a third group-theoretic realization of the nilpotency condition. While the two groups $\relInn(f)$ and $\relTrans(f)$ are subgroups of $\Inn(P)$, the relative adjoint group lives in the central extension $\As(P)\twoheadrightarrow\Inn(P)$ and therefore changes the class by at most one. This is the relative counterpart of \cite[Corollary~2.5]{darne_2026_nilpotent_quandles}.

\begin{dfn}
\label[dfn]{definition: relative adjoint group}
For a surjective quandle homomorphism $f\colon P\twoheadrightarrow Q$, we define
\[
\relAs(f)\coloneqq\Ker(\As(f)\colon \As(P)\twoheadrightarrow\As(Q))
\]
and call it the \emph{relative adjoint group} of $f$.
\end{dfn}

\begin{lem}
\label[lem]{lemma: generators of relative adjoint group}
For a surjective homomorphism $f\colon P\twoheadrightarrow Q$, we have
\[
    \relAs(f) = \left\langle g_{x}g_{y}^{-1} \mid x,y\in P \text{ such that } f(x)=f(y) \right\rangle.
\]
\end{lem}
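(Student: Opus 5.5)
Let $K\subseteq\As(P)$ be the subgroup generated by the elements $g_xg_y^{-1}$ with $f(x)=f(y)$. We prove $\relAs(f)=K$ by a standard presentation argument: show that $K$ is normal, that $\As(P)/K$ satisfies the defining relations of $\As(Q)$, and conclude that the kernel is exactly $K$.

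\emph{Step 1: $K\subseteq\relAs(f)$.} This is immediate, since $\As(f)(g_xg_y^{-1})=g_{f(x)}g_{f(y)}^{-1}=1$ whenever $f(x)=f(y)$.

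\emph{Step 2: $K$ is normal in $\As(P)$.} It suffices to conjugate generators by $g_z^{\pm1}$. The defining relation gives $g_zg_xg_z^{-1}=g_{s_z(x)}$, so
\[
g_z(g_xg_y^{-1})g_z^{-1}=g_{s_z(x)}g_{s_z(y)}^{-1}.
\]
Here $f(s_z(x))=s_{f(z)}(f(x))=f(s_z(y))$, so the result again lies in the generating set. The same argument with $s_z^{-1}$, using $g_z^{-1}g_xg_z=g_{s_z^{-1}(x)}$, handles conjugation by $g_z^{-1}$.

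\emph{Step 3: the inverse map.} Define $\psi\colon\As(Q)\to\As(P)/K$ by $g_q\mapsto [g_x]$ for any $x\in f^{-1}(q)$. Surjectivity of $f$ guarantees such an $x$ exists, and the choice does not matter because $g_xg_y^{-1}\in K$ whenever $f(x)=f(y)$. To see that $\psi$ respects the relations $g_qg_r=g_{s_q(r)}g_q$, choose lifts $x\mapsto q$ and $y\mapsto r$. Then $s_x(y)$ is a lift of $s_q(r)$, and the relation $g_xg_y=g_{s_x(y)}g_x$ already holds in $\As(P)$. So $\psi$ is a well-defined homomorphism.

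\emph{Step 4: conclusion.} Let $\bar f\colon\As(P)/K\to\As(Q)$ be the map induced by $\As(f)$, which exists by Step 1. Then $\psi\circ\bar f$ is the identity on the generators $[g_x]$, hence the identity. Therefore $\bar f$ is injective, i.e.\ $\relAs(f)=\Ker(\As(f))=K$.

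The only genuinely necessary check is the normality in Step 2, since it is what makes the quotient $\As(P)/K$ a group; it reduces to the compatibility $f\circ s_z=s_{f(z)}\circ f$. Everything else is routine.
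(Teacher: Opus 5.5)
Your proposal is correct and follows essentially the same route as the paper. Both arguments show that the subgroup is normal, build the homomorphism $\As(Q)\to\As(P)/K$ from lifts along $f$, and conclude that the kernel is exactly $K$. The differences are cosmetic: the paper gets normality by noting that conjugation by $g_z$ permutes the generators because $s_z$ is bijective (you instead treat $g_z^{\pm1}$ separately), and the paper phrases the final step as $\theta\circ\As(f)=\pi$ where you use $\psi\circ\bar f=\id$.
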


\begin{proof}
Let $E$ denote the subgroup on the right. Since $\As(f)(g_{x}g_{y}^{-1})=1$ whenever $f(x)=f(y)$, we have $E\subseteq\relAs(f)$.

First note that $E$ is normal in $\As(P)$. Indeed, $g_{z}(g_{x}g_{y}^{-1})g_{z}^{-1}=g_{s_z(x)}g_{s_z(y)}^{-1}$, and $f(s_z(x))=f(s_z(y))$ whenever $f(x)=f(y)$. Since $s_z$ is bijective, conjugation by $g_{z}$ permutes the generators of $E$.

Let $\pi\colon\As(P)\twoheadrightarrow\As(P)/E$ be the quotient map. For $q\in Q$, define $\overline{g}_{q}$ to be the common value $\pi(g_{x})$ for $x\in f^{-1}(q)$. If $x,x'\in f^{-1}(q)$, then $g_xg_{x'}^{-1}\in E$, and hence $\pi(g_x)=\pi(g_{x'})$. Thus, $\overline{g}_q $ is well-defined. If $f(x)=q$ and $f(y)=r$, then $\overline{g}_{q}\overline{g}_{r} =\pi(g_{x}g_{y}) =\pi(g_{s_x(y)}g_{x}) =\overline{g}_{s_q(r)}\overline{g}_{q}$.
Hence the assignment $g_{q}\mapsto\overline{g}_{q}$ induces a homomorphism $\theta\colon\As(Q)\to\As(P)/E$. For every $x\in P$, $(\theta\circ\As(f))(g_{x})=\pi(g_{x})$, so $\theta\circ\As(f)=\pi$. If $a\in\relAs(f)$, then $\pi(a)=\theta(\As(f)(a))=1$, whence $a\in E$. Thus $\relAs(f)\subseteq E$.
\end{proof}

\begin{prop}
\label[prop]{proposition: relative adjoint group central extension}
    The restriction of $\rho_{P}\colon\As(P)\twoheadrightarrow\Inn(P)$ induces a central extension $\rho_{f}\colon\relAs(f)\twoheadrightarrow\relTrans(f)$.
\end{prop}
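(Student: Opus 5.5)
The plan is to check three things in turn: the image of $\relAs(f)$ under $\rho_P$ lies in $\relTrans(f)$, the restricted map is onto $\relTrans(f)$, and its kernel is central in $\relAs(f)$.

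\emph{Image and surjectivity.} By \cref{lemma: generators of relative adjoint group}, $\relAs(f)$ is generated by the elements $g_xg_y^{-1}$ with $f(x)=f(y)$. By \cref{proposition: elementary properties of adjoint groups}, $\rho_P(g_xg_y^{-1})=s_xs_y^{-1}$. The elements $s_xs_y^{-1}$ with $f(x)=f(y)$ are exactly the defining generators of $\relTrans(f)$ in \cref{definition: relative inner and transvection groups}. A homomorphism sends the subgroup generated by a set onto the subgroup generated by the image of that set. Hence $\rho_P(\relAs(f))=\relTrans(f)$, and the restriction $\rho_f\colon\relAs(f)\twoheadrightarrow\relTrans(f)$ is well defined and surjective.

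\emph{Centrality.} The kernel of $\rho_f$ is $\Ker(\rho_P)\cap\relAs(f)$. Since $\rho_P$ is a central extension (\cref{proposition: elementary properties of adjoint groups}), $\Ker(\rho_P)$ lies in the center of $\As(P)$. So $\Ker(\rho_f)$ commutes with every element of $\As(P)$, and in particular with every element of $\relAs(f)$. Therefore $\Ker(\rho_f)$ is central in $\relAs(f)$, and in fact it is central in the whole of $\As(P)$. This stronger form is presumably what the later comparison of relative nilpotency will use.

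I do not expect a genuine obstacle. The only substantive input is the generator description of $\relAs(f)$, and that is already established in \cref{lemma: generators of relative adjoint group}.
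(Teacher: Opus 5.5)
Your proposal is correct and follows essentially the same route as the paper's proof. The paper uses the generator description of $\relAs(f)$ to get $\rho_P(\relAs(f))=\relTrans(f)$, then notes that $\Ker(\rho_f)=\Ker(\rho_P)\cap\relAs(f)\subseteq Z(\As(P))\cap\relAs(f)\subseteq Z(\relAs(f))$.
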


\begin{proof}
We see from \cref{lemma: generators of relative adjoint group} that $\rho_{P}(\relAs(f))=\relTrans(f)$, so $\rho_{P}$ restricts to a surjection $\rho_{f}\colon \relAs(f)\twoheadrightarrow \relTrans(f)$. Its kernel $\Ker(\rho_f)= \Ker(\rho_{P}) \cap \relAs(f)$ is contained in $Z(\As(P))\cap \relAs(f) \subseteq Z(\relAs(f))$, since $\Ker(\rho_P) \subseteq Z(\Adj(P))$ by \cref{proposition: elementary properties of adjoint groups}.
\end{proof}

\begin{prop}
\label[prop]{proposition: nilpotency detected by relative adjoint group}
Let $f\colon P\twoheadrightarrow Q$ be a surjective homomorphism, and let $c\geq 0$.
\begin{enumerate}
    \item\label{item:nilpotent_of_relAs_implies_that_of_relTrans} If $\relAs(f)$ is $\As(P)$-nilpotent of class at most $c$, then $\relTrans(f)$ is $\Inn(P)$-nilpotent of class at most $c$.
    \item\label{item:nilpotent_of_relTrans_implies_that_of_relAs} If $\relTrans(f)$ is $\Inn(P)$-nilpotent of class at most $c$, then $\relAs(f)$ is $\As(P)$-nilpotent of class at most $c+1$.
\end{enumerate}
\end{prop}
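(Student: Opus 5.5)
Both parts rest on \cref{proposition: relative adjoint group central extension}: $\rho_P$ is a surjective group homomorphism with central kernel, and it maps $\relAs(f)$ onto $\relTrans(f)$. We combine this with the functoriality of relative lower central series under surjections (\cref{proposition: functoriality of relative lower central series}). For every $i\geq0$ this functoriality gives
\[
\rho_P\bigl(\relLCS_i(\As(P),\relAs(f))\bigr)=\relLCS_i\bigl(\Inn(P),\rho_P(\relAs(f))\bigr)=\relTransLCS{i}{f}.
\]
This identity is the key step. It follows by induction on $i$, since a surjection sends $[G,N]$ onto $[\rho(G),\rho(N)]$.

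\eqref{item:nilpotent_of_relAs_implies_that_of_relTrans}: Suppose $\relLCS_c(\As(P),\relAs(f))=1$. Applying $\rho_P$ and the identity above gives $\relTransLCS{c}{f}=\rho_P(1)=1$. Equivalently, the quotient of an $\As(P)$-nilpotent normal subgroup along the surjection $\rho_P$ is $\Inn(P)$-nilpotent of no larger class.

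\eqref{item:nilpotent_of_relTrans_implies_that_of_relAs}: Suppose $\relTransLCS{c}{f}=1$. The identity gives $\rho_P(\relLCS_c(\As(P),\relAs(f)))=1$, so $\relLCS_c(\As(P),\relAs(f))\subseteq\Ker(\rho_P)$. By \cref{proposition: elementary properties of adjoint groups}, $\Ker(\rho_P)$ is central in $\As(P)$. Hence
\[
\relLCS_{c+1}(\As(P),\relAs(f))=[\As(P),\relLCS_c(\As(P),\relAs(f))]\subseteq[\As(P),Z(\As(P))]=1 .
\]

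There is no real obstacle. The one point needing care is that the centrality used in part (2) is centrality in the whole of $\As(P)$, not merely in $\relAs(f)$. This stronger centrality is exactly what \cref{proposition: elementary properties of adjoint groups} provides, and it is what makes the commutator with $\As(P)$ vanish.
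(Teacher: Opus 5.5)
Your proposal is correct and follows essentially the same route as the paper. Both arguments push the $\As(P)$-relative lower central series of $\relAs(f)$ forward along the surjection $\rho_P$ by functoriality to obtain $\relTransLCS{i}{f}$, read off part (1) directly, and for part (2) use that $\Ker(\rho_P)$ is central in all of $\As(P)$. The only cosmetic difference is that the paper cites the generator description of $\relAs(f)$ for the equality $\rho_P(\relAs(f))=\relTrans(f)$, whereas you cite the central-extension proposition, which is itself derived from that lemma.
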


\begin{proof}
    By \cref{lemma: generators of relative adjoint group}, the surjection $\rho_{P}$ maps $\relAs(f)$ onto $\relTrans(f)$. Hence it follows from \cref{proposition: functoriality of relative lower central series} that $\rho_P(\relLCS_i(\As(P),\relAs(f))) = \relLCS_i(\Inn(P),\relTrans(f))=\relTransLCS{i}{f}$ for every $i\geq 0$.

    \eqref{item:nilpotent_of_relAs_implies_that_of_relTrans}: Thus, if $\relLCS_c(\As(P),\relAs(f))$ is trivial, then so is $\relTransLCS{c}{f}$.

    \eqref{item:nilpotent_of_relTrans_implies_that_of_relAs}: If $\relTransLCS{c}{f}=1$, then $\relLCS_{c}(\As(P), \relAs(f))\subseteq\Ker(\rho_{P})$. The latter subgroup is central in $\As(P)$, so we have $\relLCS_{c+1}(\As(P), \relAs(f))=1$.
\end{proof}

Summarizing the above results, we arrive at the following theorem.

\begin{thm}
\label[thm]{theorem: main structure theorem for nilpotent homomorphisms}
Let $f\colon P\twoheadrightarrow Q$ be a surjective quandle
homomorphism. Then the following conditions are equivalent.
\begin{enumerate}[label={(\roman*)}]
    \item\label{item: f is nilp} $f$ is nilpotent, that is, $\relInn(f)$ is $\Inn(P)$-nilpotent.
    \item\label{item: reltrans is nilp} $\relTrans(f)$ is $\Inn(P)$-nilpotent.
    \item\label{item: relas is nilp} $\relAs(f)$ is $\As(P)$-nilpotent.
    \item\label{item: composition of coverings} $f$ is a finite composition of covering homomorphisms.
\end{enumerate}
\end{thm}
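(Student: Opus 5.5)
The plan is to chain together the quantitative comparisons already established, so that each equivalence comes with an explicit shift in the nilpotency class.

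For \ref{item: f is nilp}$\Leftrightarrow$\ref{item: reltrans is nilp}, we invoke \cref{proposition: nilpotency detected by relative transvection group}. If $\relInnLCS{c}{f}=1$, then $\relTransLCS{c}{f}=1$. Conversely, if $\relTransLCS{c}{f}=1$, then $\relInnLCS{c+1}{f}=1$. The underlying inclusion is $[\Inn(P),\relInn(f)]\subseteq\relTrans(f)$ from \cref{lem:commutator_inn_rel_trans_rel}.

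For \ref{item: reltrans is nilp}$\Leftrightarrow$\ref{item: relas is nilp}, we use \cref{proposition: nilpotency detected by relative adjoint group}. The restriction $\rho_P\colon\relAs(f)\twoheadrightarrow\relTrans(f)$ carries the $\As(P)$-relative lower central series onto the $\Inn(P)$-relative one. Since $\Ker(\rho_P)$ is central in $\As(P)$, the class changes by at most one in passing from $\relTrans(f)$ to $\relAs(f)$.

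For \ref{item: reltrans is nilp}$\Leftrightarrow$\ref{item: composition of coverings}, we apply \cref{theorem: transvection class and iterated coverings}: $\relTransLCS{c}{f}=1$ holds exactly when $f$ is a composite of $c+1$ coverings.

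One degenerate point needs care. A finite composition of coverings of length $0$ is an isomorphism, and then $\relTrans(f)=1$, so \ref{item: reltrans is nilp} holds trivially. Any positive length $n$ gives $\relTransLCS{n-1}{f}=1$. As a cross-check, \ref{item: f is nilp}$\Rightarrow$\ref{item: composition of coverings} also follows from \cref{theorem: nilpotent homomorphism and covering rigid tower}, because rigid homomorphisms are coverings.

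There is no real obstacle here: all the work was done in the preceding results, and this theorem only assembles them. The only thing to check is that the class bookkeeping, namely the shifts by one, is consistent, so that "nilpotent of some finite class" is preserved in each direction.
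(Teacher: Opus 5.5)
Your proposal is correct and matches the paper's proof: it derives \ref{item: f is nilp}$\Leftrightarrow$\ref{item: reltrans is nilp}, \ref{item: reltrans is nilp}$\Leftrightarrow$\ref{item: relas is nilp}, and \ref{item: reltrans is nilp}$\Leftrightarrow$\ref{item: composition of coverings} from \cref{proposition: nilpotency detected by relative transvection group}, \cref{proposition: nilpotency detected by relative adjoint group}, and \cref{theorem: transvection class and iterated coverings}, exactly as you propose. Your handling of the length-$0$ case is harmless but not needed, since an isomorphism is already a single covering homomorphism.
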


\begin{proof}
The equivalence of \ref{item: f is nilp} and \ref{item: reltrans is nilp} follows from \cref{proposition: nilpotency detected by relative transvection group},
the equivalence of \ref{item: reltrans is nilp} and \ref{item: relas is nilp} follows from \cref{proposition: nilpotency detected by relative adjoint group}, and
the equivalence of \ref{item: reltrans is nilp} and \ref{item: composition of coverings} follows from \cref{theorem: transvection class and iterated coverings}.
\end{proof}

\begin{cor}
\label{corollary: characterization_of_nilpitent_quandles}
    For a non-empty quandle $P$, the following are equivalent.
    \begin{enumerate}[label={(\roman*)}]
    \item\label{item: P is nilp} $P$ is nilpotent, that is, $\Inn(P)$ is nilpotent.
    \item\label{item: Trans is nilp} $\Trans(P)$ is $\Inn(P)$-nilpotent.
    \item\label{item: Adj_zero is nilp} $\As_0(P)=\relAs(P\to \{\ast\})$ is $\As(P)$-nilpotent.
    \item\label{item: Adj is nilp} $\As(P)$ is nilpotent.
    \item\label{item: constructed by iterated coverings} The map $P\to \{\ast\}$ is a finite composition of covering homomorphisms.
\end{enumerate}
\end{cor}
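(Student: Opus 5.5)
The plan is to specialize \cref{theorem: main structure theorem for nilpotent homomorphisms} to the terminal homomorphism $t\colon P\twoheadrightarrow\{\ast\}$, which is surjective because $P$ is non-empty. We then need one extra group-theoretic equivalence for condition \ref{item: Adj is nilp}.

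The first step is to identify the relative groups attached to $t$. Since $\Inn(\{\ast\})=1$, we get $\relInn(t)=\Inn(P)$. Because every pair $x,y\in P$ has the same image, $\relTrans(t)=\Trans(P)$. The adjoint group of the one-point quandle is $\As(\{\ast\})\cong\ZZ$, generated by $g_\ast$, and $\As(t)$ sends each $g_x$ to $g_\ast$. Under this identification $\As(t)$ is exactly the degree map $\varepsilon_P$, so $\relAs(t)=\As_0(P)$, which justifies the equality written in \ref{item: Adj_zero is nilp}. Moreover, $\relInn(t)$ is $\Inn(P)$-nilpotent exactly when $\Inn(P)$ is nilpotent, since $\relLCS_i(\Inn(P),\Inn(P))=\grpLCS_i(\Inn(P))$. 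With these identifications, \ref{item: P is nilp}, \ref{item: Trans is nilp}, \ref{item: Adj_zero is nilp} and \ref{item: constructed by iterated coverings} are literally conditions (i)--(iv) of \cref{theorem: main structure theorem for nilpotent homomorphisms} for $t$, hence mutually equivalent.

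It remains to fit in \ref{item: Adj is nilp}. For \ref{item: Adj is nilp}$\Rightarrow$\ref{item: P is nilp}, note that $\Inn(P)$ is a quotient of $\As(P)$ via $\rho_P$, and quotients of nilpotent groups are nilpotent. For \ref{item: P is nilp}$\Rightarrow$\ref{item: Adj is nilp}, use that $\rho_P$ is a central extension (\cref{proposition: elementary properties of adjoint groups}). If $\grpLCS_c(\Inn(P))=1$, then $\rho_P(\grpLCS_c(\As(P)))=\grpLCS_c(\Inn(P))=1$, because images of lower central series terms under surjections are the corresponding terms. Hence $\grpLCS_c(\As(P))\subseteq\Ker(\rho_P)$, which is central, and therefore $\grpLCS_{c+1}(\As(P))=1$.

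No step is a genuine obstacle. The only point needing a moment's care is the identification $\relAs(t)=\As_0(P)$, i.e.\ that $\As(\{\ast\})\cong\ZZ$ with $\As(t)=\varepsilon_P$. This follows from the presentation in \cref{definition: adjoint group}: with one generator $g_\ast$, the only relation is $g_\ast g_\ast=g_\ast g_\ast$, which is trivial.
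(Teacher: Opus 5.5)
Your proof is correct, and for conditions (i), (ii), (iii) and (v) it is exactly the paper's argument. You identify $\relInn$, $\relTrans$ and $\relAs$ of the terminal map and invoke \cref{theorem: main structure theorem for nilpotent homomorphisms}.

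You differ only in how you attach (iv). The paper connects (iv) to (iii). Since $\As(P)/\As_0(P)\cong\ZZ$ is cyclic, \cref{lemma: lower central series with cyclic quotient} gives $\grpLCS_i(\As(P))=\relLCS_i(\As(P),\As_0(P))$ for every $i\geq1$. You instead connect (iv) to (i) through the central extension $\rho_P\colon\As(P)\twoheadrightarrow\Inn(P)$, the same mechanism the paper uses in \cref{proposition: nilpotency detected by relative adjoint group}.

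What each route buys:
\begin{itemize}
\item Your route is more elementary and self-contained. It uses only that $\Ker(\rho_P)$ is central, and it yields the explicit bound $\ncl(\Inn(P))\leq\ncl(\As(P))\leq\ncl(\Inn(P))+1$.
\item The paper's route gives an exact comparison: the lower central series of $\As(P)$ and the $\As(P)$-relative lower central series of $\As_0(P)$ coincide in every positive degree. It also ties the new condition (iii) directly to the classical condition (iv) without passing through $\Inn(P)$.
\end{itemize}
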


\begin{proof}
\ref{item: Adj_zero is nilp} $\Leftrightarrow$ \ref{item: Adj is nilp}: 
Since $\As(\{*\})\cong\ZZ$ and $\As(P)\to\As(\{*\})$ is the degree homomorphism, we have $\relAs(P\to\{*\})=\As_0(P)$ and $\As(P)/\As_0(P)\cong\ZZ$. 
Thus, \ref{item: Adj_zero is nilp} and \ref{item: Adj is nilp} are equivalent by \cref{lemma: lower central series with cyclic quotient}.

The other equivalences follow from \cref{theorem: main structure theorem for nilpotent homomorphisms}.
\end{proof}

\begin{rem}
    The equivalences among \ref{item: P is nilp}, \ref{item: Adj is nilp}, and \ref{item: constructed by iterated coverings} in \cref{corollary: characterization_of_nilpitent_quandles} were previously established in \cite[Corollary~2.5 and Theorem~2.13]{darne_2026_nilpotent_quandles}, while the equivalences with \ref{item: Trans is nilp} and \ref{item: Adj_zero is nilp} are new.
\end{rem}

\begin{rem}
\label{remark: minor caveat concerning the empty quandle}
    There is one minor point to note concerning the empty quandle. In Darn\'{e}'s sense, a quandle $Q$ is nilpotent if its inner automorphism group $\Inn(Q)$ is nilpotent. Thus, the empty quandle is nilpotent in this sense, since $\Inn(\emptyset)=\{\id\}$. On the other hand, the unique homomorphism $\emptyset\to\{\ast\}$ is not surjective and hence is not a finite composition of covering homomorphisms. Thus, the two notions of nilpotency differ only in the case of the empty quandle.
\end{rem}

\begin{rem}
    Nilpotency for quandles is also studied by Bonatto and Stanovsk\'{y}~\cite{bonatto_stanovsky_2021_commutator_theory_for_racks_and_quandles} from the universal-algebraic viewpoint. According to \cite[Theorem 1.2]{bonatto_stanovsky_2021_commutator_theory_for_racks_and_quandles}, the nilpotency of a quandle $P$ in their sense is equivalent to that of $\Trans(P)$. Hence, our notion of nilpotency is stronger than theirs, as noted by Darn\'{e} in the introduction of \cite{darne_2026_nilpotent_quandles}.
\end{rem}

\subsection{Comparison of nilpotency classes and covering length}
\label{subsection: Comparison of nilpotency classes and covering length}

We summarize and compare the three numerical invariants: the $\Inn(P)$-nilpotency class $\ncl(f)$ of $\relInn(f)$, the $\Inn(P)$-nilpotency class $\tncl(f)$ of $\relTrans(f)$, and the covering length $\covlen(f)$.

\begin{thm}
\label{theorem: comparison of ncl tncl covlen}
Let $f\colon P\twoheadrightarrow Q$ be a surjective quandle homomorphism. Then,
\begin{align*}
    \tncl(f)&\leq \ncl(f)\leq \tncl(f)+1,\text{ and} \\
    \tncl(f)&\leq\ncl_{\As(P)}(\relAs(f))\leq\tncl(f)+1.
\end{align*}
If $f$ is not an isomorphism, then $\covlen(f)=\tncl(f)+1$ and hence $\ncl(f) \leq\covlen(f) \leq\ncl(f)+1$.
If $f$ is connected, then $\covlen(f)=\ncl(f)$.
\end{thm}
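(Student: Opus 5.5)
The plan is to read off each inequality from a result already proved, interpreting them in $\mathbb{Z}_{\geq0}\cup\{\infty\}$ so that the non-nilpotent cases are covered as well. Throughout I abbreviate $\tncl(f)=\ncl_{\Inn(P)}(\relTrans(f))$.

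\textbf{The first chain.} The inequalities $\tncl(f)\leq\ncl(f)\leq\tncl(f)+1$ are exactly \cref{proposition: nilpotency detected by relative transvection group}.

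\begin{itemize}
\item Part \eqref{item:nilpotent_of_relInn_implies_that_of_relTrans}: if $\ncl(f)=c$ is finite, then $\tncl(f)\leq c$.
\item Part \eqref{item:nilpotent_of_relTrans_implies_that_of_relInn}: if $\tncl(f)=c$ is finite, then $\ncl(f)\leq c+1$.
\end{itemize}

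If either invariant is infinite, the two parts force the other to be infinite too, so the inequalities hold trivially.

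\textbf{The second chain.} The inequalities $\tncl(f)\leq\ncl_{\As(P)}(\relAs(f))\leq\tncl(f)+1$ come from \cref{proposition: nilpotency detected by relative adjoint group} in the same way. Its two parts give the lower and upper bounds respectively, with the same handling of the infinite cases.

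\textbf{Covering length for a non-isomorphism.} Assume $f$ is not an isomorphism. By \cref{theorem: transvection class and iterated coverings}, $f$ is a composite of $c+1$ coverings exactly when $\relTransLCS{c}{f}=1$, that is, when $\tncl(f)\leq c$. Two subtleties need care.

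\begin{itemize}
\item \emph{Non-invertible factors.} $\covlen$ counts only non-invertible coverings. Isomorphisms are coverings, and a non-identity composite can absorb them, so any decomposition into $n\geq1$ coverings can be shortened to one with at most $n$ non-invertible factors. Since $f$ is not an isomorphism, at least one factor survives. Conversely, a decomposition into $n$ non-invertible coverings is in particular a decomposition into $n$ coverings.
\item \emph{Padding.} A composite of $m\leq n$ coverings can be padded with identities to a composite of exactly $n$ coverings.
\end{itemize}

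Granting these, the smallest $n$ for which $f$ is a composite of $n$ non-invertible coverings equals the smallest $n$ for which $f$ is a composite of $n$ coverings. By the theorem this is $\tncl(f)+1$, including the value $\infty$ when no such $n$ exists.

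I also need $\tncl(f)\geq0$ to be compatible with $n\geq1$, which is automatic. When $\tncl(f)=0$, $f$ is a covering that is not an isomorphism, so $\covlen(f)=1$. Substituting $\covlen(f)=\tncl(f)+1$ into the first chain gives $\ncl(f)\leq\covlen(f)\leq\ncl(f)+1$.

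\textbf{The connected case.} Assume $f$ is connected. By \cref{lem:commutator_inn_rel_trans_rel}, $\relTrans(f)=[\Inn(P),\relInn(f)]=\relInnLCS{1}{f}$. Hence $\relTransLCS{i}{f}=\relInnLCS{i+1}{f}$ for all $i$, so $\ncl(f)\leq\tncl(f)+1$ holds with equality unless $\ncl(f)=0$.

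\begin{itemize}
\item If $\ncl(f)=0$, then $f$ is rigid and connected, hence an isomorphism, and $\covlen(f)=0=\ncl(f)$.
\item Otherwise $f$ is not an isomorphism, and $\covlen(f)=\tncl(f)+1=\ncl(f)$, again including the infinite case.
\end{itemize}

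Alternatively, the connected case follows from \cref{corollary: connected nilpotent homomorphism}.

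\textbf{Main obstacle.} The only delicate point is the bookkeeping around isomorphisms in the definition of $\covlen$: absorbing invertible coverings into neighbours and padding with identities. The rest consists of direct citations.
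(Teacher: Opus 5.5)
Your proposal is correct and follows the same route as the paper. The two chains come from the propositions detecting nilpotency via $\relTrans(f)$ and via $\relAs(f)$, the formula $\covlen(f)=\tncl(f)+1$ comes from the theorem on transvection class and iterated coverings, and the connected case comes from the corollary on connected nilpotent homomorphisms; your direct argument via $\relTrans(f)=[\Inn(P),\relInn(f)]$ works equally well there. Your extra bookkeeping (absorbing invertible coverings into non-invertible neighbours, and handling the value $\infty$) only makes explicit what the paper's one-line citations leave implicit.
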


\begin{proof}
The first inequalities follow from \cref{proposition: nilpotency detected by relative transvection group}, and the second from \cref{proposition: nilpotency detected by relative adjoint group}.
The formula for $\covlen(f)$ follows from \cref{theorem: transvection class and iterated coverings}, and the connected case follows from \cref{corollary: connected nilpotent homomorphism}.
\end{proof}

For a non-isomorphism, the covering length is either $\ncl(f)$ or $\ncl(f)+1$, and the nilpotency class of the relative transvection group is either $\ncl(f)-1$ or $\ncl(f)$. The following examples show that both possibilities occur.

\begin{eg}
\label[eg]{example: basic nilpotent homomorphisms}
Let $f\colon X\twoheadrightarrow Y$ be a non-bijective surjection between trivial quandles. Then $\Inn(X)=\Inn(Y)=1$, so the map is rigid and hence $0$-nilpotent. It is also a covering, but not an isomorphism. Thus $\ncl(f)=\tncl(f)=0$ and $\covlen(f)=1$.
\end{eg}

\begin{eg}
\label[eg]{example: two possible class gaps}
Let
$f\colon\Lambda_{4, 3}\twoheadrightarrow\Lambda_{2,1}$ be reduction
modulo $2$ between linear Alexander quandles. If $x\equiv y\pmod2$, then
$(1-3)(x-y)\equiv0\pmod4$, so $s_x=s_y$. Hence $f$ is a covering and
$\relTrans(f)=1$. The target is trivial, while
$\Inn(\Lambda_{4,3})$ is a nontrivial abelian group. Therefore
$\ncl(f)=\covlen(f)=1$ and $\tncl(f)=0$.

%Together with the non-bijective surjection between trivial quandles in \cref{example: basic nilpotent homomorphisms}, this realizes both possible gaps in \cref{theorem: main structure theorem for nilpotent homomorphisms}.

Note that, although the only connected nilpotent quandle is the one-point trivial quandle $\{\ast\}$ (\cite[Corollary 3.7]{darne_2026_nilpotent_quandles}), 
this homomorphism is connected and $1$-nilpotent but not an isomorphism; compare \cref{proposition: strongly connected and nilpotent is iso}.
\end{eg}

We next consider the nilpotency under base change. 
Recall that covering homomorphisms are stable under arbitrary pullbacks by \cref{proposition: pull-back of covering is again covering}, and rigid homomorphisms are stable under pullbacks along surjections by \cref{theorem: connected-rigid factorization}.

\begin{prop}
\label[prop]{proposition: nilpotency and covering length under pullback}
Let $f\colon P\twoheadrightarrow Q$ be a nilpotent homomorphism, $v\colon R\to Q$ a quandle homomorphism, and $f'\colon P\times_QR\twoheadrightarrow R$ the pullback of $f$ along $v$.
Then the following hold.
\begin{enumerate}
    \item\label{item: pullback general inequalities} $\covlen(f')\leq\covlen(f)$, and $\ncl(f')\leq \ncl(f)+1$.
    \item\label{item: pullback surjective case} If $v$ is surjective, then $\covlen(f')=\covlen(f)$, $\tncl(f')=\tncl(f)$, and $\ncl(f')\leq \ncl(f)\leq \ncl(f')+1$.
    \item\label{item: pullback connected case} If $f$ is connected, $\ncl(f')\leq\ncl(f)$.
\end{enumerate}
\end{prop}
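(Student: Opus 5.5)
Throughout, $f'\colon P'=P\times_QR\twoheadrightarrow R$ is surjective because $f$ is, and pulling back a factorization of $f$ gives a factorization of $f'$ of the same length. The plan is to control $\covlen$ with covering towers, $\tncl$ through the comparison $\covlen=\tncl+1$ of \cref{theorem: comparison of ncl tncl covlen}, and $\ncl$ through the covering--rigid towers of \cref{theorem: nilpotent homomorphism and covering rigid tower}.

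\eqref{item: pullback general inequalities}: Write $f$ as a composite of $\covlen(f)$ coverings; the isomorphism case is trivial. Pull the tower back along $v$. By \cref{proposition: pull-back of covering is again covering}, each step stays a covering. Some steps may become isomorphisms, which can be absorbed, so $\covlen(f')\leq\covlen(f)$.

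For the $\ncl$ bound, take the tower of \cref{theorem: nilpotent homomorphism and covering rigid tower}: $c=\ncl(f)$ coverings followed by a rigid map. A rigid map is a covering, so $f$ is a composite of $c+1$ coverings, and so is its pullback. Applying $\Inn(-)$ yields $c+1$ central extensions, so $\ncl(f')\leq c+1$ by \cref{theorem: characterization of relative nilpotency}; this is the preceding proposition.

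\eqref{item: pullback surjective case}: When $v$ is surjective, rigid maps are stable under pullback along surjections (\cref{theorem: connected-rigid factorization}). Hence the pulled-back tower has $c$ coverings followed by a rigid map, giving $\ncl(f')\leq\ncl(f)$.

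For the reverse inequalities I will use the projection $u\colon P'\twoheadrightarrow P$, which is surjective since $v$ is. The key identity is
\[ u_*(\relTrans(f'))=\relTrans(f). \]
Indeed, if $f(x)=f(y)$, pick $r\in v^{-1}(f(x))$; then $(x,r)$ and $(y,r)$ lie in the same fiber of $f'$, and $u_*(s_{(x,r)}s_{(y,r)}^{-1})=s_xs_y^{-1}$. Conversely, generators of $\relTrans(f')$ map to generators of $\relTrans(f)$. By \cref{proposition: functoriality of relative lower central series}, $u_*(\relTransLCS{i}{f'})=\relTransLCS{i}{f}$, so $\tncl(f)\leq\tncl(f')$.

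For the opposite direction, I will use $\iota$ from \cref{proposition: inner automorphism groups of products}. The map $\Inn(P')\to\Inn(P)\times\Inn(R)$ is injective, because $P'\subseteq P\times R$ and an inner automorphism is determined by its two projections. Its $R$-component kills $\relTrans(f')$, since fibers of $f'$ have constant $R$-coordinate. Hence $\relTrans(f')$ embeds into $\relTrans(f)\times 1$ compatibly with the commutator action, and relative lower central series shrink under such embeddings (\cref{lemma: relative lower central series under subgroups}). This gives $\tncl(f')\leq\tncl(f)$, so the two are equal.

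If $f$ is not an isomorphism, then neither is $f'$, because $v$ is surjective. Thus $\covlen(f')=\tncl(f')+1=\covlen(f)$, and the isomorphism case is trivial. Finally, combining $\ncl(f)\leq\tncl(f)+1=\tncl(f')+1\leq\ncl(f')+1$ with \cref{theorem: comparison of ncl tncl covlen} gives the stated sandwich.

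\eqref{item: pullback connected case}: If $f$ is connected, the rigid factor in its tower is an isomorphism (\cref{corollary: connected nilpotent homomorphism}). So $f$ is a composite of $\ncl(f)$ coverings, and pulling back gives $\ncl(f')\leq\ncl(f)$ by the same central-extension argument.

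The main obstacle is the injectivity and compatibility step for the embedding of $\relTrans(f')$ when $P'$ is a subquandle of $P\times R$. I plan to argue directly: an inner automorphism of $P'$ is a restriction of a product map, so it is determined by its two components. If this is delicate, the fallback is \cref{theorem: transvection class and iterated coverings} applied to the pulled-back tower, which is enough for the $\covlen$ and $\tncl$ comparisons.
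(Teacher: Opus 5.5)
Your proposal is correct and follows essentially the same route as the paper. It pulls back covering and covering--rigid towers for the upper bounds, and uses the projection $u\colon P\times_Q R\twoheadrightarrow P$ together with $\covlen=\tncl+1$ in the surjective case. The only real difference is that you prove directly that $u_*$ restricts to an isomorphism $\relTrans(f')\cong\relTrans(f)$: surjectivity comes from lifting generators, and injectivity from the embedding $(u_*,f'_*)\colon\Inn(P')\hookrightarrow\Inn(P)\times\Inn(R)$. The paper instead cites this isomorphism from its companion preprint.
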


\begin{proof}
\eqref{item: pullback general inequalities}: Pulling back a covering factorization gives $\covlen(f')\leq\covlen(f)$ by \cref{proposition: pull-back of covering is again covering}. Hence, $\ncl(f')\leq\covlen(f')\leq\covlen(f)\leq\ncl(f)+1$ by \cref{theorem: comparison of ncl tncl covlen}.

\eqref{item: pullback surjective case}: Let $u\colon P\times_Q R\twoheadrightarrow P$ be the projection. By \cite[Proposition~3.30]{preprint_yuki_tomoki_2026_relativization_of_symmetries_on_quandles}, $u_*$ restricts to an isomorphism $\relTrans(f')\xrightarrow{\sim}\relTrans(f)$. \cref{proposition: functoriality of relative lower central series} and this give $u_*\colon\relTransLCS{i}{f'}\xrightarrow{\sim}\relTransLCS{i}{f}$ for every $i\geq0$. Hence, $\tncl(f')=\tncl(f)$. Since $v$ is surjective, $f'$ is an isomorphism if and only if $f$ is. If they are isomorphisms, both covering lengths are $0$. Otherwise, $\covlen(f')=\tncl(f')+1=\tncl(f)+1=\covlen(f)$ by \cref{theorem: transvection class and iterated coverings}.

The inequality $\ncl(f')\leq\ncl(f)$ follows by pulling back the factorization in \cref{theorem: nilpotent homomorphism and covering rigid tower}. By \cref{theorem: comparison of ncl tncl covlen}, we have $\ncl(f)\leq \tncl(f)+1=\tncl(f')+1\leq\ncl(f')+1$.

\eqref{item: pullback connected case}: If $f$ is connected, then $\covlen(f)=\ncl(f)$ by \cref{theorem: comparison of ncl tncl covlen}. Therefore, $\ncl(f')\leq\covlen(f')\leq\covlen(f)=\ncl(f)$.
\end{proof}

\begin{eg}
\label[eg]{example: sharpness of pullback class estimate}
Let $G=Q_8\rtimes\langle\sigma\rangle$, where $Q_8$ is the quaternion group and $\sigma$  cyclically permutes $i,j,k\in Q_8$ and has order $3$. Then, $Z(G)=\{\pm1\}$ and $H\coloneqq G/Z(G)\cong A_4$. 
The quotient $\pi\colon G\twoheadrightarrow H$ induces $f\coloneqq \Conj(\pi)\colon\Conj(G)\twoheadrightarrow\Conj(H)$. 
Since $Z(H)=1$, we have $\relInn(f)=1$, and thus, $f$ is rigid and $\ncl(f)=0$.

Let $V=Q_8/Z(G)\cong V_4\subseteq H$. Pulling back $f$ along $\Conj(V)\hookrightarrow\Conj(H)$ gives $f'\colon \Conj(Q_8)\twoheadrightarrow\Conj(V_4)$. 
As $\Conj(V_4)$ is a trivial quandle, 
\[
\relInn(f') = \Inn(\Conj(Q_8)) \cong Q_8/Z(Q_8)\cong V_4.
\] 
Hence the pullback is $1$-nilpotent but not rigid. 
It shows that the inequality in \eqref{item: pullback general inequalities} of \cref{proposition: nilpotency and covering length under pullback} is sharp.
\end{eg}

We end this section with an explicit computation for Alexander homomorphisms and an application showing that the upper bound for the nilpotency class under composition is also sharp.

\begin{prop}
\label[prop]{example: relative Alexander lower central series}
    In the setting in \cref{example: Alexander quandle}, put $f\coloneqq\Alex(\Phi)$ and $D\coloneqq(1-\sigma)A\cap K$. Then, we have $\relInnLCS{i}{f} = \{L_d\mid d\in(1-\sigma)^iD\}$ for $i\geq1$ and $\relTransLCS{i}{f} = \{L_e\mid e\in(1-\sigma)^{i+1}K\}$ for $i\geq0$.
\end{prop}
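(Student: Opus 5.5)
The plan is a direct computation inside the affine group of $A$. We use the descriptions of $\relInn(f)$ and $\relTrans(f)$ recalled in \cref{example: Alexander quandle}, together with a single commutator identity.

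We first record that every element of $\Inn(P)$, for $P=\Alex(A,\sigma)$, has the form $L_a\sigma^n$. Since $s_x=L_{(1-\sigma)x}\sigma$ and $\sigma L_a\sigma^{-1}=L_{\sigma a}$, the group $\Inn(P)$ is generated by $\sigma=s_0$ together with the translations $L_{(1-\sigma)x}$ for $x\in A$. Next we compute commutators. For a translation $L_d$ and an element $L_a\sigma^n$,
\[ [L_a\sigma^n, L_d] = L_a\sigma^n L_d \sigma^{-n}L_{-a} L_{-d} = L_{(\sigma^n-1)d}. \]
The identity $\sigma^n-1=(\sigma-1)(1+\sigma+\cdots+\sigma^{n-1})$, and its analogue for negative $n$, shows that $(\sigma^n-1)M\subseteq(1-\sigma)M$ for any $\sigma$-stable subgroup $M$. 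Taking $n=1$ gives the reverse containment. Hence, for any $\sigma$-stable subgroup $M\subseteq A$, we obtain
\[ [\Inn(P),\{L_d\mid d\in M\}] = \{L_e\mid e\in(1-\sigma)M\}, \]
using that $\{L_e : e\in(1-\sigma)M\}$ is already a subgroup. Note that $K$, $(1-\sigma)A$, $D$, and all the groups $(1-\sigma)^iD$ are $\sigma$-stable.

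The transvection series then follows by induction from $\relTrans(f)=\{L_e\mid e\in(1-\sigma)K\}$. For the inner series, the only subtlety is the $i=1$ step, because $\relInn(f)$ may contain non-translation elements $L_a\sigma^n$ with $a\in D$ and $\tau^n=\id_B$. We need $[\Inn(P),\relInn(f)]=\{L_d\mid d\in(1-\sigma)D\}$. The containment $\supseteq$ follows from the formula above, since $\{L_d : d\in D\}\subseteq\relInn(f)$. For $\subseteq$, we use generators. Commutators of a translation $L_{(1-\sigma)x}$ with $L_a\sigma^n\in\relInn(f)$ give $L_{(1-\sigma^n)(1-\sigma)x}$. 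Here $\Phi((1-\sigma^n)y)=(1-\tau^n)\Phi(y)=0$, so this lies in $K$. It also lies in $(1-\sigma)^2A$, but we need it in $(1-\sigma)D$.

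This is the expected obstacle. We have $(1-\sigma^n)(1-\sigma)x=(1-\sigma)\bigl[(1-\sigma^n)x\bigr]$, and $(1-\sigma^n)x\in(1-\sigma)A\cap K=D$. So the element lies in $(1-\sigma)D$, and this case works. Commutators with $\sigma$ give $L_{(\sigma-1)a}$ with $a\in D$, which is fine as well. It then remains to check that the commutators $[g,h]$ with $g$ a generator and $h$ in the normal subgroup $\relInn(f)$ generate $[\Inn(P),\relInn(f)]$. This holds because the target group is abelian and normal, and the commutator identities $[gg',h]={}^g[g',h][g,h]$ reduce everything to generators. Once $\relInnLCS{1}{f}$ is identified, the general formula applied repeatedly gives $\relInnLCS{i}{f}=\{L_d\mid d\in(1-\sigma)^iD\}$ for all $i\geq1$.
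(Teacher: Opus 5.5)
Your proof is correct and follows essentially the same route as the paper. You show that $[\Inn(P),\{L_u\mid u\in U\}]=\{L_e\mid e\in(1-\sigma)U\}$ for $\sigma$-stable $U$, then treat the first step $[\Inn(P),\relInn(f)]=\{L_e\mid e\in(1-\sigma)D\}$ separately, using normality of the right-hand side and reduction to generators, and finally induct. The only difference is cosmetic: the paper checks the generator commutators as $[\varphi,s_x]=s_{\varphi(x)}s_x^{-1}=L_{(1-\sigma)(\varphi(x)-x)}$ with $\varphi(x)-x\in D$, whereas you compute with the explicit elements $L_a\sigma^n$ and the generators $\sigma$, $L_{(1-\sigma)x}$, which rests on the same observation $(1-\sigma^n)x\in D$.
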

\begin{proof}
Put $t\coloneqq1-\sigma$, $G\coloneqq\Inn(\Alex(A,\sigma))$,
and $L(U)\coloneqq\{L_u\mid u\in U\}$.
Since $G$ is generated by $\sigma$ and $L(tA)$, the translations
commute, and $[\sigma,L_u]=L_{-tu}$, we have $[G,L(U)]=L(tU)$
for every $\sigma$-invariant subgroup $U\subseteq tA$.

For $x\in A$ and $\varphi\in\relInn(f)$, we have
$\varphi(x)-x\in D$, since $\varphi$ preserves the cosets of $tA$
and the fibers of $f$. Thus,
$[\varphi,s_x]=s_{\varphi(x)}s_x^{-1}
=L_{t(\varphi(x)-x)}\in L(tD)$.
Since $L(tD)$ is normal in $G$, this gives
$[G,\relInn(f)]\subseteq L(tD)$.
The reverse inclusion follows from $L(D)\subseteq\relInn(f)$
and $[G,L(D)]=L(tD)$.
The two formulas now follow by induction, using
$\relTrans(f)=L(tK)$ from \cref{example: Alexander quandle}.
\end{proof}

From \cref{proposition: nilpotency under composition}, $\ncl(g\circ f)\leq \ncl(f)+\ncl(g)$ in general. The following shows that the equality can occur.

\begin{eg}
\label[eg]{example: sharpness of composition class estimate}
Let $m, n$ be integers such that $m>n\ge2$ and $R_n\coloneqq \Lambda_{n, -1}$ be the dihedral quandle of order $n$.
Consider the quandle homomorphism $h_{m, n}\colon R_{2^m} \twoheadrightarrow R_{2^n}$ induced by the reduction group homomorphism $\Phi\colon \ZZ/2^m\ZZ\twoheadrightarrow\ZZ/2^n\ZZ$.
Since $\Ker(\Phi) = 2^n\ZZ/2^m\ZZ$, $D= 2(\ZZ/2^m\ZZ)\cap \Ker(\Phi) = 2^n\ZZ/2^m\ZZ$. 
Therefore, we have $\relInnLCS{i}{h_{m, n}} = \{L_x \mid x\in 2^{n+i}\ZZ/2^m\ZZ\}$ by \cref{example: relative Alexander lower central series}. 
Thus, we have $\ncl(h_{m, n}) = m-n$.
Applying it to the composition $R_{2^{m+n+2}}\xrightarrow{f} R_{2^{m+2}}\xrightarrow{g} R_{4}$, we have
$\ncl(f)=n$, $\ncl(g)=m$, and $\ncl(g\circ f)=m+n$. Thus, $\ncl(g\circ f)=\ncl(f)+\ncl(g)$ holds.
\end{eg}

\section{Hypocentral homomorphisms and factorization system}
\label{section: hypocentral homomorphisms}

In this section, we extend relative lower central series to transfinite ordinals and introduce hypocentral homomorphisms. The main result is an orthogonal factorization system (for surjections) whose left class consists of strongly connected homomorphisms.

\subsection{Hypocentral homomorphisms of quandles}
\label{subsection: hypocentral homomorphisms of quandles}
We introduce the notion of a \emph{hypocentral homomorphism} and first establish some basic properties parallel to those of nilpotent homomorphisms.

\begin{dfn}
\label[dfn]{definition: transfinite relative lower central series}
Let $N\trianglelefteq G$ be a normal subgroup. We define the \emph{transfinite $G$-relative lower central series} of $N$ in the following way: $\relLCS_0(G,N)\coloneqq N$ and $\relLCS_{\alpha+1}(G,N)\coloneqq[G,\relLCS_\alpha(G,N)]$ for an ordinal $\alpha$. For every non-zero limit ordinal $\lambda$, put
\[
    \relLCS_\lambda(G,N)
    \coloneqq
    \bigcap_{\alpha<\lambda}\relLCS_\alpha(G,N).
\]
We say $N$ is \emph{$G$-hypocentral} if $\relLCS_\alpha(G,N)=1$ for some ordinal $\alpha$.
\end{dfn}

\begin{lem}
\label[lem]{lemma: relative hypocenter}
    Let $N\trianglelefteq G$ be a normal subgroup. The transfinite $G$-relative lower central series of $N$ stabilizes, and its stable term is the largest $G$-perfect normal subgroup of $G$ contained in $N$.
\end{lem}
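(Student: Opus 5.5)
We first record three elementary facts about the transfinite series $\relLCS_\alpha\coloneqq\relLCS_\alpha(G,N)$, proved by transfinite induction on $\alpha$. First, every $\relLCS_\alpha$ is a normal subgroup of $G$. This holds because $[G,M]$ is normal in $G$ whenever $M$ is, and an intersection of normal subgroups is normal. Second, the series is decreasing, i.e. $\relLCS_\beta\subseteq\relLCS_\alpha$ for $\alpha\le\beta$. For successor steps it suffices that $\relLCS_{\alpha+1}=[G,\relLCS_\alpha]\subseteq\relLCS_\alpha$, which holds by normality. The step $\relLCS_1\subseteq\relLCS_0$ uses $[G,N]\subseteq N$. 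Limit stages are immediate from the definition as intersections, and the general inclusion follows by induction on $\beta$.

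Stabilization then follows from a cardinality argument. Assume the series never stabilizes, so that $\relLCS_{\alpha+1}\neq\relLCS_\alpha$ for every $\alpha$. Choosing an element $x_\alpha\in\relLCS_\alpha\setminus\relLCS_{\alpha+1}$ for each $\alpha$ gives an injection from the class of all ordinals into $N$, which is absurd. One can also phrase this by taking an ordinal $\kappa$ with $|\kappa|>|N|$. Hence there is an ordinal $\alpha_0$ with $\relLCS_{\alpha_0+1}=\relLCS_{\alpha_0}$. A further transfinite induction shows $\relLCS_\beta=\relLCS_{\alpha_0}$ for all $\beta\ge\alpha_0$: successor steps use the equality just obtained, and limit stages are intersections of a constant tail. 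Write $N_\infty\coloneqq\relLCS_{\alpha_0}$ for the stable term. By construction $[G,N_\infty]=N_\infty$, so $N_\infty$ is a $G$-perfect normal subgroup of $G$ contained in $N$.

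For maximality, let $K\trianglelefteq G$ satisfy $K\subseteq N$ and $[G,K]=K$. We show $K\subseteq\relLCS_\alpha$ for every $\alpha$ by transfinite induction. The base case is $K\subseteq N=\relLCS_0$. For the successor step, $K=[G,K]\subseteq[G,\relLCS_\alpha]=\relLCS_{\alpha+1}$, using monotonicity of $[G,-]$. At a limit stage, $K$ lies in every earlier term and hence in their intersection. Taking $\alpha=\alpha_0$ gives $K\subseteq N_\infty$, so $N_\infty$ is the largest such subgroup.

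None of these steps is a real obstacle. The only point needing care is the set-theoretic stabilization argument, namely that a strictly decreasing ordinal-indexed chain of subsets of the set $N$ must halt at an ordinal of cardinality at most $|N|$.
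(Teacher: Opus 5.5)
Your proof is correct and follows the same route as the paper. The paper argues stabilization by noting that $G$ has only a set of normal subgroups, whereas you inject ordinals into $N$; you then get $G$-perfectness of the stable term and maximality by transfinite induction showing $K\subseteq\relLCS_\alpha(G,N)$ for every $\alpha$, exactly as the paper does. You additionally spell out the normality and monotonicity of the series, which the paper leaves implicit.
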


\begin{proof}
The series stabilizes since $G$ has only set many normal subgroups. Its stable term is $G$-perfect. If $K\trianglelefteq G$ satisfies $K\subseteq N$ and $[G,K]=K$, then transfinite induction gives $K\subseteq\relLCS_\alpha(G,N)$ for every ordinal $\alpha$. Thus, $K$ is contained in the stable term.
\end{proof}

We write $\relHypCenter{G}{N}$ for the stable term and call it the \emph{$G$-relative hypocenter} of $N$. Thus, $N$ is $G$-hypocentral if and only if $\relHypCenter{G}{N}=1$.

\begin{cor}
\label{corollary:relative_hypocentral_iff_no_non-trivial_relative_perfect}
    A normal subgroup $N\trianglelefteq G$ is $G$-hypocentral if and only if there is no non-trivial $G$-perfect normal subgroup of $G$ contained in $N$.
\end{cor}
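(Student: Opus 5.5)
This follows directly from \cref{lemma: relative hypocenter} together with the identification of the stable term. By the discussion after that lemma, $N$ is $G$-hypocentral if and only if $\relHypCenter{G}{N}=1$. The plan is to prove the two implications separately, using that $\relHypCenter{G}{N}$ is the largest $G$-perfect normal subgroup of $G$ contained in $N$.

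For the direction ``no non-trivial $G$-perfect normal subgroup inside $N$ $\Rightarrow$ hypocentral'', note that \cref{lemma: relative hypocenter} says $\relHypCenter{G}{N}$ is a $G$-perfect normal subgroup of $G$ contained in $N$. It is normal because every term $\relLCS_\alpha(G,N)$ is normal in $G$. This follows by transfinite induction: commutator subgroups $[G,M]$ with $M\trianglelefteq G$ are normal, and intersections of normal subgroups are normal. By hypothesis $\relHypCenter{G}{N}$ must then be trivial. The series therefore reaches $1$ at the stabilizing ordinal, so $N$ is $G$-hypocentral.

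For the converse, suppose $\relLCS_\alpha(G,N)=1$ for some ordinal $\alpha$, and let $K\trianglelefteq G$ be a $G$-perfect subgroup with $K\subseteq N$. We show by transfinite induction on $\beta$ that $K\subseteq\relLCS_\beta(G,N)$. The case $\beta=0$ is the hypothesis $K\subseteq N$. At successor stages we use $K=[G,K]\subseteq[G,\relLCS_\beta(G,N)]$. At limit stages the inclusion passes to the intersection. Taking $\beta=\alpha$ gives $K=1$.

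There is no real obstacle here. The only point needing care is the transfinite bookkeeping, namely the normality of each term and the fact that the series stabilizes. Both are already handled in \cref{lemma: relative hypocenter}, which we may cite directly.
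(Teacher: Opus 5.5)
Your proposal is correct and follows the paper's approach: the corollary is read off from the preceding lemma identifying the stable term $\operatorname{Hyp}_G(N)$ as the largest $G$-perfect normal subgroup of $G$ contained in $N$. Your transfinite induction in the converse direction simply repeats the maximality part of that lemma's proof, so you could instead cite the lemma directly to get $K\subseteq\operatorname{Hyp}_G(N)=1$.
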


In turn, we extend the notion of nilpotency to the transfinite setting.

\begin{dfn}
\label[dfn]{definition: hypocentral homomorphism}
    Let $f\colon P\twoheadrightarrow Q$ be a surjective quandle homomorphism. We call $f$ \emph{hypocentral} if $\relInn(f)$ is $\Inn(P)$-hypocentral. 
\end{dfn}

Every nilpotent homomorphism is hypocentral. For the unique homomorphism $P\twoheadrightarrow\{*\}$, the preceding definition is the usual hypocentrality condition for the group $\Inn(P)$.

For every ordinal $\alpha$, we use the notation $\relInnLCS{\alpha}{f}$ and $\relTransLCS{\alpha}{f}$ for the $\alpha$-th term of the transfinite relative lower central series of $\relInn(f)$ and $\relTrans(f)$ in $\Inn(P)$, respectively.

\begin{lem}
\label[lem]{lemma: transfinite interlacing of the two relative lower central series}
Let $f\colon P\twoheadrightarrow Q$ be a surjective quandle homomorphism. Then, for every ordinal $\alpha$, we have
\[ \relInnLCS{\alpha+1}{f} \subseteq \relTransLCS{\alpha}{f} \subseteq \relInnLCS{\alpha}{f}. \]
Moreover, $\relInnLCS{\alpha}{f}=\relTransLCS{\alpha}{f}$ for $\alpha\geq\omega$.
\end{lem}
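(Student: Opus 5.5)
The plan is to prove the two inclusions simultaneously by transfinite induction on $\alpha$, and then deduce the equality for $\alpha\geq\omega$ by a cofinality argument at limit ordinals.

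For the base case $\alpha=0$, the right inclusion $\relTransLCS{0}{f}=\relTrans(f)\subseteq\relInn(f)=\relInnLCS{0}{f}$ is \cref{lemma: elementary properties of relative symmetry groups}. The left inclusion $\relInnLCS{1}{f}=[\Inn(P),\relInn(f)]\subseteq\relTrans(f)$ is \cref{lem:commutator_inn_rel_trans_rel}.

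For a successor $\alpha=\beta+1$, I apply $[\Inn(P),-]$, which is monotone, to the inclusions $\relInnLCS{\beta+1}{f}\subseteq\relTransLCS{\beta}{f}\subseteq\relInnLCS{\beta}{f}$. This gives $\relInnLCS{\beta+2}{f}\subseteq\relTransLCS{\beta+1}{f}\subseteq\relInnLCS{\beta+1}{f}$, as required.

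For a limit ordinal $\lambda$, I intersect over $\beta<\lambda$. The right inclusion $\relTransLCS{\lambda}{f}=\bigcap_{\beta<\lambda}\relTransLCS{\beta}{f}\subseteq\bigcap_{\beta<\lambda}\relInnLCS{\beta}{f}=\relInnLCS{\lambda}{f}$ is immediate. For the left inclusion, the induction hypothesis gives $\relInnLCS{\beta+1}{f}\subseteq\relTransLCS{\beta}{f}$ for every $\beta<\lambda$. Since $\beta+1<\lambda$ and the series is decreasing, this yields $\relInnLCS{\lambda}{f}\subseteq\relInnLCS{\beta+1}{f}\subseteq\relTransLCS{\beta}{f}$ for all $\beta<\lambda$, hence $\relInnLCS{\lambda}{f}\subseteq\relTransLCS{\lambda}{f}$. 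Applying $[\Inn(P),-]$ then gives $\relInnLCS{\lambda+1}{f}\subseteq\relInnLCS{\lambda}{f}\subseteq\relTransLCS{\lambda}{f}$.

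A cleaner way to organize this is to prove the stronger statement $\relInnLCS{\lambda}{f}=\relTransLCS{\lambda}{f}$ for every limit ordinal $\lambda$. The inclusion $\relTransLCS{\lambda}{f}\subseteq\relInnLCS{\lambda}{f}$ is the limit case of the right inclusion. The reverse inclusion comes from the chain $\relInnLCS{\lambda}{f}\subseteq\relInnLCS{\beta+1}{f}\subseteq\relTransLCS{\beta}{f}$ for all $\beta<\lambda$, using that successors of ordinals below a limit stay below it.

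For the final claim, every $\alpha\geq\omega$ can be written as $\alpha=\lambda+n$ with $\lambda$ a nonzero limit ordinal and $n<\omega$. Both series are obtained from their $\lambda$-th terms by applying $[\Inn(P),-]$ exactly $n$ times. Equality at $\lambda$ therefore propagates to $\alpha$ by a finite induction on $n$.

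The main point of care is the limit step. One must use the interlacing at successor stages $\beta+1<\lambda$ rather than at $\lambda$ itself, and this is exactly what makes the one-step shift disappear at limits. The rest is routine monotonicity.
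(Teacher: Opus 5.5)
Your proof is correct and follows essentially the same route as the paper: transfinite induction, with the base case from $\relTrans(f)\subseteq\relInn(f)$ and \cref{lem:commutator_inn_rel_trans_rel}, the successor step by applying $[\Inn(P),-]$, and the limit step via $\relInnLCS{\lambda}{f}\subseteq\relInnLCS{\beta+1}{f}\subseteq\relTransLCS{\beta}{f}$ for all $\beta<\lambda$. The only difference is organizational: the paper proves equality at $\omega$ and extends it by transfinite induction, whereas you get equality at every nonzero limit ordinal directly and then propagate finitely via $\alpha=\lambda+n$, which amounts to the same thing.
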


\begin{proof}
Put $G\coloneqq\Inn(P)$, $N\coloneqq\relInn(f)$, and $T\coloneqq\relTrans(f)$. The second inclusion follows by transfinite induction from $T\subseteq N$. Since $[G,N]\subseteq T$ by \cref{lem:commutator_inn_rel_trans_rel}, the first inclusion holds for $\alpha=0$, and the successor step follows by taking commutators with $G$. If $\lambda$ is a limit ordinal, then $\relLCS_\lambda(G,N)\subseteq\relLCS_{\beta+1}(G,N) \subseteq\relLCS_\beta(G,T)$ for every $\beta<\lambda$. Hence, $[G,\relLCS_\lambda(G,N)]\subseteq\relLCS_\lambda(G,T)$.

Taking intersections over the finite stages gives $\relInnLCS{\omega}{f}=\relTransLCS{\omega}{f}$. The equality for $\alpha\geq\omega$ follows by transfinite induction.
\end{proof}

For a surjective quandle homomorphism $f$, \cref{lemma: transfinite interlacing of the two relative lower central series} shows that the stable terms of the two transfinite relative lower central series associated with $\relInn(f)$ and $\relTrans(f)$ coincide. We will denote this common term by $\textstyle \hypCenter{f}\coloneqq \bigcap_\alpha \relInnLCS{\alpha}{f} = \bigcap_{\alpha} \relTransLCS{\alpha}{f}$.

\begin{cor}
\label[cor]{corollary: relative hypocenter and hypocentral lengths}
A surjective homomorphism $f\colon P\twoheadrightarrow Q$ is hypocentral if and only if the normal subgroup $\relTrans(f)$ is $\Inn(P)$-hypocentral.
\end{cor}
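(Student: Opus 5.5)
The argument reduces to comparing the stable terms of the two transfinite series, using \cref{lemma: transfinite interlacing of the two relative lower central series} and \cref{lemma: relative hypocenter}.

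By \cref{lemma: relative hypocenter}, the transfinite $\Inn(P)$-relative lower central series of $\relInn(f)$ stabilizes. Its stable term $\relHypCenter{\Inn(P)}{\relInn(f)}$ is trivial exactly when $f$ is hypocentral. Likewise, the series of $\relTrans(f)$ stabilizes at $\relHypCenter{\Inn(P)}{\relTrans(f)}$, which is trivial exactly when $\relTrans(f)$ is $\Inn(P)$-hypocentral. Note that "reaches $1$ at some ordinal" is equivalent to "stable term is $1$": the series is decreasing, so once it hits $1$ it stays there, and conversely the stable term is attained at some ordinal.

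The key step is to show the two stable terms coincide. Choose an ordinal $\alpha\geq\omega$ large enough that both series have stabilized by stage $\alpha$; such an $\alpha$ exists by taking the maximum of the two stabilization ordinals and $\omega$. \cref{lemma: transfinite interlacing of the two relative lower central series} gives $\relInnLCS{\alpha}{f}=\relTransLCS{\alpha}{f}$, so the two stable terms are equal. This common term is the subgroup $\hypCenter{f}$.

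If one prefers to avoid the equality for $\alpha\geq\omega$, the inclusions $\relInnLCS{\alpha+1}{f}\subseteq\relTransLCS{\alpha}{f}\subseteq\relInnLCS{\alpha}{f}$ suffice on their own. If the $\relInn$ series vanishes at stage $\alpha$, then so does the $\relTrans$ series. If the $\relTrans$ series vanishes at stage $\alpha$, then the $\relInn$ series vanishes at stage $\alpha+1$. Either way, the two hypocentrality conditions are equivalent.

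There is no real obstacle here. The only point needing care is matching "some ordinal" in the definition of hypocentrality with the stable term, and that is handled by monotonicity of the series.
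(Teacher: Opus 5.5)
Your proof is correct and is essentially the paper's argument, which deduces the corollary directly from \cref{lemma: transfinite interlacing of the two relative lower central series}. Your second version, using only the inclusions $\relInnLCS{\alpha+1}{f}\subseteq\relTransLCS{\alpha}{f}\subseteq\relInnLCS{\alpha}{f}$, already gives the equivalence without needing the stable terms.
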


\begin{proof}
This follows from \cref{lemma: transfinite interlacing of the two relative lower central series}.
\end{proof}

\begin{rem}
We call $f$ \emph{residually nilpotent} if $\textstyle \relInnLCS{\omega}{f}=\bigcap_{i<\omega}\relInnLCS{i}{f}=1$. Every residually nilpotent homomorphism is hypocentral.
\end{rem}

\subsection{Characterizations of hypocentral homomorphisms}
\label{subsection: characterizations of hypocentral homomorphisms}

In this subsection, we give characterizations of hypocentral homomorphisms in terms of $\relTrans(f)$, $\relAs(f)$, and $\Inn(P)$-perfect subgroups.
To establish these characterizations, we first record a group-theoretic observation used for the relative adjoint group.

\begin{lem}
\label[lem]{lemma: hypocentrality under central extensions}
    Let $\pi\colon A\twoheadrightarrow G$ be a central extension of groups, let $R\trianglelefteq A$ be a normal subgroup, and put $T\coloneqq\pi(R)$. Then, $R$ is $A$-hypocentral if and only if $T$ is $G$-hypocentral.
\end{lem}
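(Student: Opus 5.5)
The plan is to compare the two transfinite series directly. First I show that $\pi(\relLCS_\alpha(A,R))=\relLCS_\alpha(G,T)$ for every ordinal $\alpha$. Then I use centrality of $\Ker(\pi)$ to lift triviality back up, at the cost of one extra step.

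For the image formula at successor stages, I use that $\pi$ is surjective, so $\pi([A,X])=[G,\pi(X)]$ for any $X$. This is the finite-stage content of \cref{proposition: functoriality of relative lower central series}, which I would invoke or reprove.

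Limit stages are the main obstacle, since images need not commute with intersections. The inclusion $\pi(\bigcap_{\alpha<\lambda}\relLCS_\alpha(A,R))\subseteq\bigcap_{\alpha<\lambda}\relLCS_\alpha(G,T)$ is automatic, but the reverse can fail in general. To avoid needing equality, I would work with the stable terms instead. Write $H_A\coloneqq\relHypCenter{A}{R}$ and $H_G\coloneqq\relHypCenter{G}{T}$.

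For the direction ``$R$ hypocentral $\Rightarrow$ $T$ hypocentral'', I argue by contradiction. Suppose $H_G\neq1$ and set $K\coloneqq\pi^{-1}(H_G)\cap R$. Since $H_G\subseteq T=\pi(R)$, we get $\pi(K)=H_G$, and $K$ is normal in $A$. I then compute $\pi([A,K])=[G,H_G]=H_G$, which gives $[A,K]\cdot(\Ker\pi\cap K)=K$. Because $\Ker\pi$ is central, $[A,[A,K]Z]=[A,[A,K]]$ for any central $Z$, so $[A,K]=[A,[A,K]Z]=[A,[A,K]]$. Hence $K'\coloneqq[A,K]$ is $A$-perfect, normal, and contained in $R$, with $\pi(K')=[G,H_G]=H_G\neq1$. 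This contradicts \cref{corollary:relative_hypocentral_iff_no_non-trivial_relative_perfect}. So this direction reduces cleanly to perfect subgroups.

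For the converse, suppose $T$ is hypocentral and let $K\subseteq R$ be $A$-perfect and normal in $A$. Then $\pi(K)$ is $G$-perfect, since $[G,\pi(K)]=\pi([A,K])=\pi(K)$, and it is contained in $T$. Hypocentrality of $T$ forces $\pi(K)=1$, so $K\subseteq\Ker\pi$ is central. Then $K=[A,K]=1$. By \cref{corollary:relative_hypocentral_iff_no_non-trivial_relative_perfect}, $R$ is $A$-hypocentral.

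So the whole proof goes through perfect subgroups and never needs limit-stage image formulas. The only delicate point is the forward direction, where the lifted subgroup $K$ must be replaced by $[A,K]$ to make it genuinely $A$-perfect, using centrality of the kernel.
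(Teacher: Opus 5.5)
Your proof is correct and is essentially the paper's argument. The converse direction is identical, and in the forward direction your $K=\pi^{-1}(H_G)\cap R$ and $K'=[A,K]$ are the paper's $\widetilde K$ and $L=[A,\widetilde K]$, with the same decomposition $\widetilde K = L\bigl(\widetilde K\cap\Ker(\pi)\bigr)$ and the same use of centrality; you apply it to the hypocenter and argue by contradiction rather than to an arbitrary $G$-perfect subgroup, which makes no difference. Your opening plan to match the transfinite series term by term is never used and can be dropped.
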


\begin{proof}
($\Leftarrow$): Let $L\trianglelefteq A$ be an $A$-perfect subgroup contained in $R$. Then, since $[G,\pi(L)]=\pi([A,L])=\pi(L)$, the image $\pi(L)$ is $G$-perfect and contained in $T=\pi(R)$. By the assumption that $T$ is $G$-hypocentral, \cref{corollary:relative_hypocentral_iff_no_non-trivial_relative_perfect} yields $\pi(L)=1$. Therefore, we have $L\subseteq \Ker(\pi) \subseteq Z(A)$, which implies $L=[A,L]=1$. Thus $R$ is $A$-hypocentral by \cref{corollary:relative_hypocentral_iff_no_non-trivial_relative_perfect}.

($\Rightarrow$): Let $K\trianglelefteq G$ be a $G$-perfect subgroup contained in $T$. Put $\widetilde{K}\coloneqq(\pi\rvert_R)^{-1}(K) =R\cap\pi^{-1}(K)$ and $L\coloneqq[A,\widetilde{K}]$. Then, $\widetilde{K}\trianglelefteq A$, and hence $L\trianglelefteq A$ and $L\subseteq\widetilde{K}\subseteq R$. Moreover, $\pi(\widetilde{K})=K$ and $\pi(L)=[G,K]=K$.

We claim that $\widetilde{K} = L\bigl(\widetilde{K}\cap\Ker(\pi)\bigr)$. Indeed, the inclusion from right to left is clear. Conversely, let $x\in\widetilde{K}$. Since $\pi(L)=K=\pi(\widetilde{K})$, there is $l\in L$ such that $\pi(l)=\pi(x)$. Then $l^{-1}x\in\widetilde{K}\cap\Ker(\pi)$, and hence $x\in L(\widetilde{K}\cap\Ker(\pi))$.

Since $\widetilde{K}\cap\Ker(\pi)\subseteq Z(A)$, this equality gives
\[ L = [A,\widetilde K] = [A,L(\widetilde K\cap\Ker(\pi))] = [A,L]. \]
Thus, $L$ is an $A$-perfect subgroup contained in $R$, and hence must be trivial by \cref{corollary:relative_hypocentral_iff_no_non-trivial_relative_perfect}. It follows that $K=\pi(L)=1$, which shows that $T$ is $G$-hypocentral.
\end{proof}

\begin{thm}
\label[thm]{theorem: characterizations of hypocentral homomorphisms}
Let $f\colon P\twoheadrightarrow Q$ be a surjective quandle homomorphism. Then, the following conditions are equivalent.
\begin{enumerate}[label={(\roman*)}]
    \item\label{item: f is hypocentral}
    The homomorphism $f$ is hypocentral, that is, $\relInn(f)$ is $\Inn(P)$-hypocentral.
    \item\label{item: relative transvection is hypocentral}
    The group $\relTrans(f)$ is $\Inn(P)$-hypocentral.
    \item\label{item: relative adjoint group is hypocentral}
    The group $\relAs(f)$ is $\As(P)$-hypocentral.
    \item\label{item: no perfect subgroup in relative inner group}
    The group $\relInn(f)$ contains no non-trivial $\Inn(P)$-perfect normal subgroup of $\Inn(P)$.
    %\item\label{item: no strongly connected quotient factor} For every quotient factorization $f=h\circ e$ with $e$ strongly connected, the homomorphism $e$ is an isomorphism.
\end{enumerate}
\end{thm}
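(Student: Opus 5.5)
The plan is to assemble the four equivalences from results already established in this section; no new group theory is needed.

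\ref{item: f is hypocentral} $\Leftrightarrow$ \ref{item: relative transvection is hypocentral}: this is exactly \cref{corollary: relative hypocenter and hypocentral lengths}. It rests on \cref{lemma: transfinite interlacing of the two relative lower central series}: the transfinite series of $\relInn(f)$ and $\relTrans(f)$ agree from stage $\omega$ on. Hence they share the stable term $\hypCenter{f}$, and one vanishes iff the other does.

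\ref{item: f is hypocentral} $\Leftrightarrow$ \ref{item: no perfect subgroup in relative inner group}: this is \cref{corollary:relative_hypocentral_iff_no_non-trivial_relative_perfect} applied to $N=\relInn(f)\trianglelefteq G=\Inn(P)$. Normality of $\relInn(f)$ comes from \cref{lemma: elementary properties of relative symmetry groups}. The corollary itself comes from \cref{lemma: relative hypocenter}, which identifies the stable term with the largest $G$-perfect normal subgroup contained in $N$.

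\ref{item: relative transvection is hypocentral} $\Leftrightarrow$ \ref{item: relative adjoint group is hypocentral}: apply \cref{lemma: hypocentrality under central extensions} with $\pi=\rho_P\colon \As(P)\twoheadrightarrow \Inn(P)$ and $R=\relAs(f)$.
\begin{itemize}
\item $\rho_P$ is a central extension by \cref{proposition: elementary properties of adjoint groups}.
\item $R$ is normal in $\As(P)$, being a kernel.
\item $\rho_P(\relAs(f))=\relTrans(f)$ by \cref{lemma: generators of relative adjoint group}, as recorded in \cref{proposition: relative adjoint group central extension}.
\end{itemize}
The lemma then says $\relAs(f)$ is $\As(P)$-hypocentral iff $\relTrans(f)$ is $\Inn(P)$-hypocentral.

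Since each step is a direct citation, I expect no real obstacle. The only delicate point is \cref{lemma: hypocentrality under central extensions}, in particular its ($\Rightarrow$) direction, where a perfect subgroup must be lifted through a central extension. That lemma has already been proved, so here one only checks that its hypotheses match.
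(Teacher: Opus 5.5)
Your proposal is correct and matches the paper's proof: it uses the same three citations for (i)$\Leftrightarrow$(ii), (ii)$\Leftrightarrow$(iii), and (i)$\Leftrightarrow$(iv). You also check that \cref{lemma: hypocentrality under central extensions} is applied to the full central extension $\rho_P\colon \As(P)\twoheadrightarrow\Inn(P)$ with $\rho_P(\relAs(f))=\relTrans(f)$, rather than only to the restriction $\rho_f$; that matching of hypotheses is exactly what the lemma needs.
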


\begin{proof}
The equivalence of \ref{item: f is hypocentral} and \ref{item: relative transvection is hypocentral} follows from \cref{corollary: relative hypocenter and hypocentral lengths}.
The equivalence of \ref{item: relative transvection is hypocentral} and \ref{item: relative adjoint group is hypocentral} follows from \cref{lemma: hypocentrality under central extensions,proposition: relative adjoint group central extension}.
By \cref{corollary:relative_hypocentral_iff_no_non-trivial_relative_perfect}, \ref{item: f is hypocentral} is equivalent to \ref{item: no perfect subgroup in relative inner group}. 
%It remains to show \ref{item: no perfect subgroup in relative inner group} $\Leftrightarrow$ \ref{item: no strongly connected quotient factor}.
%
%Assume \ref{item: no perfect subgroup in relative inner group} and write $f=h\circ e$, where $e$ is strongly connected. Then $\relTrans(e)$ is $\Inn(P)$-perfect by \cref{theorem: characterization of strongly connected via perfect subgroup} and is contained in $\relInn(f)$. Hence, the assumption yields $\relTrans(e)=1$. Hence, as $e$ is strongly connected, it follows from \cref{proposition: strongly connected and nilpotent is iso} that $e$ is an isomorphism.
%Conversely, let $K\trianglelefteq\Inn(P)$ be a non-trivial $\Inn(P)$-perfect subgroup contained in $\relInn(f)$. Then $K=[\Inn(P),K]\subseteq[\Inn(P),\relInn(f)]\subseteq\relTrans(f)$. Thus, $f$ factors through $P\twoheadrightarrow P/K$, and this quotient is strongly connected by \cref{theorem: characterization of strongly connected via perfect subgroup}. Hence, \ref{item: no strongly connected quotient factor} fails.
\end{proof}

The following generalizes \cref{proposition: strongly connected and nilpotent is iso}.

\begin{prop}
\label[prop]{theorem: strongly connected and hypocentral is iso}
    Any strongly connected and hypocentral homomorphism is an isomorphism.
\end{prop}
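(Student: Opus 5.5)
The proof should mirror that of \cref{proposition: strongly connected and nilpotent is iso}, with the finite lower central series replaced by its transfinite version.

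Let $f\colon P\twoheadrightarrow Q$ be strongly connected and hypocentral. By \cref{proposition:relTrans_of_strongly_conncted_is_Inn-perfect}, the normal subgroup $\relTrans(f)\trianglelefteq\Inn(P)$ is $\Inn(P)$-perfect, i.e.\ $[\Inn(P),\relTrans(f)]=\relTrans(f)$. It is also contained in $\relInn(f)$ by \cref{lemma: elementary properties of relative symmetry groups}. Hypocentrality of $f$ means $\relInn(f)$ is $\Inn(P)$-hypocentral. Then \cref{corollary:relative_hypocentral_iff_no_non-trivial_relative_perfect}, equivalently \cref{theorem: characterizations of hypocentral homomorphisms}\ref{item: no perfect subgroup in relative inner group}, forces $\relTrans(f)=1$.

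Alternatively, one can argue via the series directly. Transfinite induction gives $\relTransLCS{\alpha}{f}=\relTrans(f)$ for all ordinals $\alpha$: the successor step is perfectness, and the limit step is an intersection of a constant family. Hence $\hypCenter{f}=\relTrans(f)$, and by \cref{corollary: relative hypocenter and hypocentral lengths} this is trivial.

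Having $\relTrans(f)=1$, $f$ is a covering by \cref{lemma: elementary properties of relative symmetry groups}. Since $f$ is strongly connected, $\relTrans(f)$ acts transitively on each fiber, so every fiber is a single point. A surjective homomorphism with singleton fibers is bijective, hence an isomorphism of quandles, since the inverse of a bijective homomorphism is automatically a homomorphism.

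One could also phrase this last step through \cref{theorem: characterization of strongly connected via perfect subgroup}: $f$ is isomorphic to $P\twoheadrightarrow P/\relTrans(f)=P/1\cong P$.

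There is no real obstacle here. The only point needing care is the limit stage in the transfinite argument, and even that is avoided by invoking the perfect-subgroup characterization directly.
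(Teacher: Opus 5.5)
Your proof is correct and takes essentially the same route as the paper: $\relTrans(f)$ is $\Inn(P)$-perfect by strong connectedness and lies in $\relInn(f)$, so hypocentrality (via the no-perfect-subgroup characterization) forces $\relTrans(f)=1$, and strong connectedness then makes $f$ an isomorphism. Your elementary last step (the trivial group acting transitively forces singleton fibers, so $f$ is bijective) is a harmless substitute for the paper's appeal to $f\cong\pi_{\relTrans(f)}$.
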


\begin{proof}
Suppose that $f$ is both strongly connected and hypocentral. Then, the group $\relTrans(f)$ is $\Inn(P)$-perfect by \cref{proposition:relTrans_of_strongly_conncted_is_Inn-perfect} and is contained in $\relInn(f)$. Hence, the hypocentrality of $f$ yields $\relTrans(f)=1$ by \cref{theorem: characterizations of hypocentral homomorphisms}. Thus, as $f$ is strongly connected and hence is isomorphic to $\pi_{\relTrans(f)}$, it follows that $f$ is an isomorphism.
\end{proof}

\begin{prop}
\label[prop]{proposition: hypocentrality under composition}
    Let $f\colon P\twoheadrightarrow Q$ and $g\colon Q\twoheadrightarrow R$ be surjective quandle homomorphisms. Then, the following hold.
    \begin{enumerate}
        \item\label{item: hypocentrality is closed under composition}
        If $f$ and $g$ are hypocentral, then so is $g\circ f$.
        \item\label{item: hypocentrality descends to the first factor}
        If $g\circ f$ is hypocentral, then $f$ is hypocentral.
    \end{enumerate}
\end{prop}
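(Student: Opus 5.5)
We reduce both parts to statements about normal subgroups of $G\coloneqq\Inn(P)$, using the characterization by perfect subgroups in \cref{theorem: characterizations of hypocentral homomorphisms}\ref{item: no perfect subgroup in relative inner group}. Put $H\coloneqq\Inn(Q)$ and let $f_*\colon G\twoheadrightarrow H$ and $g_*\colon H\twoheadrightarrow\Inn(R)$ be the induced surjections from \cref{proposition: functoriality of Inn for surjections}. Then $(g\circ f)_*=g_*\circ f_*$, so
\[
\relInn(g\circ f)=f_*^{-1}(\relInn(g))\supseteq \relInn(f).
\]
The only group-theoretic input needed is that surjections carry relatively perfect subgroups to relatively perfect subgroups. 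Indeed, if $K\trianglelefteq G$ is $G$-perfect, then $f_*(K)\trianglelefteq H$ and
\[
[H,f_*(K)]=f_*([G,K])=f_*(K),
\]
so $f_*(K)$ is $H$-perfect.

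For \eqref{item: hypocentrality descends to the first factor}, let $K\subseteq\relInn(f)$ be a $G$-perfect normal subgroup of $G$. By the displayed inclusion, $K\subseteq\relInn(g\circ f)$. Hypocentrality of $g\circ f$ then forces $K=1$. This is immediate.

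For \eqref{item: hypocentrality is closed under composition}, let $K\subseteq\relInn(g\circ f)$ be $G$-perfect and normal in $G$. First, $f_*(K)$ is an $H$-perfect normal subgroup of $H$, and it lies in $\relInn(g)$ because $g_*f_*(K)=1$. Since $g$ is hypocentral, $f_*(K)=1$. Hence $K\subseteq\Ker(f_*)=\relInn(f)$. Since $K$ is $G$-perfect and $f$ is hypocentral, $K=1$.

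I expect no real obstacle here. The only point needing care is justifying that the image of a $G$-perfect normal subgroup under a surjection is relatively perfect and normal, which is the displayed computation above. Working with perfect subgroups in this way avoids having to track the ordinal lengths of the transfinite series under composition.
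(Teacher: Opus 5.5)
Your proposal is correct and follows the paper's proof essentially step for step. The paper also uses the perfect-subgroup characterization \ref{item: no perfect subgroup in relative inner group} of \cref{theorem: characterizations of hypocentral homomorphisms}. It pushes a perfect $K\subseteq\relInn(g\circ f)$ forward along $f_*$ to kill it using $g$ and then $f$, and it handles part \eqref{item: hypocentrality descends to the first factor} via the inclusion $\relInn(f)\subseteq\relInn(g\circ f)$.
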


\begin{proof}
\eqref{item: hypocentrality is closed under composition}:
Put $h=g\circ f$ and let $K\trianglelefteq\Inn(P)$ be an $\Inn(P)$-perfect subgroup contained in $\relInn(h)$. Then $f_*(K)$ is an $\Inn(Q)$-perfect subgroup contained in $\relInn(g)$, as $[\Inn(Q),f_*(K)]=f_*([\Inn(P),K])=f_*(K)$ and $f_*(\relInn(h)) \subseteq \relInn(g)$. The hypocentrality of $g$ forces $f_*(K)=1$. Hence, $K\subseteq\relInn(f)$, and therefore the hypocentrality of $f$ shows $K=1$.
Thus, it follows from \ref{item: no perfect subgroup in relative inner group} of \cref{theorem: characterizations of hypocentral homomorphisms} that $h=g\circ f$ is hypocentral.

\eqref{item: hypocentrality descends to the first factor}: Since $\relInn(f)\subseteq\relInn(g\circ f)$, this also follows from \ref{item: no perfect subgroup in relative inner group} of \cref{theorem: characterizations of hypocentral homomorphisms}.
\end{proof}

\subsection{Strongly connected--hypocentral factorization}
\label{subsection: strongly connected-hypocentral factorization}

In this subsection, we show that every surjection admits a canonical strongly connected--hypocentral factorization and that these two classes form an orthogonal factorization system for surjections in $\Quandle$.

\begin{thm}[Strongly connected--hypocentral factorization]
\label[thm]{theorem: universal strongly connected-hypocentral factorization}
    Let $f\colon P\twoheadrightarrow Q$ be a surjective quandle homomorphism and put $H\coloneqq\hypCenter{f}$. Then, $f$ factors as 
    \[\begin{tikzcd}[column sep=small]
        P \arrow[two heads]{rr}{f} \arrow[two heads]{rd}[swap]{\hypMap{f}} & & Q \\
        & P/H \arrow[two heads]{ru}[swap]{\hypFac{f}}
    \end{tikzcd}\]
    where $\hypMap{f}$ is strongly connected and $\hypFac{f}$ is hypocentral.
\end{thm}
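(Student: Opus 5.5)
The plan has three steps: factor $f$ through $P/H$, show that the first map is strongly connected, and show that the second map is hypocentral.

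\emph{Existence of the factorization.} By \cref{lemma: relative hypocenter} and \cref{lemma: transfinite interlacing of the two relative lower central series}, $H=\hypCenter{f}$ is a normal subgroup of $\Inn(P)$. It is $\Inn(P)$-perfect and contained in $\relInn(f)$. Hence \cref{prop:basic_property_of_quotient_by_normal_subgroup} gives the factorization $f=\hypFac{f}\circ\hypMap{f}$ with $\hypMap{f}=\pi_H$.

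\emph{Strong connectedness of $\hypMap{f}$.} Since $H$ is $\Inn(P)$-perfect, \cref{theorem: characterization of strongly connected via perfect subgroup} immediately says that $\pi_H$ is strongly connected.

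\emph{Hypocentrality of $\hypFac{f}$.} I would apply criterion \ref{item: no perfect subgroup in relative inner group} of \cref{theorem: characterizations of hypocentral homomorphisms}. Write $\pi_*\colon\Inn(P)\twoheadrightarrow\Inn(P/H)$ and set $N=\relInn(f)$. By \cref{lemma: relative symmetry groups under a surjective factorization}, $\pi_*(N)=\relInn(\hypFac{f})$. Let $K\trianglelefteq\Inn(P/H)$ be $\Inn(P/H)$-perfect with $K\subseteq\pi_*(N)$; the goal is $K=1$. Put $\widetilde K\coloneqq N\cap\pi_*^{-1}(K)$, a normal subgroup of $\Inn(P)$ inside $N$ with $\pi_*(\widetilde K)=K$. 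I would show by transfinite induction that $\widetilde K\subseteq \relLCS_\alpha(\Inn(P),N)\cdot\Ker(\pi_*)$ for every ordinal $\alpha$.
\begin{itemize}
\item At $\alpha=0$ the claim is trivial.
\item At a successor step, $K=[\Inn(P/H),K]$ gives $\widetilde K\subseteq[\Inn(P),\widetilde K]\Ker(\pi_*)$. Using the induction hypothesis, $[\Inn(P),\widetilde K]\subseteq[\Inn(P),\relLCS_\alpha\Ker(\pi_*)]\subseteq \relLCS_{\alpha+1}\cdot[\Inn(P),\Ker(\pi_*)]$, and the last factor lies in $\Ker(\pi_*)$ since that subgroup is normal.
\end{itemize}

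\emph{Main obstacle.} The limit step is the hard part: intersections do not commute with products by $\Ker(\pi_*)$, and $\Ker(\pi_*)=\relInn(\pi_H)$ may be strictly larger than $H$. So I would instead work directly in the quotient, proving $K\subseteq\relLCS_\alpha(\Inn(P/H),\pi_*(N))$ for all $\alpha$. This reduces the problem to showing that $\relLCS_\alpha(\Inn(P/H),\pi_*(N))$ eventually becomes trivial. By \cref{proposition: functoriality of relative lower central series}, $\pi_*(\relLCS_\alpha(N))=\relLCS_\alpha(\pi_*N)$ at finite stages, and hence $\pi_*(\relLCS_\alpha(N))\subseteq\relLCS_\alpha(\pi_*N)$ at all stages. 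The reverse inclusion at limit stages is exactly where I expect difficulty.

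\emph{Fallback.} If the limit step resists, I would use the universal property route instead. First show that $\pi_H$ is initial among strongly connected quotients of $P$ through which $f$ factors: if $f=h\circ e$ with $e$ strongly connected, then $\relTrans(e)$ is $\Inn(P)$-perfect and contained in $N$, so it lies in $H$ by \cref{lemma: relative hypocenter}. Next, given a perfect $K$ in $\relInn(\hypFac{f})$, build the further quotient $(P/H)/K$. The composite $P\to(P/H)/K$ is the quotient of $P$ by the normal subgroup $M=\pi_*^{-1}(K)\cap N$. Replace $M$ by its perfect core $\Hyp_{\Inn(P)}(M)$, which lies in $H$ because it is perfect and contained in $N$. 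Finally, check that $\pi_*(\Hyp_{\Inn(P)}(M))=K$, via a central-extension-type argument modeled on the proof of \cref{lemma: hypocentrality under central extensions}; this forces $K\subseteq\pi_*(H)=1$.
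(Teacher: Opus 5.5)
The first two steps of your proposal match the paper: the factorization through $P/H$, and strong connectedness of $\hypMap{f}=\pi_H$ via \cref{theorem: characterization of strongly connected via perfect subgroup}. The gap is the hypocentrality of $\hypFac{f}$, which neither of your routes establishes.

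In the main route, proving $K\subseteq\relLCS_\alpha(\Inn(P/H),\pi_*(N))$ and then asking that this series reach $1$ restates the goal rather than reducing it. In the fallback, everything rests on the unproved claim $\pi_*(\Hyp_G(M))=K$ ``by a central-extension-type argument''. However, you never produce a central extension. Put $G\coloneqq\Inn(P)$ and $S\coloneqq\Ker(\pi_*)=\relInn(\hypMap{f})$. Then $\pi_*\colon G\twoheadrightarrow G/S$ is not a central extension unless $H=1$, since $S\supseteq H=[G,H]$. Moreover, the claim does not follow from the group-theoretic data you invoke: $H$ the $G$-hypocenter of $N$, $H\subseteq S\subseteq N$, and $K$ a $G/S$-perfect normal subgroup of $N/S$. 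For a counterexample, take $G=N=F_2$ free, so $H=\Hyp_G(G)=1$ by residual nilpotence. Let $S$ be the kernel of a surjection $F_2\twoheadrightarrow A_5$ and $K=G/S$. Then $M=G$ and $\Hyp_G(M)=1$, although $K\neq1$.

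The missing idea is a quandle-theoretic input: $[G,S]\subseteq H$. Take $\varphi\in S$ and $x\in P$. The point $\varphi(x)$ lies in the $H$-orbit of $x$, say $\varphi(x)=\xi_x(x)$ with $\xi_x\in H$. Hence $[\varphi,s_x]=s_{\varphi(x)}s_x^{-1}=s_{\xi_x(x)}s_x^{-1}=[\xi_x,s_x]\in H$. Since the $s_x$ generate $G$ and $H$ is normal, $[S,G]\subseteq H$. Thus $G/H\twoheadrightarrow G/S\cong\Inn(P/H)$ is a central extension. The paper finishes by observing that $N/H$ is $G/H$-hypocentral, by maximality of $H$, and then applying \cref{lemma: hypocentrality under central extensions}.

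At your successor step you bounded $[G,\Ker(\pi_*)]$ only by $\Ker(\pi_*)$; the sharper bound by $H$ is exactly what was needed. With it, your own setup closes without transfinite induction. For $L\coloneqq[G,\widetilde K]$ you have $\pi_*(L)=[G/S,K]=K$, hence $\widetilde K\subseteq LS$. Then $L\subseteq[G,LS]\subseteq[G,L][G,S]\subseteq[G,L]H$. So $LH=[G,LH]$ is a $G$-perfect normal subgroup contained in $N$, which forces $L\subseteq H$ and therefore $K=\pi_*(L)=1$.
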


\begin{proof}
The subgroup $H$ is $\Inn(P)$-perfect and is contained in $\relInn(f)$. Hence, $f$ factors through $P/H$ as $f=\hypFac{f}\circ\hypMap{f}$, and $\hypMap{f}$ is strongly connected by \cref{theorem: characterization of strongly connected via perfect subgroup}.

It remains to show that $\hypFac{f}$ is hypocentral. Put $G=\Inn(P)$, $N=\relInn(f)$, and $S=\relInn(\hypMap{f})$. Then, we have $\Inn(P/H) \cong \Inn(P)/\relInn(\hypMap{f}) = G/S$ and, from \cref{lemma: relative symmetry groups under a surjective factorization}, we see $\relInn(\hypFac{f}) \cong \relInn(f)/\relInn(\hypMap{f}) = N/S$. We need to show that $N/S$ is $G/S$-hypocentral.

Since $\hypMap{f}$ is the quotient map by $H$, we have $H \subseteq \relInn(\hypMap{f})=S \subseteq \relInn(f)=N$. We claim that $N/H$ is $G/H$-hypocentral; indeed, if $K/H$ is a $G/H$-perfect subgroup contained in $N/H$, then the condition $[G/H,K/H]=[G,K]/H=K/H$ implies $[G,K] = K$, which shows that $K$ is $G$-perfect. As $H \subseteq K \subseteq N$, the maximality of $H$ forces $H=K$; in other words, $K/H$ is trivial. Thus $N/H$ is $G/H$-hypocentral by \cref{corollary:relative_hypocentral_iff_no_non-trivial_relative_perfect}.

For every $\varphi\in \relInn(\hypMap{f})=S$ and $x\in P$, we have $\hypMap{f}(\varphi(x))=\hypMap{f}(x)$, and hence $\varphi(x)=\xi_x(x)$ for some $\xi_x \in H$. Then,
\[ [\varphi,s_x]=s_{\varphi(x)}s_x^{-1} = s_{\xi_x(x)}s_x^{-1} = [\xi_x,s_x] \in [H,\Inn(P)]=H, \]
and therefore we obtain $[S,G] \subseteq H$. This shows that the induced map $G/H \twoheadrightarrow G/S$ is a central extension.
As $N/H$ is $G/H$-hypocentral, \cref{lemma: hypocentrality under central extensions} yields that $N/S$ is $G/S$-hypocentral, which completes the proof.
\end{proof}

As in \cite[Definition 3.20]{preprint_yuki_tomoki_2026_relativization_of_symmetries_on_quandles}, we refer to a \emph{pre-covering} homomorphism $f\colon P\to Q$ as a quandle homomorphism satisfying $f(x) = f(y)$ implies $s_x=s_y$ for all $x,y\in P$. Surjective pre-covering homomorphisms are precisely covering homomorphisms.
The following orthogonality result is established in the same reference.

\begin{prop}[{\cite[Proposition 3.21]{preprint_yuki_tomoki_2026_relativization_of_symmetries_on_quandles}}]
\label[prop]{lemma: strongly connected homomorphisms and covering condition}
    The class of strongly connected homomorphisms is left orthogonal to the class of pre-covering homomorphisms in the category of quandles.
\end{prop}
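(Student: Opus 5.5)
We have to prove that, in $\Quandle$, every strongly connected $e\colon A\twoheadrightarrow B$ has the unique lifting property against every pre-covering $p\colon X\to Y$. Concretely, suppose we are given a commutative square $p\circ a=b\circ e$ with $a\colon A\to X$ and $b\colon B\to Y$. We want a unique $d\colon B\to X$ with $d\circ e=a$ and $p\circ d=b$.

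\emph{Uniqueness.} This is immediate from the surjectivity of $e$: any two diagonals agree on $e(A)=B$.

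\emph{Existence.} It is enough to show that $a$ is constant on the fibers of $e$. We then define $d(e(x))\coloneqq a(x)$, and check the remaining properties:
\begin{itemize}
\item $d$ is a quandle homomorphism, since $d(s_{e(x)}e(y))=d(e(s_xy))=a(s_xy)=s_{a(x)}a(y)$ and $e$ is surjective;
\item $p\circ d=b$ holds because $p\circ d\circ e=p\circ a=b\circ e$ and $e$ is epi.
\end{itemize}

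\emph{Fiber-constancy.} By strong connectedness, if $e(x)=e(x')$ then $x'=\varphi(x)$ for some $\varphi\in\relTrans(e)$. So it suffices to show that every generator $s_us_v^{-1}$ with $e(u)=e(v)$ satisfies $a\circ s_us_v^{-1}=a$.

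The difficulty is that we cannot argue by induction along fibers directly, since $a(u)=a(v)$ is exactly what we are trying to prove. To get around this, we use the relative hypocentral/perfect structure. By \cref{theorem: characterization of strongly connected via perfect subgroup}, $e$ is isomorphic to $\pi_N$ for an $\Inn(A)$-perfect normal subgroup $N$, and $N=\relTrans(e)$ by \cref{proposition:relTrans_of_strongly_conncted_is_Inn-perfect}.

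Consider the subgroup
\[ M\coloneqq\{\varphi\in N\mid a\circ\varphi\circ\psi=a\circ\psi\ \text{for all}\ \psi\in\Inn(A)\}=\{\varphi\in N \mid a\circ\varphi=a\}, \]
the two descriptions agreeing because $N$ is normal. The key claim is that $[\Inn(A),N]\subseteq M$.

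To prove the claim, take $\varphi\in N$ and $x\in A$. We have $[\varphi,s_x]=s_{\varphi(x)}s_x^{-1}$. Moreover $\varphi(x)$ and $x$ lie in the same $e$-fiber, so $p(a(\varphi x))=b(e(\varphi x))=b(e(x))=p(a(x))$. Since $p$ is pre-covering, this gives $s_{a(\varphi x)}=s_{a(x)}$. Hence, for all $y\in A$,
\[ a\bigl(s_{\varphi x}s_x^{-1}y\bigr)=s_{a(\varphi x)}s_{a(x)}^{-1}a(y)=a(y). \]
Thus every commutator $[\varphi,s_x]$ lies in $M$. The set $\{\psi\in\Inn(A)\mid a\circ\psi=a\}$ is a subgroup, so $M$ contains the whole subgroup generated by these commutators. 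It remains to pass from commutators $[\varphi,s_x]$ to general commutators $[\psi,\varphi]$ with $\psi\in\Inn(A)$. This follows from the standard identities expressing $[\psi_1\psi_2,\varphi]$ through conjugates of $[\psi_i,\varphi]$, together with normality of the relevant subgroup: the subgroup generated by the $[\varphi,s_x]$, with $\varphi\in N$ and $x\in A$, is normal and equals $[\Inn(A),N]$.

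Granting the claim, perfectness finishes the argument: $N=[\Inn(A),N]\subseteq M$, so $a\circ\varphi=a$ for every $\varphi\in N$. This is exactly the required fiber-constancy.

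The only step needing care is the commutator-generation identity, that is, the equality of the subgroup generated by the $[\varphi,s_x]$ with $[\Inn(A),N]$; everything else is formal.
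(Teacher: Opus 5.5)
Your proof is correct, but the detour through perfectness is unnecessary, because the obstacle you identify does not exist. The paper gives no proof of this statement; it imports it from the earlier preprint, so there is nothing to compare against beyond the direct argument below.

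To show $a\circ s_us_v^{-1}=a$ for $e(u)=e(v)$, you never need $a(u)=a(v)$. You only need $s_{a(u)}=s_{a(v)}$, and the pre-covering condition gives exactly this from $p(a(u))=b(e(u))=b(e(v))=p(a(v))$. Hence
\[ a\circ s_us_v^{-1}=s_{a(u)}s_{a(v)}^{-1}\circ a=a . \]
The set $\{\psi\in\Inn(A)\mid a\circ\psi=a\}$ is a subgroup, so it contains $\relTrans(e)$. Strong connectedness (the fibers of $e$ are $\relTrans(e)$-orbits) then gives fiber-constancy at once. Equivalently, $a$ factors through $\pi_{\relTrans(e)}\cong e$ by \cref{prop:basic_property_of_quotient_by_normal_subgroup}.

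Your ``key claim'' computation is literally this argument applied to the pair $(\varphi(x),x)$. That is an arbitrary pair in a common fiber, and nothing about $\varphi$ beyond $e(\varphi(x))=e(x)$ enters. So your route first verifies that the generators of $\relTrans(e)$ stabilize $a$. It then invokes perfectness and the commutator-generation identity to rewrite $\relTrans(e)$ as generated by elements of the same kind.

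That step is valid. The identity $[\varphi,\psi_1\psi_2]=[\varphi,\psi_1]\,\psi_1[\varphi,\psi_2]\psi_1^{-1}$, together with the $\Inn(A)$-invariance of the set of $[\varphi,s_x]$, does give $[\Inn(A),N]$. But it buys nothing. The direct argument uses only the definitions of strong connectedness and pre-covering, and avoids \cref{theorem: characterization of strongly connected via perfect subgroup} and \cref{proposition:relTrans_of_strongly_conncted_is_Inn-perfect} entirely.

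Two harmless slips:
\begin{enumerate}
\item \cref{proposition:relTrans_of_strongly_conncted_is_Inn-perfect} gives the perfectness of $\relTrans(e)$, not the identification $N=\relTrans(e)$. You can simply set $N\coloneqq\relTrans(e)$ from the start.
\item The two descriptions of $M$ agree because each $\psi$ is a bijection, not because $N$ is normal.
\end{enumerate}
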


Since (pre-)covering maps are not closed under composition, they cannot form the right class of an orthogonal factorization system. We overcome this obstruction by replacing coverings with hypocentral homomorphisms.

We use transfinite cocompositions in the usual sense: at every nonzero limit ordinal, the corresponding object is the limit of the preceding tower.

\begin{comment}
\begin{lem} %%%% well-known fact %%%%%
\label[lem]{lemma: orthogonality under transfinite cocomposition}
    Let $e\colon A\to B$ be a quandle homomorphism, and let $X_\lambda\to X_0$ be the transfinite cocomposite of an inverse ordinal tower $(X_\alpha)_{\alpha\leq\lambda}$. If $e$ is left orthogonal to $X_{\alpha+1}\to X_\alpha$ for every $\alpha<\lambda$, then $e$ is left orthogonal to $X_\lambda\to X_0$.
\end{lem}

\begin{proof}
Consider a commutative square from $e$ to
$X_\lambda\to X_0$.
We construct compatible homomorphisms
$w_\alpha\colon B\to X_\alpha$ by transfinite induction.

In degree $0$, let $w_0$ be the bottom homomorphism of the square.
Suppose that $w_\alpha$ has been constructed. The lifting property
against $X_{\alpha+1}\to X_\alpha$ gives a unique compatible
homomorphism $w_{\alpha+1}$.
If $\beta$ is a nonzero limit ordinal, the compatible family
$(w_\alpha)_{\alpha<\beta}$ induces a unique homomorphism
$w_\beta\colon B\to X_\beta$ by the universal property of the inverse
limit. Thus, $w_\lambda$ is the required diagonal filler.
The same induction proves uniqueness.
\end{proof}
\end{comment}

\begin{thm}
\label[thm]{theorem: transfinite covering characterization of hypocentral homomorphisms}
    Let $f\colon P\twoheadrightarrow Q$ be a surjective quandle homomorphism. Then, the following conditions are equivalent.
\begin{enumerate}[label={(\roman*)}]
    \item\label{item: homomorphism is hypocentral}
    $f$ is hypocentral.
    \item\label{item: transfinite cocomposite of covering condition}
    $f$ is a transfinite cocomposite of pre-covering homomorphisms.
    \item\label{item: hypocentral right orthogonal}
    $f$ is right orthogonal to every strongly connected homomorphism.
\end{enumerate}
\end{thm}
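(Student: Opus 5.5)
We prove \ref{item: homomorphism is hypocentral}$\Rightarrow$\ref{item: transfinite cocomposite of covering condition}$\Rightarrow$\ref{item: hypocentral right orthogonal}$\Rightarrow$\ref{item: homomorphism is hypocentral}.

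\emph{\ref{item: homomorphism is hypocentral}$\Rightarrow$\ref{item: transfinite cocomposite of covering condition}.} Assume $f$ is hypocentral and put $N_\alpha\coloneqq\relInnLCS{\alpha}{f}$. This is a descending chain of normal subgroups of $\Inn(P)$ with $N_0=\relInn(f)$ and $N_\mu=1$ for some ordinal $\mu$. Consider the tower $X_\alpha\coloneqq P/N_{\alpha}$, indexed so that $X_0=P/\relInn(f)$ and $X_\mu=P$. Prepend the rigid map $h\colon P/\relInn(f)\twoheadrightarrow Q$, which is covering, as the first step; to keep the indexing honest we shift all indices by one.

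For the successor steps, $[\Inn(P),N_\alpha]=N_{\alpha+1}$, so $P/N_{\alpha+1}\to P/N_\alpha$ is a covering by \cref{lemma: quotient covering via commutator}.

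The limit stages are the main obstacle. One must check that for a limit ordinal $\lambda$, the quotient $P/N_\lambda$ is the limit of the tower $(P/N_\alpha)_{\alpha<\lambda}$. In general the comparison map $P/N_\lambda\to\lim_{\alpha<\lambda}P/N_\alpha$ need not be surjective, and orbits of an intersection need not be intersections of orbits. So I would not insist that the quotient tower itself works. Instead I would define the tower by $X_\lambda\coloneqq\lim_{\alpha<\lambda}X_\alpha$ at limits, and at successors $X_{\alpha+1}\coloneqq X_\alpha\times_{P/N_\alpha}P/N_{\alpha+1}$, or some similar construction. The first thing to try is simpler: show that the canonical map $P\to\lim$ restricted to its image is fine, and arrange that all maps in the tower are pre-coverings, not necessarily surjective. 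This is exactly why the statement says pre-covering. Concretely, set $X_\lambda$ to be the limit, and then take $X_{\lambda+1}$ to be the image of $P/N_\lambda$ in $X_\lambda$ followed by the covering $P/N_{\lambda+1}\to P/N_\lambda$. The map $P/N_\lambda\to X_\lambda$ is injective when it is a pre-covering whose fibres are singletons. It remains to verify that this inclusion is a pre-covering. A cleaner alternative is to check that $\lim_{\alpha<\lambda}P/N_\alpha\hookleftarrow \mathrm{im}(P)$ is a subquandle inclusion, which is trivially pre-covering, and that $P/N_\lambda\to\mathrm{im}(P)$ is a covering by comparing $s_x$ through $\bigcap N_\alpha=N_\lambda$. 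At stage $\mu$ we reach $P$.

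\emph{\ref{item: transfinite cocomposite of covering condition}$\Rightarrow$\ref{item: hypocentral right orthogonal}.} A strongly connected homomorphism is left orthogonal to each pre-covering by \cref{lemma: strongly connected homomorphisms and covering condition}. Left orthogonality is preserved under transfinite cocomposites: build the diagonal fillers $w_\alpha$ by transfinite induction, using unique lifts at successor stages and the universal property of the limit at limit stages. Uniqueness follows by the same induction.

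\emph{\ref{item: hypocentral right orthogonal}$\Rightarrow$\ref{item: homomorphism is hypocentral}.} Use \cref{theorem: universal strongly connected-hypocentral factorization}, which gives $f=\hypFac{f}\circ\hypMap{f}$. Apply orthogonality to the square with left side $\hypMap{f}$, right side $f$, top $\id_P$, and bottom $\hypFac{f}$. This yields a lift $w\colon P/H\to P$ with $w\circ\hypMap{f}=\id_P$. Hence $\hypMap{f}$ is injective, and therefore an isomorphism. So $H=\relInn(\hypMap{f})$ acts trivially; since $H\subseteq\Inn(P)$ acts faithfully, $H=1$. Thus $f$ is hypocentral.
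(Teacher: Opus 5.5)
Your proposal is the paper's proof in all three implications. It uses the same quotient tower $P/N_\alpha$ with $N_\alpha=\relInnLCS{\alpha}{f}$, refined at each limit $\lambda$ by inserting $\lim_{\alpha<\lambda}P/N_\alpha$; splitting the comparison map into a covering onto its image followed by an inclusion is equivalent to the paper's single pre-covering step. It also uses the same transfinite-induction argument for (ii)$\Rightarrow$(iii) and the same retraction argument for (iii)$\Rightarrow$(i), where concluding $\Hyp(f)=1$ directly is a harmless variant of the paper's $f\cong\hypFac{f}$.

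You should, however, delete the false starts in the limit paragraph. The fibre product $X_\lambda\times_{P/N_\lambda}P/N_{\lambda+1}$ would need a map $\lim_{\alpha<\lambda}X_\alpha\to P/N_\lambda$, and the canonical map goes the other way. Also, nothing makes $P/N_\lambda\to\lim_{\alpha<\lambda}P/N_\alpha$ an inclusion, as you yourself note.

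Then write out the one real verification, which you only gesture at. If $x,y\in P$ agree in the limit, then for each $\alpha<\lambda$ there is $\varphi_\alpha\in N_\alpha$ with $y=\varphi_\alpha(x)$. Hence $s_ys_x^{-1}=[\varphi_\alpha,s_x]\in N_{\alpha+1}\subseteq N_\alpha$ for every $\alpha<\lambda$, so $s_ys_x^{-1}\in\bigcap_{\alpha<\lambda}N_\alpha=N_\lambda$. Therefore $x$ and $y$ induce the same symmetry on $P/N_\lambda$.
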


\begin{proof}
\ref{item: homomorphism is hypocentral} $\Rightarrow$ \ref{item: transfinite cocomposite of covering condition}:
Choose an ordinal $\lambda$ such that $\relInnLCS{\lambda}{f}=1$, and put $C_\alpha\coloneqq P/\relInnLCS{\alpha}{f}$ for $\alpha\leq\lambda$.
Then, $C_\lambda=P$; the induced homomorphism $C_0=P/\relInn(f)\twoheadrightarrow Q$ is rigid and hence a covering homomorphism; and every successor homomorphism $C_{\alpha+1}\twoheadrightarrow C_\alpha$ is a covering by \cref{lemma: quotient covering via commutator}.
So, the induced decomposition tower of $f$
\[\begin{tikzcd}[column sep=small]
P=C_\lambda \arrow[two heads]{r} &
\cdots \arrow[two heads]{r} &
C_{\alpha+1} \arrow[two heads]{r} &
C_{\alpha} \arrow[two heads]{r} &
\cdots \arrow[two heads]{r} &
C_0 \arrow[two heads]{r} & Q
\end{tikzcd}\]
consists of covering homomorphisms at every successor stage. However, at a limit ordinal $\beta$, $C_\beta$ is not necessarily the limit of the preceding tower; so, this tower cannot be regarded as a transfinite cocomposition of covering homomorphisms.

Instead, we refine this tower at its limit stages. Suppose that $\beta\leq\lambda$ is a nonzero limit ordinal, and let $\textstyle L_\beta = \lim_{\alpha<\beta} C_{\alpha}$ be the inverse limit of the part of the refined tower already constructed below $\beta$. The compatible quotient homomorphisms induce a canonical homomorphism $g_\beta\colon C_\beta\to L_\beta$.

We claim that $g_\beta$ satisfies the pre-covering condition. Let $x,y\in P$ and suppose that the classes of $x$ and $y$ in $C_\beta=P/\relInnLCS{\beta}{f}$ have the same image under $g_\beta$. Then, for every ordinal $\alpha<\beta$, their classes in $C_\alpha$ necessarily coincide, and there is $\varphi_\alpha\in\relInnLCS{\alpha}{f}$ such that $y=\varphi_\alpha(x)$. Therefore, $s_ys_x^{-1} = [\varphi_\alpha,s_x] \in \relInnLCS{\alpha+1}{f}$ for every $\alpha<\beta$. Since $\beta$ is a limit ordinal, it follows that $\textstyle s_ys_x^{-1}\in\relInnLCS{\beta}{f}=\bigcap_{\alpha<\beta}\relInnLCS{\alpha}{f}$. Thus, the classes of $x$ and $y$ induce the same structure map on $C_\beta=P/\relInnLCS{\beta}{f}$, proving that $g_\beta$ is pre-covering.

At every nonzero limit ordinal $\beta$, insert $L_\beta$ immediately below $C_\beta$ with successor homomorphism $g_\beta$. Reindexing the resulting tower gives a transfinite cocomposition of homomorphisms satisfying the pre-covering condition, whose cocomposite is $f$.

\ref{item: transfinite cocomposite of covering condition} $\Rightarrow$ \ref{item: hypocentral right orthogonal}:
Each homomorphism occurring in the tower is right orthogonal to every strongly connected homomorphism by \cref{lemma: strongly connected homomorphisms and covering condition}. Since right orthogonality is preserved under transfinite cocompositions, the transfinite cocomposite is also right orthogonal to strongly connected homomorphisms.

\ref{item: hypocentral right orthogonal} $\Rightarrow$ \ref{item: homomorphism is hypocentral}:
Suppose that $f$ is right orthogonal to every strongly connected homomorphism. Considering the commutative diagram
\[\begin{tikzcd}
P \arrow[r, "\id"] \arrow[d, two heads, "\hypMap{f}"]
    & P \arrow[d, two heads, "f"]\\
P/\Hyp(f) \arrow[r, two heads, "\hypFac{f}"']
    & Q\rlap{,}
\end{tikzcd}\]
we have a filler $w\colon P/\Hyp(f) \to P$, since $\hypMap{f}$ is strongly connected (\cref{theorem: universal strongly connected-hypocentral factorization}). Then, $w$ is a retraction of $\hypMap{f}$. Therefore $\hypMap{f}$ is injective, and hence an isomorphism. Thus $f\cong \hypFac{f}$ is hypocentral.
\end{proof}

In particular, every transfinite cocomposite of covering homomorphisms is hypocentral.

\begin{rem}
    A hypocentral homomorphism is a transfinite cocomposite of homomorphisms satisfying the pre-covering condition. However, the homomorphisms inserted at limit stages need not be surjective (see \cref{example: hypocentral but not nilpotent Alexander quandle}). Thus, a hypocentral homomorphism is not necessarily a transfinite cocomposite of covering homomorphisms.
\end{rem}

Even and Gran~\cite[Proposition 3.2]{even_gran_2014_on_factorization_systems_for_surjective_quandle_homomorphisms} previously showed that the classes of connected and rigid homomorphisms form an orthogonal factorization system relative to surjective homomorphisms (see also \cite[Proposition 2.24]{preprint_yuki_tomoki_2026_relativization_of_symmetries_on_quandles}). With the preceding preparations, we conclude that the analogous statement holds for strongly connected homomorphisms.
 
\begin{thm}[Strongly connected--hypocentral factorization system]
\label[thm]{theorem: strongly connected-hypocentral factorization system}
    The pair $(\{\mathrm{strongly\>connected}\}, \{\mathrm{hypocentral}\})$ of classes of surjections is an orthogonal factorization system for the surjections in $\Quandle$.
\end{thm}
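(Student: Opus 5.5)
The plan is to verify the standard axioms of an orthogonal factorization system restricted to the class of surjections. First, both classes contain all isomorphisms and are closed under composition with isomorphisms. Second, every surjection admits a factorization of the required form. Third, every strongly connected surjection is left orthogonal to every hypocentral surjection. For the closure claims: an isomorphism $f$ has $\relInn(f)=1=\relTrans(f)$. It is therefore hypocentral, and it is strongly connected because its fibers are singletons. Both classes are defined through $\relInn$ and $\relTrans$ together with the fiber structure, so they are invariant under composing with isomorphisms. Existence of factorizations is exactly \cref{theorem: universal strongly connected-hypocentral factorization}, which gives $f=\hypFac{f}\circ\hypMap{f}$ with $\hypMap{f}$ strongly connected and $\hypFac{f}$ hypocentral.

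For orthogonality, take a hypocentral surjection $f$. The implication \ref{item: homomorphism is hypocentral} $\Rightarrow$ \ref{item: hypocentral right orthogonal} of \cref{theorem: transfinite covering characterization of hypocentral homomorphisms} says that $f$ is right orthogonal to every strongly connected homomorphism. This yields unique diagonal fillers for every commutative square with a strongly connected map on the left and $f$ on the right.

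If the paper's definition of an orthogonal factorization system also requires that each class be exactly the orthogonal complement of the other within surjections, I will check this as well. A surjection right orthogonal to all strongly connected maps is hypocentral, by \ref{item: hypocentral right orthogonal} $\Rightarrow$ \ref{item: homomorphism is hypocentral} of the same theorem. For the converse, let $e\colon P\twoheadrightarrow Q$ be a surjection left orthogonal to all hypocentral surjections. Factor $e=\hypFac{e}\circ\hypMap{e}$ and consider the square with $\hypMap{e}$ on top, $\id_Q$ on the bottom, $e$ on the left and $\hypFac{e}$ on the right. The filler $w\colon Q\to P/\Hyp(e)$ satisfies $\hypFac{e}\circ w=\id_Q$ and $w\circ e=\hypMap{e}$. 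Since $e$ is surjective and $\hypMap{e}$ is surjective, $w$ is surjective. Being also a section of $\hypFac{e}$, it is injective, so $w$ is a bijective homomorphism and hence an isomorphism. Thus $e\cong\hypMap{e}$ is strongly connected. This is the mirror image of the last step in the proof of \cref{theorem: transfinite covering characterization of hypocentral homomorphisms}.

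Uniqueness of factorizations up to unique isomorphism then follows formally from orthogonality, so the whole argument is assembled from earlier results. The only step needing care is the converse characterization of the left class. The key point there is that surjectivity of $e$ makes the filler $w$ surjective.
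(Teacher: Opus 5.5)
Your proof is correct and follows the paper's route: existence comes from \cref{theorem: universal strongly connected-hypocentral factorization}, and orthogonality comes from the implication (i)$\Rightarrow$(iii) of \cref{theorem: transfinite covering characterization of hypocentral homomorphisms}. Your additional checks (closure under isomorphisms, and the mirror argument that the left class is exactly the left complement) are also sound, with one small correction: $w$ is surjective because $w\circ e=\hypMap{e}$ and $\hypMap{e}$ is surjective, not because $e$ is.
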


\begin{proof}
Every surjective homomorphism admits the factorization in \cref{theorem: universal strongly connected-hypocentral factorization}, and the two classes are orthogonal by \cref{theorem: transfinite covering characterization of hypocentral homomorphisms}.
\end{proof}

\begin{rem}
    From \cite[Proposition 3.32]{preprint_yuki_tomoki_2026_relativization_of_symmetries_on_quandles}, strongly connected homomorphisms are closed under pullbacks along surjective homomorphisms.
    Therefore, this factorization system for the surjections is in fact \textit{stable} (see \cite[\href{https://ncatlab.org/nlab/show/stable+factorization+system}{stable factorization system}]{preprint_nlab_2008_nlab}).
\end{rem}

\begin{cor}
\label[cor]{corollary: hypocentral homomorphisms under pullback}
    Hypocentral homomorphisms are preserved by pullback along arbitrary quandle homomorphisms.
\end{cor}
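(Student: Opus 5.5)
The plan is to use the orthogonality characterization in \cref{theorem: transfinite covering characterization of hypocentral homomorphisms}\ref{item: hypocentral right orthogonal}. Right classes of orthogonality are always stable under pullback, and this is true whatever the morphism pulled back along. Let $f\colon P\twoheadrightarrow Q$ be hypocentral and let $v\colon R\to Q$ be an arbitrary homomorphism. Write $f'\colon P'\coloneqq P\times_Q R\to R$ for the pullback. It is surjective, since for $r\in R$ we may pick $x\in f^{-1}(v(r))$ and then $(x,r)\in P'$. So $f'$ lies in the class of surjections to which the theorem applies.

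Next we show that $f'$ is right orthogonal to every strongly connected homomorphism $e\colon A\twoheadrightarrow B$. Take a commutative square with top $a\colon A\to P'$, bottom $b\colon B\to R$, and $f'a=be$.

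Composing with the projection $u\colon P'\to P$ gives a square from $e$ to $f$, namely $f(ua)=v b e$. Since $f$ is hypocentral, this square has a unique filler $w\colon B\to P$ with $we=ua$ and $fw=vb$.

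The pair $(w,b)$ then induces $\tilde w\colon B\to P'$. By the uniqueness in the universal property of the pullback, $\tilde w e=a$, and by construction $f'\tilde w=b$.

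Uniqueness of $\tilde w$ follows in the same way. Any filler composed with $u$ is a filler for the square against $f$, hence equals $w$. Its composite with $f'$ is $b$. So it is determined by the pullback property.

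The only point needing care is that \cref{theorem: transfinite covering characterization of hypocentral homomorphisms} only applies to surjections and that orthogonality there is against arbitrary test squares in $\Quandle$. Surjectivity of $f'$ was checked above, and the test squares are arbitrary commutative squares with no extra conditions. So the argument goes through, and $f'$ is hypocentral by \ref{item: hypocentral right orthogonal} $\Rightarrow$ \ref{item: homomorphism is hypocentral}.
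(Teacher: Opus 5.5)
Your proof is correct and follows the same route as the paper: hypocentrality is equivalent to right orthogonality to strongly connected homomorphisms, right orthogonality is stable under pullback, and pullbacks of surjections are surjective, so the characterization theorem applies to $f'$. The only difference is that you verify the pullback stability of the unique lifting property explicitly, which the paper takes as standard.
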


\begin{proof}
By \cref{theorem: transfinite covering characterization of hypocentral homomorphisms}, hypocentral homomorphisms are right orthogonal to strongly connected homomorphisms in the category of quandles. Right orthogonality is preserved by pullback. Since a pullback of a surjective homomorphism is surjective, the assertion follows from the same theorem.
\end{proof}

\subsection{Finite-stage factorizations and examples}
\label{subsection: finite-stage strongly connected-nilpotent factorizations}

The strongly connected--hypocentral factorization has a nilpotent second homomorphism precisely when the relative transvection series stabilizes at a finite stage.

\begin{prop}
\label[prop]{proposition: criterion for strongly connected-nilpotent factorization}
    Let $f\colon P\twoheadrightarrow Q$ be a surjective quandle homomorphism. Then, the following conditions are equivalent.
\begin{enumerate}[label={(\roman*)}]
    \item\label{item: strongly connected-nilpotent factorization}
        The homomorphism $f$ admits a factorization $f=m\circ e$ where $e$ is strongly connected and $m$ is nilpotent.
    \item\label{item: canonical hypocentral factor is nilpotent}
        The canonical hypocentral factor $\hypFac{f}$ is nilpotent.
    \item\label{item: finite stabilization of relative transvection series}
        We have $\relTransLCS{c}{f}=\relTransLCS{c+1}{f}$ for some finite $c\geq0$.
\end{enumerate}
If these conditions hold, then $\hypCenter{f}=\relTransLCS{c}{f}$ for every sufficiently large integer $c$, and the canonical factorization is $P\twoheadrightarrow P/\relTransLCS{c}{f}\twoheadrightarrow Q$.
\end{prop}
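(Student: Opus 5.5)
I will prove the cycle \ref{item: canonical hypocentral factor is nilpotent} $\Rightarrow$ \ref{item: strongly connected-nilpotent factorization} $\Rightarrow$ \ref{item: finite stabilization of relative transvection series} $\Rightarrow$ \ref{item: canonical hypocentral factor is nilpotent}. Throughout, put $G=\Inn(P)$ and $T=\relTrans(f)$.

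The first implication is immediate from \cref{theorem: universal strongly connected-hypocentral factorization}: take $e=\hypMap{f}$ and $m=\hypFac{f}$.

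For \ref{item: strongly connected-nilpotent factorization} $\Rightarrow$ \ref{item: finite stabilization of relative transvection series}, write $f=m\circ e$ with $e\colon P\twoheadrightarrow R$ strongly connected and $m$ nilpotent. By \cref{theorem: main structure theorem for nilpotent homomorphisms}, $\relTrans(m)$ is $\Inn(R)$-nilpotent, say $\relTransLCS{c}{m}=1$. \cref{lemma: relative symmetry groups under a surjective factorization} gives $e_*(T)=\relTrans(m)$. Then \cref{proposition: functoriality of relative lower central series} gives $e_*(\relTransLCS{c}{f})=\relTransLCS{c}{m}=1$, so $\relTransLCS{c}{f}\subseteq\relInn(e)$. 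On the other hand, $e$ is isomorphic to $\pi_N$ for the $G$-perfect subgroup $N=\relTrans(e)$ (\cref{theorem: characterization of strongly connected via perfect subgroup}). We have $N\subseteq T$, and perfectness gives $N\subseteq\relTransLCS{\alpha}{f}$ for all $\alpha$, hence $N\subseteq\hypCenter{f}$.

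The main obstacle is comparing $\relInn(e)$ with $N$. As in the proof of \cref{theorem: universal strongly connected-hypocentral factorization}, for $\varphi\in\relInn(e)$ and $x\in P$ we have $\varphi(x)=\xi(x)$ with $\xi\in N$. Hence $[\varphi,s_x]=[\xi,s_x]\in N$, so $[G,\relInn(e)]\subseteq N$. Therefore
\[
\relTransLCS{c+1}{f}=[G,\relTransLCS{c}{f}]\subseteq[G,\relInn(e)]\subseteq N\subseteq\hypCenter{f}\subseteq\relTransLCS{c+2}{f}\subseteq\relTransLCS{c+1}{f}.
\]
So $\relTransLCS{c+1}{f}=\relTransLCS{c+2}{f}$, which is \ref{item: finite stabilization of relative transvection series} with $c+1$ in place of $c$.

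For \ref{item: finite stabilization of relative transvection series} $\Rightarrow$ \ref{item: canonical hypocentral factor is nilpotent}, stabilization at stage $c$ means the transfinite series of $T$ is constant from $c$ on. By \cref{lemma: transfinite interlacing of the two relative lower central series}, its stable term is $H=\hypCenter{f}$, so $H=\relTransLCS{d}{f}$ for every integer $d\ge c$. This also proves the final assertion. Then $\hypFac{f}$ is the induced map $P/\relTransLCS{c}{f}\twoheadrightarrow Q$, i.e.\ $\covTrunc{c}{f}$. By \cref{theorem: universal property of covering length truncation} it is a composite of at most $c+1$ coverings, hence nilpotent by \cref{theorem: main structure theorem for nilpotent homomorphisms}.
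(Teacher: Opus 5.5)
Your proof is correct and follows the paper's argument essentially step for step. The paper uses the same cycle (ii)$\Rightarrow$(i)$\Rightarrow$(iii)$\Rightarrow$(ii), obtains $\relTransLCS{c}{f}\subseteq\relInn(e)$ via $e_*$, and reads off the stabilization from the $\Inn(P)$-perfect subgroup $\relTrans(e)$. The only difference is cosmetic: the paper gets $[\Inn(P),\relInn(e)]\subseteq\relTrans(e)$ directly from \cref{lem:commutator_inn_rel_trans_rel}, since $[\varphi,s_x]=s_{\varphi(x)}s_x^{-1}$ is already a generator of $\relTrans(e)$, so your detour through the orbit description of $e$ and the sandwich through $\hypCenter{f}$ is harmless but unnecessary.
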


\begin{proof}
\ref{item: finite stabilization of relative transvection series} $\Rightarrow$ \ref{item: canonical hypocentral factor is nilpotent}:
If $\relTransLCS{c}{f}=\relTransLCS{c+1}{f}$ for some integer $c\geq0$, then $\hypCenter{f}=\relTransLCS{c}{f}$. 
Therefore, $\hypFac{f}$ is in fact $\covTrunc{c}{f}\colon P/\relTransLCS{c}{f} \twoheadrightarrow Q$ in \cref{theorem: universal property of covering length truncation}, which is nilpotent by \cref{theorem: main structure theorem for nilpotent homomorphisms}.

\ref{item: canonical hypocentral factor is nilpotent} $\Rightarrow$ \ref{item: strongly connected-nilpotent factorization}: It follows from \cref{theorem: universal strongly connected-hypocentral factorization}.

\ref{item: strongly connected-nilpotent factorization} $\Rightarrow$ \ref{item: finite stabilization of relative transvection series}:
Let $f=m\circ e$ be the factorization. 
Since $e$ is strongly connected, $\relTrans(e)$ is $\Inn(P)$-perfect by \cref{proposition:relTrans_of_strongly_conncted_is_Inn-perfect}.
It is immediate to see $\relTrans(e) \subseteq \relTrans(f)\subseteq \relInn(f)$. Hence, $\relTrans(e) \subseteq \hypCenter{f}$ by \cref{lemma: relative hypocenter}; in particular, $\relTrans(e) \subseteq \relTransLCS{i}{f}$ for every $i\geq0$.

Since $m$ is nilpotent, there is an integer $c\geq0$ such that $\relTransLCS{c}{m}=1$. By \cref{lemma: relative symmetry groups under a surjective factorization} and \cref{proposition: functoriality of relative lower central series}, we have $e_*(\relTransLCS{c}{f})=\relTransLCS{c}{m}=1$, and hence $\relTransLCS{c}{f}\subseteq \relInn(e)$. Therefore, using \cref{lem:commutator_inn_rel_trans_rel}, we have
\[ \relTransLCS{c+1}{f} = [\Inn(P),\relTransLCS{c}{f}] \subseteq [\Inn(P),\relInn(e)] = \relTrans(e). \]
Together with $\relTrans(e)\subseteq \relTransLCS{c+1}{f}$, we obtain $\relTransLCS{c+1}{f}=\relTrans(e)$, which is $\Inn(P)$-perfect. Consequently, it holds that $\relTransLCS{c+1}{f}=[\Inn(P),\relTransLCS{c+1}{f}]=\relTransLCS{c+2}{f}$.
\end{proof}

\begin{cor}
\label[cor]{corollary: descending chain condition and finite-stage factorization}
    Assume that the $\Inn(P)$-normal subgroups of $\relTrans(f)$ satisfy the descending chain condition. Then, $f$ admits a factorization into a strongly connected homomorphism followed by a nilpotent homomorphism. Moreover, $f$ is hypocentral if and only if it is nilpotent. In particular, these assertions hold when $\relTrans(f)$ is finite.
\end{cor}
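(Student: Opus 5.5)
The plan is to combine the descending chain condition with \cref{proposition: criterion for strongly connected-nilpotent factorization}.

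The finite relative lower central series
\[ \relTrans(f)=\relTransLCS{0}{f}\supseteq\relTransLCS{1}{f}\supseteq\relTransLCS{2}{f}\supseteq\cdots \]
is a descending chain of $\Inn(P)$-normal subgroups of $\relTrans(f)$. Each term is normal in $\Inn(P)$, being an iterated commutator with $\Inn(P)$ of a normal subgroup. By the descending chain condition, the chain stabilizes, so $\relTransLCS{c}{f}=\relTransLCS{c+1}{f}$ for some finite $c$. This is condition \ref{item: finite stabilization of relative transvection series} of \cref{proposition: criterion for strongly connected-nilpotent factorization}. That proposition then yields a factorization $f=m\circ e$ with $e$ strongly connected and $m$ nilpotent. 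Concretely, this is the canonical factorization $f=\hypFac{f}\circ\hypMap{f}$, with $\hypCenter{f}=\relTransLCS{c}{f}$.

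For the equivalence, nilpotent implies hypocentral, as noted after \cref{definition: hypocentral homomorphism}. Conversely, suppose $f$ is hypocentral. Then $\hypCenter{f}=1$, since the stable term is the largest $\Inn(P)$-perfect normal subgroup inside $\relInn(f)$ (\cref{lemma: relative hypocenter}), and hypocentrality forces it to be trivial. By the closing assertion of the proposition, $\hypCenter{f}=\relTransLCS{c}{f}$, so $\relTransLCS{c}{f}=1$. Hence $\relTrans(f)$ is $\Inn(P)$-nilpotent, and $f$ is nilpotent by \cref{theorem: main structure theorem for nilpotent homomorphisms}.

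An alternative route to the converse: $\hypMap{f}$ is the quotient by the trivial group, hence an isomorphism, so $f\cong\hypFac{f}$, which is nilpotent by condition \ref{item: canonical hypocentral factor is nilpotent}.

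Finally, if $\relTrans(f)$ is finite, then it has only finitely many subgroups, so the descending chain condition holds trivially.

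The only point needing care is the identification $\hypCenter{f}=\relTransLCS{c}{f}$ once the finite series stabilizes. Stabilization at stage $c$ means $\relTransLCS{c}{f}$ is $\Inn(P)$-perfect, so the transfinite series is constant from $c$ onward. Moreover, \cref{lemma: transfinite interlacing of the two relative lower central series} identifies the stable terms of the $\relTrans$ and $\relInn$ series. Both facts are already covered by the proposition, so no real obstacle remains.
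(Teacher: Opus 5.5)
Your proposal is correct and takes essentially the same approach as the paper. The descending chain condition forces the relative transvection series to stabilize at a finite stage, \cref{proposition: criterion for strongly connected-nilpotent factorization} then gives the factorization with $\hypCenter{f}=\relTransLCS{c}{f}$, and hypocentrality makes this term trivial, so $f$ is nilpotent; your justification of that identification and of the finite case just spells out what the paper leaves implicit.
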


\begin{proof}
The assumption shows that the relative transvection series stabilizes at a finite stage, so the first assertion follows from \cref{proposition: criterion for strongly connected-nilpotent factorization}. If $f$ is hypocentral, its stable term is trivial, and hence the series vanishes at a finite stage. Thus, $f$ is nilpotent. The converse is trivial.
\end{proof}

\begin{cor}
\label[cor]{corollary: strongly connected-nilpotent factorization system for finite inner groups}
    On the full subcategory of $\Quandle$ consisting of quandles with finite inner automorphism groups, the classes of strongly connected and nilpotent homomorphisms form a (stable) orthogonal factorization system for surjections. In particular, the same holds for finite quandles.
\end{cor}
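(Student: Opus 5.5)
The plan is to work inside the full subcategory $\mathcal{C}$ of quandles $P$ with $\Inn(P)$ finite, and to reduce everything to \cref{corollary: descending chain condition and finite-stage factorization} and \cref{theorem: strongly connected-hypocentral factorization system}.

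\textbf{Step 1: the corollary applies to every surjection in $\mathcal{C}$.} Let $f\colon P\twoheadrightarrow Q$ be a surjection between objects of $\mathcal{C}$. Then $\relTrans(f)\subseteq\Inn(P)$ is finite. Hence the $\Inn(P)$-normal subgroups of $\relTrans(f)$ satisfy the descending chain condition, and \cref{corollary: descending chain condition and finite-stage factorization} applies. Two consequences follow.
\begin{enumerate}
    \item For such $f$, hypocentral and nilpotent coincide.
    \item The canonical factorization $f=\hypFac{f}\circ\hypMap{f}$ has $\hypFac{f}$ nilpotent. This also follows from \cref{proposition: criterion for strongly connected-nilpotent factorization}.
\end{enumerate}

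\textbf{Step 2: the factorization stays inside $\mathcal{C}$.} The middle object is $P/\hypCenter{f}$, and we need it to lie in $\mathcal{C}$. By \cref{proposition: functoriality of Inn for surjections}, $\Inn(P/\hypCenter{f})$ is a quotient of $\Inn(P)$, so it is finite. The same argument shows that $\mathcal{C}$ is closed under surjective images in general. This is what makes the restriction to $\mathcal{C}$ well defined.

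\textbf{Step 3: orthogonality and uniqueness.} Strongly connected and nilpotent surjections in $\mathcal{C}$ are orthogonal in $\Quandle$, since nilpotent implies hypocentral and \cref{theorem: transfinite covering characterization of hypocentral homomorphisms} applies. The diagonal filler is a map between objects already in $\mathcal{C}$, so orthogonality also holds within the full subcategory $\mathcal{C}$. Uniqueness of factorizations up to isomorphism then follows from orthogonality by the standard argument. Both classes contain isomorphisms and are closed under composition: for strongly connected maps this is inherited from the factorization system of \cref{theorem: strongly connected-hypocentral factorization system}, and for nilpotent maps it is \cref{proposition: nilpotency under composition}. Together these give an orthogonal factorization system for surjections on $\mathcal{C}$.

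\textbf{Step 4: stability.} For stability, take a pullback along a surjection $v\colon R\twoheadrightarrow Q$ within $\mathcal{C}$.
\begin{itemize}
    \item Strongly connected maps are preserved by \cite[Proposition 3.32]{preprint_yuki_tomoki_2026_relativization_of_symmetries_on_quandles}.
    \item Nilpotent maps are preserved by \cref{proposition: nilpotency and covering length under pullback}.
\end{itemize}
The one point to check is that the pullback object $P\times_Q R$ lies in $\mathcal{C}$. It is a subquandle of $P\times R$. I would try to show that $\Inn(P\times_Q R)$ embeds into $\Inn(P)\times\Inn(R)$ by mimicking \cref{proposition: inner automorphism groups of products}. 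The projections to $P$ and $R$ are surjective when $f$ and $v$ are, so the induced maps on inner groups exist, and a map that is trivial on both projections is the identity by the universal property of the pullback. This embedding gives finiteness.

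The main obstacle I expect is Step 4, namely making sure the subcategory is closed under the relevant pullbacks; Steps 1–3 are direct applications of earlier results. If the embedding argument in Step 4 fails, I would state stability only in the weaker sense inherited from the ambient system.

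Finally, finite quandles lie in $\mathcal{C}$, since $\Inn(Q)$ embeds in the finite symmetric group on $Q$. The finite quandles form a full subcategory closed under images and pullbacks, so the same argument applies to them verbatim.
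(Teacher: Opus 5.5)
Your proposal is correct and follows essentially the paper's proof. Finiteness of $\relTrans(f)$ makes hypocentral and nilpotent coincide via the descending chain condition corollary, the middle object $P/\hypCenter{f}$ stays in the subcategory because its inner group is a quotient of $\Inn(P)$, and the strongly connected--hypocentral system then restricts. The only difference is in closure under pullbacks: you embed $\Inn(P\times_Q R)$ into $\Inn(P)\times\Inn(R)$ via the two surjective projections, which suffices for pullbacks along surjections as stability requires here, whereas the paper observes that the inner group of any pullback is a quotient of a subgroup of $\Inn(P)\times\Inn(R)$.
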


\begin{proof}
For every homomorphism in this category, the relative transvection group is finite, so hypocentrality and nilpotency coincide by \cref{corollary: descending chain condition and finite-stage factorization}. The canonical factorization remains in the subcategory, since $\Inn(P/\hypCenter{f})$ is a quotient of $\Inn(P)$. Thus, the orthogonal factorization system of \cref{theorem: strongly connected-hypocentral factorization system} restricts to this subcategory, and hypocentrality can be replaced by nilpotency.
(Moreover, the subcategory is closed under pullbacks: the inner automorphism group of a pullback is a quotient of a subgroup of the product of the inner automorphism groups of its two factors. Hence the resulting orthogonal factorization system is stable as well.)
\end{proof}

Finally, we observe that in general, the classes of nilpotent and hypocentral homomorphisms do not coincide. In particular, strongly connected and nilpotent homomorphisms do not form a factorization system on all quandles.

\begin{eg}
\label[eg]{example: hypocentral but not nilpotent Alexander quandle}
    Let $P\coloneqq\Alex(\ZZ,-\id)$, and let $f\colon P\twoheadrightarrow\{*\}$ be the unique homomorphism into the terminal quandle. 
    By \cref{example: relative Alexander lower central series}, we have
    \[
        \relTransLCS{i}{f}=\{L_{2^{i+1}n}\mid n\in\ZZ\} \qquad (i<\omega),
    \]
    and hence $\relTransLCS{\omega}{f}=1$. Thus, $f$ is hypocentral but not nilpotent. 
    Moreover, $f$ is not a transfinite cocomposite of covering homomorphisms.

    Since $s_x(0)=2x$ for every $x\in P$, the quandle $P$ is faithful, that is, $s_x= s_y$ implies $x=y$. Thus, every covering homomorphism with domain $P$ is an isomorphism. Moreover, every non-injective surjective homomorphism $q\colon P\twoheadrightarrow X$ has finite target and infinite fibers. Indeed, if $q(a)=q(b)$ with $a\neq b$, then, for every $z\in\ZZ$, we have
    \[
        q(z+2(a-b))=q(s_as_b^{-1}(z))
        =s_{q(a)}s_{q(b)}^{-1}(q(z))=q(z).
    \]
    Hence, $q$ has a nonzero period, so $|X|\leq2|a-b|$ and every fiber of $q$ is infinite.

    Suppose that $f$ is the cocomposite of a continuous tower $(X_\alpha)_{\alpha\leq\lambda}$ with $X_0=\{*\}$, $X_\lambda=P$, and every successor homomorphism $X_{\alpha+1}\twoheadrightarrow X_\alpha$ covering. Write $q_\alpha\colon P\to X_\alpha$ for the canonical homomorphisms. Successive lifting, using continuity at limit ordinals, shows that every $q_\alpha$ is surjective.

    Let $\beta$ be the least ordinal for which $q_\beta$ is an isomorphism. Since $P\neq\{*\}$, we have $\beta>0$. If $\beta=\alpha+1$, then $q_\alpha$ is a covering with domain $P$, and hence an isomorphism, contradicting the minimality of $\beta$. Thus, $\beta$ is a limit ordinal, and
    \[
        P\cong X_\beta\cong\varprojlim_{\alpha<\beta}X_\alpha.
    \]
    For every $\alpha<\beta$, the quandle $X_\alpha$ is finite and every fiber of $q_\alpha$ is infinite.

    Give each $X_\alpha$ the discrete topology and $P$ the resulting inverse limit topology. Then, $P$ is a nonempty countable compact Hausdorff space. Since the index set is linearly ordered, the fibers of the $q_\alpha$ form a basis of open sets. These fibers are all infinite, so $P$ has no isolated points. This contradicts the Baire category theorem, since every nonempty countable compact Hausdorff space has an isolated point.
\end{eg}

\section{Reduced homomorphisms and nilpotency ascent}
\label{section: relative reduced homomorphisms}

In this section, we introduce reduced and operator-reduced surjective homomorphisms and establish the nilpotency ascent theorem.

\subsection{Reduced and operator-reduced homomorphisms}
\label{subsection: definition of relative reduced homomorphisms}

Following \cite[Definition 3.1]{inoue_2013_quasitriviality_of_quandles_for_linkhomotopy} and \cite[Definition 7.1]{darne_2026_nilpotent_quandles} (cf.~\cite[Definition 4.1]{hughes_2011_link_homotopy_invariant_quandles}), a quandle $P$ is called \emph{reduced} if $s_{\varphi(x)}(x)=x$ for every $x\in P$ and every $\varphi\in\Inn(P)$.

We introduce a relative version of reduced quandles, as well as a slightly more general notion called \emph{operator-reduced} (cf.~\cite[Section 5]{hughes_2011_link_homotopy_invariant_quandles}).

\begin{dfn}
\label[dfn]{definition: reduced hom and operator reduced hom}
A surjective quandle homomorphism $f\colon P\twoheadrightarrow Q$ is called
\begin{enumerate}
    \item \emph{reduced} if $s_{\varphi(x)}(x)=x$, and
    \item \emph{operator-reduced} if $s_xs_{\varphi(x)}=s_{\varphi(x)}s_x$
\end{enumerate}
for every $x\in P$ and every $\varphi\in\relInn(f)$.
\end{dfn}

For the terminal homomorphism $P\twoheadrightarrow\{*\}$, the first condition is precisely the notion of a reduced quandle. Since $\relInn(f)\subseteq\Inn(P)$, every surjective homomorphism with reduced domain is reduced.

The two conditions are related to the first relative commutators.

\begin{prop}
\label[prop]{proposition: commutator interpretation of reducedness}
Let $f\colon P\twoheadrightarrow Q$ be a surjective quandle homomorphism.
For $x\in P$ and $\varphi\in\relInn(f)$, it holds that $s_{\varphi(x)}=[\varphi,s_x]s_x$ and $[s_x,s_{\varphi(x)}]=[[\varphi,s_x],s_x]^{-1}$.
Consequently, the following hold.
\begin{enumerate}
    \item\label{item: 1 nil} $f$ is $1$-nilpotent if and only if $[\varphi,s_x]=1$ for every $x$ and $\varphi$.
    \item\label{item: reduced} $f$ is reduced if and only if $[\varphi,s_x](x)=x$ for every $x$ and $\varphi$.
    \item\label{item: op reduced} $f$ is operator-reduced if and only if $[[\varphi,s_x],s_x]=1$ for every $x$ and $\varphi$.
\end{enumerate}
In particular, we have the implications $1$-nilpotent $\Rightarrow$ reduced $\Rightarrow$ operator-reduced and $2$-nilpotent $\Rightarrow$ operator-reduced.
\end{prop}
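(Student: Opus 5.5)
The proof rests on the identity $s_{\varphi(x)}=\varphi s_x\varphi^{-1}$, which holds for every $\varphi\in\Inn(P)\subseteq\Aut(P)$ by the quandle homomorphism property of $\varphi$. Writing $\varphi s_x\varphi^{-1}=(\varphi s_x\varphi^{-1}s_x^{-1})s_x$ and using the convention $[a,b]=aba^{-1}b^{-1}$ gives the first identity, $s_{\varphi(x)}=[\varphi,s_x]s_x$. For the second, set $c\coloneqq[\varphi,s_x]$ and compute
\[ [s_x,s_{\varphi(x)}]=s_x\,cs_x\,s_x^{-1}\,(cs_x)^{-1}=s_xcs_x^{-1}c^{-1}=[s_x,c]=[c,s_x]^{-1}, \]
which is $[[\varphi,s_x],s_x]^{-1}$. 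Both are routine group manipulations.

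The three consequences then follow one at a time.

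For \eqref{item: 1 nil}: $f$ is $1$-nilpotent exactly when $[\Inn(P),\relInn(f)]=1$. Since $\Inn(P)$ is generated by the $s_x$ and $\relInn(f)$ is normal, this is equivalent to $[\varphi,s_x]=1$ for all $x\in P$ and $\varphi\in\relInn(f)$. The nontrivial direction is that if every $\varphi\in\relInn(f)$ commutes with all generators $s_x$, then it commutes with all of $\Inn(P)$. Indeed, the centralizer of $\varphi$ is a subgroup containing the generators, so it is all of $\Inn(P)$.

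For \eqref{item: reduced}: $s_{\varphi(x)}(x)=[\varphi,s_x]s_x(x)=[\varphi,s_x](x)$, because $s_x(x)=x$. Hence the reduced condition is literally $[\varphi,s_x](x)=x$.

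For \eqref{item: op reduced}: by the second identity, $s_x$ and $s_{\varphi(x)}$ commute if and only if $[[\varphi,s_x],s_x]=1$.

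The implications are now immediate. If $f$ is $1$-nilpotent, then $[\varphi,s_x]=1$, which fixes $x$, so $f$ is reduced. If $f$ is reduced, then $s_{\varphi(x)}$ fixes $x$. Using the identity $s_{s_y(x)}=s_ys_xs_y^{-1}$ with $y=\varphi(x)$, this gives $s_{\varphi(x)}s_xs_{\varphi(x)}^{-1}=s_{s_{\varphi(x)}(x)}=s_x$, so $s_x$ and $s_{\varphi(x)}$ commute and $f$ is operator-reduced. If $f$ is $2$-nilpotent, then $[\varphi,s_x]\in\relLCS_1(f)$, so $[[\varphi,s_x],s_x]\in\relLCS_2(f)=1$, and $f$ is operator-reduced by \eqref{item: op reduced}.

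The only step needing any care is the generator argument in \eqref{item: 1 nil}. Reduced implying operator-reduced also requires the small extra observation about $s_{s_y(x)}$. Everything else is direct computation.
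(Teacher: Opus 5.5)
Your proposal is correct and follows essentially the same route as the paper: both derive the two identities from $s_{\varphi(x)}=\varphi s_x\varphi^{-1}$ and then read off (1)--(3) and the $2$-nilpotent implication exactly as you do. The one difference is that you spell out reduced $\Rightarrow$ operator-reduced via $s_{\varphi(x)}s_xs_{\varphi(x)}^{-1}=s_{s_{\varphi(x)}(x)}=s_x$, a step the paper leaves implicit.
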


\begin{proof}
Since $\varphi\in\Inn(P)$, we have $s_{\varphi(x)}=\varphi s_x\varphi^{-1}$. Hence, we have $s_{\varphi(x)}=[\varphi,s_x]s_x$ and $[s_x,s_{\varphi(x)}]=[[\varphi,s_x],s_x]^{-1}$.
For \eqref{item: 1 nil}, since the symmetry maps generate $\Inn(P)$, the condition is equivalent to $[\Inn(P),\relInn(f)]=1$. For \eqref{item: reduced}, the first identity and $s_x(x)=x$ give $s_{\varphi(x)}(x)=[\varphi,s_x](x)$.
The equivalence in \eqref{item: op reduced} follows from the second identity.

For the last implication, since $[[\varphi,s_x],s_x]\in\relInnLCS{2}{f}$, \eqref{item: op reduced} shows that $2$-nilpotent homomorphisms are operator-reduced.
\end{proof}

\begin{prop}
\label[prop]{proposition: reduced Alexander homomorphisms}
In the Alexander setting of
\cref{example: relative Alexander lower central series}, the following
hold.
\begin{enumerate}
    \item\label{item: reduced Alexander homomorphisms}
    The homomorphism $f$ is reduced if and only if
    $(1-\sigma)D=0$, or equivalently, if and only if $f$ is
    $1$-nilpotent.
    \item\label{item: operator-reduced Alexander homomorphisms}
    The homomorphism $f$ is operator-reduced if and only if
    $(1-\sigma)^2D=0$, or equivalently, if and only if $f$ is
    $2$-nilpotent.
\end{enumerate}
\end{prop}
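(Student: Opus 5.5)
We will work directly with the commutator criteria of \cref{proposition: commutator interpretation of reducedness}, combined with the explicit description of the groups in the Alexander setting. Put $t\coloneqq 1-\sigma$, $G\coloneqq\Inn(\Alex(A,\sigma))$ and $L(U)\coloneqq\{L_u\mid u\in U\}$, as in the proof of \cref{example: relative Alexander lower central series}. That proof gives the key input: for $x\in A$ and $\varphi\in\relInn(f)$ we have $d\coloneqq\varphi(x)-x\in D$ and
\[ [\varphi,s_x]=s_{\varphi(x)}s_x^{-1}=L_{td}. \]
We first check that every $d\in D$ actually occurs. Since $L(D)\subseteq\relInn(f)$, we may take $\varphi=L_d$; then $\varphi(x)-x=d$ for any $x$. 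Hence the set of commutators $[\varphi,s_x]$ is exactly $L(tD)$.

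For \eqref{item: reduced Alexander homomorphisms} we use \cref{proposition: commutator interpretation of reducedness}\eqref{item: reduced}: $f$ is reduced if and only if $L_{td}(x)=x$ for all such $x,\varphi$. Since $L_{td}(x)=x+td$, this says $td=0$ for all $d\in D$, that is, $tD=0$. By \cref{example: relative Alexander lower central series}, $\relInnLCS{1}{f}=L(tD)$. So $tD=0$ is exactly the condition $\relInnLCS{1}{f}=1$, i.e.\ $1$-nilpotency.

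For \eqref{item: operator-reduced Alexander homomorphisms} we use criterion \eqref{item: op reduced}. We compute
\[ [[\varphi,s_x],s_x]=[L_{td},L_{tx}\sigma]. \]
Translations commute, so this equals $[L_{td},\sigma]=L_{td}\,\sigma L_{-td}\sigma^{-1}=L_{td-\sigma td}=L_{t^2d}$. Operator-reducedness is therefore equivalent to $t^2d=0$ for all $d\in D$, i.e.\ $t^2D=0$. Again by \cref{example: relative Alexander lower central series}, $\relInnLCS{2}{f}=L(t^2D)$, so this is equivalent to $2$-nilpotency.

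The only step needing care is the surjectivity claim above, which we settled with $\varphi=L_d$. It gives the converse directions: the vanishing conditions must hold for every element of $D$, not just some. Beyond that, the only remaining check is the sign convention in $[L_{td},\sigma]$. A sign error there would be harmless anyway, since only the vanishing of $L_{\pm t^2d}$ matters.
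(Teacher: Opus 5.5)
Your proposal is correct and follows essentially the paper's argument. Both rest on $\varphi(x)-x\in D$ for $\varphi\in\relInn(f)$ together with $L(D)\subseteq\relInn(f)$, so every $d\in D$ is realized, and then on the identities $s_{x+d}=L_{(1-\sigma)d}\,s_x$ and $s_xL_us_x^{-1}=L_{\sigma(u)}$. The only difference is packaging: the paper checks the defining conditions directly on the orbit $\relInn(f)\cdot x=x+D$, whereas you go through the equivalent commutator reformulations $[\varphi,s_x](x)=x$ and $[[\varphi,s_x],s_x]=1$, which is the same computation.
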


\begin{proof}
As observed in the proof of
\cref{example: relative Alexander lower central series},
we have $\varphi(x)-x\in D$ for every $x\in A$ and
$\varphi\in\relInn(f)$. On the other hand,
$L(D)\subseteq\relInn(f)$ by \cref{example: Alexander quandle}.
Hence,
\(
    \relInn(f)\cdot x=x+D
\)
for every $x\in A$.
Therefore, $f$ is reduced if and only if
$s_{x+d}(x)=x$ for every $x\in A$ and $d\in D$.
Since
\(
    s_{x+d}(x)=x+(1-\sigma)d,
\)
this is equivalent to $(1-\sigma)D=0$.
By \cref{example: relative Alexander lower central series},
this is further equivalent to $\Gamma_1(f)=1$, that is, to
$f$ being $1$-nilpotent.

Similarly, $f$ is operator-reduced if and only if
$s_x$ and $s_{x+d}$ commute for every $x\in A$ and $d\in D$.
Since
\(
    s_{x+d}=L_{(1-\sigma)d}s_x
\)
and conjugation by $s_x$ sends $L_u$ to $L_{\sigma(u)}$,
these two symmetry maps commute if and only if
\(
    \sigma((1-\sigma)d)=(1-\sigma)d,
\)
or equivalently, $(1-\sigma)^2d=0$.
Thus, $f$ is operator-reduced if and only if
$(1-\sigma)^2D=0$. Again by
\cref{example: relative Alexander lower central series},
this is equivalent to $\Gamma_2(f)=1$, that is, to
$f$ being $2$-nilpotent.
\end{proof}

\begin{prop}
\label[prop]{proposition: characterizations of reduced homomorphisms}
For a surjective quandle homomorphism $f\colon P\twoheadrightarrow Q$, the following conditions are equivalent.
\begin{enumerate}[label={\textup{(\roman*)}}]
    \item\label{item: f is reduced} $f$ is reduced.
    \item\label{item: orbit is trivial} Every $\relInn(f)$-orbit in $P$ is a trivial subquandle.
\end{enumerate}
In particular, if every fiber of $f$ is trivial, seen as a subquandle of $P$, then $f$ is reduced. The converse holds when $f$ is connected.
\end{prop}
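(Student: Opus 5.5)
The equivalence will come straight from unwinding the definitions, using the fact that $\relInn(f)$ is normal in $\Inn(P)$ (\cref{lemma: elementary properties of relative symmetry groups}).

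\ref{item: f is reduced} $\Rightarrow$ \ref{item: orbit is trivial}: first check that an $\relInn(f)$-orbit $O=\relInn(f)\cdot x$ is a subquandle, so that calling it trivial makes sense. For $y=\varphi(x)$ and $z=\psi(x)$ in $O$, we have $s_z^{\pm1}(y)=s_z^{\pm1}\varphi(x)$. Here $s_z$ need not lie in $\relInn(f)$, so I would rather show directly that $s_z(y)=y$. Write $y=\varphi(x)$ and $z=\psi(x)=\psi\varphi^{-1}(y)$ with $\psi\varphi^{-1}\in\relInn(f)$. Then $z$ lies in the $\relInn(f)$-orbit of $y$, and reducedness applied at the point $y$ gives $s_z(y)=y$. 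Hence $s_z$ and $s_z^{-1}$ fix every point of $O$. So $O$ is closed under all $s_z^{\pm1}$ with $z\in O$, and the induced structure is trivial. The point to be careful about is exactly this: reducedness is stated at an arbitrary base point, and it applies to any two points of the same orbit.

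\ref{item: orbit is trivial} $\Rightarrow$ \ref{item: f is reduced}: take $x\in P$ and $\varphi\in\relInn(f)$. Both $x$ and $\varphi(x)$ lie in the orbit $\relInn(f)\cdot x$. Since this orbit is a trivial subquandle, $s_{\varphi(x)}(x)=x$.

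For the final assertions, each $\relInn(f)$-orbit is contained in a fiber, because elements of $\relInn(f)$ preserve fibers (\cref{lemma: elementary properties of relative symmetry groups}). If every fiber is a trivial subquandle, then $s_z(y)=y$ for all $y,z$ in a common fiber, and in particular whenever $y,z$ lie in a common orbit; so \ref{item: orbit is trivial} holds. Conversely, if $f$ is connected, then $\relInn(f)$ acts transitively on each fiber, so the orbits are exactly the fibers and \ref{item: orbit is trivial} says every fiber is trivial. No step here is a real obstacle; the only subtlety is the subquandle verification in the first implication, handled as above.
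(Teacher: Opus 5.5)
Your proposal is correct and follows the paper's proof essentially step for step. The only difference is in (i)$\Rightarrow$(ii): you apply reducedness directly at the base point $y$ with $\psi\varphi^{-1}\in\relInn(f)$, whereas the paper applies it at $x$ with $\psi^{-1}\varphi$ and transports the result via $s_{\varphi(x)}=\psi s_{\psi^{-1}\varphi(x)}\psi^{-1}$; you also spell out why the orbit is closed under $s_z^{\pm1}$, and your argument never actually needs the normality of $\relInn(f)$ that you announce at the start.
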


\begin{proof}
\ref{item: orbit is trivial} $\Rightarrow$ \ref{item: f is reduced}: Consider an orbit $P'\coloneqq \relInn(f)\cdot x \subseteq P$. Since it is a trivial subquandle, $s_{\varphi(x)}\rvert_{P'}=\id_{P'}$ for all $\varphi\in \relInn(f)$. In particular, $s_{\varphi(x)}(x)=\id(x)=x$, and hence $f$ is reduced.
\ref{item: f is reduced}$\Rightarrow$ \ref{item: orbit is trivial}: if we take $y=\varphi(x)$ and $z=\psi(x)$ with $\varphi,\psi\in\relInn(f)$ from the orbit $\relInn(f)\cdot x$, then $s_y(z)=\psi(s_{\psi^{-1}\varphi(x)}(x))=\psi(x)=z$, which shows that $\relInn(f)\cdot x$ is a trivial subquandle.

The last assertions follow because every $\relInn(f)$-orbit is contained in a fiber, and the two coincide when $f$ is connected.
\end{proof}

In particular, covering homomorphisms are reduced, since we can easily verify that every fiber of a covering homomorphism is a trivial subquandle.

\begin{prop}
\label[prop]{proposition: first properties of reduced homomorphisms}
Let $f\colon P\to Q$, $f'\colon P'\to Q'$, and $g\colon Q\to R$ be surjective quandle homomorphisms. Put $h\coloneqq g\circ f$.
\begin{enumerate}
    \item\label{item: (operator-)reduced descent} If $h$ is (operator-)reduced, then both $f$ and $g$ are (operator-)reduced.
    \item\label{item: product of (operator-)reduced homs} If $f$ and $f'$ are (operator-)reduced, then $f\times f'$ is (operator-)reduced.
    \item\label{item: (operator-)reduced pullback} If $f$ is (operator-)reduced, then every pullback of $f$ along a surjective quandle homomorphism is (operator-)reduced.
    \item\label{item: connected (operator-)reduced pullback} If $f$ is connected and (operator-)reduced, then every pullback of $f$ along an arbitrary quandle homomorphism is (operator-)reduced.
\end{enumerate}
\end{prop}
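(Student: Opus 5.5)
The plan is to work with the pointwise definitions and to transport elements of relative inner groups along the functorial maps $u_*$ given by \cref{lemma: relative symmetry groups under a surjective factorization}. Throughout, the key tool is that a surjection $u\colon P\twoheadrightarrow R$ satisfies $u_*(s_x)=s_{u(x)}$ and $u(\varphi(x))=u_*(\varphi)(u(x))$.

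For \eqref{item: (operator-)reduced descent}, descent to $f$ is immediate. Indeed, $\relInn(f)=\Ker(f_*)\subseteq\Ker(g_*f_*)=\relInn(h)$, so the defining identities for $h$ restrict to $\varphi\in\relInn(f)$. For $g$, let $q\in Q$ and $\psi\in\relInn(g)$. By \cref{lemma: relative symmetry groups under a surjective factorization}, choose $\varphi\in\relInn(h)$ with $f_*(\varphi)=\psi$, and choose $x$ with $f(x)=q$. Applying $f$ to $s_{\varphi(x)}(x)=x$ gives $s_{\psi(q)}(q)=q$. Applying $f_*$ to $s_xs_{\varphi(x)}=s_{\varphi(x)}s_x$ gives the commutation of $s_q$ and $s_{\psi(q)}$, since $f(\varphi(x))=\psi(q)$.

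For \eqref{item: product of (operator-)reduced homs}, I use the injection $\iota$ from \cref{proposition: inner automorphism groups of products} and its naturality. Naturality gives $\iota(\relInn(f\times f'))\subseteq\relInn(f)\times\relInn(f')$. So for $\Phi\in\relInn(f\times f')$ we have $\iota(\Phi)=(\varphi,\varphi')$ with $\varphi\in\relInn(f)$ and $\varphi'\in\relInn(f')$, and $\Phi(x,x')=(\varphi(x),\varphi'(x'))$. Both conditions are then checked componentwise. For the operator-reduced case, injectivity of $\iota$ turns the commutation of the pairs $(s_x,s_{x'})$ and $(s_{\varphi(x)},s_{\varphi'(x')})$ into commutation in $\Inn(P\times P')$.

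For \eqref{item: (operator-)reduced pullback}, let $v\colon R\twoheadrightarrow Q$ be surjective, write $P'=P\times_QR$, and let $u\colon P'\twoheadrightarrow P$ and $f'\colon P'\twoheadrightarrow R$ be the projections; $u$ is surjective because $v$ is. I expect to show $u_*(\relInn(f'))\subseteq\relInn(f)$, either by computing $f_*u_*=v_*f'_*$ or directly: if $\Phi\in\relInn(f')$, then $f\circ u_*(\Phi)\circ u=f\circ u\circ\Phi=v\circ f'\circ\Phi=v\circ f'=f\circ u$, and surjectivity of $u$ gives $f\circ u_*(\Phi)=f$. With this in hand, fix $(x,r)\in P'$ and $\Phi\in\relInn(f')$. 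Then $\Phi(x,r)=(\varphi(x),r)$ with $\varphi=u_*(\Phi)$, because $\Phi$ fixes the $R$-coordinate and $u\circ\Phi=\varphi\circ u$. The symmetry at a point of the pullback acts as $s_{(x,r)}=(s_x,s_r)$. In the reduced case this gives $s_{(\varphi(x),r)}(x,r)=(s_{\varphi(x)}(x),s_r(r))=(x,r)$. In the operator-reduced case, the first coordinates commute by hypothesis and the second coordinates are $s_r,s_r$, so the two symmetries commute on the pullback.

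For \eqref{item: connected (operator-)reduced pullback}, $u$ is no longer surjective, so I cannot produce $\varphi=u_*(\Phi)$. This is the main obstacle. Instead I use connectedness. Given $\Phi\in\relInn(f')$ and $(x,r)\in P'$, write $\Phi(x,r)=(y,r)$; then $f(y)=v(r)=f(x)$, so $y$ lies in the fiber of $x$. Since $f$ is connected, $y=\varphi(x)$ for some $\varphi\in\relInn(f)$. The coordinatewise computation of the previous case then applies verbatim, because it only used that the first coordinate has the form $\varphi(x)$ with $\varphi\in\relInn(f)$. The remaining point to verify is that the symmetry at a point of the pullback is coordinatewise; this is immediate since $P'$ is a subquandle of $P\times R$.
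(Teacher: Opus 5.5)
Your proposal is correct and follows essentially the same route as the paper. It uses descent via $\relInn(f)\subseteq\relInn(h)$ and lifting along $f_*$, componentwise checks through the injection $\iota_{P,P'}$, transport along $u_*$ for surjective base change, and connectedness to write the first coordinate of $\Phi(x,r)$ as $\varphi(x)$ for arbitrary base change. The differences are cosmetic: you work in explicit pullback coordinates rather than with the universal property, you supply the verification $u_*(\relInn(f'))\subseteq\relInn(f)$ that the paper leaves implicit, and you treat the reduced case of (4) directly instead of through the trivial-fiber characterization of reduced homomorphisms.
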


\begin{proof}
\eqref{item: (operator-)reduced descent}:
Since $\relInn(f)\subseteq\relInn(h)$, if $h$ is reduced or operator-reduced, then so is $f$. Let $y\in Q$ and $\psi\in\relInn(g)$. Choose $x\in P$ with $f(x)=y$. By \cref{lemma: relative symmetry groups under a surjective factorization}, there is $\varphi\in\relInn(h)$ such that $f_*(\varphi)=\psi$.
If $h$ is reduced, then $s_{\psi(y)}(y) = f\bigl(s_{\varphi(x)}(x)\bigr) = f(x) = y$, so $g$ is reduced.
If $h$ is operator-reduced, then $[s_y,s_{\psi(y)}] = f_*([s_x,s_{\varphi(x)}])=1$. Hence, $g$ is operator-reduced.

\eqref{item: product of (operator-)reduced homs}:
Let $z=(x,x')\in P\times P'$ and $\varphi\in\relInn(f\times f')$. Through the injection $\Inn(P\times P') \hookrightarrow \Inn(P) \times \Inn(P')$ from \cref{proposition: inner automorphism groups of products}, we identify $\varphi=(\varphi_1,\varphi_2)$, where $\varphi_1\in\relInn(f)$ and $\varphi_2\in\relInn(f')$ are its projections. Hence we have $s_{\varphi(z)}=(s_{\varphi_1(x)},s_{\varphi_2(x')})$.
If $f$ and $f'$ are reduced, then we have $s_{\varphi(z)}(z)= \bigl(s_{\varphi_1(x)}(x), s_{\varphi_2(x')}(x')\bigr)=(x,x')=z$, so $f\times f'$ is reduced.
If $f$ and $f'$ are operator-reduced, then we have $s_zs_{\varphi(z)}=(s_x,s_{x'})(s_{\varphi_1(x)},s_{\varphi_2(x')}) = (s_{\varphi_1(x)},s_{\varphi_2(x')})(s_x,s_{x'}) = s_{\varphi(z)}s_z$, so $f\times f'$ is operator-reduced.

\eqref{item: (operator-)reduced pullback}:
Consider a pullback square of $f$ along a surjective homomorphism:
\[\begin{tikzcd}
P' \arrow[r, two heads, "u"] \arrow[d, two heads, "f'"']
    & P \arrow[d, two heads, "f"]\\
Q' \arrow[r, two heads, "v"']
    & Q\rlap{.} \arrow[phantom]{ul}[very near end]{\lrcorner}
\end{tikzcd}\]
Let $z\in P'$ and $\psi\in\relInn(f')$. Then $u_*(\psi)\in\relInn(f)$. Moreover $f'_*(\psi)=1$ gives $f's_{\psi(z)}=s_{f'\psi(z)}f'=s_{f'(z)} f'$.
If $f$ is reduced, then we have
\[ u\bigl(s_{\psi(z)}(z)\bigr) = s_{u\psi(z)}(u(z)) = s_{u_*(\psi)(u(z))}(u(z)) = u(z), \]
as well as $f'(s_{\psi(z)}(z)) = s_{f'(z)}(f'(z)) = f'(z)$. Since $(u,f')$ is injective by the pullback property, $s_{\psi(z)}(z)=z$, so $f'$ is reduced.
If $f$ is operator-reduced, then we have
\[ us_zs_{\psi(z)} = s_{u(z)}s_{u\psi(z)} u = s_{u(z)}s_{u_*(\psi)(u(z))}u = s_{u_*(\psi)(u(z))}s_{u(z)}u = u s_{\psi(z)} s_z, \]
as well as $f's_zs_{\psi(z)} = s_{f'(z)}s_{f'(z)} f' = f's_{\psi(z)} s_z$. Hence the universality of pullback yields $s_zs_{\psi(z)}= s_{\psi(z)} s_z$, so $f'$ is operator-reduced.

\eqref{item: connected (operator-)reduced pullback}:
Suppose first that $f$ is reduced. By
\cref{proposition: characterizations of reduced homomorphisms},
every fiber of $f$ is trivial. Every fiber of a pullback of $f$
is isomorphic to a fiber of $f$, and hence is trivial. The assertion
follows from the same proposition.

Suppose that $f$ is operator-reduced.
If $x,y\in P$ satisfy $f(x)=f(y)$, then the connectedness of $f$
gives some $\varphi\in\relInn(f)$ such that $y=\varphi(x)$.
Hence $[s_x,s_y]=1$.
Consider an arbitrary pullback $f'\colon P'\to Q'$ of $f$.
For $z=(x,q')\in P'$ and $\psi\in\relInn(f')$, write
$\psi(z)=(y,q')$. Then $f(x)=f(y)$, and hence $[s_x,s_y]=1$.
Since $s_z=(s_x,s_{q'})|_{P'}$ and $s_{\psi(z)}=(s_y,s_{q'})|_{P'}$,
we obtain $[s_z,s_{\psi(z)}]=1$. Thus, $f'$ is operator-reduced.
\end{proof}

\subsection{Universal (operator-)reduced factor}
\label{subsection: universal relative reduced quotient}

Let $f\colon P\twoheadrightarrow Q$ be a surjective quandle homomorphism. 
Let $\redCong{f}$ be the quandle congruence generated by $s_{\varphi(x)}(x)\mathrel{\redCong{f}}x$ for $x\in P$ and $\varphi\in\relInn(f)$. 
Let $\opredDef{f}$ be the normal subgroup of $\Inn(P)$ generated by $[s_x,s_{\varphi(x)}]$ for $x\in P$ and $\varphi\in\relInn(f)$.
The first relation lies in the kernel congruence $\Eq(f)$, and \cref{proposition: commutator interpretation of reducedness} gives $\opredDef{f}\subseteq\relInnLCS{2}{f}\subseteq \relInn(f)$.

\begin{prop}
\label[prop]{proposition: universal relative reduced and operator reduced quotient}
    Let $f\colon P\to Q$ be a surjective quandle homomorphism.
    \begin{enumerate}
        \item Put $\redQuot{f}\coloneqq P/{\redCong{f}}$ and let $\redMap{f}\colon P\twoheadrightarrow\redQuot{f}$ be the quotient map.
        Then, the induced homomorphism $\redFac{f}\colon\redQuot{f}\twoheadrightarrow Q$ is reduced.
        
        Moreover, the factorization $f= \redFac{f}\circ \redMap{f}$ is universal among factorizations of $f$ whose second homomorphism is reduced.
        That is, if $f=h\circ u$ with $u\colon P\twoheadrightarrow R$ surjective and $h\colon R\twoheadrightarrow Q$ reduced, then $u$ factors uniquely through $\redQuot{f}$.

        \item Put $\opredQuot{f}\coloneqq P/\opredDef{f}$ and let $\opredMap{f}\colon P\twoheadrightarrow\opredQuot{f}$ be the quotient map.
        Then, the induced homomorphism $\opredFac{f}\colon\opredQuot{f}\twoheadrightarrow Q$ is operator-reduced.

        Moreover, the factorization $f= \opredFac{f}\circ \opredMap{f}$ is universal among factorizations of $f$ whose second homomorphism is operator-reduced.
        That is, if $f=h\circ u$ with $u\colon P\twoheadrightarrow R$ surjective and $h\colon R\twoheadrightarrow Q$ operator-reduced, then $u$ factors uniquely through $\opredQuot{f}$.
    \end{enumerate}
    \[\begin{tikzcd}[column sep=small]
        P \arrow[two heads]{rr}{f} \arrow[two heads]{rd}[swap]{\redMap{f}} & & Q\rlap{,} \\
        & \redQuot{f} \arrow[two heads]{ru}[swap]{\redFac{f}}
    \end{tikzcd}\qquad
    \begin{tikzcd}[column sep=small]
        P \arrow[two heads]{rr}{f} \arrow[two heads]{rd}[swap]{\opredMap{f}} & & Q\rlap{.} \\
        & \opredQuot{f} \arrow[two heads]{ru}[swap]{\opredFac{f}}
    \end{tikzcd}\]
\end{prop}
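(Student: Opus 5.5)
For (1), I will first check that $f$ factors through $\redMap{f}$. The generating relations $s_{\varphi(x)}(x)\redCong{f}x$ lie in $\Eq(f)$, because $\varphi\in\relInn(f)$ gives $f(s_{\varphi(x)}(x))=s_{f(x)}(f(x))=f(x)$. Since $\Eq(f)$ is a quandle congruence, it contains $\redCong{f}$, and \cref{proposition: quotients by quandle congruences} yields $\redFac{f}$. To show that $\redFac{f}$ is reduced, take $\bar x=\redMap{f}(x)$ and $\psi\in\relInn(\redFac{f})$. By \cref{lemma: relative symmetry groups under a surjective factorization} applied to $f=\redFac{f}\circ\redMap{f}$, we have $\psi=(\redMap{f})_*(\varphi)$ for some $\varphi\in\relInn(f)$. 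Then
\[ s_{\psi(\bar x)}(\bar x)=\redMap{f}(s_{\varphi(x)}(x))=\redMap{f}(x)=\bar x. \]

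For universality in (1), let $f=h\circ u$ with $h$ reduced. It suffices to show $\redCong{f}\subseteq\Eq(u)$, since $\Eq(u)$ is a congruence and factorization then follows from \cref{proposition: quotients by quandle congruences}. Uniqueness is automatic because $\redMap{f}$ is surjective. For a generator, note that $u_*(\varphi)\in\relInn(h)$ by \cref{lemma: relative symmetry groups under a surjective factorization}, so
\[ u(s_{\varphi(x)}(x))=s_{u_*(\varphi)(u(x))}(u(x))=u(x), \]
because $h$ is reduced.

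For (2), the same pattern applies using normal subgroups. Since $\opredDef{f}\subseteq\relInn(f)$, \cref{prop:basic_property_of_quotient_by_normal_subgroup} gives the factorization $f=\opredFac{f}\circ\opredMap{f}$. To show that $\opredFac{f}$ is operator-reduced, take $\psi=(\opredMap{f})_*(\varphi)\in\relInn(\opredFac{f})$ with $\varphi\in\relInn(f)$, again by \cref{lemma: relative symmetry groups under a surjective factorization}. Then
\[ [s_{\bar x},s_{\psi(\bar x)}]=(\opredMap{f})_*([s_x,s_{\varphi(x)}]). \]
This is trivial because $\opredDef{f}\subseteq\relInn(\opredMap{f})=\Ker((\opredMap{f})_*)$ by \cref{prop:basic_property_of_quotient_by_normal_subgroup}\eqref{item:N_is_contained_in_relInn_of_projection}.

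For universality in (2), let $f=h\circ u$ with $h$ operator-reduced. Then
\[ u_*([s_x,s_{\varphi(x)}])=[s_{u(x)},s_{u_*(\varphi)(u(x))}]=1, \]
since $u_*(\varphi)\in\relInn(h)$. Because $\Ker(u_*)=\relInn(u)$ is normal, this gives $\opredDef{f}\subseteq\relInn(u)$. Then \cref{prop:basic_property_of_quotient_by_normal_subgroup}\eqref{item:factoring_condition_through_quotient_by_normal_subgroup} gives the factorization, and it is unique because the quotient map is surjective.

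The main point needing care is that every element of $\relInn$ of the second factor lifts to $\relInn(f)$. This is exactly \cref{lemma: relative symmetry groups under a surjective factorization}, which applies because both factors are surjective. Everything else is formal.
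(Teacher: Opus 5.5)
Your proposal is correct and follows the paper's own proof. You use \cref{lemma: relative symmetry groups under a surjective factorization} both to lift each element of the relative inner group of the quotient factor to $\relInn(f)$, which gives (operator-)reducedness of $\redFac{f}$ and $\opredFac{f}$, and to push the defining relations of $\redCong{f}$ and the generators of $\opredDef{f}$ into any (operator-)reduced factorization $f=h\circ u$. You also make explicit two points the paper leaves implicit: $\redCong{f}\subseteq\Eq(f)$, and the normality of $\Ker(u_*)=\relInn(u)$, which is needed to pass from the generators to all of $\opredDef{f}$.
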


\begin{proof}
    Let $\pi=\redMap{f}$ and put $\bar f=\redFac{f}$. By \cref{lemma: relative symmetry groups under a surjective factorization}, every $\psi\in\relInn(\bar f)$ has the form $\pi_*(\varphi)$ for some $\varphi\in\relInn(f)$. Hence, $s_{\psi([x])}([x])=[s_{\varphi(x)}(x)]=[x]$ by the definition of $\redCong{f}$. Thus, $\redFac{f}$ is reduced.

    The same argument for $\pi=\opredMap{f}$ gives $[s_{[x]},s_{\psi([x])}] =\pi_*([s_x,s_{\varphi(x)}])=1$, since $[s_x,s_{\varphi(x)}]\in\opredDef{f}$. Hence, $\opredFac{f}$ is operator-reduced.

    By \cref{lemma: relative symmetry groups under a surjective factorization}, the defining relations of $\redCong{f}$ and the generators of $\opredDef{f}$ vanish in the corresponding quotient factors. The same lemma shows that they vanish in every reduced, respectively operator-reduced, quotient factorization of $f$. This proves the two universal properties. 
\end{proof}

\begin{dfn}
\label[dfn]{definition: reduced and operator-reduced factor}
    For a surjective quandle homomorphism $f\colon P \twoheadrightarrow Q$, the induced homomorphisms $\redFac{f}\colon\redQuot{f}\twoheadrightarrow Q$ and $\opredFac{f}\colon\opredQuot{f}\twoheadrightarrow Q$ are called the \emph{reduced factor} and the \emph{operator-reduced factor} of $f$, respectively.
\end{dfn}

\begin{lem}
\label[lem]{proposition: reduction preserves connected components}
    Let $f\colon P \twoheadrightarrow Q$ be a surjective quandle homomorphism.
    The quotient map $\redMap{f}\colon P\to\redQuot{f}$ induces a bijection $\pi_0(P)\to\pi_0(\redQuot{f})$.
\end{lem}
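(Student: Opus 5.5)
The plan is to show directly that the map $\pi_0(P)\to\pi_0(\redQuot{f})$, $[x]\mapsto[\redMap{f}(x)]$, is well defined, surjective and injective. Write $\pi=\redMap{f}$. Since $\pi$ is a surjective quandle homomorphism, $\pi_*\colon\Inn(P)\twoheadrightarrow\Inn(\redQuot{f})$ is surjective (\cref{proposition: functoriality of Inn for surjections}) and satisfies $\pi(\psi(x))=\pi_*(\psi)(\pi(x))$. Hence $\pi$ maps $\Inn(P)$-orbits into $\Inn(\redQuot{f})$-orbits, so the map on $\pi_0$ is well defined. It is surjective because $\pi$ is surjective.

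The substance is injectivity. Suppose $\pi(x)$ and $\pi(y)$ lie in the same component. By surjectivity of $\pi_*$ there is $\psi\in\Inn(P)$ with $\pi(y)=\pi(\psi(x))$, so $y\sim_{f,\mathrm{red}}\psi(x)$. It therefore suffices to show that the relation ``lying in the same connected component of $P$'', call it $\approx$, contains $\redCong{f}$.

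For this I first check that $\approx$ is a quandle congruence. If $x\approx x'$ and $y\approx y'$, then $s_x^{\pm1}(y)\approx y\approx y'\approx s_{x'}^{\pm1}(y')$, since each $s_z^{\pm1}$ lies in $\Inn(P)$ and thus preserves components. Next, each generating pair of $\redCong{f}$ lies in $\approx$: the element $s_{\varphi(x)}(x)$ lies in the same $\Inn(P)$-orbit as $x$, because $s_{\varphi(x)}\in\Inn(P)$. Since $\redCong{f}$ is the smallest congruence containing these pairs, we get $\redCong{f}\subseteq{\approx}$. Hence $y\approx\psi(x)\approx x$, which proves injectivity.

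I do not expect a real obstacle here. The only point needing care is the passage from the generating relations to the generated congruence, which is handled by verifying that the component relation is itself a congruence.
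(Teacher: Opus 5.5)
Your proof is correct and follows essentially the same route as the paper. The key step is that every generating pair of $\redCong{f}$ lies in a single $\Inn(P)$-orbit, so the component relation (a congruence, as you verify explicitly) contains $\redCong{f}$; this is exactly the paper's factorization of $P\twoheadrightarrow\pi_0(P)$ through $\redMap{f}$. The only difference is how you finish injectivity: the paper uses the universal property of $\pi_0(\redQuot{f})$ with respect to trivial quandles to build a left inverse of $\pi_0(\redMap{f})$, whereas you lift the connecting inner automorphism along the surjection $(\redMap{f})_*$, and both work.
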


\begin{proof}
Since $\redMap{f}$ is surjective, $\pi_0(\redMap{f})$ is surjective.
Each defining pair of $\redCong{f}$ has the same image in $\pi_0(P)$. Hence, the canonical homomorphism $P\twoheadrightarrow\pi_0(P)$ factors through $\redMap{f}$ as
\[\begin{tikzcd}
    P \arrow[two heads]{r} \arrow[two heads]{d}[swap]{\redMap{f}} & \pi_0(P)\rlap{.} \\
    \redQuot{f} \arrow[dashed]{ru}[swap]{h}
\end{tikzcd}\]
Moreover, as $\pi_0(P)$ is a trivial quandle, $h$ factors through the canonical map $\redQuot{f} \to \pi_0(\redQuot{f})$. Writing $\overline{h}\colon \pi_0(\redQuot{f}) \to \pi_0(P)$ for the induced homomorphism, we have $\overline{h}\circ \pi_0(\redMap{f}) = \id$ by the universality, which shows that $\pi_0(\redMap{f})$ is also injective.
Thus, it is bijective.
\end{proof}

We conclude this subsection with further properties of the reduced factor. First, the reduced congruence has the following orbit description. Put $\redOpGrp_f(x)\coloneqq \langle s_{\varphi(x)}\mid\varphi\in\relInn(f)\rangle$.

\begin{prop}
\label[prop]{proposition: orbit description of relative reduction}
Let $f\colon P\twoheadrightarrow Q$ be an operator-reduced quandle homomorphism.
Then, $\redOpGrp_f(x)$ is abelian for every $x\in P$, and the fibers
of $\redMap{f}\colon P\twoheadrightarrow\redQuot{f}$ are the orbits
$\redOpGrp_f(x)\cdot x$.
\end{prop}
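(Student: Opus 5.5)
Two things need proving: first that $\redOpGrp_f(x)$ is abelian, and then that the orbits $\redOpGrp_f(x)\cdot x$ are exactly the classes of $\redCong{f}$.

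\textbf{Abelianness.} Take $\varphi,\psi\in\relInn(f)$ and put $y=\varphi(x)$. Then $\psi(x)=(\psi\varphi^{-1})(y)$, and $\psi\varphi^{-1}\in\relInn(f)$. The operator-reduced condition applied at the point $y$ therefore gives
\[
s_{\varphi(x)}s_{\psi(x)}=s_{\psi(x)}s_{\varphi(x)}.
\]
So the generators of $\redOpGrp_f(x)$ commute pairwise, and $\redOpGrp_f(x)$ is abelian.

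\textbf{Behaviour under inner automorphisms.} For $\theta\in\Inn(P)$ we have
\[
\theta\,\redOpGrp_f(x)\,\theta^{-1}=\redOpGrp_f(\theta(x)).
\]
Indeed, $\theta s_{\varphi(x)}\theta^{-1}=s_{\theta\varphi\theta^{-1}(\theta x)}$, and $\relInn(f)$ is normal in $\Inn(P)$. Next, if $y\in\redOpGrp_f(x)\cdot x$, then $\redOpGrp_f(y)=\redOpGrp_f(x)$. To see this, note first that $y$ lies in the fiber of $x$, because each $s_{\varphi(x)}$ maps $f^{-1}(f(x))$ to $f^{-1}(s_{f(x)}f(x))=f^{-1}(f(x))$. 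Moreover $y=\alpha(x)$ with $\alpha\in\redOpGrp_f(x)$. Conjugation by $\alpha$ sends $\redOpGrp_f(x)$ onto $\redOpGrp_f(y)$; since $\redOpGrp_f(x)$ is abelian, this conjugation is trivial, which gives the equality. Hence the relation $x\approx y \iff y\in\redOpGrp_f(x)\cdot x$ is an equivalence relation: it is reflexive, symmetric because $\redOpGrp_f(x)=\redOpGrp_f(y)$, and transitive for the same reason.

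\textbf{$\approx$ is a quandle congruence.} If $x\approx x'$ and $y\approx y'$, I must show $s_x^{\pm1}(y)\approx s_{x'}^{\pm1}(y')$.
\begin{enumerate}
\item Varying $y$: the identity $s_x^{\pm1}\redOpGrp_f(y)s_x^{\mp1}=\redOpGrp_f(s_x^{\pm1}y)$ shows $s_x^{\pm1}(y')\approx s_x^{\pm1}(y)$.
\item Varying $x$: since $x\approx x'$ we have $x'=\varphi(x)$ with $\varphi\in\relInn(f)$, so $s_{x'}=[\varphi,s_x]s_x$ by \cref{proposition: commutator interpretation of reducedness}. It then suffices to show $s_{x'}^{\pm1}(z)\approx s_x^{\pm1}(z)$ for all $z$.
\end{enumerate}
For step 2, write $w=s_x^{\pm1}(z)$. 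I want $s_{x'}^{\pm1}s_x^{\mp1}$ to be a product of generators of $\redOpGrp_f(w)$. The map $s_{x'}s_x^{-1}=[\varphi,s_x]$ lies in $\relInn(f)$, and I plan to use that $s_{x'}s_x^{-1}=s_{\chi(w)}s_w^{-1}$ for a suitable $\chi\in\relInn(f)$. This rests on the identity $[\psi,s_w]=s_{\psi(w)}s_w^{-1}$ together with \cref{lem:commutator_inn_rel_trans_rel}: writing $\psi=[\varphi,s_x]$, the element $s_{\psi(w)}s_w^{-1}$ differs from $\psi$ only by a further commutator term. I expect this to be the main obstacle. If the direct identity fails, I would fall back on proving the orbit description through the two inclusions below instead of proving directly that $\approx$ is a congruence.

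\textbf{Comparison with $\redCong{f}$.} Every generating pair $(x,s_{\varphi(x)}(x))$ of $\redCong{f}$ satisfies $s_{\varphi(x)}(x)\in\redOpGrp_f(x)\cdot x$, so once $\approx$ is a congruence, minimality of $\redCong{f}$ gives ${\redCong{f}}\subseteq{\approx}$. For the converse, $x\approx y$ means $y=s_{\varphi_1(x)}^{\epsilon_1}\cdots s_{\varphi_n(x)}^{\epsilon_n}(x)$. In $\redQuot{f}$, which is reduced over $Q$, each $s_{[\varphi_i(x)]}^{\pm1}$ fixes $[x]$, since $[\varphi_i(x)]=(\pi_*\varphi_i)([x])$ with $\pi_*\varphi_i\in\relInn(\redFac{f})$. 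Applying these factors one at a time gives $[y]=[x]$, so ${\approx}\subseteq{\redCong{f}}$. Together the two inclusions show that the fibers of $\redMap{f}$ are the orbits $\redOpGrp_f(x)\cdot x$.
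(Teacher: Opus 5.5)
Most of your argument is sound. This covers abelianness (applying operator-reducedness at $\varphi(x)$ with $\psi\varphi^{-1}$ is fine), the identity $\theta\,\redOpGrp_f(x)\,\theta^{-1}=\redOpGrp_f(\theta(x))$, the equivalence-relation property, and step~1 of the congruence check. It also covers the inclusion ${\approx}\subseteq{\redCong{f}}$ via reducedness of $\redFac{f}$, which handles products of generators and their inverses more tidily than the paper's brief remark.

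The genuine gap is step~2, which you flag yourself and leave open. The identity $s_{x'}s_x^{-1}=s_{\chi(w)}s_w^{-1}$ is never derived. The heuristic that $s_{\psi(w)}s_w^{-1}$ ``differs from $\psi$ only by a further commutator term'' gives nothing: $s_{\chi(w)}s_w^{-1}=[\chi,s_w]$, and there is no general reason an element of $\relInn(f)$ has this form. The fallback does not help either. Your inclusion ${\redCong{f}}\subseteq{\approx}$ comes from minimality of $\redCong{f}$ among congruences, so it presupposes exactly the congruence property that is missing. Separately, $x\approx x'$ only gives $x'=\alpha(x)$ with $\alpha\in\redOpGrp_f(x)$. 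Since $\redOpGrp_f(x)$ contains $s_x$, it is not contained in $\relInn(f)$ in general, so your claim that $x'=\varphi(x)$ with $\varphi\in\relInn(f)$ needs an argument you did not give.

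The missing idea is one line, and you were one step from it: $s_x\in\redOpGrp_f(x)$ (take $\varphi=\id$). So if $x'=\alpha(x)$ with $\alpha\in\redOpGrp_f(x)$, then $s_{x'}=\alpha s_x\alpha^{-1}=s_x$, because $\redOpGrp_f(x)$ is abelian. Thus all points of an orbit $\redOpGrp_f(x)\cdot x$ share the same symmetry map, and the commutator $[\varphi,s_x]$ in your step~2 is trivial. The congruence property then reduces to $s_x^{\varepsilon}(y)\approx s_x^{\varepsilon}(y')=s_{x'}^{\varepsilon}(y')$, which is your step~1. This is exactly how the paper argues, and with it inserted the rest of your proof goes through.
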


\begin{proof}
For $\varphi,\psi\in\relInn(f)$, operator-reducedness gives
\[ [s_{\varphi(x)},s_{\psi(x)}] = \varphi[s_x,s_{\varphi^{-1}\psi(x)}]\varphi^{-1} = 1. \]
Thus, $\redOpGrp_f(x)$ is abelian.
We note that $\theta \redOpGrp_f(x) \theta^{-1}= \redOpGrp_f(\theta(x))$ for $\theta\in \Inn(P)$, because $\theta s_{\varphi(x)} \theta^{-1} = s_{\theta\varphi(x)} = s_{\theta\varphi\theta^{-1}(\theta x)}$.

We define $x \sim_{\redOpGrp_f} y$ precisely when $y \in \redOpGrp_f(x)\cdot x$. We next want to show that the relations $\sim_{\redOpGrp_f}$ and $\redCong{f}$ coincide.

We observe that $\sim_{\redOpGrp_f}$ is a quandle congruence. Since $s_x\in \redOpGrp_f(x)$, we have $x\in \redOpGrp_f(x)\cdot x$, and hence $x \sim_{\redOpGrp_f} x$.
If $x\sim_{\redOpGrp_f} y$, then $y=b(x)$ for some $b \in \redOpGrp_f(x)$. As $\redOpGrp_f(x)$ is abelian, we have $\redOpGrp_f(y) = b\redOpGrp_f(x) b^{-1} = \redOpGrp_f(x) bb^{-1}=\redOpGrp_f(x)$. Hence, $x=b^{-1}(y)$ and $b^{-1}\in \redOpGrp_f(y)$ imply $x \in \redOpGrp_f(y)\cdot y$, which means $y \sim_{\redOpGrp_f} x$.
If $x \sim_{\redOpGrp_f} y$ and $y \sim_{\redOpGrp_f} z$, then a similar argument yields that $\redOpGrp_f(x)= \redOpGrp_f(y) = \redOpGrp_f(z)$ and $x\sim_{\redOpGrp_f}z$. Thus, $\sim_{\redOpGrp_f}$ is an equivalence relation.
Moreover, if $x \sim_{\redOpGrp_f} x'$ and $y \sim_{\redOpGrp_f} y'$, then we see $s_x=s_{x'}$. Therefore, as $\sim_{\redOpGrp_f}$ is preserved by $\Inn(P)$, we obtain $s_x^\varepsilon(y)\sim s_x^\varepsilon(y') =s_{x'}^\varepsilon(y')$ for $\varepsilon\in\{\pm 1\}$. Hence, $\sim_{\redOpGrp_f}$ is indeed a quandle congruence.

For each defining pair $x \redCong{f} s_{\varphi(x)}(x)$, we have $x \sim_{\redOpGrp_f} s_{\varphi(x)}(x)$ since $s_{\varphi(x)}(x) \in \redOpGrp_f(x)\cdot x$. Thus, $x\redCong{f}y$ implies $x\sim_{\redOpGrp_f} y$.
Conversely, if $y=s_{\varphi(x)}(x)$ for some $\varphi \in\relInn(f)$, then $x\redCong{f} s_{\varphi(x)}(x)=y$. Since the operators $s_{\varphi(x)}$ generate $\redOpGrp_f(x)$, we see that if $x\sim_{\redOpGrp_f} y$, then $x\redCong{f}y$.
Thus, the two congruences coincide.
\end{proof}

\subsection{Nilpotency ascent and commutative action criterion}
\label{subsection: quadratic relative actions and nilpotency ascent}

In this subsection, we study sufficient conditions for operator-reduced homomorphisms to be nilpotent.

The operator-reduced condition controls the action on abelian sections of the relative inner automorphism group. Using this, we first prove that an operator-reduced homomorphism with a finite relative inner automorphism group is nilpotent whenever its target is nilpotent.

For this, we need to make an observation on operator-reduced homomorphisms. 
Recall that the \emph{Fitting subgroup} $\Fit(G)$ of a group $G$ is the subgroup generated by all normal nilpotent subgroups of $G$. When $G$ is finite, $\Fit(G)$ is a finite nilpotent normal subgroup of $G$.

\begin{prop}
\label[prop]{proposition: fitting control for operator reduced homomorphisms}
Let $f\colon P\twoheadrightarrow Q$ be an operator-reduced quandle homomorphism, and let $M\subseteq \Inn(P)$ be a finite normal subgroup with $M\subseteq \relInn(f)$. Then, $[\Inn(P), M]\subseteq\Fit(M)$.
%Equivalently, $M/\Fit(M)\subseteq Z(G/\Fit(M))$. In particular, $M$ is solvable.
\end{prop}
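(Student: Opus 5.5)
The plan is to reduce to a single symmetry map $s_x$ and apply Baer's theorem on Engel elements in finite groups. Put $G\coloneqq\Inn(P)$. Since $\Fit(M)$ is characteristic in $M$ and $M\trianglelefteq G$, the subgroup $\Fit(M)$ is normal in $G$. By the identities $[gh,m]=g[h,m]g^{-1}\cdot[g,m]$ and $[g^{-1},m]=g^{-1}[g,m]^{-1}g$, the set of $g\in G$ with $[g,M]\subseteq\Fit(M)$ is a subgroup of $G$. Since the $s_x$ generate $G$, it therefore suffices to show $[s_x,m]\in\Fit(M)$ for every $x\in P$ and $m\in M$. Equivalently, it suffices to show $[m,s_x]\in\Fit(M)$, since $[s_x,m]=[m,s_x]^{-1}$.

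Fix $x$. Since $M\subseteq\relInn(f)$, \cref{proposition: commutator interpretation of reducedness}\eqref{item: op reduced} gives $[[m,s_x],s_x]=1$ for all $m\in M$. Thus $s_x$ is a left $2$-Engel element with respect to $M$. To work in a finite group, let $\alpha\in\Aut(M)$ be conjugation by $s_x$; it has finite order because $M$ is finite. Form the finite group $H\coloneqq M\rtimes\langle\alpha\rangle$. The map $M\rtimes\langle s_x\rangle\to H$ induced by conjugation is compatible with commutators of the form $[m,s_x]\mapsto[m,\alpha]$, and $[m,\alpha]=m\,\alpha(m)^{-1}\in M$. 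The relation $[[m,s_x],s_x]=1$ in $G$ therefore yields $[[m,\alpha],\alpha]=1$ in $H$ for all $m\in M$.

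The next step is to verify that $\alpha$ is a left Engel element of all of $H$, not merely against $M$. For a general element $h=m\alpha^k$ we have $[m\alpha^k,\alpha]=m[\alpha^k,\alpha]m^{-1}[m,\alpha]=[m,\alpha]$. Hence $[[h,\alpha],\alpha]=1$ for all $h\in H$. Baer's theorem states that in a finite group every left Engel element lies in the Fitting subgroup, so $\alpha\in\Fit(H)$. Then $[m,\alpha]\in\Fit(H)\cap M$. The subgroup $\Fit(H)\cap M$ is a nilpotent normal subgroup of $M$, so it lies in $\Fit(M)$. Consequently $[m,s_x]=[m,\alpha]\in\Fit(M)$, which is exactly what the reduction requires.

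The main obstacle is this appeal to Baer's theorem. I expect the transfer from $G$ to the finite group $H$ and the extension of the Engel condition from $M$ to all of $H$ to be routine, as checked above. If citing Baer is undesirable, I would instead try a direct argument. The relation $\alpha(m)=c^{-1}m$ with $c=[m,\alpha]$ fixed by $\alpha$ gives $\alpha^n(m)=c^{-n}m$. From this I would try to show that the normal closure of $\{[m,\alpha]\}$ in $M$ is nilpotent, for example by a minimal-counterexample argument on $|M|$.
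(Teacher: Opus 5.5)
Your proof is correct, but it reaches the key inclusion by a heavier route than the paper. The reduction to the generators $s_x$ is the same as the paper's final step. Your computations $[m,s_x]=m\alpha(m)^{-1}=[m,\alpha]$ and $[m\alpha^k,\alpha]=[m,\alpha]$ correctly show that $\alpha$ is a left $2$-Engel element of the finite group $H$. Baer's theorem together with $\Fit(H)\cap M\subseteq\Fit(M)$ then finishes the proof.

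The paper instead stays inside $\Inn(P)$. The group $O_f(x)=\langle s_{\varphi(x)}\mid\varphi\in\relInn(f)\rangle$ is abelian by operator-reducedness (\cref{proposition: orbit description of relative reduction}). It is normalized by $M$, since $mO_f(x)m^{-1}=O_f(m(x))=O_f(x)$ for $m\in M\subseteq\relInn(f)$. Hence $O_f(x)\cap M$ is an abelian normal subgroup of $M$, so it lies in $\Fit(M)$, and it contains $[m,s_x]=s_{m(x)}s_x^{-1}$.

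In fact a $2$-Engel condition never needs Baer's theorem. The identity $[[k,g],g]=1$ for all $k$ says exactly that $g$ commutes with each of its conjugates. So the conjugates of $g$ commute pairwise, and the normal closure of $g$ is abelian. For $g=s_x$ in $K=M\langle s_x\rangle\leq\Inn(P)$, this normal closure is $C=\langle s_{m(x)}\mid m\in M\rangle\subseteq O_f(x)$. Then $[m,s_x]=(ms_xm^{-1})s_x^{-1}\in C\cap M$, which is an abelian normal subgroup of $M$. This is the paper's argument in another guise, and it makes your fallback via $\alpha^n(m)=c^{-n}m$ unnecessary.

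What the paper's route buys is that it is elementary and never uses the finiteness of $M$. Finiteness enters only later, to guarantee that $\Fit(\relInn(f))$ is nilpotent when it serves as $F$ in \cref{theorem:nilpotency ascent theorem for groups}. Your route needs $M$ finite both to make $H$ finite and to apply Baer's theorem.
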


\begin{proof}
Let $x\in P$, and put $B_x\coloneqq\redOpGrp_f(x)\cap M$, where $\redOpGrp_f(x)$ is the abelian group defined in \cref{proposition: orbit description of relative reduction}. Since $M\subseteq \relInn(f)$, we have $m\redOpGrp_f(x)m^{-1} =\redOpGrp_f(m(x))=\redOpGrp_f(x)$ for every $m\in M$. Thus, $B_x$ is an abelian normal subgroup of $M$, and hence $B_x\subseteq\Fit(M)$.

For $m\in M$, we see $[m,s_x]=s_{m(x)}s_x^{-1}\in B_x\subseteq \Fit(M)$. 
Since the symmetry maps $s_x$ generate $\Inn(P)$ and the Fitting group $\Fit(M)$ is normal in $\Inn(P)$, it follows that $[M, \Inn(P)]=[\Inn(P), M]\subseteq\Fit(M)$.
%Finally, $\Fit(M)$ is nilpotent and $M/\Fit(M)$ is abelian, so $M$ is solvable.
\end{proof}

The following elementary group-theoretic observation will be useful.

\begin{lem}
\label[lem]{lemma: quadratic action on abelian sections}
    Let $M\subseteq M'$ be normal subgroups of a group $G$ such that $A\coloneqq M'/M$ is abelian. Let $s\in G$ be an element satisfying $[[x,s],s]=1$ for all $x\in M'$. Then, letting $\alpha(s)\in\Aut(A)$ be induced by conjugation by $s$, we have $(\alpha(s)-\id_A)^2=0$.
\end{lem}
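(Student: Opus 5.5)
The identity $(\alpha(s)-\id_A)^2=0$ will be checked elementwise. For $a\in A$ we must show $\alpha(s)^2(a)-2\alpha(s)(a)+a=0$. Write $A$ additively and let $a$ be the class of some $x\in M'$.

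First we set up the action. Since $M$ and $M'$ are normal in $G$, conjugation by $s$ preserves both subgroups. It therefore induces an automorphism $\alpha(s)$ of $A=M'/M$, given by $\alpha(s)(xM)=sxs^{-1}M$. With the paper's convention $[x,s]=xsx^{-1}s^{-1}$, note that $[x,s]=x\cdot(sx^{-1}s^{-1})$ lies in $M'$. Its class in $A$ is
\[ [x,s]M = a-\alpha(s)(a) = -(\alpha(s)-\id_A)(a). \]

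Next we apply the same computation a second time. Put $y\coloneqq[x,s]\in M'$. The hypothesis gives $[y,s]=1$, so in particular $[y,s]M=0$. By the formula from the first step, applied to $y$ in place of $x$, we get $[y,s]M=-(\alpha(s)-\id_A)(yM)$. Substituting $yM=-(\alpha(s)-\id_A)(a)$ gives
\[ 0=[y,s]M=(\alpha(s)-\id_A)^2(a). \]

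Since $x\in M'$ was arbitrary, this holds for every $a\in A$, and so $(\alpha(s)-\id_A)^2=0$.

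The argument has no real obstacle. The only care needed is in the sign and order conventions of the commutator. One must confirm that $[x,s]\in M'$, so that the hypothesis may be applied to it. One must also confirm that the class of $[x,s]$ equals $\pm(\alpha(s)-\id)(a)$ using the abelianness of $A$; the sign is irrelevant, since the result is squared.
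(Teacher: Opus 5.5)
Your proposal is correct and follows essentially the same route as the paper: you identify the class of $[x,s]$ in $A$ as $(\id_A-\alpha(s))(\overline{x})$ and apply this a second time, to $y=[x,s]$, to turn $[[x,s],s]=1$ into $(\alpha(s)-\id_A)^2=0$. You also make explicit the check that $[x,s]\in M'$, which the paper uses without comment.
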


\begin{proof}
Let $x\in M'$, and write $\overline{x}=xM\in A$. Writing $A$ additively, the image of $[x,s]=xsx^{-1}s^{-1}$ in $A$ is $\overline{x}-\alpha(s)(\overline{x})=(\id_A-\alpha(s))(\overline{x})$.
Therefore $[[x,s],s]=1$ yields $(\id_A-\alpha(s))^2(\overline{x})=0$ for every $x\in M'$. Hence we have $(\alpha(s)-\id_A)^2=0$ in $\End(A)$.
\end{proof}

\begin{lem}[{\cite[Lemma 3.11]{book_isaacs_2008_finite_group_theory}}]
\label{fact:finite_solvable_minimal_normal_subgroup_is_elementary_abelian_p-group}
    Let $M$ be a solvable minimal normal subgroup of an arbitrary group $G$.
    Then $M$ is abelian, and if $M$ is finite, it is an elementary abelian $p$-group for some prime $p$.
\end{lem}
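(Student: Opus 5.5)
The statement is Isaacs' Lemma 3.11: a solvable minimal normal subgroup $M$ of a group $G$ is abelian, and if $M$ is finite it is an elementary abelian $p$-group. I will argue directly from minimality, using the fact that characteristic subgroups of a normal subgroup are normal in the ambient group.

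\emph{Abelianness.} The commutator subgroup $M'=[M,M]$ is characteristic in $M$. Since $M\trianglelefteq G$, every conjugation by an element of $G$ restricts to an automorphism of $M$ and therefore preserves $M'$, so $M'\trianglelefteq G$. Minimality of $M$ then forces $M'=M$ or $M'=1$. If $M'=M$, the derived series of $M$ is constant at $M$. Solvability means the derived series reaches $1$, so this would give $M=1$, which is excluded because a minimal normal subgroup is nontrivial by definition. Hence $M'=1$ and $M$ is abelian.

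\emph{The finite case.} Now assume $M$ is finite, and hence a nontrivial finite abelian group. I will show it is a $p$-group of exponent $p$.
\begin{enumerate}
\item Choose a prime $p$ dividing $|M|$. The subgroup $\Omega=\{m\in M \mid m^p=1\}$ is characteristic in $M$. Because $M$ is abelian, $\Omega$ is a subgroup, and by Cauchy's theorem it is nontrivial.
\item As in the abelian step, $\Omega$ is normal in $G$. Minimality gives $\Omega=M$, so every element of $M$ satisfies $m^p=1$.
\end{enumerate}
Thus $M$ is an abelian group of exponent $p$, that is, an elementary abelian $p$-group.

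No step here is a real obstacle. The only point needing care is the standard fact that characteristic subgroups of a normal subgroup are normal in the whole group, and this is used in both parts.
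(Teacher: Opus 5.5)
Your argument is correct and complete. You show $M'=1$ using that $M'$ is characteristic in $M$, hence normal in $G$, and then apply minimality and solvability; you then show $\Omega=M$ for a prime $p$ dividing $|M|$ using Cauchy's theorem and minimality. The paper gives no proof of its own here and only cites Isaacs, and your proof is essentially the standard argument for this fact.
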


\begin{lem}% standard fact
\label{lemma:finite_nilpotent_generated_by_p-elements_is_p-group}
    A finite nilpotent group generated by $p$-elements is a $p$-group.
\end{lem}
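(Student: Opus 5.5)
The statement is the standard fact that a finite nilpotent group $N$ generated by $p$-elements is a $p$-group. The plan is to use the Sylow decomposition of finite nilpotent groups and show that only the Sylow $p$-factor can be non-trivial.

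First, recall that a finite nilpotent group is the internal direct product of its Sylow subgroups, $N=\prod_{q} N_q$, where each $N_q$ is the unique Sylow $q$-subgroup. Uniqueness follows because every Sylow subgroup of a finite nilpotent group is normal. The normality comes from the normalizer condition: a proper subgroup of a nilpotent group is properly contained in its normalizer, while $N_N(N_N(S))=N_N(S)$ for any Sylow subgroup $S$. I will quote this as the standard structure theorem.

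Second, let $g\in N$ be a $p$-element and write $g=\prod_q g_q$ with $g_q\in N_q$. The factors $g_q$ commute pairwise and lie in subgroups of coprime orders, so the order of $g$ is the product of the orders of the $g_q$. Since this order is a power of $p$, we get $g_q=1$ for every $q\neq p$, and hence $g\in N_p$. Consequently every generator lies in the subgroup $N_p$, so $N=N_p$, and $N$ is a $p$-group.

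There is no real obstacle here. The only input needing justification is the normality, hence uniqueness, of the Sylow subgroups of a finite nilpotent group, and this is textbook material (e.g.\ \cite{book_isaacs_2008_finite_group_theory}).
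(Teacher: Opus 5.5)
Your proof is correct and follows essentially the same route as the paper: both use the normality, and hence uniqueness, of Sylow subgroups in a finite nilpotent group, and then place every $p$-element generator inside the Sylow $p$-subgroup. The only cosmetic difference is how that last step is done: you use the direct-product decomposition and a coprime-order count to show $p$-elements lie in $N_p$, whereas the paper uses Sylow's theorem that every $p$-subgroup lies in some (here the unique) Sylow $p$-subgroup.
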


\begin{proof}
    In a finite nilpotent group, every Sylow $p$-group is normal, and hence unique. Since $p$-elements are contained in the unique Sylow $p$-group, the generating condition implies that the finite nilpotent group itself is a $p$-group.
\end{proof}

\begin{lem}
\label{lemma:invariant_space_of_finite_vector_space_with_p-group_action_is_nontrivial}
    Let $H$ be a finite $p$-group, and let $V$ be a nonzero finite-dimensional vector space over $\mathbb{F}_{p}$.
    Suppose that $H$ acts linearly on $V$.
    Then the fixed-point subspace $V^H=\{v \in V \mid h\cdot v=v \text{ for all } h\in H \}$ is nonzero.
\end{lem}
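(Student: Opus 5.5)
We want to show that the fixed space $V^H$ is nonzero. The plan is to reduce to a counting argument via the orbit–stabilizer theorem. This avoids any induction on $|H|$ or any appeal to unipotence of $p$-elements in characteristic $p$.

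First, since $V$ is a nonzero finite-dimensional vector space over $\mathbb{F}_p$, it is a finite set of cardinality $p^n$ with $n=\dim V\geq 1$. The group $H$ acts on the set $V$ by permutations, since the action is linear and in particular bijective. The zero vector is fixed by every element of $H$, so $0\in V^H$ and $V^H$ is nonempty.

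Next, decompose $V$ into $H$-orbits. By the orbit–stabilizer theorem, each orbit has cardinality $[H:\mathrm{Stab}_H(v)]$, which divides $|H|$. Since $|H|$ is a power of $p$, every orbit has size $1$ or a positive power of $p$. The orbits of size $1$ are exactly the singletons $\{v\}$ with $v\in V^H$. Summing the orbit sizes gives
\[ p^n = |V| \equiv |V^H| \pmod{p}. \]
Since $n\geq1$, this gives $|V^H|\equiv 0\pmod p$.

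Finally, $V^H$ contains $0$, so $|V^H|\geq 1$; divisibility by $p$ then forces $|V^H|\geq p\geq 2$. Hence $V^H$ contains a nonzero vector. One should also note that $V^H$ is a linear subspace, because each $h$ acts linearly, so this nonzero vector shows $V^H\neq 0$ as a subspace.

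The only point needing care is that the congruence $|V|\equiv|V^H|\pmod p$ uses finiteness of both $H$ and $V$; both hold by hypothesis. There is no real obstacle here, and the argument is the standard fixed-point lemma for $p$-groups.
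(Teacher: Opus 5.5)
Your proof is correct and is essentially the same argument as the paper's. Both partition $V$ into $H$-orbits and use that every nontrivial orbit has size divisible by $p$ to get $|V^H|\equiv|V|\equiv 0 \pmod{p}$. Both then conclude from $0\in V^H$ that $|V^H|\geq p$, so $V^H\neq 0$.
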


\begin{proof}
    For each $v \in V$, the orbit $H\cdot v$ has cardinality dividing $\lvert H \rvert$. Since $H$ is a $p$-group, $\lvert H\cdot v\rvert$ is a power of $p$. In particular, every nontrivial orbit has cardinality divisible by $p$. The elements of $V^H$ are precisely those lying in orbits of cardinality $1$. Hence, by the orbit-counting formula,
    \[ \lvert V \rvert = \lvert V^H \rvert + (\text{a multiple of }p). \]
    Since $\lvert V\rvert$ is also divisible by $p$, we obtain $\lvert V^H \rvert \equiv 0 \mod p$. As $0\in V^H$, we have $\lvert V^H \rvert \geq 1$, and hence in fact $\lvert V^H \rvert \geq p$. Thus $V^H\neq 0$. 
\end{proof}

Recall that a \emph{$G$-chief series of $N$} for a normal subgroup $N\trianglelefteq G$ is a finite chain $1= N_{0}< N_{1}<\cdots< N_{m}=N$ of normal subgroups of $G$ such that $N_i/N_{i-1}$ is a nontrivial minimal normal subgroup of $G/N_{i-1}$ for every $1\leq i \leq m$.

\begin{thm}
\label{theorem:nilpotency ascent theorem for groups}
    Let $G$ be a group, $N\leq G$ be a finite normal subgroup, and $F\leq G$ be a finite nilpotent normal subgroup with $[G,N]\subseteq F\subseteq N$.
    Suppose that $G$ is generated by a set $S\subseteq G$ and that $[[x,s],s]=1$ for every $s\in S$ and $x\in N$.
    If $G/N$ is nilpotent, then $G$ is nilpotent.
    More precisely, if $G/N$ is nilpotent of class at most $r$ and $N$ has a $G$-chief series of length $m$, then $N$ is $G$-nilpotent of class at most $m$ and $G$ is nilpotent of class at most $m+r$.
\end{thm}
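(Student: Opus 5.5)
We will show by induction along a $G$-chief series $1=N_0<N_1<\cdots<N_m=N$ that $[G,N_i]\subseteq N_{i-1}$ for each $i$. Then $\relLCS_m(G,N)=1$ follows by the standard descent: inductively $\relLCS_j(G,N)\subseteq N_{m-j}$. For the class of $G$, the sequence $\grpLCS_r(G)\subseteq N$ comes from the nilpotency of $G/N$ of class at most $r$. Hence $\grpLCS_{r+j}(G)\subseteq\relLCS_j(G,N)$, and $G$ is nilpotent of class at most $m+r$. The argument is essentially \cref{theorem: characterization of relative nilpotency}, central extensions stacked $m$ times on top of an $r$-step nilpotent quotient.

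The core step: fix $i$ and put $V\coloneqq N_i/N_{i-1}$, a minimal normal subgroup of $\bar G\coloneqq G/N_{i-1}$. We must show that $G$ acts trivially on $V$ by conjugation. First we show that $V$ is elementary abelian. Since $[G,N]\subseteq F$, the group $N/F$ is central in $G/F$, so $N$ is solvable ($F$ nilpotent, $N/F$ abelian). By \cref{fact:finite_solvable_minimal_normal_subgroup_is_elementary_abelian_p-group}, $V$ is an elementary abelian $p$-group, an $\mathbb F_p$-vector space. By \cref{lemma: quadratic action on abelian sections} applied to $N_{i-1}\subseteq N_i$, every $s\in S$ acts on $V$ by a unipotent map $\alpha(s)$ with $(\alpha(s)-\id)^2=0$, hence $\alpha(s)^p=\id$ (if $p=2$, $\alpha(s)^2=\id+2(\alpha(s)-\id)=\id$; in general use the binomial expansion). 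So the image $\bar H$ of $G$ in $\GL(V)$ is generated by $p$-elements.

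To apply \cref{lemma:finite_nilpotent_generated_by_p-elements_is_p-group}, we need $\bar H$ to be finite nilpotent. Finiteness: $\bar H$ is a subgroup of $\GL(V)$ with $V$ finite. Nilpotency: the action of $G$ on $V$ factors through $G/C_G(V)$. Since $V\subseteq N$ and $F\subseteq N$, the commutator $[N,N_i]\subseteq [G,N_i]\subseteq F$; more usefully, $N$ acts on $V$ through $N/C_N(V)$. We will argue that $F$ acts on the irreducible $\bar G$-module $V$ trivially. Indeed, $V^{O_p}$-type arguments say that the image of $F$ in $\GL(V)$ is a normal nilpotent subgroup, and its $p'$-part acts semisimply; combined with irreducibility and the $p$-part having nonzero fixed points (\cref{lemma:invariant_space_of_finite_vector_space_with_p-group_action_is_nontrivial}, whose fixed space is $G$-invariant by normality, hence all of $V$), the $p$-part of $F$ acts trivially. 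Handling the $p'$-part of $F$ is where we expect the \textbf{main obstacle}. The plan is to instead use that $N/F$ is central in $G/F$, so that the image of $G$ modulo the image of $F$ is an extension of the nilpotent $G/N$ by a central subgroup, hence nilpotent. Then $\bar H$ is an extension of a nilpotent group by the nilpotent image of $F$, and we would refine this to show $\bar H$ itself is nilpotent, perhaps by noting that the unipotent generators commute with $p'$-elements of the normal nilpotent image of $F$ acting on the irreducible module (Clifford theory reduces to homogeneous components, where the $p$-generators, having order $p$, must fix each component's isotypic decomposition).

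Granting that $\bar H$ is a finite $p$-group (via \cref{lemma:finite_nilpotent_generated_by_p-elements_is_p-group}), \cref{lemma:invariant_space_of_finite_vector_space_with_p-group_action_is_nontrivial} gives $V^{\bar H}\neq0$. This fixed subspace is a normal subgroup of $\bar G$ contained in $V$, so minimality forces $V^{\bar H}=V$, i.e.\ $[G,N_i]\subseteq N_{i-1}$. This completes the induction and the stated bounds.
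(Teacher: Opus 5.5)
There is a genuine gap, and it sits exactly where you flag the ``main obstacle''. Your framework is sound: elementary abelian chief factors, generators acting unipotently and hence as $p$-elements, a $p$-group image with nonzero fixed points, and minimality. This matches the paper's Steps 2, 5, 6 and 7. But you never prove that the image $\bar H$ of $G$ in $\GL(V)$ is nilpotent, and without that \cref{lemma:finite_nilpotent_generated_by_p-elements_is_p-group} does not apply.

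Your proposed fallback does not close the gap. An extension of a nilpotent group by a nilpotent normal subgroup need not be nilpotent ($S_3$ is an example), and the Clifford-theory sentence is not an argument. Only your treatment of the Sylow $p$-subgroup of $F$ is complete. What is actually needed is that \emph{all} of $F$ centralizes $V$. Then the action of $G$ on $V$ factors through $G/F$, which you already know is nilpotent, since $N/F\subseteq Z(G/F)$ and $G/N$ is nilpotent.

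The paper avoids the problem by not working with an arbitrary chief series. It refines the $G$-normal series $1=Z_0(F)\leq\cdots\leq Z_c(F)=F\leq N$ to a $G$-chief series. Each factor then lies either between some $Z_{j-1}(F)$ and $Z_j(F)$, where $F$ centralizes it by definition of the upper central series, or above $F$, where $[N_i,F]\subseteq F\subseteq N_{i-1}$. To get the bound for an arbitrary chief series of length $m$, one implicitly uses that all $G$-chief series of $N$ have the same length.

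If you want to keep an arbitrary chief series, the gap also closes with a short elementary argument. In $\bar G=G/N_{i-1}$, put $\bar F=FN_{i-1}/N_{i-1}$, a nilpotent normal subgroup. Since both $V$ and $\bar F$ are normal, $[V,\bar F]\subseteq V\cap\bar F$, and $V\cap\bar F\trianglelefteq\bar G$ is either $1$ or $V$ by minimality.
\begin{itemize}
\item If $V\cap\bar F=1$, then $[V,\bar F]=1$ directly.
\item If $V\cap\bar F=V$, then $V$ is a nontrivial normal subgroup of the nilpotent group $\bar F$, so $[V,\bar F]\subsetneq V$; otherwise iterating would give $V\subseteq\grpLCS_k(\bar F)=1$. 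Since $[V,\bar F]\trianglelefteq\bar G$, minimality again forces $[V,\bar F]=1$.
\end{itemize}
Either way $F$ centralizes every chief factor, and the rest of your argument goes through.
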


\begin{proof}
    \textbf{Step 1:}
    % N: finite solvable and G/F: nilpotent
    By assumption, $[N,N] \subseteq [G,N] \subseteq F \subseteq N$, so $N/F$ is abelian. Since $F$ is nilpotent, it is solvable, and hence $N$ is solvable. Moreover, $[G,N] \subseteq F$ implies $N/F \subseteq Z(G/F)$. Since $(G/F)/(N/F)\cong G/N$ is nilpotent, $G/F$ is nilpotent.

    \textbf{Step 2:}
    % Each V_i=N_i/N_{i-1} is a vec sp with linear action of $\alpha_i(G)$
    Consider the upper central series $1=Z_0(F)\leq Z_1(F)\leq\cdots\leq Z_c(F)=F$ of $F$. Since $F\trianglelefteq G$, each $Z_j(F)$ is normal in $G$. Thus
    \begin{equation}
        1=Z_0(F) \leq Z_1(F) \leq \cdots \leq Z_c(F)=F \leq N \label{equation:G-normal_series_of_N}
    \end{equation}
    is a $G$-normal series for $N$. Since $N$ is finite, this series can be refined to a $G$-chief series
    \[ 1=N_0 < N_1 < \cdots < N_m=N. \]
    For each \(1\le i\le m\), put $V_i=N_i/N_{i-1}$. Since $N$ is finite and solvable, each $V_i$ is finite and solvable. Moreover, $V_i$ is a minimal normal subgroup of $G/N_{i-1}$, so by \cref{fact:finite_solvable_minimal_normal_subgroup_is_elementary_abelian_p-group}, $V_i$ is an elementary abelian $p_i$-group for some prime $p_i$. Thus $V_i$ is a finite-dimensional vector space over $\mathbb{F}_{p_i}$, and $\Aut(V_i)=\GL_{\mathbb{F}_{p_i}}(V_i)$. 
    The conjugation action of $G$ on $V_i$ gives a homomorphism
    \[ \alpha_i\colon G\longrightarrow \Aut(V_i), \quad \alpha_i(g)(v)= gvg^{-1}. \]
    Hence $V_i$ naturally admits a linear action of $\alpha_i(G)$.

    \textbf{Step 3:}
    % \alpha_i(F)=1
    Fix $1\leq i \leq m$. Since the chosen $G$-chief series refines the series \eqref{equation:G-normal_series_of_N} we have either $N_i \leq F$ or $F\leq N_{i-1}$.
    If $N_i \leq F$, then there is some $j$ such that $Z_{j-1}(F) \leq N_{i-1} \leq N_i \leq Z_j(F)$. Hence $[N_i, F] \leq [Z_j(F), F] \leq Z_{j-1}(F) \leq N_{i-1}$.
    If $F\leq N_{i-1}$, then $[N_i, F] \leq F \leq N_{i-1}$ since $F \trianglelefteq G$.
    Thus, in either case, we obtain $[N_i,F] \leq N_{i-1}$, which means $[F,V_i]=1$, or, equivalently, $\alpha_i(F)=1$.

    \textbf{Step 4:}
    % \alpha_i(G): finite nilpotent
    Since $\alpha_i(F)=1$, the homomorphism $\alpha_i\colon G \to \Aut(V_i)$ factors through $G/F$. In particular, there is a surjective homomorphism $G/F \twoheadrightarrow \alpha_i(G)$.
    By Step 1, $G/F$ is nilpotent, so $\alpha_i(G)$ is also nilpotent. Moreover, since $V_i$ is finite, so $\Aut(V_i)$, hence $\alpha_i(G)$, is finite. Thus $\alpha_i(G)$ is a finite nilpotent group.

    \textbf{Step 5:}
    % \alpha_i(G): finite p-group
    By assumption, $[[x,s],s]=1$ for $x\in N$ and $s\in S$. By passing to the factor $V_i$ and writing additively, we obtain $(\alpha_i(s)-1)^2=0$ in 
    $\End(V_i)$
    %$\Aut(V_i)=\GL_{\mathbb{F}_{p_i}}(V_i)$ 
    by \cref{lemma: quadratic action on abelian sections}.
    Putting $u=\alpha_i(s)-1$, we have $u^2=0$. Since $V_i$ is a vector space over $\mathbb{F}_{p_i}$, its endomorphism ring has characteristic $p_i$. Hence we have
    \[ (\alpha_i(s))^{p_i} = (u+1)^{p_i} = u^{p_i} + 1 =1, \]
    because $u^2=0$ implies $u^{p_i}=0$. Thus every $\alpha_i(s)$ is a $p_i$-element. Since $\alpha_i(G)$ is a finite nilpotent group generated by the $p_i$-elements $\alpha_i(s)$, it follows that $\alpha_i(G)$ is a $p_i$-group (\cref{lemma:finite_nilpotent_generated_by_p-elements_is_p-group}).

    \textbf{Step 6:}
    % \alpha_i(G)=1 
    Thus the finite $p_i$-group $\alpha_i(G)$ acts linearly on the nonzero finite-dimensional $\mathbb{F}_{p_i}$-vector space $V_i$. By \cref{lemma:invariant_space_of_finite_vector_space_with_p-group_action_is_nontrivial}, $V_i^{\alpha_i(G)}\neq 0$. Since the action is induced by conjugation, we have
    \[ V_i^{\alpha_i(G)}= \{v \in V_i \mid \alpha_i(g)(v)=gvg^{-1}=v \text{ for all } g\in G \} = Z(G/N_{i-1},V_i)\neq 0, \]
    which is a normal subgroup of $G/N_{i-1}$. By the minimality of $V_i$, we obtain $V_i^{\alpha_i(G)}=V_i$. This means $\alpha_i(G)=1$. 

    \textbf{Step 7:}
    % N: G-nilpotent and G: nilpotent
    Since $\alpha_i(G)=1$, we have $[G,N_i] \subseteq N_{i-1}$ for every $1\leq i \leq m$. The chosen $G$-chief series of $N$
    \[ 1=N_0 \leq N_1 \leq \cdots \leq N_m=N \]
    is in fact a $G$-central series, and hence $N$ is $G$-nilpotent. Finally, since $G/N$ is nilpotent, it follows that $G$ is nilpotent.
\end{proof}

\begin{thm}[Nilpotency ascent theorem]
\label[thm]{theorem: finite operator reduced ascent}
    Let $f\colon P\twoheadrightarrow Q$ be an operator-reduced quandle homomorphism such that $\relInn(f)$ is a finite group. If $Q$ is nilpotent, then both $f$ and $P$ are nilpotent. 
    More precisely, if $Q$ is $r$-nilpotent and $\relInn(f)$ has an $\Inn(P)$-chief series of length $m$, then $f$ is $m$-nilpotent and $P$ is $(m+r)$-nilpotent.
\end{thm}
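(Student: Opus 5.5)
The plan is to reduce directly to \cref{theorem:nilpotency ascent theorem for groups}. Put $G\coloneqq\Inn(P)$, $N\coloneqq\relInn(f)$, $S\coloneqq\{s_x\mid x\in P\}$ and $F\coloneqq\Fit(N)$. By \cref{lemma: elementary properties of relative symmetry groups}, $N$ is a normal subgroup of $G$, and it is finite by hypothesis. By definition, $S$ generates $G$. Since $G/N=\Inn(P)/\Ker(f_*)\cong\Inn(Q)$ via the surjection $f_*$, the assumption that $Q$ is $r$-nilpotent says exactly that $G/N$ is nilpotent of class at most $r$.

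The remaining hypotheses of the group-theoretic theorem are checked one by one. First, $F=\Fit(N)$ is a finite nilpotent group because $N$ is finite. It is characteristic in $N$, hence normal in $G$, and of course $F\subseteq N$. Second, the inclusion $[G,N]\subseteq F$ is \cref{proposition: fitting control for operator reduced homomorphisms} applied with $M=N$, which is legitimate since $N$ is finite, normal in $G$, and contained in $\relInn(f)$. Third, the quadratic condition $[[\varphi,s_x],s_x]=1$ for all $\varphi\in N$ and $x\in P$ is precisely the characterization of operator-reducedness in \cref{proposition: commutator interpretation of reducedness}\eqref{item: op reduced}.

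With all hypotheses in place, \cref{theorem:nilpotency ascent theorem for groups} gives that $N$ is $G$-nilpotent of class at most $m$ and that $G$ is nilpotent of class at most $m+r$. In quandle terms, $\relInnLCS{m}{f}=1$, so $f$ is $m$-nilpotent, and $\Inn(P)$ is nilpotent of class at most $m+r$, so $P$ is $(m+r)$-nilpotent.

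There is one point of care. In the group theorem, $m$ denotes the length of the particular chief series obtained by refining the upper central series of $F$, whereas the statement here uses the length of an arbitrary $\Inn(P)$-chief series of $\relInn(f)$. This is resolved by Jordan--H\"older for $G$-chief series of the finite group $N$: any two such series have the same length, so the two values of $m$ coincide. Apart from this bookkeeping, all the substantive work is already contained in the cited results, so I expect no genuine obstacle.
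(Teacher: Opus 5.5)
Your proposal is correct and is exactly the paper's proof: it applies \cref{theorem:nilpotency ascent theorem for groups} with $G=\Inn(P)$, $N=\relInn(f)$, $F=\Fit(N)$. It uses \cref{proposition: fitting control for operator reduced homomorphisms} for $[G,N]\subseteq F$ and \cref{proposition: commutator interpretation of reducedness} for the quadratic condition. Your Jordan--H\"older remark is a valid addition that the paper leaves implicit: the proof of the group-theoretic theorem only bounds the class by the length of one particular chief series (the refinement of the upper central series of $F$). Since $\Inn(P)$-chief series of $\relInn(f)$ are exactly composition series of $\relInn(f)$ as a group with operators $\Inn(P)$, they all have the same length.
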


\begin{proof}
This follows immediately from \cref{proposition: commutator interpretation of reducedness,proposition: fitting control for operator reduced homomorphisms} and \cref{theorem:nilpotency ascent theorem for groups}, by taking $G=\Inn(P)$, $N=\relInn(f)$, and $F=\Fit(N)$.
\end{proof}

\begin{cor}
\label[cor]{corollary: consequences of finite ascent}
    Suppose that $f$ is operator-reduced and that $\relInn(f)$ is finite. Then $P$ is nilpotent if and only if $Q$ is nilpotent. Under these equivalent conditions, $f$ is nilpotent. 
\end{cor}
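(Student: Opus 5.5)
The corollary has two directions, and one of them is just \cref{theorem: finite operator reduced ascent}. If $Q$ is nilpotent, that theorem applies directly to the operator-reduced homomorphism $f$ with finite $\relInn(f)$. It gives that both $f$ and $P$ are nilpotent. This also settles the final sentence whenever $Q$ is nilpotent.

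For the converse, suppose $P$ is nilpotent, i.e.\ $\Inn(P)$ is a nilpotent group. Since $f$ is surjective, \cref{proposition: functoriality of Inn for surjections} gives a surjection $f_*\colon\Inn(P)\twoheadrightarrow\Inn(Q)$. A quotient of a nilpotent group is nilpotent, so $\Inn(Q)$ is nilpotent and $Q$ is nilpotent. Note that $P\neq\emptyset$ need not be assumed separately: nilpotency of a quandle here means nilpotency of $\Inn$, and $Q$ is the surjective image of $P$. Thus $P$ nilpotent $\Leftrightarrow$ $Q$ nilpotent.

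Under these equivalent conditions, $Q$ is nilpotent, so \cref{theorem: finite operator reduced ascent} again yields that $f$ is nilpotent. One could also see this directly when $P$ is nilpotent. Indeed, $\relLCS_i(\Inn(P),\relInn(f))\subseteq\grpLCS_i(\Inn(P))$ by induction from $\relInn(f)\subseteq\Inn(P)$, so $f$ is nilpotent of class at most $\ncl(P)$. This direct route avoids the finiteness hypothesis altogether.

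There is no real obstacle here. The only point needing care is to check that the hypotheses of \cref{theorem: finite operator reduced ascent} (operator-reduced and finite $\relInn(f)$) are exactly those of the corollary. They are.
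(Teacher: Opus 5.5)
Your proof is correct and follows the paper's argument exactly: the direction ``$Q$ nilpotent $\Rightarrow$ $P$ and $f$ nilpotent'' is \cref{theorem: finite operator reduced ascent}, and the converse holds because $\Inn(Q)$ is a quotient of $\Inn(P)$. Your side remark is also right. Nilpotency of $P$ alone already makes $f$ nilpotent of class at most $\ncl(P)$, via $\relLCS_i(\Inn(P),\relInn(f))\subseteq\grpLCS_i(\Inn(P))$ and without the finiteness hypothesis; this is \cref{corollary: relative nilpotency implies ordinary nilpotency}(2), which the paper invokes shortly afterwards.
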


\begin{proof}
If $Q$ is nilpotent, the assertion follows from \cref{theorem: finite operator reduced ascent}. Conversely, if $P$ is nilpotent, then $Q$ is nilpotent because $\Inn(Q)$ is a quotient of $\Inn(P)$. The last assertion follows from \cref{theorem: finite operator reduced ascent}.
\end{proof}

\begin{rem}
Note that if $f$ is reduced, or if $f$ is connected and every fiber of $f$ is trivial, then $f$ is operator-reduced. Thus, the same equivalence holds in these cases.
\end{rem}

The absolute theorem of Darn\'e is recovered without any additional work: if $P$ is finitely generated and reduced, then $P$ is nilpotent by \cite[Proposition~7.13]{darne_2026_nilpotent_quandles}, and hence every surjective homomorphism out of $P$ is nilpotent by \cref{corollary: relative nilpotency implies ordinary nilpotency}. The following finite example shows why no direct relative analogue can hold without a condition on the base.

\begin{eg}%[Necessity of the nilpotent-base hypothesis]
\label[eg]{example: finite connected reduced homomorphism not nilpotent}
Let $T_{n}$ be the subquandle of $\Conj(S_n)$ consisting of
the transpositions. The quotient
$S_4\twoheadrightarrow S_4/V_4\cong S_3$ induces a surjective
homomorphism
$f\colon T_{4}\twoheadrightarrow T_{3}$.
Since the transpositions generate $S_n$ and $Z(S_n)=1$ for $n=3,4$,
we have
$\Inn(T_{4})\cong S_4$ and
$\Inn(T_{3})\cong S_3$, and
$\relInn(f)=V_4$. The fibers are
$\{(12),(34)\}$, $\{(13),(24)\}$, and $\{(14),(23)\}$.
The group $V_4$ acts transitively on each fiber, so $f$ is connected.
Elements in the same fiber are equal or disjoint transpositions, and
therefore commute. Hence every fiber is trivial and $f$ is reduced.

On the other hand, $[S_4,V_4]=V_4$, so
$\relLCS_{i}(S_4,V_4)=V_4$ for every $i\geq0$. Thus $f$ is not
nilpotent.
\end{eg}

This example is finite, connected, has trivial fibers, and has an abelian
relative inner automorphism group. In particular, finite generation of
the domain, finite generation of the kernel congruence, or finite
normal generation of the relative inner automorphism group cannot
replace the nilpotency hypothesis on the target in
\cref{theorem: finite operator reduced ascent}. 
%The obstruction is visible in \cref{proposition: finite relative nilpotency via chief factors}: $V_4$ is a minimal nontrivial normal subgroup of $S_4$, hence a $S_4$-chief factor, but it is not central. Equivalently, the induced action of $S_4$ on $V_4$ is nontrivial.

The second result is the following commutative action criterion. Unlike the preceding ascent theorem, the following quantitative criterion does not require the relative inner automorphism group to be finite.

\begin{lem}
\label[lem]{lemma: abelian images of inner automorphism groups}
Let $P$ be a quandle satisfying either of the following conditions:
\begin{enumerate}[label={(\alph*)}]
    \item\label{item:Inn(P)_is_finitely_generated} $\Inn(P)$ is generated by finitely many symmetry maps. 
    \item\label{item:P_has_finite_connected_component} $P$ has finitely many connected components.
\end{enumerate}
Let $\beta\colon\Inn(P)\to H$ be a group homomorphism with abelian image.
Then, the image $\beta(\Inn(P))$ is generated by finitely many elements of the form $\beta(s_{x_1}),\ldots,\beta(s_{x_r})$.
\end{lem}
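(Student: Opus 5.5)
Case \ref{item:Inn(P)_is_finitely_generated} is immediate. If $\Inn(P)=\langle s_{x_1},\ldots,s_{x_r}\rangle$, then the image $\beta(\Inn(P))$ is generated by $\beta(s_{x_1}),\ldots,\beta(s_{x_r})$. Commutativity of the image is not even needed here.

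For case \ref{item:P_has_finite_connected_component}, the key observation is that $\beta\circ s\colon P\to H$, $x\mapsto\beta(s_x)$, is constant on connected components whenever $\beta(\Inn(P))$ is abelian. For $\varphi\in\Inn(P)$ we have $s_{\varphi(x)}=\varphi s_x\varphi^{-1}$. Applying $\beta$ gives
\[ \beta(s_{\varphi(x)})=\beta(\varphi)\beta(s_x)\beta(\varphi)^{-1}=\beta(s_x), \]
since $\beta(\varphi)$ and $\beta(s_x)$ both lie in the abelian subgroup $\beta(\Inn(P))$. Hence $\beta(s_x)$ depends only on the $\Inn(P)$-orbit of $x$, that is, on the class of $x$ in $\pi_0(P)$.

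Now pick representatives $x_1,\ldots,x_r$ of the finitely many connected components. Since $\Inn(P)$ is generated by the $s_x$ ($x\in P$), its image is generated by the $\beta(s_x)$. Each such $\beta(s_x)$ coincides with some $\beta(s_{x_i})$, so the image is generated by $\beta(s_{x_1}),\ldots,\beta(s_{x_r})$, as claimed.

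There is no real obstacle here. The only point requiring care is to use conjugation invariance inside the abelian image rather than in $H$ itself, since $H$ need not be abelian. If $P$ is empty, both statements hold trivially with $r=0$.
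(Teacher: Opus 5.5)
Your proposal is correct and follows the paper's proof essentially verbatim: case (a) is immediate from the generators, and in case (b) you pick representatives $x_1,\ldots,x_r$ of the connected components and use $s_{\varphi(x)}=\varphi s_x\varphi^{-1}$ together with commutativity of $\beta(\Inn(P))$ to get $\beta(s_x)=\beta(s_{x_i})$. Your remarks that the conjugation must be carried out inside the abelian image rather than in $H$, and that the empty quandle is covered by $r=0$, are correct but not needed beyond what the paper already does.
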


\begin{proof}
For the first case \ref{item:Inn(P)_is_finitely_generated}, if $\Inn(P)$ is generated by $s_{x_1},\dots,s_{x_r}$ for $x_i\in P$, then $\beta(\Inn(P))$ is generated by $\beta(s_{x_1}),\ldots,\beta(s_{x_r})$.

For the second case \ref{item:P_has_finite_connected_component}, suppose that $P$ has $r<\infty$ connected components and choose $x_1,\ldots,x_r\in P$ from each connected component.
For every $x\in P$, there are some $i\in \{1,\dots,r\}$ and $\varphi\in\Inn(P)$ such that $x=\varphi(x_i)$. Since $s_x=\varphi s_{x_i}\varphi^{-1}$ and $\beta(\Inn(P))$ is abelian, we have
\[ \beta(s_x) = \beta(\varphi)\beta(s_{x_i})\beta(\varphi)^{-1} = \beta(s_{x_i}). \]
Since $\Inn(P)$ is generated by all symmetry maps, its image under $\beta$ is generated by $\beta(s_{x_1}),\ldots,\beta(s_{x_r})$.
\end{proof}

\begin{thm}[Commutative action criterion]
\label[thm]{theorem: commutative action criterion}
    Let $f\colon P\twoheadrightarrow Q$ be an operator-reduced quandle homomorphism. 
    Assume that there is a finite sequence of $\Inn(P)$-normal subgroups $1=N_0\subseteq N_1\subseteq\cdots\subseteq N_d=\relInn(f)$ such that each $A_j=N_j/N_{j-1}$ is abelian and the image of the conjugation action $\alpha_j\colon \Inn(P)\to \Aut(A_j)$ is abelian and generated by finitely many elements of the form $\alpha_j(s_{x_1}),\dots,\alpha_j(s_{x_r})$ for $x_i\in P$. Then, $f$ is nilpotent of class at most $d(r+1)$.
\end{thm}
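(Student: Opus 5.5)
The plan is to show, for each $j$, that $[\Inn(P),N_j]$ reaches $N_{j-1}$ after at most $r+1$ commutator steps. More precisely, the target is
\[
\relLCS_{r+1}(\Inn(P),N_j)\subseteq N_{j-1},
\qquad\text{equivalently}\qquad
I_j^{\,r+1}A_j=0 ,
\]
where $I_j$ is the augmentation ideal of $\mathbb{Z}[\alpha_j(\Inn(P))]$ acting on $A_j$. Chaining these over $j=d,\dots,1$ then gives the conclusion. Concretely, by induction on $k$ one gets $\relLCS_{k(r+1)}(\Inn(P),\relInn(f))\subseteq N_{d-k}$: if $\relLCS_{k(r+1)}\subseteq N_{d-k}$, then applying $r+1$ further commutators with $\Inn(P)$ lands in $\relLCS_{r+1}(\Inn(P),N_{d-k})\subseteq N_{d-k-1}$. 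Taking $k=d$ yields $\relLCS_{d(r+1)}(f)=1$, i.e.\ $f$ is $d(r+1)$-nilpotent.

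\textbf{Step 1: the exponent-two relations.} Fix $j$ and write the abelian section $A_j$ additively. Set $u_i\coloneqq\alpha_j(s_{x_i})-\id$ for $i=1,\dots,r$. These endomorphisms commute, because the image of $\alpha_j$ is abelian and $\End(A_j)$ multiplication restricted to this commutative subring is commutative. Since $N_j\subseteq\relInn(f)$, the operator-reduced condition, via \cref{proposition: commutator interpretation of reducedness}\eqref{item: op reduced}, gives $[[\varphi,s_x],s_x]=1$ for every $\varphi\in N_j$ and every $x\in P$. Then \cref{lemma: quadratic action on abelian sections}, applied with $M=N_{j-1}$, $M'=N_j$ and $s=s_x$, yields $(\alpha_j(s_x)-\id)^2=0$ for \emph{every} $x\in P$, and in particular $u_i^2=0$.

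\textbf{Step 2: the augmentation ideal is generated by the $u_i$.} The image $\alpha_j(\Inn(P))$ is the abelian group generated by $g_i\coloneqq\alpha_j(s_{x_i})=1+u_i$. Each $g_i$ is invertible with $g_i^{-1}=1-u_i$, since $u_i^2=0$. Hence every element $g=\prod g_i^{e_i}$ satisfies $g-1\in(u_1,\dots,u_r)$, the ideal of the commutative subring $\mathbb{Z}[u_1,\dots,u_r]\subseteq\End(A_j)$.

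\textbf{Step 3: nilpotency of the ideal.} In the commutative ring $\mathbb{Z}[u_1,\dots,u_r]$ with $u_i^2=0$, any product of $r+1$ of the $u_i$ repeats some index, so it vanishes. Therefore $(u_1,\dots,u_r)^{r+1}=0$. Now one checks $\relLCS_k(\Inn(P),N_j)N_{j-1}/N_{j-1}\subseteq I^kA_j$ by induction on $k$, using that $[g,n]$ maps to $(1-\alpha_j(g))\bar n$ in $A_j$, and that $I^kA_j$ is a submodule, so subgroups generated by such elements stay inside it. With $k=r+1$ this gives $\relLCS_{r+1}(\Inn(P),N_j)\subseteq N_{j-1}$, which is exactly the per-step target above.

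\textbf{Main obstacle.} The delicate point is Step 2, namely that the \emph{whole} image, not just the chosen generators, acts through the ideal generated by the $u_i$. This needs the finite generation by symmetry maps together with commutativity. Note that Step 1 gives $(\alpha_j(s_x)-\id)^2=0$ for all $x$, but products of non-commuting unipotents need not be unipotent of bounded degree. Commutativity of the image is precisely what makes the binomial counting argument in Step 3 work.
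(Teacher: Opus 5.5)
Your proposal is correct and follows essentially the same argument as the paper: you get $u_i^2=0$ from operator-reducedness and \cref{lemma: quadratic action on abelian sections}, you get $\alpha_j(g)-\id\in(u_1,\dots,u_r)$ because $\alpha_i^{-1}=\id-u_i$, you get $(u_1,\dots,u_r)^{r+1}=0$ from commutativity, and so $r+1$ commutators with $\Inn(P)$ push $N_j$ into $N_{j-1}$, which you then chain along the series. The differences are cosmetic: you write out the induction $\relLCS_k(\Inn(P),N_j)N_{j-1}/N_{j-1}\subseteq I^kA_j$ (the paper compresses this into one sentence), and you conclude directly with the relative lower central series rather than via $\relInn(f)\subseteq Z_{d(r+1)}(\Inn(P))$.
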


\begin{proof}
Let $G=\Inn(P)$ and $N=\relInn(f)$.
For a fixed $j$, put $\alpha_i=\alpha_j(s_{x_i})$ for $1\leq i\leq r$. From the assumption, $\alpha_j(G)$ is an abelian group generated by $\alpha_1,\ldots,\alpha_r$.

Let $R_j$ be the subring of $\End(A_j)$ generated by $\alpha_1^{\pm1},\ldots,\alpha_r^{\pm1}$, which is in fact a commutative ring by the assumption. Put $u_i=\alpha_i-\id$ and $I_j=(u_1,\ldots,u_r)\subseteq R_j$. By \cref{proposition: commutator interpretation of reducedness,lemma: quadratic action on abelian sections}, we have $u_i^2=0$. Since the $u_i$ commute, every product of $r+1$ of them contains a repeated factor. Hence we obtain $I_j^{r+1}=0$.

Since $\alpha_i^{-1}=\id-u_i$, $\alpha_i^{\pm1} \equiv\id \pmod{I_j}$. Since $\alpha_j(G)$ is generated by $\alpha_1,\ldots,\alpha_r$, every image $\alpha_j(g)$ is congruent to $\id$ modulo $I_j$, and hence the action difference $\id - \alpha_j(g)$ belongs to $I_j$.
Because taking the commutator of an element in $A_j$ with an element in $G$ is equivalent to multiplying it by an element in $I_j$ of the form $\id - \alpha_j(g)$, the $(r+1)$-fold commutator of $A_j$ with $G$ is trivial. In the language of subgroups, this implies
\[  [N_j, \underbrace{G, \dots, G}_{r+1}] \subseteq N_{j-1}.\]
Applying this relation successively along the series $1 = N_0 \subseteq N_1 \subseteq \cdots \subseteq N_d = N$ of length $d$, we deduce that $N$ is annihilated by $d(r+1)$ commutators with $G$. This immediately yields $N\subseteq Z_{d(r+1)}(G)$, and hence $\relInn(f)$ is nilpotent of class at most $d(r+1)$ by \cref{theorem: characterizations of nilpotent groups}. 
\end{proof}

\begin{cor}
\label[cor]{corollary: easy consequences of comm action criterion}
    Let $f\colon P\twoheadrightarrow Q$ be an operator-reduced quandle homomorphism. 
    Suppose that $P$ satisfies either of the two conditions in \cref{lemma: abelian images of inner automorphism groups}, and let $r$ denote the number of symmetry maps needed there to generate the corresponding abelian image.
    If $\relInn(f)$ is nilpotent of class at most $d$ and $\Inn(Q)$ is abelian, then $f$ is $d(r+1)$-nilpotent.
    %In particular, if both $\relInn(f)$ and $\Inn(Q)$ are abelian, then $f$ is $(r+1)$-nilpotent.
\end{cor}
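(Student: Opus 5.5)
We will deduce the corollary from the commutative action criterion (\cref{theorem: commutative action criterion}). To do so, we take as the required series the $\Inn(P)$-relative lower central series of $N\coloneqq\relInn(f)$, read in reverse. Indeed, interpreting "$\relInn(f)$ is nilpotent of class at most $d$" in the relative sense used throughout the paper, we have $\relInnLCS{d}{f}=1$. We then set $N_j\coloneqq\relInnLCS{d-j}{f}$ for $0\leq j\leq d$. This gives a chain $1=N_0\subseteq N_1\subseteq\cdots\subseteq N_d=\relInn(f)$ of $\Inn(P)$-normal subgroups of length $d$.

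\textbf{Abelian factors.} We first check that each $A_j=N_j/N_{j-1}$ is abelian. This holds because $[N_j,N_j]\subseteq[\Inn(P),N_j]=N_{j-1}$.

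\textbf{Abelian action.} Next, we check that the image of the conjugation action $\alpha_j\colon\Inn(P)\to\Aut(A_j)$ is abelian. The key observation is that $N=\relInn(f)$ acts trivially on $A_j$, since $[N,N_j]\subseteq[\Inn(P),N_j]=N_{j-1}$. Therefore $\alpha_j$ factors through $\Inn(P)/\relInn(f)\cong\Inn(Q)$, which is abelian by hypothesis. Hence $\alpha_j(\Inn(P))$ is abelian.

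\textbf{Finite generation.} Now \cref{lemma: abelian images of inner automorphism groups}, applied to $\beta=\alpha_j$, shows that $\alpha_j(\Inn(P))$ is generated by finitely many elements $\alpha_j(s_{x_1}),\dots,\alpha_j(s_{x_r})$. To make $r$ uniform in $j$, we choose the points once, independently of $j$. In case (b) we take one point in each connected component. In case (a) we take the finitely many generating symmetry maps of $\Inn(P)$. Alternatively, we may note that all the $\alpha_j$ factor through the single abelian group $\Inn(Q)$, so it suffices that $\Inn(Q)$ itself is generated by $s_{f(x_1)},\dots,s_{f(x_r)}$. This is exactly what the lemma yields when applied to $\beta=f_*$, and this is the meaning of $r$ in the statement. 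With the same points for every $j$, the hypotheses of \cref{theorem: commutative action criterion} hold with this common $r$, and we conclude that $f$ is nilpotent of class at most $d(r+1)$.

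The only point requiring care is this bookkeeping. We must make sure that a single generating set serves every layer $j$, which the factorization through $\Inn(Q)$ guarantees. We must also use that $\relInn(f)$ acts trivially on each factor $A_j$; this is what makes the action abelian, and it is why the hypothesis on $\Inn(Q)$ rather than on $\Inn(P)$ suffices.
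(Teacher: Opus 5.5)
Your argument reads ``$\relInn(f)$ is nilpotent of class at most $d$'' as $\relInnLCS{d}{f}=1$, and that is not the intended meaning. The paper always writes the relative notion as ``$\Inn(P)$-nilpotent'', while ``nilpotent'' for a group means nilpotent as an abstract group (see \cref{corollary: relative nilpotency implies ordinary nilpotency}). So the hypothesis here is $\grpLCS_d(\relInn(f))=1$.

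Under your reading the statement is empty. By \cref{definition: nilpotent quandle homomorphism}, $\relInnLCS{d}{f}=1$ says precisely that $f$ is $d$-nilpotent, so $d(r+1)$-nilpotency follows at once. You would need neither operator-reducedness, nor the hypothesis on $\Inn(Q)$, nor \cref{theorem: commutative action criterion}.

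The real content of the corollary is the upgrade from abstract nilpotency to $\Inn(P)$-relative nilpotency, which fails in general. See \cref{remark: ordinary nilpotency does not imply relative nilpotency}, and \cref{example: finite connected reduced homomorphism not nilpotent}, where $\relInn(f)=V_4$ is abelian but $f$ is not nilpotent. Under the actual hypothesis you do not know that your chain $N_j=\relInnLCS{d-j}{f}$ reaches $1$ after $d$ steps; that is essentially what has to be proved.

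The repair is small, and the rest of your argument survives unchanged. Use a central series of the abstract group $N=\relInn(f)$ instead: its upper central series $Z_j(N)$, as the paper does, or $N_j=\grpLCS_{d-j}(N)$. These terms are characteristic in $N$, hence normal in $\Inn(P)$, and their successive quotients are abelian. Moreover, $[N,N_j]\subseteq N_{j-1}$ holds by the definition of the absolute central series. That is exactly what you need for $N$ to act trivially on $A_j$, so that $\alpha_j$ factors through $\Inn(P)/N\cong\Inn(Q)$ and has abelian image.

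Your remark that one choice of $x_1,\dots,x_r$ serves every layer (one point per component, or the given generating symmetries) is correct. It makes explicit a point the paper leaves implicit.
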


\begin{proof}
    Put $G=\Inn(P)$ and $N=\relInn(f)$. Consider the upper central series of \(N\):
    \[ 1=Z_0(N)\leq Z_1(N)\leq\cdots\leq Z_d(N)=N. \]
    Since each \(Z_j(N)\) is characteristic in \(N\) and \(N\trianglelefteq G\), this is a \(G\)-normal series. Set $ A_j=Z_j(N)/Z_{j-1}(N)$ for $1\leq j\leq d$. Each \(A_j\) is abelian. Moreover, by the definition of the upper central series, $[N,Z_j(N)]\subseteq Z_{j-1}(N)$, which means that \(N\) acts trivially on \(A_j\) by conjugation. It follows that the action of \(G\) on \(A_j\) factors through $G/N\cong \Inn(Q)$.
    Since \(\Inn(Q)\) is abelian, the image of \(G\) in \(\Aut(A_j)\) is abelian. Thus all the hypotheses of the commutative action criterion are satisfied. Consequently, \cref{theorem: commutative action criterion} shows that $f$ is $d(r+1)$-nilpotent.
\end{proof}

If $P$ is generated by $r$ elements as a quandle, then \cref{lemma: quandle generators generate inner group} shows that $\Inn(P)$ is generated by the corresponding $r$ symmetry maps.

Finally, we study the cyclic case.

\begin{prop}
\label[prop]{proposition: operator reduced homomorphisms with cyclic relative inner group}
Let $f\colon P\twoheadrightarrow Q$ be a surjective quandle homomorphism such that $\relInn(f)$ is cyclic. Then, $f$ is operator-reduced if and
only if it is $2$-nilpotent.
Under these equivalent conditions, $f$ is $1$-nilpotent if $\relInn(f)$ is infinite or has square-free order.
\end{prop}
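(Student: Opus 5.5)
Write $G\coloneqq\Inn(P)$ and $N\coloneqq\relInn(f)=\langle n\rangle$, which is cyclic and normal in $G$. The plan is to reduce everything to arithmetic in the endomorphism ring of $N$. That ring is $\ZZ$ if $N$ is infinite and $\ZZ/m\ZZ$ if $|N|=m$, and in either case it is commutative.

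\textbf{Setup.} Since $N$ is cyclic, conjugation by $g\in G$ acts on $N$ as $n^a\mapsto n^{a k_g}$ for a unit $k_g$ of that ring. The map $g\mapsto k_g$ is a homomorphism into the abelian unit group. Put $u_g\coloneqq k_g-1$. A direct computation with the convention $[x,y]=xyx^{-1}y^{-1}$ gives
\[
[n^a,s_x]=n^{-a u_x},\qquad [[n^a,s_x],s_x]=n^{a u_x^2},
\]
where $u_x\coloneqq u_{s_x}$. So by \cref{proposition: commutator interpretation of reducedness}\eqref{item: op reduced}, $f$ is operator-reduced if and only if $u_x^2=0$ in the ring for every $x\in P$.

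\textbf{Operator-reduced implies $2$-nilpotent.} Let $I$ be the ideal generated by the $u_x$. The map $g\mapsto k_g$ is multiplicative, the $s_x$ generate $G$, and $k^{-1}\equiv 1$ modulo any ideal containing $k-1$. Hence $u_g\in I$ for all $g\in G$, exactly as in the proof of \cref{theorem: commutative action criterion}. Since $[g,n^a]=n^{a u_g}$, it follows that $\relInnLCS{1}{f}\subseteq n^{I}$, and then $\relInnLCS{2}{f}\subseteq n^{I^2}$. It therefore suffices to show $u_xu_y=0$ whenever $u_x^2=u_y^2=0$.
\begin{itemize}
\item In the infinite case $\ZZ$ has no nonzero nilpotents, so every $u_x=0$.
\item In $\ZZ/m\ZZ$, fix $p^e\,\|\,m$. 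Then $p^e\mid u_x^2$ forces $v_p(u_x)\geq\lceil e/2\rceil$. Hence $v_p(u_xu_y)\geq 2\lceil e/2\rceil\geq e$, so $m\mid u_xu_y$.
\end{itemize}
Thus $I^2=0$ and $f$ is $2$-nilpotent. The converse, $2$-nilpotent implies operator-reduced, is already part of \cref{proposition: commutator interpretation of reducedness}.

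\textbf{The $1$-nilpotent refinement.}
\begin{itemize}
\item If $N$ is infinite, the argument above already gives all $u_x=0$.
\item If $m$ is square-free, then $\ZZ/m\ZZ$ is reduced, so $u_x^2=0$ forces $u_x=0$.
\end{itemize}
In both cases every $k_x=1$, so each $s_x$ centralizes $N$. Since the $s_x$ generate $G$, we get $[G,N]=1$, which is $1$-nilpotency.

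The only delicate step is the $p$-adic valuation estimate showing that square-zero elements of $\ZZ/m\ZZ$ multiply to zero. The rest is bookkeeping with the commutator convention.
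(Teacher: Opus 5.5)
Your proof is correct and follows essentially the same route as the paper: you reduce operator-reducedness to $u_x^2=0$ in $\End(N)$, which is $\ZZ$ or $\ZZ/m\ZZ$; you settle the infinite and square-free cases by the absence of nonzero nilpotents; and in general you use the $p$-adic bound $v_p(u_x)\geq\lceil e/2\rceil$. The only difference is packaging: the paper turns this bound into an explicit $\Inn(P)$-central series $1\subseteq kN\subseteq N$ with $k=\prod_{p} p^{\lceil e_p/2\rceil}$, whereas you show that the ideal generated by the $u_x$ squares to zero.
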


\begin{proof}
The implication from $2$-nilpotency to operator-reducedness follows from \cref{proposition: commutator interpretation of reducedness}. Suppose that $f$ is operator-reduced.
Let $G=\Inn(P)$.
Put $N=\relInn(f)$, write it additively, and for $x\in P$, let $\alpha_x\in\Aut(N)$ be induced by conjugation by $s_x$. Put $u_x\coloneqq\alpha_x-\id_N$. Since $[[a,s_x],s_x]=1$ for every $a\in N$, \cref{lemma: quadratic action on abelian sections} gives $u_x^2=0$.

If $N$ is infinite cyclic, then $\End(N)\cong\ZZ$, so $u_x=0$ for every $x\in P$. Since the symmetry maps generate $G$, we have $[G,N]=1$, and hence $f$ is $1$-nilpotent.

Suppose that $N\cong\ZZ/n\ZZ$ for some $n\geq1$, and write $\textstyle n=\prod_{p\mid n}p^{e_p}$. Under $\End(N)\cong\ZZ/n\ZZ$, each $u_x$ is multiplication by an integer $a_x$. The equality $u_x^2=0$ means that $n\mid a_x^2$, and hence $p^{\lceil e_p/2\rceil}\mid a_x$ for every prime $p\mid n$. Put $\textstyle k\coloneqq\prod_{p\mid n}p^{\lceil e_p/2\rceil}$. Then, $k\mid a_x$ and $n\mid k^2$, so $u_x(N)\subseteq kN$ and $u_x(kN)=0$ for every $x\in P$. Thus, every symmetry map acts trivially on both $N/kN$ and $kN$. Since the symmetry maps generate $G$, it follows that $[G,N]\subseteq kN$ and $[G,kN]=1$. Therefore,
\[
    \relInnLCS{2}{f} = [G,[G,N]] \subseteq [G,kN] = 1.
\]
Hence, $f$ is $2$-nilpotent. If $n$ is square-free, then $k=n$ and $kN=0$, so $[G,N]=1$. Thus, $f$ is $1$-nilpotent in this case as well.
\end{proof}

\appendix
\crefalias{section}{appendix}
\crefalias{subsection}{appendix}

\renewcommand{\thesection}{\Alph{section}}
\renewcommand{\thesubsection}{\thesection.\arabic{subsection}}
\renewcommand{\theHsection}{\Alph{section}}
\renewcommand{\theHsubsection}{\theHsection.\arabic{subsection}}

\section{Relative nilpotency for groups}
\label{appendix: relative nilpotency for groups}

In this appendix, we collect the relative theory of nilpotency for a normal subgroup $N$ of a group $G$, or equivalently for the quotient epimorphism $G\twoheadrightarrow G/N$.
Although the underlying notions and basic characterizations are classical, we include the proofs in order to fix our conventions and to record statements needed in this paper.
For general background on nilpotent groups, central series, and chief series, see \cite{book_robinson_1982_a_course_in_the_theory_of_groups}.

\begin{HisNote}
The material in this section consists mostly of standard facts that appear in various forms throughout the literature. 
To the best of the authors' knowledge, the earliest explicit formulation we have found of nilpotency for a pair of groups in terms of a relative central series is due to Hilton \cite{hilton_1976_nilpotency_in_group_theory_and_topology,hilton_1982_nilpotency_in_group_theory_and_topology}.

Earlier relative central-series constructions occur in Baer's study of upper central series and hypercentral normal subgroups (\cite{baer_1953_the_hypercenter_of_a_group}). In a different, action-theoretic setting, Kaloujnine~\cite{kaloujnine_1953_uber_gewisse_beziehungen_zwischen_einer_gruppe_und_ihren_automorphismen} introduced $A$-central series and $A$-nilpotency for groups equipped with a group $A$ of automorphisms; in this line of work, Mohamed~\cite{mohamed_1963_on_series_of_subgroups_related_to_groups_of_automorphisms} treated both lower and upper $A$-central series, and Bechtell~\cite{bechtell_1966_frattini_subgroups_and_phi_central_groups} applied Kaloujnine's construction to normal subgroups.
If $N\trianglelefteq G$ and $A$ is the image of the conjugation homomorphism $G\to\Aut(N)$, these operator-central series and the notion of nilpotency specialize to the relative ones considered below.

A further generalization replaces the commutator word by a system of group words; the resulting generalized central series are called \emph{marginal series}. Hulse--Lennox~\cite{hulse_lennox_1976_marginal_series_in_groups} and Fung~\cite{fung_1977_some_theorems_of_hall_type} developed this theory in the 1970s.
For pairs of groups, corresponding relative marginal-series and $\mathscr{V}_G$-nilpotency theories were developed in \cite{moghaddam_salemkar_2002_some_properties_of_ultra_hall_and_schur_pairs,rismanchian_araskhan_2013_some_properties_on_the_baerinvariant_of_a_pair_of_groups_and_mathscrvgmarginal_series}, where $\mathscr{V}$ is a variety of groups. The ordinary relative nilpotency considered here is recovered when $\mathscr{V}=\Ab$. See also \cite{rismanchian_2021_group_extensions_and_marginal_series_of_pair_of_groups} for a later account of marginal-series theory for pairs of groups.

A later group-theoretic treatment specifically devoted to nilpotency of pairs of groups is given in \cite{hassanzadeh_pourmirzaei_kayvanfar_2013_on_the_nilpotency_of_a_pair_of_groups}.
\end{HisNote}

\subsection{Relative central series and nilpotent homomorphisms}
\label{subsection in appendix: relative nilpotency for groups}

\begin{dfn}
\label[dfn]{definition: relative central series}
Let $G$ be a group and $N\trianglelefteq G$ be a normal subgroup. A \emph{$G$-relative central series} for $N$ is a finite sequence of normal subgroups of $G$,
\[ 1=N_{0}\subseteq N_{1}\subseteq\cdots\subseteq N_{c}=N, \]
such that $[G, N_{i+1}]\subseteq N_{i}$ for every $0\leq i<c$, or equivalently, $ N_{i+1}/ N_{i}\subseteq Z(G/ N_{i})$.

The normal subgroup $N$ is called \emph{nilpotent relative to $G$}, or \emph{$G$-nilpotent}, if it admits such a series. We define its \emph{$G$-nilpotency class}, denoted by $\ncl_G(N)$, as the least possible length. For notational convenience, we set $\ncl_G(N)= \infty$ when \(N\) is not \(G\)-nilpotent. When $N=G$, we simply say that $G$ is \emph{nilpotent} and write $\ncl(G)\coloneqq\ncl_G(G)$.
\end{dfn}

\begin{dfn}
\label[dfn]{definition: relative lower and upper central series}
Let $G$ be a group and $N\trianglelefteq G$ be a normal subgroup.
\begin{enumerate}
    \item The \emph{$G$-relative lower central series} of $N$ is defined inductively by $\relLCS_{0}(G,N)\coloneqq N$ and $\relLCS_{i+1}(G,N)\coloneqq[G,\relLCS_{i}(G,N)]$:
    \[ \cdots \subseteq \relLCS_{i}(G,N) \subseteq \cdots\subseteq \relLCS_{1}(G,N)=[G,N] \subseteq \relLCS_{0}(G,N) =N. \]
    \item The \emph{$G$-relative upper central series} of $N$ is defined inductively by $Z_0(G, N)\coloneqq1$ and $Z_{i+1}(G, N)\coloneqq \{x\in N\mid[G,x]\subseteq Z_i(G, N)\}$:
    \[ Z_{0}(G, N)=1 \subseteq Z_{1}(G, N) =N\cap Z(G) \subseteq \cdots\subseteq Z_{i}(G, N) \subseteq \cdots. \]
\end{enumerate}
Note that we use an indexing convention that starts in degree $0$ in the definition of the lower central series.
When $N=G$, we write $\grpLCS_{i}(G)\coloneqq\relLCS_{i}(G,G)$ and $Z_i(G)\coloneqq Z_{i}(G,G)$; these are the usual lower and upper central series of $G$.
\end{dfn}

In fact, if the $G$-relative lower and upper central series have finite length, then they are $G$-relative central series.

\begin{lem}
\label[lem]{proposition: relative upper central series as intersection}
For every $i\geq0$, we have $Z_i(G, N)=N\cap Z_i(G)$.
\end{lem}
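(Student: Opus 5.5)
We argue by induction on $i$. The case $i=0$ is immediate, since $Z_0(G,N)=1=N\cap Z_0(G)$. For $i=1$ the definition gives $Z_1(G,N)=\{x\in N\mid [G,x]=1\}=N\cap Z(G)$, which is consistent with what follows.

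Assume inductively that $Z_i(G,N)=N\cap Z_i(G)$. By definition, an element $x\in N$ lies in $Z_{i+1}(G,N)$ if and only if $[g,x]\in Z_i(G,N)$ for every $g\in G$. By the induction hypothesis this means $[g,x]\in N\cap Z_i(G)$ for all $g\in G$.

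The key observation is that $[g,x]\in N$ holds automatically whenever $x\in N$, because $N$ is normal in $G$: indeed $[g,x]=(gxg^{-1})x^{-1}\in N$. Hence, for $x\in N$, the condition reduces to $[g,x]\in Z_i(G)$ for all $g\in G$. By the usual definition of the upper central series of $G$ (the case $N=G$), this says exactly that $x\in Z_{i+1}(G)$. Therefore
\[ Z_{i+1}(G,N)=N\cap Z_{i+1}(G), \]
which completes the induction.

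There is no real obstacle in this argument. The only point needing care is to make sure the absolute upper central series is read with the same recursive definition, namely $Z_{i+1}(G)=\{x\in G\mid [G,x]\subseteq Z_i(G)\}$. This is how the paper defines it, as the special case $N=G$ of the relative definition.
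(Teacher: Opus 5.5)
Your proof is correct and follows the paper's argument: induction on $i$, with the key observation that $[G,x]\subseteq N$ for $x\in N$ by normality. This reduces membership in $Z_{i+1}(G,N)$ to the condition $[G,x]\subseteq Z_i(G)$.
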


\begin{proof}
For $x\in N$, we have $[G,x]\subseteq N$ by normality. Therefore, if the equality holds in degree $i$, $x\in Z_{i+1}(G, N)$ if and only if $[G, x]\subseteq Z_i(G)$. This completes the proof.
\end{proof}

\begin{thm}
\label[thm]{theorem: characterizations of nilpotent groups}
Let $G$ be a group and $N\trianglelefteq G$ be a normal subgroup, and let $c\geq0$. The following conditions are equivalent.
\begin{enumerate}[label={(\roman*)}]
    \item\label{item:G-nilpotent_of_class_c} The subgroup $N$ is $G$-nilpotent of class at most $c$.
    \item\label{item:relLCS_stops_at_c} $\relLCS_{c}(G,N)=1$.
    \item\label{item:relUCS_stops_at_c} $Z_{c}(G, N)=N$.
    \item\label{item:N_in_contained_in_absUCS_at_c} $N\subseteq Z_c(G)$.
\end{enumerate}
In particular, $\ncl_G(N)=\min\{i\geq 0\mid \Gamma_i(G,N)=1\}=\min\{i\geq 0\mid Z_i(G,N)=N\}$.
\end{thm}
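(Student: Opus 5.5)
I will prove the four conditions equivalent by a cycle of implications. Each step is a short induction using only the definitions, \cref{definition: relative central series} and \cref{definition: relative lower and upper central series}, together with \cref{proposition: relative upper central series as intersection}. The plan is \ref{item:G-nilpotent_of_class_c} $\Rightarrow$ \ref{item:relLCS_stops_at_c} $\Rightarrow$ \ref{item:G-nilpotent_of_class_c}, then \ref{item:G-nilpotent_of_class_c} $\Rightarrow$ \ref{item:relUCS_stops_at_c} $\Rightarrow$ \ref{item:G-nilpotent_of_class_c}, and finally \ref{item:relUCS_stops_at_c} $\Leftrightarrow$ \ref{item:N_in_contained_in_absUCS_at_c}.

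For \ref{item:G-nilpotent_of_class_c} $\Rightarrow$ \ref{item:relLCS_stops_at_c}, take a $G$-relative central series $1=N_0\subseteq\cdots\subseteq N_c=N$; padding with $N$ at the top if the given series is shorter, we may assume it has length exactly $c$. By induction on $i$ I show $\relLCS_i(G,N)\subseteq N_{c-i}$. The case $i=0$ is equality. For the inductive step, $\relLCS_{i+1}(G,N)=[G,\relLCS_i(G,N)]\subseteq[G,N_{c-i}]\subseteq N_{c-i-1}$. Setting $i=c$ gives $\relLCS_c(G,N)=1$.

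For \ref{item:relLCS_stops_at_c} $\Rightarrow$ \ref{item:G-nilpotent_of_class_c}, note that each $\relLCS_i(G,N)$ is normal in $G$, being a commutator of normal subgroups. Hence the reversed chain $N_i\coloneqq\relLCS_{c-i}(G,N)$ is a $G$-relative central series of length $c$, since $[G,N_{i+1}]=N_i$.

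For \ref{item:G-nilpotent_of_class_c} $\Rightarrow$ \ref{item:relUCS_stops_at_c}, I show $N_i\subseteq Z_i(G,N)$ by induction. If $x\in N_{i+1}$, then $x\in N$ and $[G,x]\subseteq N_i\subseteq Z_i(G,N)$, so $x\in Z_{i+1}(G,N)$. Thus $N=N_c\subseteq Z_c(G,N)\subseteq N$. For \ref{item:relUCS_stops_at_c} $\Rightarrow$ \ref{item:G-nilpotent_of_class_c}, I check that the $Z_i(G,N)$ form a $G$-relative central series of length $c$. Each is normal in $G$ because, by \cref{proposition: relative upper central series as intersection}, it equals $N\cap Z_i(G)$, an intersection of normal subgroups. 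The condition $[G,Z_{i+1}(G,N)]\subseteq Z_i(G,N)$ holds by definition, since the subgroup $[G,Z_{i+1}]$ is generated by commutators $[g,x]$ lying in the subgroup $Z_i(G,N)$.

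The equivalence \ref{item:relUCS_stops_at_c} $\Leftrightarrow$ \ref{item:N_in_contained_in_absUCS_at_c} is immediate from $Z_c(G,N)=N\cap Z_c(G)$. The final formula for $\ncl_G(N)$ then follows by taking the minimum over $c$.

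I expect no real obstacle. The only point needing care is confirming that the subsets $Z_i(G,N)$ are actually normal subgroups, and \cref{proposition: relative upper central series as intersection} together with the standard normality of $Z_i(G)$ handles this.
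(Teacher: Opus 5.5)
Your proof is correct and follows the paper's argument. You use the same two inductions, $\relLCS_i(G,N)\subseteq N_{c-i}$ and $N_i\subseteq Z_i(G,N)$, for (i)$\Rightarrow$(ii),(iii); you obtain the converses by noting that the relative lower and upper central series are themselves $G$-relative central series; and you get (iii)$\Leftrightarrow$(iv) from $Z_c(G,N)=N\cap Z_c(G)$. The only difference is that you spell out details the paper calls immediate, namely padding the series to length $c$ and the normality of the $Z_i(G,N)$ via $Z_i(G,N)=N\cap Z_i(G)$.
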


\begin{proof}
\ref{item:relLCS_stops_at_c} or \ref{item:relUCS_stops_at_c} $\Rightarrow$ \ref{item:G-nilpotent_of_class_c}: immediate.
\ref{item:G-nilpotent_of_class_c} $\Rightarrow$ \ref{item:relLCS_stops_at_c}, \ref{item:relUCS_stops_at_c}: let $1=N_0\subseteq\cdots\subseteq N_c=N$ be any $G$-relative central series for $N$. Then, observe that induction gives $\relLCS_i(G,N)\subseteq N_{c-i}$ and $N_i\subseteq Z_i(G,N)$ for $0\leq i\leq c$. Thus we have $\relLCS_c(G,N)=1$ and $Z_c(G,N)=N$.
\ref{item:relUCS_stops_at_c} $\Leftrightarrow$ \ref{item:N_in_contained_in_absUCS_at_c}: follows from \cref{proposition: relative upper central series as intersection}.
\end{proof}

Taking $N=G$ in the preceding theorem gives the usual characterizations of nilpotent groups.

\begin{rem}
\label[rem]{remark: lower and upper operator central lengths}
For a general group with operators, the lower and upper operator-central series need not determine the same length, even when the relevant nilpotency conditions hold; see \cite[Example 4]{mohamed_1963_on_series_of_subgroups_related_to_groups_of_automorphisms}. The coincidence in \cref{theorem: characterizations of nilpotent groups} is a special feature of the conjugation action of an ambient group on a normal subgroup, as reflected in the identity $Z_i(G,N)=N\cap Z_i(G)$.
\end{rem}

\begin{eg}
    A normal subgroup $N$ of a group $G$ is nilpotent with $\ncl_G(N)=0$ if and only if $N$ is trivial; it is nilpotent with $\ncl_G(N)\leq1$ if and only if $N$ is centralized by $G$, which means $[G,N]=1$.
\end{eg}

\begin{dfn}
\label[dfn]{definition: nilpotent group homomorphism}
A surjective group homomorphism $\phi\colon G\twoheadrightarrow H$ is called \emph{nilpotent} if the kernel $\Ker(\phi)$ is $G$-nilpotent.
Its nilpotency class is denoted by $\ncl(\phi)\coloneqq \ncl_G(\Ker(\phi))$.
\end{dfn}

The characterization by iterated central extensions is now a direct relative analogue of the usual central-series description. Recall that a surjective group homomorphism $\pi\colon G\twoheadrightarrow H$ is a \emph{central extension} if $\Ker(\pi)\subseteq Z(G)$.

\begin{lem}
\label[lem]{lemma:central_series_correspond_to_central_extensions}
    Let $N\subseteq M$ be normal subgroups of $G$, and let $\pi\colon G/N \twoheadrightarrow G/M$ be the induced surjection. Then, the inclusion $N\subseteq M$ is $G$-central (i.e.\ $[G,M]\subseteq N$) if and only if $\pi$ is a central extension.
\end{lem}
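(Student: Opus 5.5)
The plan is to prove the lemma directly from the correspondence theorem, using the identification $\Ker(\pi)=M/N$. First we record that the surjection $\pi\colon G/N\twoheadrightarrow G/M$, $gN\mapsto gM$, is well defined because $N\subseteq M$. Its kernel is $\{gN\mid g\in M\}=M/N$. By definition, $\pi$ is a central extension exactly when $M/N\subseteq Z(G/N)$. So the whole statement reduces to showing
\[
M/N\subseteq Z(G/N)\iff [G,M]\subseteq N .
\]

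Next we unwind centrality elementwise. Fix $m\in M$. The coset $mN$ is central in $G/N$ if and only if, for every $g\in G$, we have $(gN)(mN)(gN)^{-1}(mN)^{-1}=N$. Equivalently, $[g,m]=gmg^{-1}m^{-1}\in N$, since the quotient map $G\to G/N$ is a homomorphism and so sends commutators to commutators. Hence $M/N\subseteq Z(G/N)$ holds if and only if every generator $[g,m]$ of $[G,M]$, with $g\in G$ and $m\in M$, lies in $N$.

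Finally, $N$ is a subgroup, so containing all these generators is equivalent to containing the subgroup $[G,M]$ they generate. This gives both directions simultaneously.

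There is no genuine obstacle here. The only points requiring care are that centrality is tested against all $g\in G$, since $G\to G/N$ is surjective, and the passage between generators and the generated subgroup $[G,M]$. Normality of $M$ and $N$ is used only to ensure that the quotients $G/N$ and $G/M$ are groups.
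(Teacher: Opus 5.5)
Your proposal is correct and follows essentially the same route as the paper: identify $\Ker(\pi)=M/N$, so that $\pi$ being a central extension is equivalent to $M/N\subseteq Z(G/N)$, which is equivalent to $[G,M]\subseteq N$. The only difference is that you spell out the elementwise commutator check and the passage from generators to the subgroup $[G,M]$, which the paper leaves implicit.
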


\begin{proof}
    The condition $[G,M]\subseteq N$ is equivalent to $M/N\subseteq Z(G/N)$. Since $M/N=\Ker(\pi)$, this is equivalent to saying that $\pi$ is a central extension.
\end{proof}

\begin{thm}
\label[thm]{theorem: characterization of relative nilpotency}
Let $\phi\colon G\twoheadrightarrow H$ be a surjective group homomorphism and let $c\geq 0$. Then, $\phi$ is nilpotent of class at most $c$ if and only if $\phi$ admits a factorization as a composite of $c$ central extensions.
\end{thm}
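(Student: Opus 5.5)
The plan is to prove both directions by relating $G$-relative central series of $\Ker(\phi)$ to towers of quotients of $G$, using \cref{lemma:central_series_correspond_to_central_extensions} at each step. Throughout, put $N\coloneqq\Ker(\phi)$ and identify $H$ with $G/N$.

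For the forward direction, assume $\phi$ is nilpotent of class at most $c$. By \cref{theorem: characterizations of nilpotent groups}, $\relLCS_c(G,N)=1$, so we have the chain of normal subgroups of $G$
\[
1=\relLCS_c(G,N)\subseteq\relLCS_{c-1}(G,N)\subseteq\cdots\subseteq\relLCS_0(G,N)=N.
\]
Each consecutive inclusion is $G$-central, since $[G,\relLCS_{i}(G,N)]=\relLCS_{i+1}(G,N)$. Hence \cref{lemma:central_series_correspond_to_central_extensions} shows that every induced surjection $G/\relLCS_{i+1}(G,N)\twoheadrightarrow G/\relLCS_i(G,N)$ is a central extension. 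Composing these $c$ maps gives $G\twoheadrightarrow G/N\cong H$, which is $\phi$. For $c=0$, the empty composite means that $\phi$ is an isomorphism, which is consistent with $N=1$.

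For the converse, suppose $\phi=\pi_1\circ\cdots\circ\pi_c$, where $\pi_i\colon G_i\twoheadrightarrow G_{i-1}$ are central extensions with $G_c=G$ and $G_0=H$. Let $q_i\colon G\twoheadrightarrow G_i$ denote the composite surjection and set $N_i\coloneqq\Ker(q_{c-i})$. Then
\[
1=N_0\subseteq N_1\subseteq\cdots\subseteq N_c=N
\]
is a chain of normal subgroups of $G$, and $G/N_i\cong G_{c-i}$ compatibly with the maps. Under these isomorphisms, $G/N_i\twoheadrightarrow G/N_{i+1}$ corresponds to $\pi_{c-i}$, which is central. Therefore \cref{lemma:central_series_correspond_to_central_extensions} gives $[G,N_{i+1}]\subseteq N_i$. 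So this chain is a $G$-relative central series of length $c$, and $\ncl_G(N)\le c$ by \cref{definition: relative central series}.

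I expect no real obstacle; the only care needed is the bookkeeping. One must check that the isomorphisms $G/N_i\cong G_{c-i}$ commute with the induced maps, which follows from surjectivity and the factorization of the $q_i$. One must also handle the degenerate case $c=0$ consistently.
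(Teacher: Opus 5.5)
Your proposal is correct and follows essentially the same route as the paper. Both directions pass between $G$-relative central series of $\Ker(\phi)$ and towers of quotients $G/N_i$, applying \cref{lemma:central_series_correspond_to_central_extensions} at each step, and the converse takes kernels of the successive composites exactly as you do. The only cosmetic difference is in the forward direction: you use the relative lower central series, justified by \cref{theorem: characterizations of nilpotent groups}, whereas the paper takes an arbitrary $G$-relative central series of length at most $c$ and pads it to length exactly $c$ by repeating terms.
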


\begin{proof}
($\Rightarrow$): Put $N\coloneqq\Ker(\phi)$. Let $1= N_{0}\subseteq\cdots\subseteq N_{c}=N$ be a $G$-relative central
series for $N$ (we may assume the length is $c$, by repeating terms if necessary). Then this sequence induces a factorization of $\phi$
\[\begin{tikzcd}
G=G/N_{0} \arrow[two heads]{r} &
G/N_{1} \arrow[two heads]{r} &
\cdots \arrow[two heads]{r} &
G/N_{c}\cong H
\end{tikzcd}\]
into $c$ central extensions by \cref{lemma:central_series_correspond_to_central_extensions}.
($\Leftarrow$): Conversely, from a factorization of $\phi$ into $c$ central extensions, take the kernels in $G$ of the successive composites. These kernels form a $G$-relative central series of length $c$, by \cref{lemma:central_series_correspond_to_central_extensions} again.
\end{proof}

From this theorem, we can also define a nilpotent homomorphism as a finite sequence of central extensions.
We will freely use this perspective in what follows, without further mention.

In particular, the lower and upper relative central series give two canonical factorizations of a nilpotent homomorphism. Let $\phi\colon G\twoheadrightarrow H$ be nilpotent of class at most $c$ and put $N\coloneqq\Ker(\phi)$. Then both sequences
\[\begin{tikzcd}
    G\cong G/\relLCS_{c}(G,N) \arrow[two heads]{r} &
    G/\relLCS_{c-1}(G,N) \arrow[two heads]{r} &
    \cdots \arrow[two heads]{r} &
    G/\relLCS_{0}(G,N)= H
\end{tikzcd}\]
and
\[\begin{tikzcd}
    G=G/Z_0(G, N) \arrow[two heads]{r} &
    G/ Z_{1}(G, N) \arrow[two heads]{r} &
    \cdots \arrow[two heads]{r} &
    G/Z_{c}(G, N)\cong H
\end{tikzcd}\]
are factorizations into central extensions.

\begin{eg}
    A surjective group homomorphism $\phi\colon G \twoheadrightarrow H$ is nilpotent with $\ncl(\phi)=0$ if and only if $\Ker(\phi)$ is trivial, and hence $\phi$ is an isomorphism; it is nilpotent with $\ncl(\phi)\leq1$ if and only if $\Ker(\phi)$ is contained in $Z(G)$, i.e., $\phi$ is a central extension.
\end{eg}

%For the terminal homomorphism $G\twoheadrightarrow1$, the preceding theorem says that $G$ is nilpotent of class at most $c$ if and only if $G\twoheadrightarrow1$ is a composite of $c$ central extensions. Thus all of the ordinary statements from the absolute theory are contained in the relative formulation.

The relative lower central series lies between the ordinary lower central series of the normal subgroup and that of the ambient group.

\begin{prop}
\label[prop]{proposition: comparison of lower central series}
    For a normal subgroup $N\subseteq G$, the inclusions $\Gamma_i(N) \subseteq \Gamma_i(G, N)\subseteq \Gamma_i(G)$ hold for all $i \geq 0$.
\end{prop}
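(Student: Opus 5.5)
The plan is to prove both inclusions by induction on $i$. The base case $i=0$ is immediate, since $\Gamma_0(N)=N=\Gamma_0(G,N)$ and $N\subseteq G=\Gamma_0(G)$. The only tool needed is the monotonicity of the commutator subgroup: if $A\subseteq A'$ and $B\subseteq B'$, then $[A,B]\subseteq[A',B']$. This holds because every generator $[a,b]$ of $[A,B]$ is also a generator of $[A',B']$.

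For the inductive step, assume $\Gamma_i(N)\subseteq\Gamma_i(G,N)\subseteq\Gamma_i(G)$. For the first inclusion, write
\[
\Gamma_{i+1}(N)=[N,\Gamma_i(N)]\subseteq[G,\Gamma_i(G,N)]=\Gamma_{i+1}(G,N),
\]
using $N\subseteq G$ together with the induction hypothesis. For the second inclusion, write
\[
\Gamma_{i+1}(G,N)=[G,\Gamma_i(G,N)]\subseteq[G,\Gamma_i(G)]=\Gamma_{i+1}(G),
\]
again using the induction hypothesis.

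There is no real obstacle here. The only point to check is that the definitions line up under the paper's conventions. The groups $\Gamma_i(N)$ are the lower central series of $N$ computed inside $N$, namely $\Gamma_{i+1}(N)=[N,\Gamma_i(N)]$. Under the commutator convention $[x,y]=xyx^{-1}y^{-1}$, the order of the arguments in $[A,B]$ does not matter, since $[A,B]=[B,A]$. So the recursive definitions match term by term, and monotonicity applies directly.
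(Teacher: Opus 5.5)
Your proof is correct and follows the same argument as the paper: induction on $i$ with the trivial base case, and in the inductive step the chains $[N,\Gamma_i(N)]\subseteq[G,\Gamma_i(G,N)]$ and $[G,\Gamma_i(G,N)]\subseteq[G,\Gamma_i(G)]$, obtained from monotonicity of commutator subgroups. Your remark that $\Gamma_{i+1}(N)=[N,\Gamma_i(N)]$ is just $\Gamma_{i+1}(N,N)$ confirms that the recursions line up term by term.
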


\begin{proof}
When $i=0$, we have $\Gamma_0(N)=\Gamma_0(G,N)=N \subseteq G=\Gamma_0(G)$. If the inclusions hold for $i\geq 0$, then it follows that $\Gamma_{i+1}(N) = [N,\Gamma_i(N)] \subseteq [G,\Gamma_i(N)] \subseteq [G,\Gamma_i(G,N)]=\Gamma_{i+1}(G,N)$ and $\Gamma_{i+1}(G,N)=[G,\Gamma_i(G,N)] \subseteq [G,\Gamma_i(G)]=\Gamma_{i+1}(G)$.
\end{proof}

\begin{cor}
\label[cor]{corollary: relative nilpotency implies ordinary nilpotency}
    \begin{enumerate}
        \item If a normal subgroup $N\subseteq G$ is $G$-nilpotent of class at most $c$, then $N$ is nilpotent of class at most $c$.
        \item If $G$ is nilpotent of class at most $c$, then every normal subgroup $N$ of $G$ is $G$-nilpotent of class at most $c$.
    \end{enumerate}
\end{cor}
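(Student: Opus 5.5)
Both parts follow from \cref{proposition: comparison of lower central series}, by checking the two ends of the chain $\Gamma_i(N)\subseteq\Gamma_i(G,N)\subseteq\Gamma_i(G)$ at $i=c$.

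For the first part, suppose that $N$ is $G$-nilpotent of class at most $c$. By \cref{theorem: characterizations of nilpotent groups}, this means $\Gamma_c(G,N)=1$. The left inclusion of the chain then gives $\Gamma_c(N)\subseteq\Gamma_c(G,N)=1$. Hence the ordinary lower central series of $N$ vanishes at stage $c$. Applying \cref{theorem: characterizations of nilpotent groups} once more, now with the pair $(N,N)$, shows that $N$ is nilpotent of class at most $c$.

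For the second part, suppose that $G$ is nilpotent of class at most $c$, that is, $\Gamma_c(G)=1$. The right inclusion of the chain gives $\Gamma_c(G,N)\subseteq\Gamma_c(G)=1$ for every normal subgroup $N\trianglelefteq G$. By \cref{theorem: characterizations of nilpotent groups}, $N$ is therefore $G$-nilpotent of class at most $c$.

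The only point requiring any care is to use the same degree-zero indexing convention throughout, so that class at most $c$ means exactly that the $c$-th term vanishes. \cref{theorem: characterizations of nilpotent groups} guarantees this for both the relative and the absolute notions. There is no real obstacle here; the corollary is a direct reading of the two inclusions.
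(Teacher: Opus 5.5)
Your proposal is correct and is essentially the paper's own argument: the corollary is stated immediately after \cref{proposition: comparison of lower central series} and follows by evaluating the chain $\Gamma_c(N)\subseteq\Gamma_c(G,N)\subseteq\Gamma_c(G)$ at $i=c$. Translating ``class at most $c$'' into vanishing of the $c$-th term via \cref{theorem: characterizations of nilpotent groups}, applied to both $(G,N)$ and $(N,N)$, is exactly the bookkeeping needed.
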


\begin{rem}
\label[rem]{remark: ordinary nilpotency does not imply relative nilpotency}
    Absolute nilpotency does not imply relative nilpotency.
    For example, consider the symmetric group $S_3$ and the alternating group $A_3\trianglelefteq S_3$ of degree $3$. Then $A_3$ is abelian, hence nilpotent. But $[S_3,A_3]=A_3$, so $\relLCS_{i}(S_3,A_3)=A_3$ for every $i\geq0$.
\end{rem}

\begin{lem}
\label[lem]{lemma: lower central series with cyclic quotient}
Let $N\trianglelefteq G$ be a normal subgroup such that $G/N$ is cyclic. Then, we have $\grpLCS_i(G)=\relLCS_i(G,N)$ for every $i\geq 1$. In particular, $N$ is $G$-nilpotent if and only if $G$ is nilpotent.
\end{lem}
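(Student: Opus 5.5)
The plan is to compare the two series term by term. \Cref{proposition: comparison of lower central series} already gives the inclusion $\relLCS_i(G,N)\subseteq\grpLCS_i(G)$ for all $i$. So the content of the lemma is the reverse inclusion $\grpLCS_i(G)\subseteq\relLCS_i(G,N)$ for $i\geq1$. I will prove this by induction on $i$, after first settling the case $i=1$, which is the only place where cyclicity of $G/N$ is used.

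For $i=1$, choose $t\in G$ whose image generates $G/N$. Then every element of $G$ has the form $t^a n$ with $a\in\ZZ$ and $n\in N$. I want to show $[G,G]\subseteq[G,N]$. Since $[G,N]$ is normal in $G$, it is enough to work in the quotient $\bar G\coloneqq G/[G,N]$. There the image $\bar N$ of $N$ is central. Moreover, $\bar G$ is generated by $\bar t$ together with the central subgroup $\bar N$, so $\bar G$ is abelian. Hence $[G,G]\subseteq[G,N]$, that is, $\grpLCS_1(G)=\relLCS_1(G,N)$.

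For the inductive step, assume $\grpLCS_i(G)=\relLCS_i(G,N)$ for some $i\geq1$. Taking commutators with $G$ gives
\[\grpLCS_{i+1}(G)=[G,\grpLCS_i(G)]=[G,\relLCS_i(G,N)]=\relLCS_{i+1}(G,N).\]
This proves the equality for every $i\geq1$.

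For the final assertion, by \cref{theorem: characterizations of nilpotent groups}, $N$ is $G$-nilpotent if and only if $\relLCS_c(G,N)=1$ for some $c\geq0$, and $G$ is nilpotent if and only if $\grpLCS_c(G)=1$ for some $c$. If $\relLCS_c(G,N)=1$ for some $c\geq0$, then also $\relLCS_{\max(c,1)}(G,N)=1$, since the series is descending. The equality just proved then gives $\grpLCS_{\max(c,1)}(G)=1$. The converse direction is the same argument read backwards, or it follows directly from \cref{corollary: relative nilpotency implies ordinary nilpotency}. The only step needing any care is the case $i=1$, which is the elementary observation that a group generated by one element together with a central subgroup is abelian.
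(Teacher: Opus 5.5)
Your proof is correct and follows essentially the same route as the paper: you settle $i=1$ by noting that $G/[G,N]$ is generated by one element together with the central subgroup $N/[G,N]$, hence is abelian, and then you induct on $i$. Your explicit treatment of the final assertion, passing to index $\max(c,1)$ so that the degenerate case $N=1$ is covered, fills in a step the paper leaves implicit.
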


\begin{proof}
First, we prove the case with $i=1$. The inclusion $[G,N] \subseteq [G,G]$ is trivial. The subgroup $N/[G,N]$ is central in $G/[G,N]$, and the corresponding quotient is isomorphic to $G/N$, which is cyclic. Then we claim that $G/[G,N]$ is abelian; indeed, putting $K=N/[G,N]$ and $H=G/[G,N]$, there is some $h \in H$ such that $H/K=\langle \overline{h}\rangle$. For $x,y \in H$, we can write $x=h^m k_1$ and $y=h^nk_2$, where $k_1,k_2\in K$ and $m,n\in \ZZ$. Since $k_1,k_2$ are in the center of $H$, we have $xy=h^mk_1h^n k_2 = h^{m+n}k_1k_2 = h^nk_2 h^mk_1 = yx$, and hence $H=G/[G,N]$ is abelian.
From this, it follows that $[G,G]\subseteq[G,N]$, and therefore $\grpLCS_1(G)=\relLCS_1(G,N)$. The equality in every degree
$i\geq1$ follows by induction.
%If $H$ is $G$-nilpotent of positive class, the equality of the two series shows that $G$ is nilpotent. If $H=1$, then $G$ is cyclic. The converse follows from the same equality.
\end{proof}

We record the following proposition, which will be used repeatedly in the main text.

\begin{prop}
\label[prop]{proposition: functoriality of relative lower central series}
Let $\phi\colon G\twoheadrightarrow H$ be a surjective homomorphism of groups. Let $N\trianglelefteq G$ be a normal subgroup, and put $M\coloneqq\phi(N)$, which is also normal by the surjectivity of $\phi$. Then, $\phi(\relLCS_i(G, N)) = \relLCS_i(H, M)$ for every $i\ge0$.
%In particular, $\ncl_H(M)\le\ncl_G(N)$.
\end{prop}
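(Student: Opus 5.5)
We argue by induction on $i$. For $i=0$ the claim is $\phi(N)=M$, which is the definition of $M$. For the inductive step, assume $\phi(\relLCS_i(G,N))=\relLCS_i(H,M)$ and write $K\coloneqq\relLCS_i(G,N)$, which is normal in $G$. Then $\relLCS_{i+1}(G,N)=[G,K]$ and $\relLCS_{i+1}(H,M)=[H,\phi(K)]$. So it suffices to prove the general identity $\phi([A,B])=[\phi(A),\phi(B)]$ for subgroups $A,B\le G$, applied with $A=G$, $B=K$, and to use $\phi(G)=H$, which is the surjectivity of $\phi$.

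To prove this identity, note first that $\phi([a,b])=[\phi(a),\phi(b)]$ for all $a\in A$ and $b\in B$. Next, $[A,B]$ is generated by the commutators $[a,b]$, and the image of a subgroup generated by a set $X$ is the subgroup generated by $\phi(X)$. Hence $\phi([A,B])$ is generated by the elements $[\phi(a),\phi(b)]$. Every generator $[a',b']$ of $[\phi(A),\phi(B)]$, with $a'\in\phi(A)$ and $b'\in\phi(B)$, has this form after choosing preimages $a\in A$ and $b\in B$. Therefore $\phi([A,B])=[\phi(A),\phi(B)]$.

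Combining the two steps gives $\phi(\relLCS_{i+1}(G,N))=[\phi(G),\phi(K)]=[H,\relLCS_i(H,M)]=\relLCS_{i+1}(H,M)$, which completes the induction.

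There is no serious obstacle. The one point needing care is that surjectivity is used exactly to replace $\phi(G)$ by $H$ in the outer slot of the commutator. Without it, one would only get $[\phi(G),\cdot]\subseteq[H,\cdot]$, and the image would then be merely contained in $\relLCS_{i+1}(H,M)$ rather than equal to it.
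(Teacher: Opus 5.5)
Your proof is correct and follows the paper's own argument: induction on $i$, using the identity $\phi([A,B])=[\phi(A),\phi(B)]$ together with $\phi(G)=H$ from surjectivity. The only difference is that you prove this commutator-image identity explicitly, whereas the paper simply invokes it as a general fact.
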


\begin{proof}
    We use the fact that $\phi([A,B])=[\phi(A),\phi(B)]$ in general. For $i=0$, we have $\phi(\relLCS_0(G,N))=\phi(N)=M=\relLCS_0(H,M)$. Suppose $\phi(\relLCS_i(G, N)) = \relLCS_i(H, M)$ for $i\geq 0$. Then, $\phi(\relLCS_{i+1}(G, N)) = \phi([G,\relLCS_i(G,N)]) =[H,\phi(\relLCS_i(G,N))]= [H,\relLCS_i(H,M)]=\relLCS_{i+1}(H, M)$. Thus, induction completes the proof.
\end{proof}

The lower series also gives the universal approximation by homomorphisms of bounded nilpotency class.

\begin{thm}
\label[thm]{theorem: universal relative nilpotent truncation}
Let $\phi\colon G\twoheadrightarrow H$ be a surjective group homomorphism and let $c\ge0$. With the abbreviation $\relLCS_c= \relLCS_c(G,\Ker(\phi))$, the induced homomorphism $\phi_c\colon G/\relLCS_c \twoheadrightarrow H$ is nilpotent of class at most $c$.

Moreover, the factorization $\phi=\phi_c\circ \pi$, where $\pi\colon G \to G/\relLCS_c$ is the quotient map, is universal among factorizations of $\phi$ whose second homomorphism has class at most $c$. That is, if $\phi=\theta\circ \xi$ with $\xi\colon G\twoheadrightarrow K$ surjective and $\theta\colon K\twoheadrightarrow H$ nilpotent of class at most $c$, then $\xi$ factors uniquely through $G/\relLCS_{c}$.
\end{thm}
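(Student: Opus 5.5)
The plan has two parts. First show that $\phi_c$ has class at most $c$. Then use the functoriality of relative lower central series (\cref{proposition: functoriality of relative lower central series}) to obtain the universal property. This parallels the quandle-level statement \cref{proposition: universal property of nilpotent truncation}.

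Put $N\coloneqq\Ker(\phi)$. Since $\relLCS_c\subseteq N$ and $\relLCS_c$ is normal in $G$, the map $\phi$ factors through the quotient $\pi\colon G\twoheadrightarrow G/\relLCS_c$ as $\phi=\phi_c\circ\pi$. The kernel of $\phi_c$ is $N/\relLCS_c=\pi(N)$. Applying \cref{proposition: functoriality of relative lower central series} to the surjection $\pi$ and the normal subgroup $N$ gives
\[ \relLCS_c(G/\relLCS_c,\Ker(\phi_c))=\pi(\relLCS_c(G,N))=1. \]
By \cref{theorem: characterizations of nilpotent groups}, this says $\phi_c$ is nilpotent of class at most $c$.

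For the universal property, suppose $\phi=\theta\circ\xi$ with $\xi\colon G\twoheadrightarrow K$ surjective and $\ncl(\theta)\leq c$. Since $\xi$ is surjective, $\xi(N)=\xi(\xi^{-1}(\Ker\theta))=\Ker(\theta)$. Applying \cref{proposition: functoriality of relative lower central series} to $\xi$ then gives
\[ \xi(\relLCS_c(G,N))=\relLCS_c(K,\Ker\theta)=1. \]
The last equality is \cref{theorem: characterizations of nilpotent groups}. Hence $\relLCS_c\subseteq\Ker(\xi)$, so $\xi$ factors through $\pi$. The factorization is unique because $\pi$ is surjective.

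The only real point is the identity $\xi(N)=\Ker(\theta)$, which relies on the surjectivity of $\xi$. Everything else is a direct application of the functoriality proposition.
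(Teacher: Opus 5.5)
Your proof is correct and follows the paper's argument essentially verbatim. You apply \cref{proposition: functoriality of relative lower central series} first to the quotient map $\pi$ to get that $\phi_c$ has class at most $c$, and then to $\xi$ to get the universal property. You also make explicit the identity $\xi(\Ker(\phi))=\Ker(\theta)$ coming from the surjectivity of $\xi$, which the paper uses without comment.
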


\begin{proof}
One can easily check that $\pi(\Ker(\phi))=\Ker(\phi_c)$. By \cref{proposition: functoriality of relative lower central series}, we have $\relLCS_c(G/\relLCS_c,\Ker(\phi_c)) = \pi(\relLCS_c(G,\Ker(\phi)))=1$, which shows that $\phi_c$ is nilpotent of class at most $c$.
For the universal property, $\xi(\relLCS_{c}(G,\Ker(\phi))) =\relLCS_{c}(K,\Ker(\theta))=1$, so $\xi$ factors uniquely through the quotient.
\end{proof}

\begin{dfn}
\label[dfn]{definition: relative nilpotent truncation}
Let $c\geq 0$. For a surjective group homomorphism $\phi\colon G\twoheadrightarrow H$, we refer to the induced nilpotent homomorphism $\phi_c\colon G/\relLCS_{c}\twoheadrightarrow H$ as the \emph{$c$-nilpotent truncation} of $\phi$.
\end{dfn}

\subsection{Stable properties}
\label{subsection:stable_property_of_nilpotency}

\begin{prop}
\label[prop]{proposition:nilpotency_for_group_descend_along_surjection}
    Consider a commutative diagram of surjective homomorphisms
    \[\begin{tikzcd}
        G' \arrow[two heads]{r}{\theta} \arrow[two heads]{d}[swap]{\phi'} & G \arrow[two heads]{d}{\phi} \\
        H' \arrow[two heads]{r}[swap]{\kappa} & H\rlap{.}
    \end{tikzcd}\]
    with $\theta(\Ker(\phi'))=\Ker(\phi)$. If $\phi'$ is nilpotent of class at most $c$, then so is $\phi$.
\end{prop}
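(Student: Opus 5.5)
The plan is to use \cref{proposition: functoriality of relative lower central series}, applied to the surjection $\theta$. Put $N'\coloneqq\Ker(\phi')\trianglelefteq G'$ and $N\coloneqq\Ker(\phi)\trianglelefteq G$. By hypothesis $\theta(N')=N$. Since $\theta\colon G'\twoheadrightarrow G$ is surjective, the cited proposition (with $\theta$ in place of $\phi$ and $N'$ in place of $N$) gives
\[
\theta\bigl(\relLCS_i(G',N')\bigr)=\relLCS_i(G,\theta(N'))=\relLCS_i(G,N)\qquad\text{for every } i\geq 0 .
\]

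Next, since $\phi'$ is nilpotent of class at most $c$, \cref{theorem: characterizations of nilpotent groups} yields $\relLCS_c(G',N')=1$. Applying $\theta$ to this and using the identity above, we get $\relLCS_c(G,N)=\theta(1)=1$. The same theorem then says that $N$ is $G$-nilpotent of class at most $c$, which by \cref{definition: nilpotent group homomorphism} is exactly the statement that $\phi$ is nilpotent of class at most $c$.

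The argument does not use $\kappa$ or commutativity of the square beyond the kernel condition $\theta(\Ker(\phi'))=\Ker(\phi)$, so there is essentially no obstacle. The only point to check is that the functoriality proposition applies verbatim. It needs only surjectivity of $\theta$, which gives $\theta([A,B])=[\theta(A),\theta(B)]$ and $\theta(G')=G$, together with normality of $N'$ in $G'$, which holds because $N'$ is a kernel.
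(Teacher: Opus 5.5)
Your proposal is correct and is essentially the paper's proof. Both apply \cref{proposition: functoriality of relative lower central series} to the surjection $\theta$ with $\theta(\Ker(\phi'))=\Ker(\phi)$, so that $\relLCS_c(G,\Ker(\phi))=\theta(\relLCS_c(G',\Ker(\phi')))=1$.
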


\begin{proof}
    It follows from \cref{proposition: functoriality of relative lower central series} that $\relLCS_c(G,\Ker(\phi)) = \theta(\relLCS_c(G',\Ker(\phi')))=1$, provided that $\relLCS_c(G',\Ker(\phi'))=1$.
\end{proof}

\begin{prop}
\label[prop]{proposition: relative lower central series under pullback}
Consider a pullback square of a surjective homomorphism $\phi$:
\[\begin{tikzcd}
    G' \arrow{r}{\theta} \arrow[two heads]{d}[swap]{\phi'} & G \arrow[two heads]{d}{\phi} \\
    H' \arrow{r}[swap]{\kappa} & H\arrow[phantom]{lu}[very near end]{\lrcorner} \rlap{.}
\end{tikzcd}\]
If $\phi$ is nilpotent of class at most $c$, then so is $\phi'$.
Moreover, if $\kappa$ is surjective, then the converse also holds; in particular, $\ncl(\phi)=\ncl(\phi')$ holds.
\end{prop}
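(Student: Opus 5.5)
The plan is to identify $\Ker(\phi')$ with $\Ker(\phi)$ through the pullback, and then compare the two relative lower central series directly.

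Concretely, we realize $G'=G\times_H H'=\{(g,h')\mid \phi(g)=\kappa(h')\}$, with $\theta$ and $\phi'$ the projections. Then
\[ \Ker(\phi')=\{(g,1)\mid \phi(g)=1\}, \]
and $\theta$ restricts to an isomorphism $\Ker(\phi')\xrightarrow{\sim}\Ker(\phi)$. The key step is to show by induction that
\[ \theta\bigl(\relLCS_i(G',\Ker(\phi'))\bigr)\subseteq \relLCS_i(G,\Ker(\phi)) \]
for every $i\geq0$. Since $\theta$ is injective on $\Ker(\phi')\supseteq\relLCS_i(G',\Ker(\phi'))$, this suffices.

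\begin{itemize}
\item The case $i=0$ is the identification of the kernels above.
\item For the inductive step, $\theta([G',X])=[\theta(G'),\theta(X)]\subseteq[G,\theta(X)]$, so the inclusion propagates.
\end{itemize}

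Hence $\relLCS_c(G,\Ker(\phi))=1$ forces $\relLCS_c(G',\Ker(\phi'))=1$, which gives the first assertion.

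For the converse when $\kappa$ is surjective, note first that $\theta$ is surjective: given $g\in G$, choose $h'$ with $\kappa(h')=\phi(g)$, so that $(g,h')\in G'$ maps to $g$. Combined with $\theta(\Ker(\phi'))=\Ker(\phi)$, this lets us apply \cref{proposition:nilpotency_for_group_descend_along_surjection} (or directly \cref{proposition: functoriality of relative lower central series}), which shows that $\phi$ is nilpotent of class at most $c$ whenever $\phi'$ is. Together with the first part, this gives the equality $\ncl(\phi)=\ncl(\phi')$, with both sides possibly infinite.

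I do not expect a real obstacle. The only point needing care is that the inclusion in the inductive step, together with injectivity of $\theta$ on the kernel, is enough without surjectivity of $\kappa$: we never need $\theta(G')=G$ in the first part, only $\theta(G')\subseteq G$.
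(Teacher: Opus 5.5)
Your proposal is correct and follows essentially the same route as the paper. Both arguments identify $\Ker(\phi')$ with $\Ker(\phi)$ via $\theta$, push the relative lower central series forward along $\theta$, and obtain the converse from the descent proposition once $\theta$ is surjective. The only difference is that the paper computes the image exactly, as the relative series of $\Ker(\phi)$ inside $\Image(\theta)$ via functoriality, and then includes it into the one in $G$, while you prove the needed inclusion directly by induction.
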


\begin{proof}
Note that $\theta$ induces an isomorphism $\Ker(\phi')\cong \Ker(\phi)$ between the kernels.
By \cref{proposition: functoriality of relative lower central series}, it also induces $\relLCS_i(G',\Ker(\phi'))\cong \theta(\relLCS_i(G',\Ker(\phi'))) = \relLCS_i(\Image(\theta),\Ker(\phi))$ for every $i\geq 0$.
Since $\relLCS_i(\Image(\theta),\Ker(\phi)) \subseteq \relLCS_i(G,\Ker(\phi))$, if $\phi$ is nilpotent of class $c$, then so is $\phi'$.

The last assertion follows from \cref{proposition:nilpotency_for_group_descend_along_surjection}.
\end{proof}

\begin{lem}
\label[lem]{lemma: relative lower central series under subgroups}
Let $\phi\colon G\twoheadrightarrow H$ and $\phi'\colon G'\twoheadrightarrow H'$ be two surjective group homomorphisms. Suppose that we have inclusions $\iota_G\colon G'\hookrightarrow G$ and $\iota_H\colon H'\hookrightarrow H$ such that $\phi\circ \iota_G = \iota_H\circ \phi'$. If $\phi$ is nilpotent of class at most $c$, then so is $\phi'$.
\end{lem}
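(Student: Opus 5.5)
The plan is to show by induction on $i$ that $\relLCS_i(G',\Ker(\phi'))\subseteq\relLCS_i(G,\Ker(\phi))$ for every $i\geq 0$, where $G'$ is regarded as a subgroup of $G$ via $\iota_G$. The claim then follows at once. If $\relLCS_c(G,\Ker(\phi))=1$, the inclusion gives $\relLCS_c(G',\Ker(\phi'))=1$, so $\phi'$ is nilpotent of class at most $c$ by \cref{theorem: characterizations of nilpotent groups}.

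\textbf{Base case $i=0$.} We must check that $\iota_G(\Ker(\phi'))\subseteq\Ker(\phi)$. Let $x\in\Ker(\phi')$. Then
\[
\phi(\iota_G(x))=\iota_H(\phi'(x))=\iota_H(1)=1.
\]
This uses only the commutativity $\phi\circ\iota_G=\iota_H\circ\phi'$; injectivity of $\iota_H$ is not even needed here.

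\textbf{Inductive step.} Assume $\relLCS_i(G',\Ker(\phi'))\subseteq\relLCS_i(G,\Ker(\phi))$ inside $G$. Using $G'\subseteq G$ and the monotonicity of commutator subgroups in both arguments,
\[
\relLCS_{i+1}(G',\Ker(\phi'))=[G',\relLCS_i(G',\Ker(\phi'))]\subseteq[G,\relLCS_i(G,\Ker(\phi))]=\relLCS_{i+1}(G,\Ker(\phi)).
\]
Since $\iota_G$ is a homomorphism, it preserves commutators. Hence the image of $[A,B]$ under $\iota_G$ equals $[\iota_G(A),\iota_G(B)]$, which justifies working inside $G$.

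There is no real obstacle. The only point needing care is that $\Ker(\phi')$ is normal in $G'$ but need not be normal in $G$. This causes no difficulty, because the relative lower central series of $\Ker(\phi')$ is taken with respect to $G'$, and we only compare it termwise with the series of $\Ker(\phi)$, which is normal in $G$.
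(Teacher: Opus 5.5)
Your proof is correct and is essentially the paper's argument. The paper likewise regards $G'$ as a subgroup of $G$, notes $\Ker(\phi')\subseteq\Ker(\phi)$, and proves $\Gamma_i(G',\Ker(\phi'))\subseteq\Gamma_i(G,\Ker(\phi))$ by induction on $i$; your extra remarks (only the commutativity of the square is needed for the base case, and normality of $\Ker(\phi')$ in $G$ is irrelevant) are accurate and simply make explicit what the paper leaves implicit.
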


\begin{proof}
Put $N=\Ker(\phi)$ and $N'=\Ker(\phi')$. Then we have an inclusion $N' \subseteq N$. Regarding $G'\subseteq G$ as a subgroup via $\iota_G$, we obtain $\relLCS_{i}(G',N') \subseteq \relLCS_{i}(G,N)$ for every $i \geq 0$ inductively. Therefore, the assertion follows from it.
\end{proof}

\begin{prop}
\label[prop]{proposition: relative nilpotency under finite products}
Let $\phi\colon G\twoheadrightarrow H$ and $\phi'\colon G'\twoheadrightarrow H'$ be two surjective group homomorphisms. Then, the product $\phi\times \phi'$ is nilpotent of class at most $c$ if and only if $\phi$ and $\phi'$ are nilpotent of class at most $c$.
In particular, $\ncl(\phi\times \phi') = \max\{\ncl(\phi),\ncl(\phi')\}$ holds.
\end{prop}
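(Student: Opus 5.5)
The plan is to compute the relative lower central series of the product inside the product group and compare it with the individual series. Put $N=\Ker(\phi)$ and $N'=\Ker(\phi')$. Then $\Ker(\phi\times\phi')=N\times N'$, a normal subgroup of $G\times G'$. The key claim is
\[
\relLCS_i(G\times G',\,N\times N')=\relLCS_i(G,N)\times\relLCS_i(G',N')\qquad (i\geq 0).
\]
Once this holds, $\relLCS_c(G\times G',N\times N')=1$ if and only if both $\relLCS_c(G,N)=1$ and $\relLCS_c(G',N')=1$. By \cref{theorem: characterizations of nilpotent groups}, this is exactly the asserted equivalence of class at most $c$. The formula $\ncl(\phi\times\phi')=\max\{\ncl(\phi),\ncl(\phi')\}$ then follows by taking the least such $c$, with the convention $\infty$ on both sides when nilpotency fails.

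I would prove the claim by induction on $i$; the case $i=0$ is trivial. For the inductive step it suffices to show that, for normal subgroups $A\trianglelefteq G$ and $A'\trianglelefteq G'$,
\[
[G\times G',\,A\times A']=[G,A]\times[G',A'].
\]
The inclusion $\subseteq$ holds because commutators in a direct product are computed componentwise: $[(g,g'),(a,a')]=([g,a],[g',a'])$, and the right-hand side is a subgroup. For $\supseteq$, note that $[(g,1),(a,1)]=([g,a],1)$, so $[G,A]\times 1$ lies in the left side. Symmetrically $1\times[G',A']$ lies in it, and the two together generate $[G,A]\times[G',A']$.

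I expect no genuine obstacle. The only point needing care is this reverse inclusion, where one must use elements with a trivial coordinate rather than general pairs. The direction from $\phi\times\phi'$ to the factors could alternatively be obtained from \cref{proposition:nilpotency_for_group_descend_along_surjection} applied to the projections, but the componentwise formula handles both directions at once.
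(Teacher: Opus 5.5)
Your proposal is correct and follows the paper's proof: both use the componentwise identity $[A\times A',B\times B']=[A,B]\times[A',B']$ to get $\relLCS_i(G\times G',N\times N')=\relLCS_i(G,N)\times\relLCS_i(G',N')$ by induction, and then read off the equivalence from $\Ker(\phi\times\phi')=N\times N'$. Your argument for the reverse inclusion, using elements with a trivial coordinate, spells out a step the paper only states as a fact.
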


\begin{proof}
We use the fact that $[A\times A',B\times B'] = [A,B]\times [A',B']$ as subsets of $G\times G'$ for subgroups $A,B\subseteq G$ and $A',B'\subseteq G'$.
Put $N=\Ker(\phi)$ and $N'=\Ker(\phi')$.
Then, by induction, we obtain $\relLCS_i(G\times G',N\times N') \cong \relLCS_i(G,N)\times \relLCS_i(G',N')$ for every $i\geq 0$.
Since $\Ker(\phi\times\phi')=N\times N'$, the assertion easily follows from it.
\end{proof}

\begin{prop}
\label[prop]{proposition: composition of nilpotent group homomorphisms}
Let $\phi\colon G\twoheadrightarrow H$ and $\psi\colon H \twoheadrightarrow K$ be surjective group homomorphisms.
\begin{enumerate}
    \item\label{item:composition_of_nilpotent_hom} If $\phi$ is nilpotent of class at most $c$ and $\psi$ is nilpotent of class at most $d$, then $\psi\circ \phi$ is nilpotent of class at most $c+d$.
    \item\label{item:cancelation_of_nilpotent_hom} If $\psi\circ\phi$ is nilpotent of class at most $c$, then both $\phi$ and $\psi$ are so.
\end{enumerate}
Consequently, whenever all three homomorphisms are nilpotent,
\[ \max\{\ncl(\phi),\ncl(\psi)\} \leq \ncl(\psi\circ\phi) \leq \ncl(\phi)+\ncl(\psi).\]
\end{prop}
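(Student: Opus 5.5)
For \eqref{item:composition_of_nilpotent_hom}, the plan is to concatenate central-extension factorizations, using \cref{theorem: characterization of relative nilpotency}. Since $\phi$ has class at most $c$, that theorem factors $\phi$ as a composite of $c$ central extensions. Likewise, $\psi$ factors as a composite of $d$ central extensions. Composing these two chains writes $\psi\circ\phi$ as a composite of $c+d$ central extensions, and the same theorem then gives class at most $c+d$.

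A direct alternative avoids the factorization language. Put $N=\Ker(\phi)$ and $M=\Ker(\psi\circ\phi)=\phi^{-1}(\Ker\psi)$. By \cref{proposition: functoriality of relative lower central series},
\[ \phi(\relLCS_d(G,M))=\relLCS_d(H,\Ker\psi)=1, \]
so $\relLCS_d(G,M)\subseteq N$. Monotonicity of the relative series in the subgroup then gives $\relLCS_{c+d}(G,M)\subseteq\relLCS_c(G,N)=1$.

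For \eqref{item:cancelation_of_nilpotent_hom}, there are two cancellations to check.

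\begin{itemize}
\item For $\phi$: since $N\subseteq M$, induction gives $\relLCS_i(G,N)\subseteq\relLCS_i(G,M)$ for every $i$. Hence $\relLCS_c(G,N)=1$.
\item For $\psi$: we have $\phi(M)=\Ker(\psi)$, by surjectivity of $\phi$. So \cref{proposition: functoriality of relative lower central series} yields $\relLCS_c(H,\Ker\psi)=\phi(\relLCS_c(G,M))=1$. Equivalently, this is \cref{proposition:nilpotency_for_group_descend_along_surjection} applied with $\theta=\phi$.
\end{itemize}

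The displayed inequalities then follow by taking minimal classes, via the characterization $\ncl_G(N)=\min\{i\mid\relLCS_i(G,N)=1\}$ from \cref{theorem: characterizations of nilpotent groups}.

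No step is a real obstacle. The only points needing care are the identification $\phi(M)=\Ker\psi$, which uses surjectivity, and the monotonicity of $\relLCS_i(G,-)$ under inclusion of normal subgroups. Both are one-line inductions.
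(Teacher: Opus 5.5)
Your proposal is correct and matches the paper's proof. Part (1) is obtained by concatenating the central-extension factorizations from \cref{theorem: characterization of relative nilpotency}, and part (2) uses the inclusion $\Ker(\phi)\subseteq\Ker(\psi\circ\phi)$ for $\phi$ and the equality $\phi(\Ker(\psi\circ\phi))=\Ker(\psi)$ with \cref{proposition:nilpotency_for_group_descend_along_surjection} for $\psi$. Your direct alternative for (1) is also sound; its only unstated step is the reindexing identity $\relLCS_{c+d}(G,M)=\relLCS_c(G,\relLCS_d(G,M))$, which is immediate from the definition.
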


\begin{proof}
\eqref{item:composition_of_nilpotent_hom}: When $\phi$ (resp.\ $\psi$) is a composite of $c$ (resp.\ $d$) central extensions, $\psi\circ\phi$ is a composite of $(c+d)$ central extensions. Hence, it follows from \cref{theorem: characterization of relative nilpotency}.

\eqref{item:cancelation_of_nilpotent_hom}: Put $\chi=\psi\circ\phi$ and assume that $\chi$ is nilpotent of class $c$. Then, $\Ker(\phi) \subseteq \Ker(\chi)$ gives $\relLCS_c(G,\Ker(\phi)) \subseteq \relLCS_c(G,\Ker(\chi))=1$, which shows that $\phi$ is nilpotent of class at most $c$.
Also, because $\phi(\Ker(\chi))=\Ker(\psi)$, \cref{proposition:nilpotency_for_group_descend_along_surjection} implies that $\psi$ is nilpotent of class at most $c$.
\end{proof}

We collect the closure properties for relatively nilpotent normal subgroups.

\begin{prop}
\label[prop]{proposition: products and intersections of relatively nilpotent subgroups}
    Let $N,M\trianglelefteq G$ be normal subgroups, and let $c, d\geq 0$. Then the following hold:
    \begin{enumerate}
        \item\label{item:subgroup_of_relative_nilpotent} If $N$ is $G$-nilpotent of class at most $c$ and $M\subseteq N$, then $M$ is $G$-nilpotent of class at most $c$.
        \item\label{item:intersection_of_relative_nilpotent} If $N$ and $M$ are $G$-nilpotent of class at most $c$ and $d$ respectively, then $N \cap M$ is $G$-nilpotent of class at most $\min\{c,d\}$. Hence, $\ncl_G(N\cap M)\le\min\{\ncl_G(N), \ncl_G(M)\}$.
        \item\label{item:product_subgroup_of_relative_nilpotent} If $N$ and $M$ are $G$-nilpotent of class at most $c$ and $d$ respectively, then $NM$ is $G$-nilpotent of class at most $\max\{c,d\}$. In particular, we have $\ncl_G(NM) = \max\{\ncl_G(N), \ncl_G(M)\}$.
    \end{enumerate}
\end{prop}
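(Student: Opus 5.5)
The three assertions will be handled separately, using only the characterization of relative nilpotency by relative lower central series (\cref{theorem: characterizations of nilpotent groups}) and elementary commutator inclusions.

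For \eqref{item:subgroup_of_relative_nilpotent}, I will prove $\relLCS_i(G,M)\subseteq\relLCS_i(G,N)$ for all $i\geq 0$ by induction. The case $i=0$ is the hypothesis $M\subseteq N$. The inductive step is monotonicity of $[G,-]$: if $\relLCS_i(G,M)\subseteq\relLCS_i(G,N)$, then
\[
\relLCS_{i+1}(G,M)=[G,\relLCS_i(G,M)]\subseteq[G,\relLCS_i(G,N)]=\relLCS_{i+1}(G,N).
\]
Taking $i=c$ then gives $\relLCS_c(G,M)=1$.

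Assertion \eqref{item:intersection_of_relative_nilpotent} follows from \eqref{item:subgroup_of_relative_nilpotent}. The subgroup $N\cap M$ is normal in $G$ and is contained in both $N$ and $M$, so its class is at most $c$ and at most $d$.

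For \eqref{item:product_subgroup_of_relative_nilpotent}, first note that $NM$ is normal in $G$. The key claim is
\[
\relLCS_i(G,NM)=\relLCS_i(G,N)\,\relLCS_i(G,M)\qquad(i\geq 0).
\]
Once this holds, taking $i=\max\{c,d\}$ makes both factors trivial, so $\ncl_G(NM)\leq\max\{c,d\}$. The reverse inequality $\max\{\ncl_G(N),\ncl_G(M)\}\leq\ncl_G(NM)$ comes from \eqref{item:subgroup_of_relative_nilpotent}, since $N,M\subseteq NM$; together these give the stated equality.

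I will prove the claim by induction on $i$, and I expect the inductive step to be the main obstacle. Write $A=\relLCS_i(G,N)$ and $B=\relLCS_i(G,M)$. Both are normal in $G$, so $[G,A]$ and $[G,B]$ are normal in $G$ as well. The inclusion $[G,A][G,B]\subseteq[G,AB]$ is clear by monotonicity. For the reverse inclusion it suffices to show that every generator $[g,ab]$ with $a\in A$ and $b\in B$ lies in $[G,A][G,B]$, because the latter is a subgroup. I will use the identity
\[
[g,ab]=[g,a]\,a[g,b]a^{-1},
\]
which holds for the convention $[x,y]=xyx^{-1}y^{-1}$. Here $[g,a]\in[G,A]$, and $a[g,b]a^{-1}\in[G,B]$ by normality of $[G,B]$ in $G$. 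Hence $[G,AB]\subseteq[G,A][G,B]$, which completes the inductive step and the proof.
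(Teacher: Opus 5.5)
Your proof is correct. For (2) it matches the paper, but for (1) and (3) it takes a different route.

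The paper uses the upper-central criterion from \cref{theorem: characterizations of nilpotent groups}: $N$ is $G$-nilpotent of class at most $c$ if and only if $N\subseteq Z_c(G)$. With this, (1) is just $M\subseteq N\subseteq Z_c(G)$. Part (3) is equally immediate: with $r=\max\{c,d\}$ one has $N,M\subseteq Z_r(G)$, and $Z_r(G)$ is a subgroup, so $NM\subseteq Z_r(G)$. No commutator calculus is needed.

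You instead work with the lower-series criterion $\Gamma_c(G,N)=1$. This forces you to prove monotonicity of $\Gamma_i(G,-)$ by induction for (1). For (3) it forces you to prove the identity $\Gamma_i(G,NM)=\Gamma_i(G,N)\,\Gamma_i(G,M)$ via $[g,ab]=[g,a]\,a[g,b]a^{-1}$. This identity is correct for the paper's convention $[x,y]=xyx^{-1}y^{-1}$. The normality of $[G,\Gamma_i(G,M)]$ in $G$ is exactly what puts the conjugated factor in the right subgroup.

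The paper's argument is shorter because the work is absorbed into the equality $Z_i(G,N)=N\cap Z_i(G)$ behind the upper-central criterion. Yours avoids the upper central series entirely. It also gives a sharper, term-by-term statement: at every finite stage, the relative lower central series of $NM$ is the product of those of $N$ and $M$, not merely trivial at stage $\max\{c,d\}$.
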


\begin{proof}
\eqref{item:subgroup_of_relative_nilpotent}: By \cref{theorem: characterizations of nilpotent groups}, $N$ is nilpotent of class at most $c$ if and only if $N\subseteq Z_c(G)$. Therefore, $M$ is nilpotent of class at most $c$ when $M \subseteq N$.
\eqref{item:intersection_of_relative_nilpotent}: It follows immediately from \eqref{item:subgroup_of_relative_nilpotent}.
\eqref{item:product_subgroup_of_relative_nilpotent}: Put $r=\max\{c,d\}$. Then $N,M \subseteq Z_r(G)$, which implies that $NM \subseteq Z_r(G)$. Hence $NM$ is $G$-nilpotent of class at most $r$. In particular, $\ncl_G(NM) \leq \max\{\ncl_G(N),\ncl_G(M)\}$. The reverse inequality follows from \eqref{item:subgroup_of_relative_nilpotent}, and hence equality holds.
\end{proof}

\bibliographystyle{amsalpha}
\bibliography{converted_bibtex_quandles_tyoshida}

\end{document}